\numberwithin{equation}{section}
\newtheorem{theorem}[subsection]{Theorem}
\newtheorem{corollary}[subsection]{Corollary}
\newtheorem{lemma}[subsection]{Lemma}
\newtheorem{proposition}[subsection]{Proposition}
\theoremstyle{definition}
\newtheorem{definition}[subsection]{Definition}
\newtheorem{remark}[subsection]{Remark}
\newtheorem{example}[subsection]{Example}
\newtheorem{construction}[subsection]{Construction}
\newcommand{\cal}{\mathcal}
\newcommand{\arxivlink}[1]{\href{http://arxiv.org/abs/#1}{\texttt{arXiv:#1}}}
\title[]{On Deformation Theory in \\ Higher Logarithmic Geometry}
\author{Tommy Lundemo}
\address{Department of Mathematics and Informatics, University of Wuppertal, Germany}
\email{lundemo@uni-wuppertal.de}
\date{\today}
\begin{document}
\maketitle

\begin{abstract} We initiate the study of deformation theory in the context of derived and higher log geometry. After reconceptualizing the ``exactification''-procedures in ordinary log geometry in terms of Quillen's approach to the cotangent complex, we construct an ``exactified tangent bundle'' over the category of log ring spectra. The fibers recover the categories of modules over the underlying ring spectra, and the resulting cotangent complex functor specializes to log topological Andr\'e--Quillen homology on each fiber. As applications, we characterize log square-zero extensions and derive a log variant of \'etale rigidity, applicable to some tamely ramified extensions of ring spectra. 
\end{abstract}

\tableofcontents

\section{Introduction}

In classical deformation theory, the theory of Kodaira--Spencer \cite{KS58} is an invaluable tool that reduces the problem of extending a first-order deformation to a linear problem governed by the tangent sheaf.  This theory has lent itself to very meaningful generalizations in derived algebraic geometry and homotopy theory with the works of Basterra \cite{Bas99}, To\"en--Vezzosi \cite{TV08}, and Lurie \cite[Chapter 7]{Lur17}, in increasing order of generality. 

Recall that a spectrum is \emph{connective} if its homotopy groups are concentrated in non-negative degrees. If $R$ is a connective ${\Bbb E}_{\infty}$-ring spectrum, the \emph{Postnikov tower} \[R \to \cdots \to \tau_{\le 2}(R) \to \tau_{\le 1}(R) \to \pi_0(R)\] is one of square-zero extensions, and exhibits the truncation map $R \to \pi_0(R)$ as an infinitesimal thickening.  Intuition from classical algebraic geometry \cite[Th\'eor\`eme 18.1.2]{egaiv} suggests that \'etale objects over $R$ should identify with those over $\pi_0(R)$. This is realized as the following special case of \cite[Theorem 7.5.0.6]{Lur17}:

\begin{theorem}[Lurie's \'etale rigidity]\label{thm:lurieetale} Let $R$ be a connective ${\Bbb E}_{\infty}$-ring. Base-change along the truncation map $R \to \pi_0(R)$ induces an equivalence of categories \[{\rm CAlg}_{R/}^{\rm \acute{e}t} \xrightarrow{\simeq} {\rm CAlg}_{\pi_0(R)/}^{\rm \acute{e}t}\] relating the categories of \'etale $R$-algebras and \'etale $\pi_0(R)$-algebras. 
\end{theorem}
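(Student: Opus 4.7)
The plan is to climb the Postnikov tower of $R$ and apply deformation theory via the cotangent complex at each stage. Writing $R \simeq \lim_n \tau_{\leq n} R$ (with $\tau_{\leq 0} R = \pi_0(R)$), each truncation map $\tau_{\leq n+1} R \to \tau_{\leq n} R$ is a square-zero extension of connective $\mathbb{E}_\infty$-rings by a shift of $\pi_{n+1}(R)$. It therefore suffices to establish two claims: (i) base change along any square-zero extension $\tilde{A} \to A$ of connective $\mathbb{E}_\infty$-rings induces an equivalence ${\rm CAlg}_{\tilde{A}/}^{\rm \acute{e}t} \xrightarrow{\simeq} {\rm CAlg}_{A/}^{\rm \acute{e}t}$; and (ii) these finite-stage equivalences assemble, via $R \simeq \lim_n \tau_{\leq n} R$, into the desired equivalence.

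For claim (i), given an étale $A$-algebra $B$, the space of $\mathbb{E}_\infty$-algebra lifts $\tilde{B}$ over $\tilde{A}$ equipped with an identification $\tilde{B} \otimes_{\tilde{A}} A \simeq B$ is controlled by $L_{B/A}$ through the standard torsor-and-obstruction argument: it is, if nonempty, a torsor over a mapping space out of $L_{B/A}$ into an appropriate shift of the kernel of $\tilde{A} \to A$, with obstruction class in the next cohomological degree. Since étale morphisms are characterized by the vanishing $L_{B/A} \simeq 0$ together with flatness (and with $\pi_0(B)/\pi_0(A)$ classically étale), both the obstruction and the indeterminacy vanish, yielding an essentially unique lift $\tilde{B}$. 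One then checks that $\tilde{B}$ is flat over $\tilde{A}$ by a long-exact-sequence argument on homotopy groups coming from the square-zero extension; combined with $\pi_0(\tilde{B}) = \pi_0(B)$ being classically étale over $\pi_0(\tilde{A}) = \pi_0(A)$, this establishes étaleness of $\tilde{B}$.

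For claim (ii), iterating (i) produces, for each étale $C$ over $\pi_0(R)$, a compatible tower of essentially unique étale lifts $C_n$ to $\tau_{\leq n} R$; the assignment $C \mapsto \lim_n C_n$ then serves as an inverse to base change, using $R \simeq \lim_n \tau_{\leq n} R$ to identify étale $R$-algebras with such compatible towers. Full faithfulness is established similarly, by analyzing the mapping spaces at each Postnikov stage and passing to the limit.

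I expect the main obstacle to lie inside claim (i): showing that the lift $\tilde{B}$ produced by cotangent-complex obstruction theory, which a priori is only an $\mathbb{E}_\infty$-$\tilde{A}$-algebra, is genuinely flat over $\tilde{A}$ and hence étale. This requires combining the vanishing of $L_{B/A}$ with the algebraic structure of the square-zero extension to get fine control on the homotopy groups of $\tilde{B}$, and is the technical heart of the argument.
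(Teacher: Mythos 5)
Your proposal follows the same three-step strategy as the paper's proof outline in Section 1.8: (1) square-zero extensions of connective $\mathbb{E}_\infty$-rings induce equivalences on étale algebras, (2) climb the Postnikov tower, and (3) pass to the limit. Your claim (i) is the paper's step (1), and your claim (ii) bundles the paper's steps (2) and (3). Structurally the argument matches.

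The one point on which I would push back is your assessment of where the technical heart lies. You flag the flatness of the lift $\widetilde{B}$ in claim (i) as the main obstacle, but this is comparatively routine: since the Postnikov truncation map does not change $\pi_0$, one has $\pi_0(\widetilde{B}) \cong \pi_0(B)$, and flatness then follows by a Tor-amplitude/long-exact-sequence argument relative to the square-zero extension. The genuinely delicate step is the one you compress into a single sentence in claim (ii): justifying the identification $\mathrm{CAlg}_{R/}^{\text{\'et}} \simeq \lim_n \mathrm{CAlg}_{\tau_{\le n}(R)/}^{\text{\'et}}$ from the equivalence $R \simeq \lim_n \tau_{\le n}(R)$. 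This does not follow formally; in Lurie's proof it relies on the fact that $\mathrm{CAlg}$ is a category of algebras over an operad together with finiteness properties of étale maps (see the proof of Theorem 7.5.1.11 in \cite{Lur17}). Indeed, the paper explicitly singles out this step as the one that fails to generalize to the log setting, which is why the notion of charted log étale morphism is introduced. Writing ``the assignment $C \mapsto \lim_n C_n$ then serves as an inverse'' elides precisely the argument that turns the formal tower of lifts into an actual equivalence of categories of étale algebras, and that is the step worth singling out as the crux.
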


In this paper, we shall be concerned with a generalization of Theorem \ref{thm:lurieetale}. The definition of \'etaleness pursued in \cite{Lur17} is borrowed from derived algebraic geometry: A map $R \to A$ is \emph{\'etale} if $\pi_0(R) \to \pi_0(A)$ is \'etale and the canonical map $\pi_*(R) \otimes_{\pi_0(R)} \pi_0(A) \xrightarrow{} \pi_*(A)$ is an isomorphism. This notion is necessarily correct in the context of connective ${\Bbb E}_{\infty}$-rings, as it is equivalent to the vanishing of the cotangent complex ${\Bbb L}_{A / R}$ as soon as $\pi_0(R) \to \pi_0(A)$ is of finite presentation. 

Statements like Theorem \ref{thm:lurieetale} are extremely appealing in that they reduce the delicate problem of constructing extensions of ${\Bbb E}_{\infty}$-ring spectra to ordinary algebra. For example, there is no extension ${\Bbb S} \to {\Bbb S}[i]$ realizing the ``Gaussian sphere spectrum'' \cite{SVW99}, but Theorem \ref{thm:lurieetale} provides an essentially unique extension of ${\Bbb E}_{\infty}$-rings ${\Bbb S}[1/2] \to {\Bbb S}[1/2, i]$ once the ramification at the prime $2$ is killed. 

On the other hand, the nature of \'etale morphisms imposes strict conditions on which extensions of ring spectra one can construct using Theorem \ref{thm:lurieetale}. While it provides an essentially unique extension $R \to R[\sqrt[n]{x}]$ as soon as $n$ and $x$ are units in the ordinary ring $\pi_0(R)$, examples that involve a homotopy class $x$ in strictly positive degree are not covered by Theorem \ref{thm:lurieetale}. 

\subsection{Logarithmic geometry} In an attempt to circumvent this problem, we shall make use of ideas from a variant of algebraic geometry - \emph{logarithmic geometry} - in which the notions of \'etaleness and smoothness are less rigid than they are in ordinary algebraic geometry \cite{Kat89}. For example, the ring  map ${\Bbb Z}_{(3)} \to {\Bbb Z}_{(3)}[\sqrt{3}]$ participates in a log \'etale morphism, despite not being \'etale. 

By definition\footnote{For the purposes of this introduction, we ignore the distinction between pre-log and log rings.}, a \emph{log ring} is a triple $(A, M, \alpha)$ with $A$ a commutative ring, $M$ a commutative monoid, and $\alpha \colon M \to (A, \cdot)$ a morphism of commutative monoids. The log \'etale extension above is realized as the map $({\Bbb Z}_{(3)}, \langle 3 \rangle) \to ({\Bbb Z}_{(3)}[\sqrt{3}], \langle \sqrt{3} \rangle)$, where the underlying monoids are freely generated by the given element. 

Rognes \cite{Rog09} has extended the notion of a log ring to the context of higher algebra. As we shall explain below, the resulting \emph{log ring spectra} give rise to more flexible notions of \'etaleness, which for instance includes examples like the inclusion of the connective Adams summand $\ell_p \to {\rm ku}_p$. On coefficient rings, this realizes the root adjunction ${\Bbb Z}_p[v_1] \to {\Bbb Z}_p[\sqrt[p - 1]{v_1}]$ of the class $v_1 \in \pi_{2p - 2}(\ell_p)$. 

Given a log ring spectrum $(R, P)$, we shall denote by ${\rm Log}_{(R, P)/}$ the category of $(R, P)$-algebras. These are log ring spectra $(A, M)$ together with a morphism $(R, P) \to (A, M)$ of such. We shall argue that the following is the correct generalization of Theorem \ref{thm:lurieetale} to the context of log geometry:

\begin{theorem}[Precise statement in Theorem \ref{thm:precisechartedlogetale}]\label{thm:chartedlogetale} Let $(R, P)$ be a connective log ring spectrum. The \emph{log Postnikov} truncation $(R, P) \to (\pi_0(R), P)$ induces a base-change functor \[(-, -) \otimes_{(R, P)} (\pi_0(R), P) \colon {\rm Log}^{}_{(R, P)/} \xrightarrow{} {\rm Log}^{}_{(\pi_0(R), P)/}\] which is an equivalence once restricted to \emph{charted log \'etale} objects. 
\end{theorem}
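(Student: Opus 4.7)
The plan is to imitate Lurie's proof of Theorem~\ref{thm:lurieetale} by decomposing the map $(R, P) \to (\pi_0(R), P)$ into a tower of log Postnikov truncations, reducing the statement to each stage of the tower, and invoking the log deformation theory developed earlier in the paper via the exactified tangent bundle and its associated cotangent complex.

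First I would reduce to the truncation tower. Since every connective log ring spectrum equals the inverse limit of its log Postnikov sections $(\tau_{\leq n} R, P)$, and since charted log \'etale algebras should be compatible with such limits of the base (the log analogue of the corresponding limit step in Lurie's argument), it suffices to prove that base-change along each truncation $(\tau_{\leq n} R, P) \to (\tau_{\leq n-1} R, P)$ induces an equivalence on charted log \'etale algebras. Each such truncation is a log square-zero extension whose kernel on underlying rings is the Eilenberg--Mac\,Lane module $\pi_n(R)[n+1]$ and which is trivial on the log structure, so it falls squarely within the scope of the square-zero extension classification the paper establishes.

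Next I would translate the lifting problem at each stage into a cotangent complex computation. Given a charted log \'etale $(\tau_{\leq n-1} R, P)$-algebra $(B, Q)$, the classification of log square-zero extensions should identify the space of charted log \'etale lifts of $(B,Q)$ along the truncation with a mapping space out of the relative exactified cotangent complex of $(B, Q)$ over $(\tau_{\leq n-1} R, P)$, evaluated against a suitable shift of the $B$-module $\pi_n(R)\otimes B$. Charted log \'etaleness should be precisely the vanishing of this exactified cotangent complex, so the mapping space is contractible; this yields both the essential surjectivity and full faithfulness of the base-change functor at that stage, and hence (by assembling the tower) the theorem.

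The main obstacle is making these translations precise in the charted log setting. Two points require care: one must show that the chart datum $P \to Q$ itself lifts canonically along the log square-zero extension, so that the deformation problem genuinely takes place in the category of charted (not merely pre-log) algebras; and one must identify the charted log \'etaleness condition with the vanishing of the exactified cotangent complex in a way compatible with the square-zero obstruction theory. Since the source monoid $P$ is held fixed along the entire Postnikov tower and the chart is essentially a discrete piece of data, these rigidity statements are plausible, but verifying them---i.e.\ checking that the ``chartedness'' is genuinely preserved and detected by the exactified cotangent complex machinery---is where the log variant departs from the classical argument and where I expect the bulk of the work to lie.
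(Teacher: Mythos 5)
Your proposal has a genuine gap at its first step, and that gap is precisely the one the paper itself identifies as the obstruction to a naive transcription of Lurie's argument. You write that ``since charted log \'etale algebras should be compatible with such limits of the base (the log analogue of the corresponding limit step in Lurie's argument), it suffices to prove that base-change along each truncation \dots\ induces an equivalence.'' But Section~\ref{subsec:proofoutline} states explicitly that there is no known analogue of the equivalence ${\rm CAlg}_{R/}^{\rm \acute{e}t} \simeq \lim {\rm CAlg}_{\tau_{\le n}(R)/}^{\rm \acute{e}t}$ in the log setting: the category ${\rm Log}$ does not appear to arise as algebras over an operad, so the argument of \cite[Proof of Theorem 7.5.1.11]{Lur17} used to reduce to the truncated case does not transfer. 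The equivalences along each step of the tower (which are established in Theorem~\ref{thm:logetalebase} for formally log \'etale objects, along lines much as you sketch) are not known to assemble into an equivalence involving the full slice category over $(R,P)$. This failure is the \emph{raison d'\^etre} of charted log \'etale morphisms, so the plan of ``assembling the tower'' does not close. Compounding this, Remark~\ref{rem:chartedlogetale} points out that the paper does not even know whether charted log \'etale morphisms compose, which makes an inductive lift-and-glue argument for the charted condition even more delicate than for the formally log \'etale one.

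The paper's actual route for essential surjectivity is different in kind: rather than inductively lifting along the tower, it uses the chart datum $(R,P) \to (A,M)$ to reduce to an \'etale extension of the $\mathbb{E}_\infty$-ring $R \otimes_{\mathbb{S}^{\mathcal{J}}[P]} \mathbb{S}^{\mathcal{J}}[M]$, and then applies Lurie's \'etale rigidity \cite[Theorem 7.5.0.6]{Lur17} directly to that underlying ring to construct the desired lift in one step, afterwards re-equipping the lift with the inverse image pre-log structure. Full faithfulness is handled by a pointwise mapping-space computation (Corollaries~\ref{cor:truncationalonglogpostnikov} and~\ref{cor:truncationalltheway}), which uses convergence of the log Postnikov tower and formal log \'etaleness but does not require expressing the slice category itself as an inverse limit. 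Finally, a smaller inaccuracy: you assert that charted log \'etaleness ``should be precisely the vanishing of this exactified cotangent complex.'' It is not; charted log \'etale is a strictly stronger finiteness-type condition that merely \emph{implies} vanishing of ${\mathbb{L}}^{\rm rep}$ (Lemma~\ref{lem:logetalechartformallylogetale}), and this asymmetry is discussed in the remark following that lemma. Conflating the two is harmless in the direction your sketch actually uses, but it signals a misapprehension of why the chart condition is introduced at all.
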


The statement of Theorem \ref{thm:chartedlogetale} may seem surprising at first sight, as the ``monoid variable'' $P$ has remained constant. We explain in Section \ref{subsec:intrologpostnikov} that this in fact what one would expect from the nature of square-zero extensions in classical log geometry. One of the main contributions of the present paper, then, is that the log-geometric notions indeed play well with the higher-algebraic approach to deformation theory developed by Lurie \cite[Chapter 7]{Lur17}. One immediate drawback is that the subcategory of ${\rm Log}_{(\pi_0(R), P)/}$ consisting of ``charted log \'etale'' objects is no longer purely algebraic in nature, as opposed to the situation of Theorem \ref{thm:lurieetale}. Nonetheless, Theorem \ref{thm:chartedlogetale} is applicable in concrete examples, such as the inclusion of the Adams summand $\ell_p \to {\rm ku}_p$. We explain this in Section \ref{subsec:adams}. The verbatim analog of Theorem \ref{thm:chartedlogetale} applies in the context of animated log rings, cf.\ Section \ref{subsec:animated}. 

The proof of Theorem \ref{thm:chartedlogetale} involves the development of the deformation theory of log ring spectra. The results that go into this are also applicable for animated log rings, as we point out in Section \ref{subsec:animated}. After discussing some examples, we shall spend the remainder of the introduction motivating and explaining the terminology used in Theorem \ref{thm:chartedlogetale}. This also includes a discussion of our main constructions in the context of ordinary log geometry. 

\subsection{Log ring spectra}\label{subsec:intrologring} As we review in Sections \ref{sec:commjspace}, \ref{sec:gradedeinfinity}, and \ref{sec:logringspectra}, the notion of a log ring has been generalized to the context of ${\Bbb E}_{\infty}$-ring spectra by Rognes \cite{Rog09}, with further developments by Sagave--Schlichtkrull \cites{SS12, SS19}, Sagave \cite{Sag14}, and Rognes--Sagave--Schlichtkrull \cites{RSS15, RSS18}. While there are variations, we shall use the definition of \emph{log ring spectra} pursued in the three latter papers. These are pairs $(A, M)$, where $A$ is an ${\Bbb E}_{\infty}$-ring and $M$ is a ``graded'' ${\Bbb E}_{\infty}$-space; see Sections \ref{sec:commjspace} and \ref{sec:gradedeinfinity} for a review. The category of graded ${\Bbb E}_{\infty}$-spaces participate in an adjunction $({\Bbb S}^{\cal J}[-], \Omega^{\cal J}(-))$ with the category of ${\Bbb E}_{\infty}$-rings, which we think of as a graded analog of the adjunction $({\Bbb S}[-], \Omega^{\infty}(-))$. The structure map of $(A, M)$ is a morphism $\alpha \colon M \to \Omega^{\cal J}(A)$ of graded ${\Bbb E}_{\infty}$-spaces.

\subsection{The example of the Adams summand}\label{subsec:adams} For odd primes $p$, the $p$-complete connective complex $K$-theory spectrum ${\rm ku}_p$ splits as $p - 1$ shifted copies of the \emph{Adams summand} $\ell_p$. Sagave \cite[Theorem 1.6]{Sag14} proved that the resulting inclusion participates in a map $(\ell_p, \langle v_1 \rangle_*) \to ({\rm ku}_p, \langle u \rangle_*)$ of log ring spectra which is formally log \'etale. Inspecting the proof, we readily find that this map is charted log \'etale in the sense of Definition \ref{def:chartedlogetale}.   

As we discuss and motivate in Section \ref{subsec:intrologpostnikov}, truncations of log ring spectra take a somewhat surprising form: The ``monoid'' variable is not allowed to change, and the bottom Postnikov stage of $(\ell_p, \langle v_1 \rangle_*)$ is $({\Bbb Z}_p, \langle v_1 \rangle_*)$. The structure map is given by the ``inverse image'' log structure $\langle v_1 \rangle_* \to \Omega^{\cal J}(\ell_p) \to \Omega^{\cal J}({\Bbb Z}_p)$ induced by ${\ell}_p \to {\Bbb Z}_p$. 

In this case, Theorem \ref{thm:chartedlogetale} states that the inclusion of the Adams summand, as a map of log ring spectra, is uniquely determined by the base-changed map $({\Bbb Z}_p, \langle v_1 \rangle_*) \to({\Bbb Z}_p \otimes_{\ell_p} {\rm ku}_p, \langle u \rangle_*)$. We once again stress that neither the source nor target of this map are discrete: The source involves the non-discrete graded ${\Bbb E}_{\infty}$-space $\langle v_1 \rangle_*$, while the target is non-discrete in both the ring and monoid variable. However, as explained in \cite[Proof of Proposition 4.15]{Sag14}, the target of this map can be constructed without reference to ${\rm ku}_p$. From the graded ${\Bbb E}_{\infty}$-space $\langle v_1 \rangle_*$ one can construct a graded ${\Bbb E}_{\infty}$-space $\langle \sqrt[p - 1]{v_1} \rangle_*$, and the base-changed map is equivalent to $({\Bbb Z}_p, \langle v_1 \rangle_*) \to ({\Bbb Z}_p \otimes_{{\Bbb S}^{\cal J}[\langle v_1 \rangle_*]} {\Bbb S}^{\cal J}[\langle \sqrt[p - 1]{v_1} \rangle_*], \langle \sqrt[p - 1]{v_1} \rangle)$. Once again, the underlying ${\Bbb E}_{\infty}$-ring of the target is not discrete, but is algebraic in nature in that its underlying spectrum is a sum $\oplus_{i = 0}^{p - 2} {\Bbb Z}_p[2i]$ of shifted copies of ${\Bbb Z}_p$. 

\subsection{Adjoining roots}\label{subsec:adjroots} As the above discussion suggests, the data required to obtain a root adjunction $R \to R[\sqrt[n]{x}]$ from Theorem \ref{thm:chartedlogetale} is a sufficiently well-behaved log structure $(R, \langle x \rangle_*)$ on $R$. This should allow for the construction of a graded ${\Bbb E}_{\infty}$-space $\langle \sqrt[n]{x} \rangle_*$ for which the base-changed morphism $(\pi_0(R), \langle x \rangle_*) \to (\pi_0(R) \otimes_{{\Bbb S}^{\cal J}[\langle x \rangle_*]} {\Bbb S}^{\cal J}[\langle \sqrt[n]{x} \rangle_*], \langle \sqrt[n]{x} \rangle)$ is charted log \'etale. 

These are rather restrictive assumptions, and implicit in our requirement that $(R, \langle x \rangle_*)$ is ``sufficiently well-behaved'' is a condition that roughly amounts to saying that the homotopy class $x \in \pi_d(R)$ is \emph{strict}. For $x \in \pi_0(R)$, this condition ensures that there is a factorization \[\begin{tikzcd}[row sep = tiny]{\Bbb S}[\sqcup_{k \ge 0} B\Sigma_k] \ar{dr} \ar{rr}{x} & & R \\ \vspace{10 mm} & {\Bbb S}[{\Bbb N}] \ar[dashed]{ur} \end{tikzcd}\] of the map from the free ${\Bbb E}_{\infty}$-ring ${\Bbb S}[\sqcup_{k \ge 0} B \Sigma_k]$ determined by $x$, where the map to the flat affine line ${\Bbb S}[{\Bbb N}] =: {\Bbb S}[t]$ is obtained by collapsing path components. For strict $x \in \pi_0(R)$, adjoining an $n$th root to $x$ is as simple as it is in ordinary algebra: Base-change the map ${\Bbb S}[t] \to R$ along the map ${\Bbb S}[t] \xrightarrow{t \mapsto t^n} {\Bbb S}[t]$.  

For homotopy classes $x \in \pi_d(R)$ with $d > 0$, we refer to Remark \ref{rem:strictness} for the notion of strictness that we have in mind. Under these assumptions, we can realize the relevant root adjunction as the base-change $R \otimes_{{\Bbb S}^{\cal J}[\langle x \rangle_*]} {\Bbb S}^{\cal J}[\langle \sqrt[n]{x} \rangle_*]$. It is thus not the case that Theorem \ref{thm:chartedlogetale} is a tool to \emph{construct} these root adjunctions; rather, it shows that they are uniquely determined by the map $\langle x \rangle_* \to \langle \sqrt[n]{x} \rangle_*$ of graded ${\Bbb E}_{\infty}$-spaces (which \emph{can} be used to construct the root adjunctions). For this uniqueness statement to be applicable, we must further require that $n$ be invertible in $\pi_0(R)$ (while $x \in \pi_d(R)$ need not be invertible), which corresponds to the log \'etaleness of \emph{tamely ramified} extensions. Another example for which Theorem \ref{thm:chartedlogetale} is applicable, then, is the root adjunction ${\rm ko}_{p} \to {\rm ko}_{p}[\sqrt{\alpha}]$ for an odd prime $p$ and the class $\alpha \in \pi_4({\rm ko}_p)$. We refer to Lawson \cite{Law20} and Ausoni--Bayındır--Moulinos \cite{ABM23} for further discussion of root adjunctions in different contexts, and to Hesselholt--Pstr\k{a}gowski \cite[Theorem 1.10]{HP23} for a variant of Theorem \ref{thm:lurieetale} that involves \'etale algebras over the coefficient ring $\pi_*(R)$.  

We end this discussion with a reality check. Devalapurkar \cite{Dev20} proved that there are no $K(n)$-local ${\Bbb E}_{\infty}$-rings $R$ for which $\pi_0(R)$ contains a primitive $p$th root of unity, unless $n = 0$ (see also \cite{Ang08}). As the extension ${\Bbb Z}_p \to {\Bbb Z}_p[\zeta_p]$ participates in a log \'etale map, one might fear that Theorem \ref{thm:chartedlogetale} provides e.g.\ an extension ${\rm ku}_p \to {\rm ku}_p[\zeta_p]$, which would contradict Devalapurkar's result upon inverting the Bott class $u$. Thankfully, the restrictive conditions under which we can construct well-behaved log structures on ${\Bbb E}_{\infty}$-rings do not allow for this: As explained by Rognes \cite{RogLoen}, the element $p$ is not strict in the ${\Bbb E}_{\infty}$-ring ${\rm ku}_p$,  and Theorem \ref{thm:chartedlogetale} does not apply to construct an extension contradicting Devalapurkar's result. 

\subsection{Exactification and repletion} We now proceed to discuss the material and terminology that go into Theorem \ref{thm:chartedlogetale}. We begin the discussion in the context of classical log geometry. 

The log \'etaleness of a morphism of log rings $(R, P) \to (A, M)$ is witnessed by the vanishing of a certain $A$-module $\Omega^1_{(A, M) / (R, P)}$ of \emph{log differentials}. This is a module over the underlying commutative ring $A$, as opposed to an object of a category dependent on the log structures. The construction of the log differentials passes through a procedure called \emph{exactification}; as we explain in Remark \ref{rem:replete}, we shall refer to this as \emph{repletion}.

The process of repletion is a very common theme in log geometry, and most invariants of log rings and log schemes, such as log de Rham cohomology, log crystalline cohomology, log (topological) Hochschild homology, and log prismatic cohomology, all pass through some variant of the repletion construction. 

\subsection{The replete abelianization functor} We consider the repletion procedure as a systematic way of ``cashing out'' the additional data provided by the log structure, a philosophy that we now aim to make precise. A very rough two-step recipe to construct an invariant of log rings is to 

\begin{enumerate}
\item mimic a construction in ordinary algebraic geometry to obtain a morphism $(B, N) \to (A, M)$ of log rings; and 
\item apply the repletion procedure to this morphism to obtain a new morphism $(B^{\rm rep}, N^{\rm rep}) \to (A, M)$ of log rings, with $B^{\rm rep} := B \otimes_{{\Bbb Z}[N]} {\Bbb Z}[N^{\rm rep}]$. 
\end{enumerate}

\noindent In cases of interest, the latter morphism will induce an isomorphism at the level of commutative monoids, so that the only relevant data is the underlying morphism of commutative rings. Concrete examples of this process include

\begin{enumerate}[label = (\alph*)]
\item the repletion $((A \otimes A)^{\rm rep}, (M \oplus M)^{\rm rep}) \to (A, M)$ of the diagonal map $(A \otimes A, M \oplus M) \to (A, M)$. The underlying map $(A \otimes A)^{\rm rep} \to A$ is called the \emph{log diagonal} by Kato--Saito \cite[Section 4]{KS04}. At the level of underlying monoids, the map $((A \otimes A)^{\rm rep}, (M \oplus M)^{\rm rep}) \to (A, M)$ is an isomorphism on associated log structures. At the level of underlying rings, its module of indecomposables (conormal) is the module of log differentials $\Omega^1_{(A, M)}$.
\item Following \cite{Rog09, SSV16}, we can also make sense of the repletion procedure in the context of simplicial commutative monoids. Applying this to the collapse map $S^1 \otimes (A, M) \to (A, M)$, we obtain the definition of log Hochschild homology ${\rm HH}(A, M)$ pursued by Rognes \cite{Rog09}.
\end{enumerate}

As the above examples suggest, the repletion procedure is an operation applied to log rings $(B, N) \to (A, M)$ over a fixed log ring $(A, M)$. We shall most often apply repletion in \emph{pointed} contexts; that is, when the map $(B, N) \to (A, M)$ admits a section. We thus consider repletion as a functor \[(-, -)^{\rm rep} \colon {\rm Log}_{(A, M)//(A, M)} \to {\rm Log}^{\rm rep}_{(A, M)//(A, M)}\] from the category of augmented $(A, M)$-algebras to augmented $(A, M)$-algebras that are \emph{replete}.  

The moral starting point to modern approaches to the cotangent complex is an observation of Quillen (that we recall as Theorem \ref{thm:quillen}), which identifies abelian group objects in augmented commutative rings ${\rm CRing}_{/A}$ with the category of modules over $A$, and the abelianization ${\rm Ab}_+(B)$ of an augmented commutative ring $B \to A$ with the $A$-module $\Omega^1_B \otimes_B A$. 

Using the repletion functor $(-, -)^{\rm rep}$, we explain in Section \ref{sec:prelude} how to construct a \emph{replete abelianization functor} \[{\rm Ab}_+^{\rm rep}(-, -)  \colon {\rm Log}_{/(A, M)} \to {\rm Ab}({\rm Log}^{\rm rep}_{(A, M)//(A, M)})\] from log rings augmented over $(A, M)$ to abelian group objects in augmented replete $(A, M)$-algebras. The following log variant of Quillen's result explains how the repletion procedure extracts data from the log structure to produce meaningful invariants in categories that no longer depend on the log structure.

\begin{proposition}\label{prop:repabel} Let $(A, M)$ be a (discrete) log ring. 
\begin{enumerate}
\item The forgetful functor ${\rm Log}^{\rm rep}_{(A, M)//(A, M)} \to {\rm CAlg}_{A//A}$ is an equivalence of categories. 
\item The assignment ${\rm Mod}_A \xrightarrow{} {\rm Ab}({\rm Log}^{\rm rep}_{(A, M)//(A, M)}), J \mapsto (A \oplus J, M \oplus J)$ defines an equivalence of categories. 
\item There is a natural isomorphism ${\rm Ab}_+^{\rm rep}(B, N) \cong \Omega^1_{(B, N)} \otimes_B A$ of $A$-modules, where $\Omega^1_{(B, N)}$ is the module of log differentials. 
\end{enumerate}
\end{proposition}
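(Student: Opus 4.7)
The plan is to prove (1) first; parts (2) and (3) then follow formally by combining (1) with Quillen's classical theorem and the universal property of $\Omega^1_{(B,N)}$.

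For (1), I would construct an inverse $F$ to the forgetful functor $U$ as follows. Given an augmented $A$-algebra $A \to B \xrightarrow{\epsilon} A$, equip $B$ with the pullback log structure $\epsilon^{*}M$ obtained as the pullback of $M \to \Omega^{\mathcal{J}}(A)$ along $\Omega^{\mathcal{J}}(\epsilon)\colon \Omega^{\mathcal{J}}(B) \to \Omega^{\mathcal{J}}(A)$, and set $F(B) = (B, \epsilon^{*}M)$. The structure map $(A, M) \to F(B)$ is induced by the given map $A \to B$, while the augmentation $F(B) \to (A, M)$ is tautological. Two things need to be verified: (a) $F(B)$ is replete, and (b) every replete augmented $(A,M)$-algebra $(B, N)$ is recovered this way, i.e.\ the unit $(B, N) \to F(U(B, N))$ is an equivalence. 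Both should follow from the universal characterization of repletion to be recalled in the earlier sections: once an augmentation back to $(A, M)$ is specified, it splits off a copy of the base log structure and the remainder is forced to be recovered by pullback, so the repletion functor acts trivially.

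For (2), combining (1) with the classical Quillen theorem (recalled as Theorem \ref{thm:quillen}) yields equivalences
\[
\mathrm{Ab}(\mathrm{Log}^{\mathrm{rep}}_{(A,M)//(A,M)}) \xrightarrow{\simeq} \mathrm{Ab}(\mathrm{CAlg}_{A//A}) \xleftarrow{\simeq} \mathrm{Mod}_{A},
\]
where the right equivalence sends an $A$-module $J$ to the square-zero extension $A \oplus J$. Tracing $A \oplus J$ back through the inverse $F$ of part (1) gives the log ring $(A \oplus J, \epsilon^{*}M)$ with $\epsilon\colon A \oplus J \to A$ the projection, and a direct computation identifies $\epsilon^{*}M$ with the ``square-zero extension'' $M \oplus J$ of graded $\mathbb{E}_{\infty}$-spaces — this is the expected formula since the augmentation kills $J$, so the pullback splits off a trivial $J$-summand at the level of monoids.

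For (3), fix $(B, N) \in \mathrm{Log}_{/(A, M)}$ and an $A$-module $J$. The adjunction defining $\mathrm{Ab}_{+}^{\mathrm{rep}}$ together with part (2) gives
\[
\mathrm{Map}_{\mathrm{Mod}_{A}}\bigl(\mathrm{Ab}_{+}^{\mathrm{rep}}(B, N),\, J\bigr) \;\simeq\; \mathrm{Map}_{\mathrm{Log}_{/(A,M)}}\bigl((B, N),\, (A \oplus J,\, M \oplus J)\bigr),
\]
and by the classical definition of log differentials as corepresenting log derivations valued in square-zero extensions, the right-hand side is $\mathrm{Hom}_{A}(\Omega^1_{(B, N)} \otimes_{B} A,\, J)$. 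The Yoneda lemma then produces the desired natural isomorphism.

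The main obstacle will be part (1), specifically pinning down the precise form of repleteness in the graded $\mathbb{E}_{\infty}$-setting and verifying that pullback log structures along augmentations are always replete — once this is checked, the identification of the unit with the identity is essentially tautological, and (2), (3) follow by routine bookkeeping.
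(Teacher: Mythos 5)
Your high-level strategy---construct a quasi-inverse to the forgetful functor to prove (1), then deduce (2) and (3) from Quillen's theorem and corepresentability of $\Omega^1_{(B,N)}$---matches the paper (Corollary~\ref{cor:replaugmented}, Corollary~\ref{cor:ablogrep}, Lemma~\ref{lem:replabdiff}). However, the formula you give for the quasi-inverse is wrong, and this is not cosmetic: the fiber product $\epsilon^{*}M := M \times_{(A,\cdot)}(B,\cdot)$ along the augmentation (which is what your ``pullback of $M \to \Omega^{\cal J}(A)$ along $\Omega^{\cal J}(\epsilon)$'' amounts to in this classical setting, where $\Omega^{\cal J}(A)$ should be read as $(A,\cdot)$) is in general neither replete over $M$ nor isomorphic to the log structure on a replete augmented $(A,M)$-algebra. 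Take $A=\mathbb{Z}$ with trivial log structure $M=\{\pm1\}$ and $B=\mathbb{Z}[x]$ augmented by $x\mapsto 0$: then $\epsilon^{*}M=\{f: f(0)=\pm1\}$ is not a group, whereas repleteness over the trivial $M$ forces $N=N^{\rm gp}$. Already for $B=A\oplus J$, the fiber product consists of \emph{all} pairs $(m,(\alpha(m),j))$ with $j\in J$, whereas the correct replete log structure (Example~\ref{ex:splitsquarezero2}) is $M\oplus J$ with the twisted structure map $(m,j)\mapsto(\alpha(m),\alpha(m)j)$; the comparison map $(m,j)\mapsto(m,(\alpha(m),\alpha(m)j))$ fails to be surjective whenever $\alpha(m)$ is a nonunit. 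So your claim in (2) that a direct computation identifies $\epsilon^{*}M$ with $M\oplus J$ is false, precisely because the multiplication on $A\oplus J$ twists the $J$-factor by $\alpha$, and the pullback does not split off a trivial $J$-summand.

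The correct quasi-inverse, as in Corollary~\ref{cor:replaugmented}, is $B\mapsto(B,\,M\oplus({\rm GL}_1(B)/{\rm GL}_1(A)))$ with structure map~\eqref{prelogb}---equivalently, the inverse image log structure along the unit $A\to B$, which is a \emph{pushout} $M\oplus_{{\rm GL}_1(A)}{\rm GL}_1(B)$, not a pullback along $\epsilon$. Verifying that it is a quasi-inverse is the content of Lemma~\ref{lem:replogiremains}, and it is not a formal consequence of ``the universal characterization of repletion'': the argument uses Rognes's splitting lemma for replete morphisms with a section, the observation that an element of $A$ is a unit if and only if its image in $B$ is one (since $A\to B$ admits a retraction), and a comparison of the defining cocartesian square for $N^a$ with the splitting of ${\rm GL}_1(B)$. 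Once (1) is repaired with the correct formula, the remainder of your outline for (2) and (3) does track the paper's argument.
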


We observe that (1) realizes the goal of ``cashing out'' the log structure by means of repletion. The second part of Proposition \ref{prop:repabel} is also a consequence of \cite[Lemma 4.13]{Rog09}; see Remark \ref{rem:rognescomp} for discussion on this point. 

\subsection{A proof outline of Lurie's \'etale rigidity}\label{subsec:proofoutline} In order to motivate the results that we generalize to the context of log ring spectra, let us give a rough outline of the proof of Theorem \ref{thm:lurieetale}.  Given the deformation-theoretic nature of our approach, this is closer in spirit to the exposition of \cite{dagiv} than it is to \cite{Lur17}.

\begin{enumerate}
\item For any \emph{square-zero extension} $\widetilde{R} \to R$ of connective ${\Bbb E}_{\infty}$-rings, the base-change functor $- \otimes_{\widetilde{R}} R \colon {\rm CAlg}_{\widetilde{R}/}^{\rm \acute{e}t} \xrightarrow{} {\rm CAlg}^{\rm \acute{e}t}_{R/}$ is an equivalence.
\item Any connective ${\Bbb E}_{\infty}$-ring is the limit of its \emph{Postnikov tower} \[\cdots \to \tau_{\le 2}(R) \to \tau_{\le 1}(R) \to \tau_{\le 0}(R) \simeq \pi_0(R),\] and each map in the tower is a square-zero extension.
\item We may reduce to the truncated case: ${\rm CAlg}_{R/}^{\rm \acute{e}t} \simeq {\rm lim}({\rm CAlg}_{\tau_{\le n}(R)/}^{\rm \acute{e}t})$. 
\end{enumerate}

We stated Theorem \ref{thm:lurieetale} and the first step of the proof outline above in terms of the base-change functor. Once restricted to \'etale objects, the base-change functor along the Postnikov tower coincides with the relevant truncation functor, so that Theorem \ref{thm:lurieetale} could equally well have been stated for the functor $A \mapsto \pi_0(A)$. This observation is used in a crucial way in the third step of the above proof outline. 

\subsection{A log cotangent formalism} The constructions on ordinary log rings that we have discussed so far, such as repletion, have natural analogs in the context of log ring spectra. Moreover, there is an analog of the log differentials, called \emph{log topological Andr\'e--Quillen homology} ${\rm TAQ}(A, M)$, developed by Rognes \cite[Sections 11 and 13]{Rog09} and Sagave \cite{Sag14}. 

The analog of abelianization in the context of higher algebra is \emph{stabilization}. For a presentably symmetric monoidal, stable ${\infty}$-category ${\cal C}$ with a commutative algebra object $A$ and $A$-module $J$, the analog of Quillen's observation is the equivalence in the composite \begin{equation}\label{introcomposite}{\rm Mod}_A({\cal C}) \simeq {\rm Sp}({\rm CAlg}({\cal C})_{/A}) \xrightarrow{\Omega^{\infty}} {\rm CAlg}({\cal C})_{/A},\end{equation} and we may \emph{define} the square-zero extension $A \oplus J$ to be the image of $J$ under \eqref{introcomposite}. See \cite[Remark 7.3.4.15]{Lur17} for details. 

Lurie's cotangent complex formalism \cite[Section 7.3]{Lur17} globalizes this perspective:  Specializing to the category ${\cal C} = {\rm CAlg}$ of ${\Bbb E}_{\infty}$-ring spectra, there is a \emph{presentable fibration} $T_{\rm CAlg} \to {\rm CAlg}$ which, on each fiber, recovers the stabilization ${\rm Sp}({\rm CAlg}_{/A}) \simeq {\rm Mod}_A$.  We can thus informally think of objects of the category $T_{\rm CAlg}$ as pairs $(A, J)$, where $A$ is an ${\Bbb E}_{\infty}$-ring and $J$ is an $A$-module. There is a natural functor $T_{\rm CAlg} \to {\rm Fun}(\Delta^1, {\rm CAlg})$ which admits the informal description $(A, J) \mapsto (A \oplus J \to A)$. This admits a left adjoint \emph{relative} to ${\rm CAlg}$ in the sense of \cite[Section 7.3.2]{Lur17}, which is the essential ingredient in the construction of an \emph{absolute cotangent complex} functor ${\Bbb L} \colon {\rm CAlg} \to T_{\rm CAlg}$. 

\begin{theorem}\label{thm:repletetangent} There is a presentable fibration $T^{\rm rep}_{\rm Log} \to {\rm Log}$ such that:
\begin{enumerate}
\item The fiber $(T^{\rm rep}_{\rm Log})_{(A, M)}$ over any log ring spectrum $(A, M)$ is canonically equivalent to the category ${\rm Mod}_A$ of $A$-modules, and 
\item there is a map $T^{\rm rep}_{\rm Log} \to {\rm Fun}(\Delta^1, {\rm Log})$ of presentable fibrations over ${\rm Log}$ which admits a left adjoint relative to ${\rm Log}$.
\end{enumerate}
\end{theorem}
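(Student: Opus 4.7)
The plan is to follow the template of Lurie's construction of the tangent bundle $T_{\rm CAlg} \to {\rm CAlg}$ from \cite[Section 7.3]{Lur17}, with the essential modification that we restrict to \emph{replete} augmented objects throughout. Concretely, I would first identify, for each log ring spectrum $(A, M)$, the relevant $\infty$-category of augmented replete $(A,M)$-algebras, and organize them into a full subcategory $\mathcal{M}^{\rm rep} \subset {\rm Fun}(\Delta^1, {\rm Log})$. These should form a presentable fibration $\mathcal{M}^{\rm rep} \to {\rm Log}$ via evaluation at the target, with cocartesian edges given by pushouts in ${\rm Log}$ followed by fiberwise repletion. I would then define $T^{\rm rep}_{\rm Log}$ as the fiberwise stabilization of $\mathcal{M}^{\rm rep} \to {\rm Log}$, using Lurie's tangent bundle machinery from \cite[Section 7.3.1]{Lur17}.

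For claim (1), the identification of the fiber $(T^{\rm rep}_{\rm Log})_{(A,M)}$ with ${\rm Mod}_A$ is the $\infty$-categorical analog of Proposition \ref{prop:repabel}: parts (1) and (2) of that proposition suggest that the square-zero extension functor $J \mapsto ((A \oplus J, M \oplus J) \to (A,M))$ and its loop-space inverse exhibit modules as spectrum objects in augmented replete $(A,M)$-algebras. This ultimately reduces to Lurie's identification ${\rm Sp}({\rm CAlg}_{A//A}) \simeq {\rm Mod}_A$ once one has the spectrum-level version of Proposition \ref{prop:repabel}(1), which realizes replete augmented $(A,M)$-log ring spectra as augmented $A$-algebras. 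To assemble these identifications across the base ${\rm Log}$, I would verify naturality in $(A,M)$ and invoke a straightening/unstraightening argument.

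For claim (2), the map $T^{\rm rep}_{\rm Log} \to {\rm Fun}(\Delta^1, {\rm Log})$ is the composition of the $\Omega^{\infty}$ of the fiberwise stabilization with the inclusion $\mathcal{M}^{\rm rep} \hookrightarrow {\rm Fun}(\Delta^1, {\rm Log})$; informally, it sends $(A, M, J)$ to the square-zero extension $(A \oplus J, M \oplus J) \to (A,M)$. For the relative left adjoint, the natural candidate is the composition of (i) the repletion functor, sending an arrow $(B,N) \to (A,M)$ to $(B^{\rm rep}, N^{\rm rep}) \to (A,M)$, followed by (ii) the fiberwise cotangent-complex/stabilization functor on $\mathcal{M}^{\rm rep}$. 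The main obstacle, and the technical heart of the proof, is to verify that this construction satisfies the hypotheses of \cite[Proposition 7.3.2.6]{Lur17}: one must show that repletion interacts coherently with the cocartesian edges of the fibration, so that the fiberwise left adjoints glue into a genuine left adjoint relative to ${\rm Log}$. This ultimately amounts to the compatibility of repletion with base change along maps of log ring spectra, which should follow from its interpretation as an accessible localization commuting with the relevant pushouts.
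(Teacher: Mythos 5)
Your approach is essentially the same as the paper's: build the replete tangent bundle as a stable envelope over ${\rm Log}$, identify the fibers with ${\rm Mod}_A$ via the spectral analogue of Proposition~\ref{prop:repabel} (namely Proposition~\ref{prop:augmentedalgrepl} combined with \cite[Corollary 7.3.4.14]{Lur17}), and produce the relative left adjoint via \cite[Proposition 7.3.2.6]{Lur17}. However, there is a genuine technical gap in the intermediate category $\mathcal{M}^{\rm rep}$. Augmented replete $(A,M)$-algebras cannot be organized as a full subcategory of ${\rm Fun}(\Delta^1, {\rm Log})$: an object of ${\rm Fun}(\Delta^1, {\rm Log})$ is merely an arrow $(B,N)\to(A,M)$ and carries no chosen section, while ``augmented'' and ``replete'' both refer to structure that lives over and under $(A,M)$ simultaneously. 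The paper resolves this by working with the \emph{pointed envelope} $P_{\rm Log}$ in the sense of \cite[Definition 1.1]{dagiv}, modelled as a full subcategory of ${\rm Fun}(\Delta^2, {\rm Log})$ consisting of triangles composing to an equivalence, and then taking the replete full subcategory $P^{\rm rep}_{\rm Log}\subset P_{\rm Log}$. This is not an inclusion into ${\rm Fun}(\Delta^1, {\rm Log})$; rather, there is a forgetful functor to ${\rm Fun}(\Delta^1,{\rm Log})$ remembering only the augmentation.

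Correspondingly, your description of the relative left adjoint skips an essential step. Repletion of a general morphism $(B,N)\to(A,M)$ is not defined: it requires virtual surjectivity, which a section provides but an arbitrary arrow need not. The paper's relative left adjoint to $T^{\rm rep}_{\rm Log}\to P^{\rm rep}_{\rm Log}\to P_{\rm Log}\to{\rm Fun}(\Delta^1,{\rm Log})$ is the three-step composite of (i) the relative left adjoint to the pointed envelope \cite[Proof of Lemma 7.3.3.21]{Lur17}, which is cobase-change $(-)_+$ and produces an augmented object (in particular one whose augmentation is virtually surjective); (ii) the relative left adjoint given by fiberwise repletion (Lemma~\ref{lem:repglobal}); and (iii) the relative left adjoint given by fiberwise $\Sigma^\infty$ (Lemma~\ref{lem:cottxleft}). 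Once you insert step (i) and work with the pointed-envelope formalism, your plan matches the paper's; in particular your observation that the glueing condition in \cite[Proposition 7.3.2.6]{Lur17} reduces to compatibility of repletion with base-change is exactly Lemma~\ref{lem:repletebasechangestable}, though the verification is of preservation of \emph{Cartesian} morphisms (via Lemma~\ref{lem:repletebasechangestable} for the fibration and the full-subcategory observation for the localization), not commutation with pushouts.
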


From Theorem \ref{thm:repletetangent}, we find that we can think of an object of the \emph{replete tangent bundle} $T^{\rm rep}_{\rm Log}$ as a pair $((A, M), J)$, where $(A, M)$ is a log ring spectrum and $J$ is an $A$-module. This gives a canonical candidate for split square-zero extensions $(A, M) \oplus J$, informally defined as the domain of the image of $((A, M), J)$ under the functor $T^{\rm rep}_{\rm Log} \to {\rm Fun}(\Delta^1, {\rm Log})$. As we explain in Remark \ref{rem:splitsquarezero}, this recovers the notion of split log square-zero extensions considered previously in the literature.

\subsection{The log cotangent complex} Analogously to the situation for ${\Bbb E}_{\infty}$-rings, the left adjoint provided by Theorem \ref{thm:repletetangent} is a key step in the construction of a \emph{log cotangent complex} ${\Bbb L}^{\rm rep} \colon {\rm Log} \to T^{\rm rep}_{\rm Log}$. In the same way that the $A$-modules ${\Bbb L}_A$ are modelled by classical topological Andr\'e--Quillen homology, we have:

\begin{theorem}\label{thm:logcotangent} The absolute log cotangent complex ${\Bbb L}^{\rm rep}$ recovers log topological Andr\'e--Quillen homology as the $A$-modules ${\Bbb L}^{\rm rep}_{(A, M)}$.
\end{theorem}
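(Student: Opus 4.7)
The plan is to match universal properties, exactly as in the classical identification ${\Bbb L}_A \simeq \mathrm{TAQ}(A)$ for ordinary ${\Bbb E}_{\infty}$-rings. My first step is to unpack the absolute log cotangent complex of Theorem~\ref{thm:repletetangent}: by construction, ${\Bbb L}^{\rm rep}_{(A,M)}$ is obtained by applying the relative left adjoint to the identity $\mathrm{id}_{(A,M)}$ viewed as an object of $\mathrm{Fun}(\Delta^1, \mathrm{Log})$, and so lies in the fiber $(T^{\rm rep}_{\rm Log})_{(A,M)} \simeq \mathrm{Mod}_A$. Unpacking the defining adjunction produces the universal property
\[
\mathrm{Map}_{\mathrm{Mod}_A}\!\bigl({\Bbb L}^{\rm rep}_{(A,M)},\, J\bigr) \;\simeq\; \mathrm{Map}_{\mathrm{Log}_{(A,M)//(A,M)}}\!\bigl((A,M),\,(A,M)\oplus J\bigr),
\]
where $(A,M)\oplus J$ denotes the split square-zero log extension encoded by the right adjoint $T^{\rm rep}_{\rm Log} \to \mathrm{Fun}(\Delta^1,\mathrm{Log})$. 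Thus ${\Bbb L}^{\rm rep}_{(A,M)}$ corepresents the space of log derivations of $(A,M)$ into $J$.

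Second, I would show that log $\mathrm{TAQ}$ enjoys the same universal property. The constructions of Rognes~\cite{Rog09} and Sagave~\cite{Sag14} realize $\mathrm{TAQ}(A,M)$ as a derived functor involving repletion and log indecomposables, applied to a simplicial resolution of $(A,M)$ by free (pre-)log ring spectra; by design, $\mathrm{TAQ}(A,M)$ corepresents the space of log derivations $(A,M) \to (A,M) \oplus J$ in the sense of \emph{loc.~cit.} Matching the two universal properties then reduces the theorem to showing that the split square-zero log extensions arising from the right adjoint of Theorem~\ref{thm:repletetangent} coincide with the classical split log square-zero extensions of \cite{Rog09,Sag14} — precisely the identification promised by Remark~\ref{rem:splitsquarezero}.

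Third, once this identification of square-zero extensions is in place, the theorem follows by the Yoneda lemma. As a sanity check, both sides can be computed directly on a free log ring spectrum $(A,M) = ({\Bbb S}^{\cJ}[P], P)$, where they reduce to the base-change to $A$ of the replete abelianization of $P$, in direct analogy with the classical formula of Proposition~\ref{prop:repabel}(3). The comparison then propagates to arbitrary $(A,M)$ via a simplicial resolution by free log ring spectra, using that both ${\Bbb L}^{\rm rep}$ and $\mathrm{TAQ}$ preserve the relevant sifted colimits.

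The main obstacle I expect is the careful translation between the $\infty$-categorical relative-adjunction formalism of Theorem~\ref{thm:repletetangent} and the strict/simplicial models of \cite{Rog09, Sag14}. Specifically, one must verify that the right adjoint $T^{\rm rep}_{\rm Log} \to \mathrm{Fun}(\Delta^1, \mathrm{Log})$ genuinely produces the split square-zero log extensions of \emph{loc.~cit.}, and that the repletion built into $T^{\rm rep}_{\rm Log}$ is compatible with the repletion occurring in the explicit construction of log $\mathrm{TAQ}$. Once these compatibilities are secured, the universal-property argument and the reduction to free objects make the remainder of the proof essentially formal.
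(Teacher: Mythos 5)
Your approach is correct in outline but takes a genuinely different route from the paper. The paper proves Theorem \ref{thm:logcotangent} by direct computation: it unpacks the fiberwise description of ${\Bbb L}^{\rm rep}_{(A,M)}$ from Remark \ref{rem:lifeiscomplicated}, simplifies that expression via Propositions \ref{prop:augmentedalgrepl} and \ref{prop:lifeismanageable} to the explicit suspension spectrum $\Sigma^\infty\bigl((A\otimes A)\otimes_{{\Bbb S}^{\cal J}[M^a\boxtimes M^a]}{\Bbb S}^{\cal J}[(M^a\boxtimes M^a)^{\rm rep}]\bigr)$ in ${\rm Sp}({\rm CAlg}_{A//A})$, and then invokes \cite[Section~9]{Lun21} (the author's earlier comparison result) to identify this with ${\rm TAQ}(A,M^a)$, closing with logification invariance \cite[Corollary~6.7]{Sag14}. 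You instead propose a universal-property argument: show that both ${\Bbb L}^{\rm rep}_{(A,M)}$ and $\mathrm{TAQ}(A,M)$ corepresent log derivations and conclude by Yoneda. This is viable --- indeed, the paper's own Lemma \ref{lem:logcotangentrep} states the corepresentability of ${\Bbb L}^{\rm rep}_{(A,M)}$ as ``a consequence of our construction (or Theorem \ref{thm:logcotangent})'', so there is no circularity in using the construction-only direction, and Remark \ref{rem:splitsquarezero} provides exactly the identification of split square-zero extensions you need. The trade-off is that your route still presupposes the precise statement that log $\mathrm{TAQ}$ corepresents log derivations in \cite{Rog09, Sag14}; you should cite where this is established rather than appeal to ``by design'', since the paper's citation of \cite[Section~9]{Lun21} is doing essentially that comparison work explicitly. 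Two smaller remarks: the mapping space in your universal property should read ${\rm Map}_{{\rm Log}_{/(A,M)}}((A,M),(A,M)\oplus J)$ (the space of sections, as in the paper's definition of ${\rm Der}$) rather than ${\rm Map}_{{\rm Log}_{(A,M)//(A,M)}}((A,M),(A,M)\oplus J)$, which is contractible since $(A,M)$ is the zero object of the pointed category ${\rm Log}_{(A,M)//(A,M)}$; and your proposed third step (free objects plus sifted-colimit resolution) is redundant once the Yoneda argument is in place --- it is essentially an alternative to, not a continuation of, the universal-property argument.
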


\subsection{The log Postnikov tower}\label{subsec:intrologpostnikov} We consider Theorem \ref{thm:logcotangent} and the results leading up to it as evidence that we have captured the correct variant of Lurie's cotangent complex formalism in the context of log ring spectra. We shall now argue that this also harmonizes with intuition from classical log geometry. 

The formalism leading to Theorem \ref{thm:logcotangent} effectively forces a definition of square-zero extensions of log ring spectra upon us: $(\widetilde{R}, \widetilde{P}) \to (R, P)$ is \emph{log square-zero} if there is a map $(d, d^\flat) \colon (R, P) \to (R, P) \oplus J[1]$ over $(R, P)$ which fits in a cartesian square \[\begin{tikzcd}[row sep = small](\widetilde{R}, \widetilde{P}) \ar{r} \ar{d} & (R, P) \ar{d}{(d, d^\flat)} \\ (R, P) \ar{r}{(d_0, d_0^\flat)} &  (R, P) \oplus J[1],\end{tikzcd}\] with $(d_0, d_0^\flat)$ a canonical inclusion (cf.\ \cite[Definition 7.4.1.6, Remark 7.4.1.7]{Lur17}).  

On the other hand, in classical log geometry \cite[(3.1), (3.2)]{Kat89}, one says that a map is square-zero if the underlying map of rings is square-zero and it is \emph{strict}; this effectively means that it induces an isomorphism on the level of underlying monoids.  The following reconciles these two notions:

\begin{theorem}\label{thm:squarezeroiff} Let $(p, p^\flat) \colon (\widetilde{R}, \widetilde{P}) \to (R, P)$ be a map of log rings where $\widetilde{R} \to R$ is a square-zero extension by a connective $R$-module $J$. Then $(p, p^\flat)$ is a log square-zero extension if and only if it is strict. 
\end{theorem}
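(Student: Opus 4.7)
The plan is to reduce the equivalence of the two square-zero notions to a statement about how log square-zero extensions interact with the strict subcategory of log ring spectra. The key input is that \emph{split} log square-zero extensions $(R, P) \oplus J[1]$ are strict over $(R, P)$: by Theorem \ref{thm:repletetangent}, the fiber of the replete tangent bundle over $(R, P)$ is ${\rm Mod}_R$, and the functor $T^{\rm rep}_{\rm Log} \to {\rm Fun}(\Delta^1, {\rm Log})$ sending $((R,P), J)$ to the associated split extension only alters the underlying ring. Consequently, the canonical zero section $(d_0, d_0^\flat)$ and any derivation $(d, d^\flat)\colon (R,P) \to (R,P) \oplus J[1]$ over $(R,P)$ are strict: the latter is a section of the strict augmentation, so its monoid component is an inverse to an equivalence.

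For the forward direction, suppose $(p, p^\flat)$ is log square-zero, presented by the pullback square in the definition. Strict maps are closed under pullback in log ring spectra: the underlying ring pullback is computed by the limit-preserving forgetful functor to ${\rm CAlg}$, while the log structure on the pullback is the inverse image of $P$ along each leg and agrees with $P$ after strict lifting. Applied to the two strict edges $(d_0, d_0^\flat)$ and $(d, d^\flat)$, this exhibits $(p, p^\flat)$ as a pullback of strict maps, hence strict.

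For the reverse direction, suppose $(p, p^\flat)$ is strict with underlying square-zero extension $\widetilde{R} \to R$, witnessed by a classical derivation $d\colon R \to R \oplus J[1]$ and a pullback in ${\rm CAlg}$. I would lift $d$ to a log derivation $(d, d^\flat)$ as follows: since $(R, P) \oplus J[1]$ is strict over $(R, P)$, strict maps $(R, P) \to (R, P) \oplus J[1]$ over $(R, P)$ are in canonical bijection with ring maps $R \to R \oplus J[1]$ over $R$, by pulling back the log structure. This produces $(d, d^\flat)$. Forming the pullback of $(d, d^\flat)$ and $(d_0, d_0^\flat)$ in log ring spectra, the underlying ring is $\widetilde{R}$ by preservation of limits, and the log structure is the inverse image of $P$ along $\widetilde{R} \to R$; strictness of $(p, p^\flat)$ identifies this with $\widetilde{P}$, so the pullback is $(\widetilde{R}, \widetilde{P})$, exhibiting $(p, p^\flat)$ as log square-zero.

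The main obstacle will be to make precise the closure of strict maps under pullback in log ring spectra together with the identification of strict maps into $(R, P) \oplus J[1]$ with classical derivations of $R$. Both steps hinge on a careful understanding of the replete tangent bundle of Theorem \ref{thm:repletetangent} and on the compatibility of the forgetful functor from $T^{\rm rep}_{\rm Log}$ with the relevant pullbacks in the pre-log and pointed augmented settings. Once these are in place, the argument reduces to a diagrammatic manipulation of pullback squares.
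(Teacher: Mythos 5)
Your proposal gets the overall shape right — both directions boil down to comparing a pullback square of log ring spectra with a pullback of underlying data, using the strictness of the split square-zero arrows and of log derivations $(d, d^\flat)$, $(d_0, d_0^\flat)$ — and the individual observations that these arrows are strict are correct and do appear in the paper's argument (they are essentially the content of Lemmas \ref{lem:inverseimagelogi} and \ref{lem:logisquarezero}). However, both of your key steps treat as formal a statement that is in fact the whole difficulty of the theorem.

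In the forward direction you assert that ``strict maps are closed under pullback in log ring spectra.'' Strictness is a \emph{colimit} condition: by Lemma \ref{lem:strictiff} it asks that a certain square involving ${\rm GL}_1^{\cal J}$ be cocartesian. Cocartesian squares are not preserved by pullback in a general presentable $\infty$-category, and they are not preserved here either: pushouts of commutative ${\cal J}$-space monoids are not universal, which is precisely why the paper proves only the weakened Mather cube lemma (Proposition \ref{prop:matherscube}), valid under an additional \emph{repleteness} hypothesis and established via the Bousfield--Friedlander theorem. The paper's proof of Proposition \ref{prop:squarezerostrict} applies exactly this lemma to a cube whose bottom is the cocartesian strictness square for $d^\flat$; the repleteness of $d^\flat$ is what makes the lemma applicable, and it holds because $J$ is connective. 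Your argument never uses the connectivity of $J$, which is a signal that something essential is missing: without that hypothesis the closure of strict maps under this pullback would not be expected to hold, and the assertion is genuinely false as a blanket statement.

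In the reverse direction there are two separate problems. First, the ``canonical bijection'' between strict log maps $(R,P) \to (R,P)\oplus J[1]$ over $(R,P)$ and ring maps $R\to R\oplus J[1]$ over $R$ does not exist: every log derivation $(d,d^\flat)$ is strict (as you note), so the ``strict'' maps are just the log derivations, and log derivations are corepresented by ${\Bbb L}^{\rm rep}_{(R,P)}$ rather than ${\Bbb L}_R$ (Lemma \ref{lem:logcotangentrep}); these spaces differ in general. Second, and more fundamentally, identifying the log structure on the pullback with $\widetilde P$ is not a consequence of strictness of $(p,p^\flat)$. Strictness tells you how $P$ is built from $\widetilde P$ (as a logification/pushout), not how to recover $\widetilde P$ from $P$. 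What actually needs to be shown is that the square of ${\cal J}$-space monoids obtained by logifying the tautological cartesian pre-log square \eqref{tobelogified} is \emph{still} cartesian. This is Lemma \ref{lem:logicartesian} in the paper, proved via Lemma \ref{lem:bousfieldfriedlander} — again a Bousfield--Friedlander argument using the connectivity of $J$. So while your claimed identification is morally what Lemma \ref{lem:logicartesian} delivers, there is a real homotopy-theoretic theorem hiding inside the phrase ``strictness identifies this with $\widetilde P$,'' and your outline does not supply it.
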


As a consequence, for a log ring spectrum $(R, P)$ with $R$ connective, we obtain a tower \begin{equation}\label{intropostnikov}\cdots \to (\tau_{\le 2}(R), P) \to (\tau_{\le 1}(R), P) \to (\pi_0(R), P)\end{equation} of log square-zero extensions of $R$, that we shall refer to as the \emph{log Postnikov tower}. While the presence of the graded ${\Bbb E}_{\infty}$-space $P$ as a log structure on the discrete ring $\pi_0(R)$ might seem strange at first sight, this is perfectly healthy from a log geometric perspective. For example, if $A$ is a discrete valuation ring with residue field $k$, then the log ring $(k, \langle \pi \rangle)$ sending all positive powers of $\pi$ to $0$ is the \emph{standard log point}. We consider the log ring spectrum $(\pi_0(R), P)$ to be an analog of this.

\subsection{Log \'etale rigidity} We now have the tools to actively pursue the log variant of Theorem \ref{thm:lurieetale}. A map of log ring spectra is \emph{formally log \'etale} if the $A$-module ${\Bbb L}_{(A, M) / (R, P)}^{\rm rep}$ vanishes. See \cite[Proposition 3.12]{Kat89} for the log variant of the relationship between \'etaleness and the vanishing of differentials in the context of classical log geometry, and e.g.\ \cite[Example 4.9]{BLPO23} for a discussion of the fact that formal log \'etaleness is strictly stronger than log \'etaleness, even under finiteness hypotheses. 

With reference to the proof outline of Section \ref{subsec:proofoutline}, the first step goes through:

\begin{theorem}\label{thm:logetalebase} Let $(\widetilde{R}, \widetilde{P}) \to (R, P)$ be a log square-zero extension by a $0$-connected $R$-module. The base-change functor restricted to formally log \'etale objects \[(-, -) \otimes_{(\widetilde{R}, \widetilde{P})} (R, P) \colon {\rm Log}_{(\widetilde{R}, \widetilde{P})/}^{\rm fl\acute{e}t} \xrightarrow{} {\rm Log}_{(R, P)/}^{\rm fl\acute{e}t}\] is an equivalence. 
\end{theorem}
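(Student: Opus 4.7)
The plan is to follow the strategy of step (1) in the proof outline of Section \ref{subsec:proofoutline}, translated to the log setting by means of the tangent bundle formalism of Theorem \ref{thm:repletetangent}, the interpretation of the cotangent complex in Theorem \ref{thm:logcotangent}, and the cartesian-square characterization of log square-zero extensions in Theorem \ref{thm:squarezeroiff}.

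First, I would use Theorem \ref{thm:squarezeroiff} to present $(\widetilde{R}, \widetilde{P}) \to (R, P)$ as the pullback of the zero section $(d_0, d_0^\flat) \colon (R, P) \to (R, P) \oplus J[1]$ along the classifying derivation $(d, d^\flat)$. Because $T^{\rm rep}_{\rm Log} \to {\rm Log}$ is a presentable fibration satisfying the universal property of Theorem \ref{thm:repletetangent}(2), the deformation-theoretic machinery of \cite[\S 7.3--7.4]{Lur17} transfers to the log setting. Concretely, for any $(A, M) \in {\rm Log}_{(R, P)/}$, I would identify the fiber of the base-change functor $(-, -) \otimes_{(\widetilde{R}, \widetilde{P})} (R, P)$ over $(A, M)$ with a space of log derivations on $(A, M)$ extending the classifying derivation of $(\widetilde{R}, \widetilde{P})$ pulled back along $(R, P) \to (A, M)$. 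Under Theorem \ref{thm:logcotangent}, this space is identified with the space of nullhomotopies of an explicit morphism ${\Bbb L}^{\rm rep}_{(R, P)} \otimes_R A \to (J \otimes_R A)[1]$ of $A$-modules.

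Next, for $(A, M)$ formally log \'etale over $(R, P)$, the vanishing of ${\Bbb L}^{\rm rep}_{(A, M)/(R, P)}$ combined with the transitivity cofiber sequence forces the natural map ${\Bbb L}^{\rm rep}_{(R, P)} \otimes_R A \to {\Bbb L}^{\rm rep}_{(A, M)}$ to be an equivalence, so the obstruction space above becomes contractible. This shows that an essentially unique lift $(\widetilde{A}, \widetilde{M})$ exists, and base-change compatibility of the relative log cotangent complex ensures that this lift is again formally log \'etale, establishing essential surjectivity. An analogous deformation-theoretic calculation on mapping spaces $\Map_{(\widetilde{R}, \widetilde{P})/}((\widetilde{A}, \widetilde{M}), (\widetilde{A}', \widetilde{M}'))$---again reducing to a space of nullhomotopies that vanishes under the formally log \'etale hypothesis---yields full faithfulness.

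The main obstacle is the rigorous establishment of the deformation-theoretic description of the fibers of base-change, which is the log analog of \cite[Thm.~7.4.1.26]{Lur17} and its supporting results. All the required structural ingredients are already assembled by Theorems \ref{thm:repletetangent}, \ref{thm:logcotangent}, and \ref{thm:squarezeroiff}, so I expect this step to be largely formal. The delicate points will be verifying that the relative left adjoint of Theorem \ref{thm:repletetangent}(2) interacts with cartesian squares of log square-zero extensions in the manner one needs, and pinning down where the $0$-connectedness hypothesis on $J$ enters---presumably in ensuring that $(\widetilde{R}, \widetilde{P}) \to (R, P)$ induces an equivalence on $\pi_0$ and that the deformation-theoretic obstructions are genuinely controlled by the log cotangent complex.
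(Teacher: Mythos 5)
Your proposal captures the right skeleton: essential surjectivity via an obstruction-theoretic lifting argument controlled by the log cotangent complex, and full faithfulness via a deformation-theoretic comparison of mapping spaces, with $0$-connectedness ensuring $\pi_0$-agreement. Your essential surjectivity step agrees closely with the paper's Proposition \ref{prop:esssurjective}: one exhibits the extension as a pullback along a derivation (Theorem \ref{thm:squarezeroiff}), produces an essentially unique dashed arrow because ${\rm Map}_{{\rm Mod}_A}({\Bbb L}^{\rm rep}_{(A,M)/(R,P)}, A\otimes_R J[1])$ is contractible (Lemma \ref{lem:logcotangentrep}), and then checks that the resulting square is cocartesian and that the lift remains formally log \'etale; the latter step in the paper also uses strictness (to upgrade cocartesianness of underlying ${\Bbb E}_\infty$-rings to cocartesianness of log ring spectra) and conservativity of $A\otimes_{\widetilde{A}}(-)$, which you gloss but which is where $0$-connectedness genuinely enters for essential surjectivity.

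The gap is in your full faithfulness step. To carry out the ``analogous deformation-theoretic calculation'' directly, you would reduce, via the adjunction
${\rm Map}_{{\rm Log}_{(R,P)/}}((A,M),(B,N)) \simeq {\rm Map}_{{\rm Log}_{(\widetilde{R},\widetilde{P})/}}((\widetilde{A},\widetilde{M}),(B,N))$,
to showing that precomposition with $(\widetilde{B},\widetilde{N}) \to (B,N)$ induces an equivalence on mapping spaces out of $(\widetilde{A},\widetilde{M})$, and the deformation-theoretic tool for that is Lemma \ref{lem:mappingspacesq0}, which requires $(\widetilde{B},\widetilde{N}) \to (B,N)$ to be a \emph{log square-zero extension}. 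Since $(\widetilde{B},\widetilde{N}) \to (B,N)$ is the cobase-change of $(\widetilde{R},\widetilde{P}) \to (R,P)$ along the formally log \'etale map $(\widetilde{R},\widetilde{P}) \to (\widetilde{B},\widetilde{N})$, it is strict (strictness is preserved under cobase-change), but it is not clear that the underlying map of ${\Bbb E}_\infty$-rings $\widetilde{B} \to B$ is a square-zero extension: formal log \'etaleness does \emph{not} imply flatness of the underlying ring map, so you cannot argue as Lurie does in the classical \'etale setting that square-zero extensions are stable under this base-change. Establishing this fact for arbitrary formally log \'etale lifts $(\widetilde{B},\widetilde{N})$ would essentially require the uniqueness part of Proposition \ref{prop:esssurjective} together with full faithfulness, which is circular. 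The paper avoids this by taking a different route to full faithfulness: Lemma \ref{lem:fullyfaithful} reduces all mapping spaces to their $\pi_0$-truncated avatars using convergence of the log Postnikov tower (Proposition \ref{prop:logpostnikovconverge}) and Corollary \ref{cor:truncationalltheway}, where Lemma \ref{lem:mappingspacesq0} is applied fiberwise to the legitimate log square-zero extensions in the Postnikov tower, and then the $0$-connectedness of $\widetilde{R}\to R$ forces the bottom stages to agree. This also gives a statement (for arbitrary strict $0$-connected maps, not just square-zero extensions) that is reused in Theorem \ref{thm:precisechartedlogetale}.
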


Moreover, the second step of the proof outline of Section \ref{subsec:proofoutline} is provided by the convergent tower \eqref{intropostnikov}. However, we see no reason for the third step to be true; the equivalences provided by base-change along the tower \eqref{intropostnikov} do not necessarily glue to an equivalence involving ${\rm Log}^{\rm fl\acute{e}t}_{(R, P)/}$, even under finiteness hypotheses. Chief among the numerous reasons that this could break down is the observation that the category of log ring spectra does not seem to arise as the category of algebras over some operad, and it does not seem feasible to argue as in \cite[Proof of Theorem 7.5.1.11]{Lur17} to reduce to the truncated case.

To solve this problem, we take one further cue from classical log geometry. The log ring spectra that we have used as the basis of the theory are \emph{not} an analog of classical log structures, but rather of \emph{charts} of log structures. Charts of log structures come with their own notion of \'etaleness, which \emph{a priori} is strictly stronger than being formally log \'etale, even when finiteness hypotheses are imposed. In Definition \ref{def:chartedlogetale}, we mimic the notion of \'etale charts in log geometry to obtain a category ${\rm Log}_{(R, P)/}^{\rm chl\acute{e}t}$ of \emph{charted log \'etale} $(R, P)$-algebras. This is the notion that plays the role of ``\'etale'' in the statement of Theorem \ref{thm:chartedlogetale}. 

\subsection{The deformation theory of animated log rings}\label{subsec:animated} While we work with one specific notion of log ring spectra throughout this paper, our arguments are fairly general and adapt easily to the context of animated log rings. In particular, Theorems \ref{thm:squarezeroiff}, \ref{thm:logetalebase}, and \ref{thm:chartedlogetale} all have natural analogs in this setting. 

\subsection{Future perspectives} In forthcoming work, Rognes--Sagave--Schlichtkrull use an $\infty$-categorical definition of log ring spectra that generalizes the notion used in \cites{RSS15, RSS18}. We strongly expect that the present results are valid in this model-independent setup, and that the results herein to provide a solid blueprint of a log variant of Lurie's more general \'etale rigidity statement \cite[Theorem 7.5.0.6]{Lur17}. In these contexts, there are occasionally other interesting towers that arise from square-zero extensions to consider; for example, there is a tower of ${\rm MU}$-modules \[BP\langle n \rangle \to \cdots \to B_3 \to B_2 \to B_1 \simeq BP \langle n - 1 \rangle\] relating the truncated Brown--Peterson spectra $BP \langle n \rangle$ and $BP \langle n - 1 \rangle$ \cite[Proposition 2.6.2, Proof of Theorem 2.0.6]{HW22}. The heuristics of \cite[Remark 9.8]{BLPO23} (based on \cite{SS19} and \cite{HW22}) suggest that $BP\langle n \rangle$ should admit a well-behaved log structure generated by $v_n$. The analogs of our results in that context, then, would uniquely determine log \'etale extensions of $BP \langle n \rangle$ from an extension of $BP \langle n - 1 \rangle$. This would provide additional structure to the root adjunctions $BP\langle n \rangle[\sqrt[p - 1]{v_n}]$ considered by Ausoni--Bayındır--Moulinos \cite{ABM23}. 

In \cite[Remark 9.8]{BLPO23} we motivate the necessity of a theory of \emph{spectral log geometry}. In analogy with the theory developed in \cite{SAG}, we expect that the results of this paper will serve as an important technical tool for the development of such a theory.  

\subsection{Outline} In the largely expository Section \ref{sec:prelude} we prove Proposition \ref{prop:repabel}. Sections \ref{sec:commjspace}, \ref{sec:gradedeinfinity}, and \ref{sec:logringspectra} provides the necessary background on graded ${\Bbb E}_{\infty}$-spaces and log ring spectra, and also contains some preliminary results. In Section \ref{sec:repletetangent} we prove Theorems \ref{thm:repletetangent} and \ref{thm:logcotangent}, while in Section \ref{sec:logpostnikov} we prove Theorem \ref{thm:squarezeroiff}. In the final Section \ref{sec:logetalerigidity}, we give proofs of Theorems \ref{thm:logetalebase} and \ref{thm:chartedlogetale}. 

\subsection{Acknowledgments} The author would like to thank Federico Binda, Jack Davies, Jens Hornbostel, Doosung Park, Piotr Pstr\k{a}gowski, Maxime Ramzi, Birgit Richter, Steffen Sagave, and Paul Arne {\O}stv{\ae}r for helpful discussions and comments related to this material. This research was conducted in the framework of the DFG-funded research training group GRK 2240: \emph{Algebro-Geometric Methods in Algebra, Arithmetic and Topology}. Some material already appears in the author's PhD thesis \cite{Lun22}, which was partially supported by the NWO-grant 613.009.121. Finally, the author would like to thank an anonymous referee for a thorough report that improved many aspects of the exposition and simplified several arguments.

\section{Prelude: The replete abelianization functor}\label{sec:prelude} In this section, we explain how our approach to the cotangent complex of log ring spectra manifests itself in the linear context of log rings. After recalling Quillen's approach to the cotangent complex and the basics on log rings, we describe how the repletion construction naturally gives rise to a functor \[{\rm Ab}_+^{\rm rep}(-, -) \colon {\rm Log}_{/(A, M)} \to {\rm Ab}({\rm Log}_{(A, M)//(A, M)}^{\rm rep})\] for a fixed log ring $(A, M)$. We proceed to identify the category ${\rm Log}_{(A, M)//(A, M)}^{\rm rep}$ with that of ordinary augmented $A$-algebras ${\rm CAlg}_{A//A}$, so that Quillen's classical result (recalled as Theorem \ref{thm:quillen}) identifies its category of abelian group objects with that of modules over the ring $A$. Hence the replete abelianization functor ${\rm Ab}_+^{\rm rep}(-, -)$ determines an $A$-module for each log ring over $(A, M)$, and we explain in Proposition \ref{prop:diffagree} that ${\rm Ab}_+^{\rm rep}(A, M) \cong \Omega^1_{(A, M)}$ - the classical module of log differentials.  The replete abelianization functor admits a non-abelian left derived functor, and the resulting animated module ${\Bbb L}{\rm Ab}_+^{\rm rep}(A, M)$ recovers Gabber's log cotangent complex \cite[Section 8]{Ols05}.

The inclusion of this preliminary section is mostly to motivate and contextualize our forthcoming constructions, and it is not strictly necessary for the later sections. This material is inspired by and partially overlaps with \cite[Sections 3 and 4]{Rog09}, and we refer to Remark \ref{rem:rognescomp} for further details on this point.

\subsection{Abelian group objects} Let ${\cal C}$ be a category with finite products and terminal object $*$. Recall that an \emph{abelian group object} $G$ of ${\cal C}$ comes with maps \[e \colon * \to G, \quad m \colon G \times G \to G, \quad {\rm inv} \colon G \to G,\] subject to the expected axioms. We write ${\rm Ab}({\cal C})$ for the category of abelian group objects in ${\cal C}$.

Let ${\rm CRing}$ denote the category of commutative rings. For a fixed commutative ring $A$, we denote by ${\rm CRing}_{/A}$ the category of commutative rings over $A$, that is, the category whose objects are ring maps $B \to A$. For an $A$-module $J$, we denote by $A \oplus J$ the split square-zero extension of $A$ by $J$. The following serves as a moral starting point for the approach to the cotangent complex typically taken in the context of derived and higher algebra (see e.g.\ \cite[Remark 7.3.2.17]{Lur17}):

\begin{theorem}[\cite{Qui}]\label{thm:quillen} Let $A$ be a commutative ring.

\begin{enumerate}
\item There is a well-defined functor \[{\rm Mod}_A \to {\rm Ab}({\rm CRing}_{/A}), \quad J \mapsto A \oplus J,\] which is an equivalence of categories.  
\item The forgetful functor ${\rm Ab}({\rm CRing}_{/A}) \to {\rm CRing}_{/A}$ admits a left adjoint \[{\rm Ab}_+ \colon {\rm CRing}_{/A} \to {\rm Ab}({\rm CRing}_{/A}).\] For $B \in {\rm CRing}_{/A}$, the $A$-module determined by ${\rm Ab}_+(B)$ under the above equivalence is isomorphic to $A \otimes_B \Omega^1_{B}$, with $\Omega^1_B$ the module of differentials. 
\end{enumerate}  
\end{theorem}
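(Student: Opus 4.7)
My plan is to prove the two parts of Theorem \ref{thm:quillen} separately, treating part (1) as a concrete structural identification and part (2) as a computation of the left adjoint via a standard chain of adjunctions.

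For part (1), I would first verify that, given an $A$-module $J$, the projection $A \oplus J \to A$ carries a canonical abelian group structure in $\mathrm{CRing}_{/A}$: the zero section is the ring inclusion $A \hookrightarrow A \oplus J$, the multiplication is the ring map $(A \oplus J) \times_A (A \oplus J) \cong A \oplus J \oplus J \to A \oplus J$ sending $(a, j, j') \mapsto (a, j + j')$ (which is a ring homomorphism precisely because the $J$-part is square-zero), and inversion negates the $J$-component. Functoriality and the abelian group axioms are then routine. To construct the inverse functor, given an abelian group object $B \to A$ with unit $e \colon A \to B$, I would set $J := \ker(B \to A)$, viewed as an $A$-module through $e$. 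The splitting $e$ provides an $A$-module decomposition $B \cong A \oplus J$, so the content is to identify this with the split square-zero extension as a ring.

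The key step is showing that $J^2 = 0$ in $B$, and that under the decomposition $B \cong A \oplus J$ the given abelian group structure agrees with the canonical one. For this I would compute the multiplication map $m \colon B \times_A B \to B$ on elements of the form $(j, 0)$ and $(0, j')$: since $m$ is a ring homomorphism and these elements have product zero in $B \times_A B$ (they lie in the two summands of the augmentation ideal of $B \times_A B \cong A \oplus J \oplus J$, which multiply to zero after using the projections), one gets $j \cdot j' = 0$ in $B$ for all $j, j' \in J$ by comparing $m(j,0) \cdot m(0,j')$ with $m((j,0)(0,j'))$. A similar computation on $m(j,0) + m(0,j')$ versus $m(j,j')$ identifies the addition on $J$ with the ring-theoretic one. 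The two constructions are mutually inverse by direct inspection.

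For part (2), I would construct $\mathrm{Ab}_+$ by exhibiting the universal property directly, using the equivalence from part (1) to identify $\mathrm{Ab}(\mathrm{CRing}_{/A})$ with $\mathrm{Mod}_A$. The computation is the standard chain
\[
\mathrm{Hom}_{\mathrm{CRing}_{/A}}(B, A \oplus J) \;\cong\; \mathrm{Der}(B, J) \;\cong\; \mathrm{Hom}_B(\Omega^1_B, J) \;\cong\; \mathrm{Hom}_A(\Omega^1_B \otimes_B A, J),
\]
where the first bijection writes a ring map $B \to A \oplus J$ over $A$ as $b \mapsto (\overline{b}, d(b))$ with $d$ a derivation into the $B$-module $J$ (with $B$-action through $B \to A$), the second is the universal property of Kähler differentials, and the third is extension of scalars. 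Naturality in $B$ identifies the left adjoint with $B \mapsto A \oplus (\Omega^1_B \otimes_B A)$, as desired.

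The main obstacle is the square-zero claim in part (1): one must be careful to use the full strength of the requirement that the multiplication $m$ be a map of rings over $A$ (and not merely of $A$-modules), because the $A$-module splitting alone is not enough to force $J \cdot J = 0$. Once this is in hand, part (2) is essentially a book-keeping exercise in universal properties, and no genuinely new input is needed beyond the classical theory of Kähler differentials.
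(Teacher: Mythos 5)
The paper does not supply a proof of this statement; it cites \cite{Qui} and takes it as a known input, so there is no ``paper's own proof'' to compare against. Your argument is correct and is essentially the standard one. The decisive point --- which you rightly isolate --- is that the square-zero property of $J = \ker(B \to A)$ must be extracted from the hypothesis that the abelian-group-object multiplication $m$ is a ring map over $A$, not merely an $A$-linear splitting: your identity $j \cdot j' = m(j,0)\cdot m(0,j') = m((j,0)\cdot(0,j')) = m(0) = 0$, combined with the unit axiom giving $m(j,0) = j$ and $m(0,j') = j'$, is exactly what is needed, and the additivity of $m$ then pins down the remaining structure. The chain of natural bijections in part (2), identifying $\mathrm{Hom}_{\mathrm{CRing}_{/A}}(B, A \oplus J)$ with derivations, then with $\mathrm{Hom}_B(\Omega^1_B, J)$, and finally with $\mathrm{Hom}_A(\Omega^1_B \otimes_B A, J)$, is the standard universal-property computation and is correctly executed.
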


\begin{remark}[Abelianization as indecomposables]\label{rem:indec} Extension of scalars determines a functor $(-)_+ \colon {\rm CRing}_{/A} \to {\rm CRing}_{A//A}$ from the category of commutative rings augmented over $A$ to that of commutative rings \emph{pointed} at $A$: that is, augmented commutative $A$-algebras. As abelian group objects are already pointed, the category ${\rm Ab}({\rm CRing}_{A // A})$ is still equivalent to that of $A$-modules, and the resulting functor \[{\rm Ab}(-) \colon {\rm CRing}_{A//A} \to {\rm Ab}({\rm CRing}_{A//A}) \simeq {\rm Mod}_A\] can be identified with the indecomposables functor $C \mapsto I/I^2$, where $I$ is defined to be the kernel of the augmentation map $C \to A$. We observe that there is a canonical isomorphism ${\rm Ab}_+(B) \cong {\rm Ab}((B)_+) = {\rm Ab}(A \otimes_{\Bbb Z} B)$ of $A$-modules. 
\end{remark}

The resulting functor ${\rm Ab}_+ \colon {\rm CRing}_{/A} \to {\rm Mod}_A$ admits a non-abelian left derived functor, whose value at $A$ is the (absolute) cotangent complex ${\Bbb L}_A$ of $A$. 

\subsection{Log rings} We now aim to discuss Theorem \ref{thm:quillen} in the context of log geometry. Let us recall the basic definitions:

\begin{definition} A \emph{pre-log ring} $(A, M, \alpha)$ consists of a commutative ring $A$, a commutative monoid $M$, and a morphism of commutative monoids $\alpha \colon M \to (A, \cdot)$.
\end{definition}

By adjunction, the structure map $\alpha$ determines a unique map $\overline{\alpha} \colon {\Bbb Z}[M] \to A$ of commutative rings. We shall write ${\rm PreLog}$ for the resulting category of pre-log rings. Its coproduct is that of the underlying rings and monoids, that is, \[(A \otimes_{\Bbb Z} B, M \oplus N, \alpha \oplus \beta)\] is the coproduct of $(A, M, \alpha)$ and $(B, N, \beta)$, where $\alpha \oplus \beta$ is adjoint to the map ${\Bbb Z}[M \oplus N] \cong {\Bbb Z}[M] \otimes_{\Bbb Z} {\Bbb Z}[N] \xrightarrow{\overline{\alpha} \otimes \overline{\beta}} A \otimes_{\Bbb Z} B$ of commutative rings. 

We shall denote by ${\rm GL}_1(A)$ the group of multiplicative units in a commutative ring $A$, perhaps more commonly denoted $A^\times$. 

\begin{definition} A pre-log ring $(A, M, \alpha)$ is \emph{log} if the map $\alpha^{-1}{\rm GL}_1(A) \to {\rm GL}_1(A)$ is an isomorphism.
\end{definition}

We shall write ${\rm Log}$ for the resulting category of log rings. The forgetful functor ${\rm Log} \to {\rm PreLog}$ admits a left adjoint \[(-, -, -)^a \colon {\rm PreLog} \to {\rm Log}, \quad (A, M, \alpha) \mapsto (A, M^a, \alpha^a),\] where $M^a$ is defined as the pushout of the diagram $M \xleftarrow{} \alpha^{-1}{\rm GL}_1(A) \to {\rm GL}_1(A)$ and $\alpha^a$ is determined by its universal property along $\alpha$ and the inclusion of the units ${\rm GL}_1(A) \to (A, \cdot)$. 

\subsection{Exactification and repletion} The following notions are, either implicitly or explicitly, a key ingredient in the construction of most invariants of log rings and log schemes:

\begin{definition} Let $f^\flat \colon N \to M$ be a map of commutative monoids. We define
\begin{enumerate}
\item $f^\flat$ to be \emph{exact} if the diagram \[\begin{tikzcd}[row sep = small]N \ar{r} \ar{d}{f^{\flat}} & N^{\rm gp} \ar{d}{f^{\flat, {\rm gp}}} \\ M \ar{r} & M^{\rm gp} \end{tikzcd}\] is cartesian;
\item the \emph{exactification} $f^{\flat, {\rm ex}} \colon N^{\rm ex} \to M$ to be the base-change of $f^{\flat, {\rm gp}}$ along $M \to M^{\rm gp}$; and 
\item if $f^\flat$ participates in a map $(f, f^\flat) \colon (B, N) \to (A, M)$ of pre-log rings, its \emph{exactification} $(f^{\rm ex}, f^{\flat, {\rm ex}}) \colon (B^{\rm ex}, N^{\rm ex}) \to (A, M)$ is defined by setting $f^{\rm ex}$ to be the canonical map \[B^{\rm ex} := B \otimes_{{\Bbb Z}[N]} {\Bbb Z}[N^{\rm ex}] \to A \otimes_{{\Bbb Z}[M]} {\Bbb Z}[M] \cong A\] of commutative rings. 
\end{enumerate}
\end{definition}

For fixed $(A, M)$, we would like this construction to determine a functor \[(-, -)^{\rm ex} \colon {\rm PreLog}_{/(A, M)} \to {\rm PreLog}_{/(A, M)}^{\rm ex}, \quad (B, N) \mapsto (B^{\rm ex}, N^{\rm ex})\] from pre-log rings over $(A, M)$ to pre-log rings over $(A, M)$ with exact structure map to $M$.  For this, one should restrict attention to \emph{integral} monoids, that is, those monoids that inject into their group completion. See \cite[Proposition I.4.2.17]{Ogu18}.

In log geometry, exactness is thus typically discussed in the context of integral monoids. To circumvent the lack of a notion of integrality in derived contexts, Rognes identified conditions under which the exactification procedure lends itself to a homotopically meaningful generalization:

\begin{definition}\label{def:replete}(\cite[Definition 3.6]{Rog09}) Let $f^\flat \colon N \to M$ be a map of commutative monoids with the property that $f^{\flat, {\rm gp}} \colon N^{\rm gp} \to M^{\rm gp}$ is surjective. We define 
\begin{enumerate}
\item $f^\flat$ to be \emph{replete} if it is exact;
\item the \emph{repletion} $f^{\flat, {\rm rep}}$ to be its exactification; and 
\item if $f^\flat$ participates in a map $(f, f^\flat) \colon (B, N) \to (A, M)$ of pre-log rings, its \emph{repletion} $(f^{\rm rep}, f^{\flat, {\rm rep}}) \colon (B^{\rm rep}, N^{\rm rep}) \to (A, M)$ to be its exactification. 
\end{enumerate}
\end{definition}

The condition that $f^{\flat, {\rm gp}}$ be surjective is also used in Kato--Saito's \cite[Section 4]{KS04}. Writing ${\rm PreLog}^{\rm vsur}_{/(A, M)}$ for the full subcategory of ${\rm PreLog}_{/(A, M)}$ consisting of those $(f, f^\flat) \colon (B, N) \to (A, M)$ with $f^{\flat, {\rm gp}}$ surjective, \cite[Lemma 3.8]{Rog09} implies that the exactification construction gives a well-defined functor \[(-, -)^{\rm rep} \colon {\rm PreLog}_{/(A, M)}^{\rm vsur} \to {\rm PreLog}^{\rm rep}_{/(A, M)}\] to the category of pre-log rings over $(A, M)$ with replete structure map to $M$. We remark that, if $(B, N) \in {\rm PreLog}_{(A, M) // (A, M)}$ is an augmented $(A, M)$-algebra, then $N \to M$ is automatically virtually surjective. Thus we obtain a functor \[{\rm PreLog}_{/(A, M)} \xrightarrow{(-, -)_+} {\rm PreLog}_{(A, M) // (A, M)} \xrightarrow{(-, -)^{\rm rep}} {\rm PreLog}_{(A, M) // (A, M)}^{\rm rep},\] where $(B, N)_+ := (A \otimes_{{\Bbb Z}} B, M \oplus N)$. This will be used in Construction \ref{constr:replabelianization2}. 

\begin{example}\label{ex:splitsquarezero1} Let $(A, M, \alpha)$ be a pre-log ring and let $J$ be an $A$-module. The \emph{split square-zero extension} $(A, M) \oplus J := (A \oplus J, M \oplus J)$ has structure map \[M \oplus J \to (A \oplus J, \cdot), \quad (m, j) \mapsto (\alpha(m), \alpha(m)j)\] and is replete over $(A, M)$ via the projection $(A \oplus J, M \oplus J) \to (A, M)$. The choice of structure map will be explained in Example \ref{ex:splitsquarezero2}.
\end{example}

\begin{remark}\label{rem:replete} We shall only apply the exactification construction in pointed contexts, that is, when the map $f^\flat \colon N \to M$ admits a section. The condition that $f^{\flat, {\rm gp}}$ be surjective always holds in this situation, and the notions of exactification and repletion coincide. Nonetheless, we choose to use Rognes' terminology throughout. This is due to the fact that we will crucially use the notion of replete morphisms (even in non-pointed contexts) in the derived setting, and also an acknowledgment that we are largely inspired by the approach of \cite{Rog09}. 
\end{remark}

Let $(B, N)$ be an augmented $(A, M)$-algebra. This determines a fixed splitting \begin{equation}\label{unitssplitdiscrete}{\rm GL}_1(A) \oplus ({\rm GL}_1(B) / {\rm GL}_1(A)) \xrightarrow{\cong} {\rm GL}_1(B).\end{equation} We shall consider the pre-log structure \begin{equation}\label{prelogb}M \oplus ({\rm GL}_1(B) / {\rm GL}_1(A)) \xrightarrow{} (B, \cdot)\end{equation} induced by $M \to (A, \cdot) \to (B, \cdot)$ and ${\rm GL}_1(B) / {\rm GL}_1(A) \to {\rm GL}_1(B) \to (B, \cdot)$.  

Let us say that a morphism $(B, N) \to (A, M)$ is \emph{split replete} if it admits a section and if $N \to M$ is exact (and hence replete).

\begin{lemma}\label{lem:replogiremains} Let $(B, N) \in {\rm PreLog}^{\rm rep}_{(A, M)//(A, M)}$ be an augmented replete pre-log ring over a pre-log ring $(A, M, \alpha)$. Then $(B, N^a)$ is naturally isomorphic to $(B, M^a \oplus ({\rm GL}_1(B)/{\rm GL}_1(A)))$. In particular, $(B, N^a)$ is split replete over $(A, M^a)$. 
\end{lemma}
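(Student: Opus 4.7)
The plan is to use the augmented replete hypothesis to give a concrete decomposition $N \cong M \oplus K$ for a canonical abelian group $K$, identify $K$ with a subgroup of ${\rm GL}_1(B)$ projecting onto ${\rm GL}_1(B)/{\rm GL}_1(A)$, and then compute $N^a$ as a pushout that decomposes as a coproduct.

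First I would set $K := \ker(N^{\gp} \to M^{\gp})$. The section provided by the augmentation $(B, N) \to (A, M)$ splits $N^{\gp} \cong M^{\gp} \oplus K$, and since $N \to M$ is replete (equivalently exact, as the map is pointed; cf.\ Remark \ref{rem:replete}) we obtain
\[ N \;=\; M \times_{M^{\gp}} N^{\gp} \;\cong\; M \times_{M^{\gp}} (M^{\gp} \oplus K) \;\cong\; M \oplus K \]
as commutative monoids, with the projection $N \to M$ being the first coordinate and each $k \in K$ yielding an element $(0,k) \in N$.

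Next I would locate $K$ inside ${\rm GL}_1(B)/{\rm GL}_1(A)$. Because $(0, k)$ and $(0, -k)$ both lie in $N$, their images in $B$ are mutually inverse and hence units, and the augmentation $B \to A$ sends them to $1$; so they lie in the subgroup $V := \ker({\rm GL}_1(B) \to {\rm GL}_1(A))$, which the splitting \eqref{unitssplitdiscrete} identifies with ${\rm GL}_1(B)/{\rm GL}_1(A)$. This produces a group homomorphism $\phi\: K \to V$. Writing $M^\times := \alpha^{-1}({\rm GL}_1(A))$ and noting that an element of $A$ is a unit in $B$ if and only if it is a unit in $A$ (apply $B \to A$ to a chosen inverse), I would then check $N^\times = M^\times \oplus K$ inside $M \oplus K$, since the structure map on $(m,k)$ factors as $\alpha(m) \cdot \phi(k)$ with $\phi(k)$ already a unit.

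With these identifications in hand, $N^a = N \sqcup_{N^\times} {\rm GL}_1(B)$ becomes the pushout of
\[ M \oplus K \;\longleftarrow\; M^\times \oplus K \;\longrightarrow\; {\rm GL}_1(A) \oplus V, \]
where both legs are literal direct sums of monoid maps (the left is $(\mathrm{incl}, \mathrm{id})$, the right is $(\alpha|_{M^\times}, \phi)$). Since pushouts commute with coproducts, this decomposes as the direct sum of the pushout of $M \leftarrow M^\times \to {\rm GL}_1(A)$, which is $M^a$ by definition, and the pushout of $K \xleftarrow{\mathrm{id}} K \xrightarrow{\phi} V$, which is simply $V$. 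So $N^a \cong M^a \oplus V$, and tracing structure maps shows the pre-log map $M^a \oplus V \to (B, \cdot)$ is $(m, v) \mapsto \alpha^a(m) \cdot v$, which matches \eqref{prelogb}. Repleteness of $M^a \oplus V \to M^a$ is then immediate: since $V$ is already a group, $(M^a \oplus V)^{\gp} \cong (M^a)^{\gp} \oplus V$, and the defining pullback collapses; the section $m \mapsto (m, 0)$ provides the splitting over $(A, M^a)$, and all identifications are evidently natural in $(B, N)$.

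The main subtle point is the third step: the decomposition $N^a \cong M^a \oplus V$ relies on recognizing that the span defining the pushout is literally a coproduct of two spans, so that the general commutation of colimits applies. The $K$-slot of the pushout is $V$ rather than $K$ itself, which is exactly what makes the log-ification forget $K$ and remember only its image in ${\rm GL}_1(B)/{\rm GL}_1(A)$; this is where the log structure ``cashes out'' the fiber $K$ contributed by the replete hypothesis.
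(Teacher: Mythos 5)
Your proof is correct and takes essentially the same approach as the paper's: decompose $N \cong M \oplus K$ using exactness and the section, identify $N^\times \cong M^\times \oplus K$ via the factored structure map and the retraction $B \to A$, and compute $N^a$ as a pushout that splits off $M^a$. The only cosmetic difference is in bookkeeping — you observe directly that $\phi(K) \subset V := \ker({\rm GL}_1(B) \to {\rm GL}_1(A))$ and split the pushout span as a coproduct of two spans, whereas the paper routes the same computation through an intermediate ${\rm GL}_1(A) \oplus {\rm GL}_1(B)$ via a cascade of cartesian and cocartesian squares (invoking the splitting \eqref{unitssplitdiscrete} only at the very end), an organization chosen to parallel the later spectral argument in Lemma \ref{lem:replogiremains2}.
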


\begin{proof} By \cite[Lemma 3.11]{Rog09}, the fixed splitting of $N \to M$ induces a splitting $M \oplus (N^{\rm gp}/M^{\rm gp}) \xrightarrow{\cong} N$ over and under $M$, so that the structure map of $(B, N)$ factors as the lower horizontal composite in the commutative diagram \[\begin{tikzcd}[row sep = small]\alpha^{-1}{\rm GL}_1(A) \oplus (N^{\rm gp}/M^{{\rm gp}}) \ar{r} \ar{d} & {\rm GL}_1(A) \oplus {\rm GL}_1(B) \ar{r} \ar{d} & {\rm GL}_1(B) \ar{d} \\ M \oplus (N^{\rm gp}/M^{{\rm gp}}) \ar{r} & (A, \cdot) \oplus {\rm GL}_1(B) \ar{r} & (B, \cdot).\end{tikzcd}\] The left-hand square is cartesian by definition, while the right-hand square is cartesian since the structure map $A \to B$ admits a retraction so that an element of $A$ is a unit precisely when it maps to one in $B$. The defining pushout square for the logification $N^a$ is thus isomorphic to the outer rectangle  \[\begin{tikzcd}[row sep = small]\alpha^{-1}{\rm GL}_1(A) \oplus (N^{\rm gp}/M^{{\rm gp}}) \ar{r} \ar{d} & {\rm GL}_1(A) \oplus {\rm GL}_1(B) \ar{r} \ar{d} & {\rm GL}_1(B) \ar{d} \\ M \oplus (N^{\rm gp}/M^{{\rm gp}}) \ar{r} & M^a \oplus {\rm GL}_1(B) \ar{r} & N^a.\end{tikzcd}\] Since the left-hand  square is cocartesian, so is the right-hand square.  Hence both squares in the diagram \[\begin{tikzcd}[row sep = small]{\rm GL}_1(A)  \ar{r} \ar{d} & {\rm GL}_1(A) \oplus {\rm GL}_1(B) \ar{r} \ar{d} & {\rm GL}_1(B) \ar{d} \\ M^a \ar{r} & M^a \oplus {\rm GL}_1(B) \ar{r} & N^a\end{tikzcd}\] are cocartesian. The result follows from the splitting \eqref{unitssplitdiscrete}.
\end{proof}

\subsection{The replete abelianization functor} We now work towards the construction of the replete abelianization functor. Let us first record the following consequence of Lemma \ref{lem:replogiremains}:

\begin{corollary}\label{cor:replaugmented} Let $(A, M)$ be a log ring. Then the natural forgetful functor ${\rm Log}^{\rm rep}_{(A, M)//(A, M)} \to {\rm CRing}_{A//A}$ is an equivalence of categories, with quasi-inverse \[{\rm CRing}_{A//A} \to {\rm Log}^{\rm rep}_{(A, M) // (A, M)}, \quad B \mapsto (B, M \oplus ({\rm GL}_1(B) / {\rm GL}_1(A))),\] where the structure map on the target is described in \eqref{prelogb}.  
\end{corollary}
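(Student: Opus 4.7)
The plan is to verify that the functor $B \mapsto (B, N(B))$ with $N(B) := M \oplus (\mathrm{GL}_1(B)/\mathrm{GL}_1(A))$ and structure map \eqref{prelogb} is well-defined, and then to read off from Lemma \ref{lem:replogiremains} that it is quasi-inverse to the forgetful functor.

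First I would check that $(B, N(B))$ lies in $\mathrm{Log}^{\mathrm{rep}}_{(A,M)//(A,M)}$. The augmentation over $(A,M)$ is transparent from the projection $N(B) \to M$ and its section $m \mapsto (m, 1)$, combined with the augmentation $B \to A$. For the log condition, an element $(m, \bar u) \in N(B)$ maps to $\alpha(m) \cdot \tilde u \in B$, where $\tilde u$ is the lift of $\bar u$ under the splitting \eqref{unitssplitdiscrete}; since $\tilde u$ is already a unit, the image is a unit iff $\alpha(m) \in \mathrm{GL}_1(B)$. Because the augmentation $B \to A$ retracts the structure map $A \to B$, an element of $A$ is a unit in $B$ iff it is a unit in $A$, so this reduces to $\alpha(m) \in \mathrm{GL}_1(A)$; the log condition on $(A,M)$ then forces $m \in \mathrm{GL}_1(M)$, and hence $(m, \bar u) \in \mathrm{GL}_1(N(B))$. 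Repleteness of $N(B) \to M$ over $(A,M)$ is immediate from Definition \ref{def:replete}, since the map is split with group quotient $\mathrm{GL}_1(B)/\mathrm{GL}_1(A)$.

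For the other direction, given any $(B, N) \in \mathrm{Log}^{\mathrm{rep}}_{(A,M)//(A,M)}$, I would apply Lemma \ref{lem:replogiremains} to the underlying pre-log ring to obtain a natural isomorphism $(B, N^a) \cong (B, M^a \oplus (\mathrm{GL}_1(B)/\mathrm{GL}_1(A)))$. Since $(A, M)$ and $(B, N)$ are both log rings, $M^a = M$ and $N^a = N$, so this specializes to a natural isomorphism $(B, N) \cong (B, N(B))$ over and under $(A, M)$. This provides the required natural isomorphism between the identity on $\mathrm{Log}^{\mathrm{rep}}_{(A,M)//(A,M)}$ and the composite of forgetting followed by the proposed quasi-inverse; the composite in the other direction on $\mathrm{CRing}_{A//A}$ is plainly the identity, and functoriality of the quasi-inverse in $A$-algebra morphisms is automatic from the functoriality of $\mathrm{GL}_1$. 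The substantive content is already packaged in Lemma \ref{lem:replogiremains}, so the corollary should emerge by purely formal bookkeeping; the only mild obstacle is the log verification for $(B, N(B))$, which is dispatched as above by playing the log condition on $(A, M)$ against the augmentation $B \to A$.
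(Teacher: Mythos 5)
Your proof is correct and takes essentially the same route as the paper, whose proof is to invoke Lemma \ref{lem:replogiremains} together with the observation that log rings coincide with their logifications. You add a useful explicit verification that $(B, N(B))$ is a log ring (playing the log condition on $(A,M)$ against the retraction $B \to A$) and that the split map is replete — the paper leaves this well-definedness implicit — but the substantive content, namely the natural isomorphism supplied by Lemma \ref{lem:replogiremains}, is identical.
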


\begin{proof} As all objects in the statement are log rings (as opposed to pre-log rings), they are naturally isomorphic to their logifications. The result thus follows from Lemma \ref{lem:replogiremains}, which provides a natural isomorphism \[(B, M \oplus ({\rm GL}_1(B)/{\rm GL}_1(A))) \xrightarrow{\cong} (B, N)\] for any augmented replete $(A, M)$-algebra $(B, N) \in {\rm Log}^{\rm rep}_{(A, M)//(A, M)}$. 
 \end{proof}

We find it conceptually appealing that the replete category is independent of the log structure already before passing to abelian group objects. See Example \ref{ex:loghh} for the example of log (topological) Hochschild homology.  

\begin{example}\label{ex:splitsquarezero2} Let us identify the image of the split square-zero extension $A \oplus J$ under the equivalence of Corollary \ref{cor:replaugmented}. The units ${\rm GL}_1(A \oplus J)$ split as ${\rm GL}_1(A)$ and a copy $1 + J$ of the underlying abelian group $J$. This means that there is an isomorphism $M \oplus ({\rm GL}_1(A \oplus J)/{\rm GL}_1(A)) \cong M \oplus (1 + J)$, and the resulting structure map \eqref{prelogb} is given by \[M \oplus (1 + J) \to (A \oplus J, \cdot), \quad (m, (1, j)) \mapsto (\alpha(m), 0)(1, j) = (\alpha(m), \alpha(m)j).\] This recovers Example \ref{ex:splitsquarezero1}.  
\end{example}

As a consequence of Theorem \ref{thm:quillen}, Corollary \ref{cor:replaugmented}, and Example \ref{ex:splitsquarezero2}, we obtain the following:

\begin{corollary}\label{cor:ablogrep} Let $(A, M)$ be a log ring. There assignment \[{\rm Mod}_A \to {\rm Ab}({\rm Log}^{\rm rep}_{(A, M)//(A, M)}), \qquad J \mapsto (A \oplus J, M \oplus J) \] defines an equivalence of categories.  \qed
\end{corollary}

We now have all necessary ingredients for the following construction, which is the linear variant of one of the main constructions of this paper:

\begin{construction}\label{constr:replabelianization2} Let $(A, M)$ be a pre-log ring. Consider the following diagram \[\begin{tikzcd}[row sep = small]{\rm PreLog}_{/(A, M)} \ar[bend right = 5 mm, dashed, swap]{rdd}{{\rm Ab}_+^{\rm rep}(-, -)} \ar{r}{(-, -)_+} & {\rm PreLog}_{(A, M) // (A, M)} \ar{r}{(-, -)^{\rm rep}} & {\rm PreLog}_{(A, M) // (A, M)}^{\rm rep} \ar{d}{(-, -)^a} \\ \vspace{10 mm} & {\rm CRing}_{A//A} \ar{d}{{\rm Ab}(-)}&  {\rm Log}^{\rm rep}_{(A, M^a) // (A, M^a)} \ar[swap]{l}{\simeq} \ar{l}{\text{Cor } \ref{cor:replaugmented}}\\ \vspace{10 mm} & {\rm Mod}_A & {\rm Ab}({\rm Log}^{\rm rep}_{(A, M^a) // (A, M^a)}) \ar{u} \ar[swap]{l}{\simeq} \ar{l}{\text{Cor } \ref{cor:ablogrep}}\end{tikzcd}\] of categories and functors. Here $(-, -)_+$ is extension of scalars (along the map $({\Bbb Z}, \{1\}) \to (A, M)$ of pre-log rings), $(-, -)^{\rm rep}$ is the repletion functor, and $(-, -)^a$ is the logification functor. The functor ${\rm Log}^{\rm rep}_{(A, M^a)//(A, M^a)} \to {\rm CAlg}_{A//A}$ is the forgetful functor, which is an equivalence by Corollary \ref{cor:replaugmented}. The functor ${\rm Ab}(-)$ is the indecomposables functor of Remark \ref{rem:indec}.
\end{construction} 

\begin{definition} We define the \emph{replete abelianization} functor \[{\rm Ab}_+^{\rm rep} \colon {\rm PreLog}_{/(A, M)} \to {\rm Mod}_A\] to be the functor resulting from Construction \ref{constr:replabelianization2}. 
\end{definition}

By construction, ${\rm Ab}_+^{\rm rep}$ admits the explicit description \begin{equation}\label{repabexplicit}{\rm Ab}_+^{\rm rep}(B, N) \cong {\rm Ab}((A \otimes_{\Bbb Z} B)^{\rm rep}) = {\rm Ab}((A \otimes_{\Bbb Z} B) \otimes_{{\Bbb Z}[M \oplus N]} {\Bbb Z}[(M \oplus N)^{\rm rep}]),\end{equation} where the repletion is taken with respect to the map $M \oplus N \to M$ induced by the identity on $M$ and the structure map $N \to M$.

\begin{remark}\label{rem:rognescomp}As the objects $(B, N)$ of ${\rm Log}^{\rm rep}_{(A, M)//(A, M)}$ have \emph{strict} structure maps $(B, N) \to (A, M)$, the content of Corollary \ref{cor:ablogrep}, taken in isolation, is equivalent to that of \cite[Lemma 4.13]{Rog09}. Nonetheless, there is some expositional difference in the ways that we arrive at Corollary \ref{cor:ablogrep}. In \emph{loc.\ cit.}\ one starts with the category ${\rm Log}^{\rm str}_{/(A, M)}$ of strict augmented log rings, and, after deducing Corollary \ref{cor:ablogrep}, considers the possibility of a larger category whose abelian group objects would be a sensible candidate of ``log modules'' \cite[Remark 4.14]{Rog09}. Our approach is in some sense completely orthogonal to this: We \emph{begin} with the larger category of all augmented $(A, M)$-algebras, and actively use repletion as a means of ``cashing out'' the data provided by the log structure, aiming to land in a category independent of the log structure. Corollary \ref{cor:replaugmented} shows that we have succeeded in this, already before passing to abelian group objects. 
\end{remark}

\subsection{The log differentials as replete abelianization} In classical log geometry, one defines a \emph{log derivation} $(d, d^\flat) \colon (A, M) \to J$ to consist of a derivation $d \colon A \to J$ and a monoid map $d^\flat \colon M \to (J, +)$ satisfying $d(\alpha(m)) = \alpha(m)d^\flat(m)$. Inspired by Theorem \ref{thm:quillen} and the modern approach to the cotangent complex (of e.g\ \cite[Chapter 7]{Lur17}), we aim to make this haromize with the following definition:

\begin{definition} The module of \emph{log differentials} is the replete abelianization ${\rm Ab}_+^{\rm rep}(A, M)$ of $(A, M) \in {\rm PreLog}_{/(A, M)}$.  
\end{definition}

There is a well-established notion of differentials associated to a pre-log ring, explicitly defined by \[\Omega^1_{(A, M)} := \frac{\Omega^1_A \oplus (A \otimes_{\Bbb Z} M^{\rm gp})}{d\alpha(m) \sim \alpha(m) \otimes [m]},\] where $[m]$ denotes the image of $m$ under the canonical map $M \to M^{\rm gp}$. The module $\Omega^1_{(A, M)}$ corepresents log derivations.

\begin{proposition}\label{prop:diffagree} There is a canonical isomorphism $\Omega^1_{(A, M)} \cong {\rm Ab}_+^{\rm rep}(A, M)$.
\end{proposition}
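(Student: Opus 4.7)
The plan is to identify both sides with the same corepresentable functor on $A$-modules. By Construction \ref{constr:replabelianization2} and Corollaries \ref{cor:replaugmented} and \ref{cor:ablogrep}, the functor ${\rm Ab}_+^{\rm rep}$ is assembled from a composite of functors whose pointed analogues admit right adjoints: the right adjoint of the indecomposables functor sends $J \in {\rm Mod}_A$ to $A \oplus J \in {\rm CRing}_{A//A}$; the inverse of the equivalence of Corollary \ref{cor:replaugmented} sends this to the split square-zero extension $(A, M) \oplus J$ computed explicitly in Example \ref{ex:splitsquarezero2}; the inclusions of the log and replete subcategories are right adjoint to logification and repletion; and the forgetful functor from $(A, M)$-pointed to $(A, M)$-augmented pre-log rings is right adjoint to extension of scalars. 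Assembling these gives a natural bijection
$$\mathrm{Hom}_A\bigl({\rm Ab}_+^{\rm rep}(A, M),\, J\bigr) \;\cong\; \mathrm{Hom}_{{\rm PreLog}_{/(A, M)}}\bigl((A, M),\; (A, M) \oplus J\bigr),$$
where $(A, M)$ on the right is viewed in ${\rm PreLog}_{/(A, M)}$ via the identity and $(A, M) \oplus J$ via its projection augmentation.

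Next, I unpack the right-hand side. A morphism $(A, M) \to (A, M) \oplus J$ over $(A, M)$ amounts to a pair $(s, \sigma)$, where $s \colon A \to A \oplus J$ is a ring section of the projection — equivalently a $\mathbb{Z}$-derivation $d \colon A \to J$ via $s(a) = (a, d(a))$ — and $\sigma \colon M \to M \oplus J$ is a monoid section — equivalently a monoid homomorphism $\delta \colon M \to (J, +)$ via $\sigma(m) = (m, \delta(m))$, which factors uniquely through a group homomorphism $\overline{\delta} \colon M^{\rm gp} \to J$. Substituting the explicit structure map $(m, j) \mapsto (\alpha(m), \alpha(m) j)$ from Example \ref{ex:splitsquarezero2}, the commutativity of the log-structure square translates to the identity
$$d(\alpha(m)) \;=\; \alpha(m) \cdot \overline{\delta}([m]) \qquad \text{for all } m \in M.$$

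Comparing with the defining presentation of $\Omega^1_{(A, M)}$, an $A$-linear map $\Omega^1_{(A, M)} \to J$ consists of exactly the same data: a derivation $d \colon A \to J$ (from the $\Omega^1_A$ summand) and a group homomorphism $\overline{\delta} \colon M^{\rm gp} \to J$ (equivalently an $A$-linear map $A \otimes_{\mathbb{Z}} M^{\rm gp} \to J$), subject to the quotient relation $d\alpha(m) \sim \alpha(m) \otimes [m]$ — which, under the assignment above, reads precisely $d(\alpha(m)) = \alpha(m)\overline{\delta}([m])$. The Yoneda lemma then produces the canonical isomorphism $\Omega^1_{(A, M)} \cong {\rm Ab}_+^{\rm rep}(A, M)$. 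The principal subtlety is tracing through the equivalence of Corollary \ref{cor:replaugmented} to pin down the right adjoint as the split square-zero extension with the non-standard log structure of Example \ref{ex:splitsquarezero2}: it is this formula $(m, j) \mapsto (\alpha(m), \alpha(m) j)$ that is responsible for the Leibniz-type compatibility and recovers the quotient relation defining $\Omega^1_{(A, M)}$.
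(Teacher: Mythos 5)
Your proof is correct and takes a genuinely different route from the paper's. The paper proves this proposition by invoking the explicit formula \eqref{repabexplicit} to rewrite ${\rm Ab}_+^{\rm rep}(A,M)$ as ${\rm Ab}\bigl((A\otimes_{\Bbb Z}A)\otimes_{{\Bbb Z}[M\oplus M]}{\Bbb Z}[(M\oplus M)^{\rm rep}]\bigr)$ and then identifying this with Kato--Saito's description of $\Omega^1_{(A,M)}$, outsourcing the verification to the author's thesis \cite[Prop.\ 2.2.1.1]{Lun22}. Your argument instead identifies both sides as corepresenting the functor of classical log derivations and concludes by Yoneda; this makes the proof self-contained and, as a byproduct, you essentially re-derive Corollary \ref{cor:logder}, which the paper treats as a \emph{consequence} of Proposition \ref{prop:diffagree} rather than a route to it. The one place you compress too much is the assertion that ``the inclusions of the log and replete subcategories are right adjoint to logification and repletion'' applies verbatim in the pointed/augmented setting: in Construction \ref{constr:replabelianization2} the logification step changes the base from $(A,M)$ to $(A,M^a)$, and the right adjoint at the level of double-slice categories is not the bare inclusion but the inclusion followed by base-change along the logification unit $(A,M)\to(A,M^a)$ (compare the chain of adjunctions in the paper's proof of Corollary \ref{cor:logder}, which passes through ${\rm PreLog}_{(A,M)//(A,M^a)}$). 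This does not damage your argument — the pullback $(A\oplus J, M^a\oplus J)\times_{(A,M^a)}(A,M)$ does return $(A\oplus J, M\oplus J)$ with the structure map $(m,j)\mapsto(\alpha(m),\alpha(m)j)$, so your Hom formula and the ensuing unpacking are correct — but the adjunction bookkeeping deserves one more sentence of care. The final comparison with the defining presentation of $\Omega^1_{(A,M)}$ is clean, and your emphasis on the non-trivial structure map of Example \ref{ex:splitsquarezero2} as the source of the Leibniz relation is exactly the right thing to highlight.
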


\begin{proof} By \eqref{repabexplicit} we have that \[{\rm Ab}_+(A, M) \cong {\rm Ab}((A \otimes_{\Bbb Z} A) \otimes_{{\Bbb Z}[M \oplus M]} {\Bbb Z}[(M \oplus M)^{\rm rep}]).\] This coincides with a description of the log differentials $\Omega^1_{(A, M)}$ of Kato--Saito \cite[Section 4]{KS04}: In this linear setting, this is checked in \cite[Proposition 2.2.1.1]{Lun22}. \end{proof}

\begin{remark} The map $\Omega^1_{(A, M)} \to \Omega^1_{(A, M^a)}$ is an isomorphism by \cite[1.7]{Kat89}. 
\end{remark}

We now aim to prove Proposition \ref{prop:repabel}. Recall the split square-zero extension $(A \oplus J, M \oplus J)$ of Example \ref{ex:splitsquarezero2}. We first record the following consequence of Corollary \ref{cor:ablogrep}:

\begin{corollary}\label{cor:logder} Let $(A, M)$ be a pre-log ring. There is a natural isomorphism \[{\rm Hom}_{{\rm Mod}_A}({\rm Ab}_+^{\rm rep}(A, M), J) \cong {\rm Hom}_{{\rm Log}_{/(A, M^a)}}((A, M^a), (A \oplus J, M^a \oplus J)).\]
\end{corollary}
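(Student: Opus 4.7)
The plan is to combine Proposition \ref{prop:diffagree} with the universal property of the module of log differentials.

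First, Proposition \ref{prop:diffagree} identifies ${\rm Ab}_+^{\rm rep}(A, M)$ with $\Omega^1_{(A, M)}$, and the remark following that proposition (which quotes \cite[1.7]{Kat89}) further identifies $\Omega^1_{(A, M)}$ with $\Omega^1_{(A, M^a)}$. Together these reduce the claim to producing a natural isomorphism
\begin{equation*}
{\rm Hom}_{{\rm Mod}_A}(\Omega^1_{(A, M^a)}, J) \cong {\rm Hom}_{{\rm Log}_{/(A, M^a)}}\bigl((A, M^a),\, (A \oplus J, M^a \oplus J)\bigr).
\end{equation*}

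Second, I would unwind both sides. Using the presentation of $\Omega^1_{(A, M^a)}$ recalled immediately above Proposition \ref{prop:diffagree}, an $A$-linear map out of $\Omega^1_{(A, M^a)}$ amounts to a pair $(d, \widetilde{\delta})$ in which $d \colon A \to J$ is an ordinary derivation and $\widetilde{\delta} \colon (M^a)^{\gp} \to J$ is an abelian group homomorphism satisfying $d(\alpha^a(m)) = \alpha^a(m) \cdot \widetilde{\delta}([m])$ for all $m \in M^a$. On the other side, a morphism $(f, f^\flat) \colon (A, M^a) \to (A \oplus J, M^a \oplus J)$ of log rings over $(A, M^a)$ is the data of a ring map $f \colon A \to A \oplus J$ over $A$ (equivalently, a derivation $d \colon A \to J$ via $f(a) = (a, d(a))$) together with a monoid map $f^\flat \colon M^a \to M^a \oplus J$ over $M^a$ (equivalently, an additive monoid map $\delta \colon M^a \to J$ via $f^\flat(m) = (m, \delta(m))$); compatibility with the log structures of Example \ref{ex:splitsquarezero1} is then precisely the identity $d(\alpha^a(m)) = \alpha^a(m)\, \delta(m)$.

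Third, I would match these two descriptions: since $J$ is already an abelian group, additive monoid maps $\delta \colon M^a \to J$ extend uniquely to abelian group maps $\widetilde{\delta} \colon (M^a)^{\gp} \to J$ by the universal property of group completion, and this extension takes the two compatibility relations into each other. The resulting bijection is natural in $J$ and in $(A, M)$.

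The main obstacle is the translation in the second step, in particular the conversion of the log-structure compatibility for $(f, f^\flat)$ into the equation $d(\alpha^a(m)) = \alpha^a(m)\, \delta(m)$; this is a bookkeeping exercise with the monoid structure on $M^a \oplus J$ (with $J$ regarded additively) and the log structure $(m, j) \mapsto (\alpha^a(m), \alpha^a(m) j)$ of Example \ref{ex:splitsquarezero1}. Beyond this, the argument is essentially formal and mirrors the identification carried out in the proof of Proposition \ref{prop:diffagree}.
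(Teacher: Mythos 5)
Your proof is correct, but it takes a genuinely different route from the paper. The paper's proof of this corollary is a purely formal adjunction chase: it unwinds ${\rm Ab}_+^{\rm rep}$ via Construction \ref{constr:replabelianization2} and Lemma \ref{lem:modarep}, then transports the resulting $\rm Hom$-set across the logification and repletion adjunctions (together with restriction of scalars along $(\mathbb{Z},\{1\}) \to (A,M)$) — in particular it makes no use of the explicit formula for $\Omega^1_{(A,M)}$ or of Proposition \ref{prop:diffagree} at all. You instead invoke Proposition \ref{prop:diffagree} and the subsequent remark to rewrite the left side as $\operatorname{Hom}_{{\rm Mod}_A}(\Omega^1_{(A,M^a)}, J)$ and then verify the universal property of the module of log differentials directly, unwinding an augmented log-ring map $(A,M^a)\to(A\oplus J, M^a\oplus J)$ into a pair $(d,\delta)$ and matching it with $(d,\widetilde\delta)$ via the universal property of group completion. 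Both arguments are sound, and there is no circularity in yours since \ref{prop:diffagree} is proved independently (by reference to \cite{Lun22}) before this corollary. The trade-off is instructive: the paper's formal argument is chosen because it carries over verbatim to the spectral setting in Sections \ref{sec:repletetangent}--\ref{sec:logpostnikov}, where the element-level manipulations you perform are unavailable, whereas your argument is more concrete and perhaps more illuminating in the discrete case but depends on the external citation underlying \ref{prop:diffagree}. Note also that the paper takes the opposite logical order — it proves the corollary formally, uses it to \emph{motivate} the definition of log derivations, and only then relates everything to $\Omega^1$ in Lemma \ref{lem:replabdiff} — while you effectively start from the classical derivation picture.
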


\begin{proof} By Corollary \ref{cor:ablogrep} Construction \ref{constr:replabelianization2}, the first-mentioned ${\rm Hom}$-set is naturally isomorphic to \[{\rm Hom}_{{\rm Log}_{(A, M^a)//(A, M^a)}}(((A \otimes_{\Bbb Z} A)^{\rm rep}), (M \oplus M)^{{\rm rep}, a}), (A \oplus J, M^a \oplus J)).\] Since logification and repletion are left adjoints and the map $(A \oplus J, M^a \oplus J) \to (A, M^a)$ is a replete map of log rings, this is naturally isomorphic to \[{\rm Hom}_{{\rm PreLog}_{(A, M)//(A, M^a)}}((A \otimes_{\Bbb Z} A, M \oplus M), (A \oplus J, M^a \oplus J)).\] By restriction of scalars along $({\Bbb Z}, \{1\}) \to (A, M)$ and once again exploiting that logification is a left adjoint, we obtain the description predicted by the corollary. 
\end{proof}

Corollary \ref{cor:logder} this implies that the set of log derivations ${\rm Der}((A, M), J)$ is naturally isomorphic to  \[{\rm Hom}_{{\rm Log}_{/(A, M^a)}}((A, M^a), (A \oplus J, M^a \oplus J))\] of augmented maps $(A, M^a) \to (A \oplus J, M^a \oplus J)$.  This perspective on log derivations is taken as the definition in e.g.\ \cite[Definition 4.15]{Rog09} and in the context of topological log structures (cf.\ \cite[Section 11]{Rog09} or \cite{Sag14}). 

Combined with Corollary \ref{cor:ablogrep}, the following gives a strong analog of Theorem \ref{thm:quillen} in the context of log geometry:

\begin{lemma}\label{lem:replabdiff} Under the equivalence of Corollary \ref{cor:ablogrep}, the replete abelianization functor admits the explicit description ${\rm Ab}_+^{\rm rep}(B, N) \cong A \otimes_B \Omega^1_{(B, N)}$. 
\end{lemma}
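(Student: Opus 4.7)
The plan is to argue by the Yoneda lemma: we identify the functor corepresented by ${\rm Ab}_+^{\rm rep}(B,N)$ on ${\rm Mod}_A$ with the functor of log derivations of $(B,N)$ with values in the $A$-module $J$ (viewed as a $B$-module through the structure map $B \to A$), and then recall that these are classically corepresented by $A \otimes_B \Omega^1_{(B,N)}$.

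First, for an arbitrary $A$-module $J$, I would unwind Construction \ref{constr:replabelianization2} using the adjunctions that define it. Combined with Lemma \ref{lem:modarep}, this yields a chain of natural isomorphisms
\begin{align*}
{\rm Hom}_{{\rm Mod}_A}({\rm Ab}_+^{\rm rep}(B,N), J)
&\cong {\rm Hom}_{{\rm Log}^{\rm rep}_{(A,M^a)//(A,M^a)}}\bigl(((B)_+)^{\rm rep, a}, (A\oplus J, M^a \oplus J)\bigr) \\
&\cong {\rm Hom}_{{\rm PreLog}_{(A,M)//(A,M^a)}}\bigl((B)_+, (A\oplus J, M^a\oplus J)\bigr) \\
&\cong {\rm Hom}_{{\rm PreLog}_{/(A,M^a)}}\bigl((B,N), (A\oplus J, M^a\oplus J)\bigr),
\end{align*}
where the last step uses the extension-of-scalars adjunction along $({\Bbb Z}, \{1\})\to (A,M)$, and the preceding steps use that $(-,-)^{\rm rep}$ and $(-,-)^a$ are left adjoints with the target log ring being replete and logified. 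This is a direct generalization of Corollary \ref{cor:logder}, obtained by running its proof verbatim with $(B,N)$ in place of $(A,M)$.

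Second, I would unpack the right-hand side as the set of \emph{log derivations} ${\rm Der}((B,N), J)$. Writing a map $(B,N)\to (A\oplus J, M^a\oplus J)$ over $(A,M^a)$ as $(f+D, f^{\flat}+\partial)$, the ring-map condition forces $D\colon B\to J$ to be an ordinary derivation (where $J$ is a $B$-module via $B\to A$), the monoid-map condition forces $\partial\colon N\to (J,+)$ to be additive, and compatibility with the pre-log structure maps forces the Leibniz-type relation $D(\alpha(n)) = \beta(n)\,\partial(n)$, where $\beta\colon N \to B$ is the structure map. This is exactly the data recorded by a log derivation of $(B,N)$ with values in $J$, and passage from $N^a$ back to $N$ uses the adjunction between $(-)^a$ and the forgetful functor together with the invariance of log derivations under logification.

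Finally, the classical module of log differentials $\Omega^1_{(B,N)}$ corepresents log derivations of $(B,N)$ valued in $B$-modules, so
\[
{\rm Der}((B,N), J) \cong {\rm Hom}_{{\rm Mod}_B}(\Omega^1_{(B,N)}, J) \cong {\rm Hom}_{{\rm Mod}_A}(A \otimes_B \Omega^1_{(B,N)}, J)
\]
by the extension-of-scalars adjunction along $B\to A$. Combining all three steps and applying the Yoneda lemma on ${\rm Mod}_A$ yields the stated natural isomorphism ${\rm Ab}_+^{\rm rep}(B,N) \cong A \otimes_B \Omega^1_{(B,N)}$. The main bookkeeping hurdle is step two -- translating an augmented map of log rings into the classical derivation data -- but this is essentially the same computation that underlies Example \ref{ex:splitsquarezero2} and Proposition \ref{prop:diffagree}, now carried out relative to a general augmented $(B,N)$ rather than just $(A,M)$ itself.
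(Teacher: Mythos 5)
Your proof is correct and follows essentially the same route as the paper: both arguments proceed by the Yoneda lemma, unwind the adjunctions defining ${\rm Ab}_+^{\rm rep}$ to reach a Hom-set of maps into the split square-zero extension, and then identify this with log derivations corepresented by $\Omega^1_{(B,N)}$ (via Proposition \ref{prop:diffagree}). The only organizational difference is that the paper first uses the pullback square $(B\oplus J, N^a\oplus J)\to(A\oplus J, M^a\oplus J)$ over $(B,N^a)\to(A,M^a)$ to reduce to derivations over $(B,N^a)$ and then invokes Corollary \ref{cor:logder} applied to $B$, whereas you directly unwind the derivation data for maps over $(A,M^a)$; this is a purely cosmetic rearrangement of the same argument.
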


\begin{proof} By the Yoneda lemma, it suffices to prove that there is a natural isomorphism relating the two ${\rm Hom}$-sets on top in the diagram \[\begin{tikzpicture}[baseline= (a).base]
\node[scale=.93] (a) at (0,0){\begin{tikzcd}[column sep = tiny, row sep = small]{\rm Hom}_{{\rm Mod}_A}(A \otimes_B \Omega^1_{(B, N)}, J) \ar{r} \ar{d}{\cong} \ar[swap]{d}{\text{Restriction along } B \to A} & {\rm Hom}_{{\rm Mod}_A}({\rm Ab}_+^{\rm rep}(B, N), J) \\ {\rm Hom}_{{\rm Mod}_B}(\Omega^1_{(B, N)}, J) \ar{d}{\cong} \ar[swap]{d}{\text{Prop \ref{prop:diffagree} + Cor \ref{cor:logder}}} & {\rm Hom}_{{\rm CRing}_{A//A}}((A \otimes_{\Bbb Z} B)^{\rm rep}, A \oplus J) \ar{u}{\cong} \ar[swap]{u}{\text{Eq.} \eqref{repabexplicit}} \\ {\rm Hom}_{{\rm Log}_{/(B, N^a)}}((B, N^a), (B \oplus J, N^a \oplus J)) \ar{r}{\cong} & {\rm Hom}_{{\rm Log}_{/(A, M^a)}}((B, N^a), (A \oplus J, M^a \oplus J)),\ar{u}{\cong} \end{tikzcd}};\end{tikzpicture}\] and so it only remains to provide explanations for the two natural isomorphisms above that have yet to receive one. The bottom horizontal map, induced by the augmentation $(B, N^a) \to (A, M^a)$, is an isomorphism since the square \[\begin{tikzcd}[row sep = small](B \oplus J, N^a \oplus J) \ar{r} \ar{d} & (A \oplus J, M^a \oplus J) \ar{d} \\ (B, N^a) \ar{r} & (A, M^a)\end{tikzcd}\] is a pullback. Finally, we observe that all maps in the composite \[\begin{tikzcd}[row sep = small]{\rm Hom}_{{\rm Log}_{/(A, M^a)}}((B, N^a), (A \oplus J, M^a \oplus J)) \ar{d}{\cong} \\ {\rm Hom}_{{\rm PreLog}_{(A, M)//(A, M^a)}}((A \otimes_{\Bbb Z} B, M \oplus N), (A \oplus J, M^a \oplus J)) \ar{d}{\cong} \\ {\rm Hom}_{{\rm Log}_{(A, M^a)//(A, M^a)}^{\rm rep}}(((A \otimes_{\Bbb Z} B)^{\rm rep}, (M \oplus N)^{{\rm rep}, a}), (A \oplus J, M^a \oplus J)) \ar{d}{\cong} \\  {\rm Hom}_{{\rm CRing}_{A//A}}((A \otimes_{\Bbb Z} B)^{\rm rep}, A \oplus J) \end{tikzcd}\] are isomorphisms by logification being left adjoint to the inclusion ${\rm Log} \to {\rm PreLog}$ and cobase-change along the unit map $({\Bbb Z}, \{1\}) \to (A, M)$, repletion and logification being left adjoints, and the equivalence of Corollary \ref{cor:replaugmented}, respectively. This concludes the proof. 
\end{proof}

\begin{proof}[Proof of Proposition \ref{prop:repabel}] Part (1) is Corollary \ref{cor:replaugmented}. Part (2) follows from Theorem \ref{thm:quillen}, Corollary \ref{cor:replaugmented}, and Example \ref{ex:splitsquarezero2}. Finally, part (3) is Lemma \ref{lem:replabdiff}. 
\end{proof}

\section{Commutative ${\cal J}$-space monoids}\label{sec:commjspace} We now review the $QS^0$-graded ${\Bbb E}_{\infty}$-spaces of \cite{SS12}, modeled by \emph{commutative ${\cal J}$-space monoids}. These will play the role of commutative monoids in the definition of log ring spectra we pursue in Section \ref{sec:logringspectra}. While we will pass to underlying $\infty$-categories starting from Section \ref{sec:gradedeinfinity}, we work in the model that we review below throughout this section. This is due to our applications of the Bousfield--Friedlander theorem \cite[Theorem B.4]{BF78} (see e.g.\ the proof of Proposition \ref{prop:matherscube}), that we have not been able to phrase in a model-independent manner (but see Remark \ref{rem:bfindependent}).  For this reason, we have gathered all results dependent on the Bousfield--Friedlander theorem in this section, to be proved in the model described below. 

\subsection{Commutative ${\cal J}$-space monoids} We give a very brief recollection of the material on commutative ${\cal J}$-space monoids we shall use throughout. We refer to \cites{RSS18, RSS15, Sag14, SS12} for increasingly detailed expositions. 

Following \cite[Section 4]{SS12}, let ${\cal J}$ denote Quillen's localization construction $\Sigma^{-1}\Sigma$ on the category $\Sigma$ of finite sets and bijections. Objects of ${\cal J}$ are pairs $({\bf n}, {\bf m})$ where ${\bf n}$ denotes the finite set $\{1, \dots, n\}$. A ${\cal J}$-space is a functor from ${\cal J}$ to the category ${\cal S}$ of simplicial sets. This is a symmetric monoidal category $(S^{\cal J}, \boxtimes, U^{\cal J})$ and commutative monoids therein are \emph{commutative ${\cal J}$-space monoids}. The resulting category ${\cal C}{\cal S}^{\cal J}$ admits a \emph{positive ${\cal J}$-model structure} by \cite[Proposition 4.10]{SS12}. Weak equivalences $M \to N$ in this model structure are those maps that induce a weak equivalence $M_{h{\cal J}} \to N_{h{\cal J}}$ on (Bousfield--Kan) homotopy colimits over ${\cal J}$. We refer to its fibrations as \emph{positive fibrations}. 

By \cite[Theorem 1.7]{SS12}, the category of commutative ${\cal J}$-space monoids model ${\Bbb E}_{\infty}$-spaces over $QS^0 = \Omega^{\infty}({\Bbb S})$. We therefore think of commutative ${\cal J}$-space monoids as ($QS^0$-)graded ${\Bbb E}_{\infty}$-spaces. 

\subsection{Group completion} We say that a commutative ${\cal J}$-space monoid $M$ is \emph{grouplike} if the commutative monoid $\pi_0(M_{h{\cal J}})$ is a group. There is a \emph{group completion model structure} ${\cal C}{\cal S}^{\cal J}_{\rm gp}$ on the category of commutative ${\cal J}$-space monoids \cite[Theorem 5.5]{Sag16}. It arises as a left Bousfield localization of the positive ${\cal J}$-model structure and fibrant objects therein are precisely the (positive fibrant) grouplike commutative ${\cal J}$-space monoids. The \emph{group completion} $M \to M^{\rm gp}$ of a given commutative ${\cal J}$-space monoid $M$ is a fibrant replacement in this model structure. In the same way that grouplike ${\Bbb E}_{\infty}$-spaces model connective spectra, grouplike commutative ${\cal J}$-space monoids model connective spectra over the sphere \cite[Theorem 1.6]{Sag16}.

\subsection{Replete morphisms} The analog of Definition \ref{def:replete} in this context reads:

\begin{definition}[\cite{RSS15}] Let $N \to M$ be a map of commutative ${\cal J}$-space monoids. We say that it is
\begin{enumerate}
\item \emph{virtually surjective} if $\pi_0(N^{\rm gp}_{h{\cal J}}) \to \pi_0(M^{\rm gp}_{h{\cal J}})$ is a surjection of abelian groups;
\item \emph{exact} if the square \[\begin{tikzcd}[row sep = small]N \ar{r} \ar{d} & N^{\rm gp} \ar{d} \\ M \ar{r} & M^{\rm gp} \end{tikzcd}\] is homotopy cartesian in the positive ${\cal J}$-model structure; and
\item \emph{replete} if it is virtually surjective and exact. 
\end{enumerate}
\end{definition}

As explained in \cite[Lemma 3.17]{RSS15}, one convenient way to model the repletion $N^{\rm rep} \to M$ of a virtually surjective $N \to M$ is as a fibrant replacement of $N$ relative to $M$ in the group completion model structure. 

\subsection{Mather's cube lemma} Mather's second cube lemma \cite[Theorem 25]{Mat} states that, for a commutative cube of spaces in which the vertical faces are homotopy cartesian and the bottom face is homotopy cocartesian, the top face is homotopy cocartesian as well. For more general homotopy theories one has to assume that pushouts are \emph{universal}, i.e., that they commute with forming the homotopy pullback along any map, for this to hold.

We will need the following weakened form of Mather's second cube lemma in the category of commutative ${\cal J}$-space monoids:

\begin{proposition}\label{prop:matherscube} Let \[\begin{tikzcd}[row sep = tiny]
&
M_1
\ar{rr}{}
\ar[]{dd}[near end]{}
& & M_{12}
\ar{dd}{}
\\
M_{\emptyset}
\ar[crossing over]{rr}[near start]{}
\ar{dd}[swap]{}
\ar{ur}
& & M_2
\ar{ur}
\\
&
N_1
\ar[near start]{rr}{}
& & N_{12}
\\
N_\emptyset
\ar{ur}
\ar{rr}
& & N_2
\ar[crossing over, leftarrow, near start]{uu}{}
\ar{ur}
\end{tikzcd}\] be a commutative diagram of cofibrant commutative ${\cal J}$-space monoids in which the vertical faces are homotopy cartesian and the bottom face is homotopy cocartesian. If $M_{12}$ is replete over $N_{12}$, then the top face is homotopy cocartesian.
\end{proposition}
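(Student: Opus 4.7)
My plan is to reduce to Mather's cube lemma in the group-completion model structure on $\CSJ$ --- which models connective spectra over $\bS$, a stable $\infty$-category in which pullbacks and pushouts interchange --- and then descend the result back to the positive $\cJ$-model structure using the repleteness hypothesis on $M_{12}$ over $N_{12}$.

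Concretely, I would form the homotopy pushout $P := M_1 \sqcup^h_{M_{\emptyset}} M_2$ in $\CSJ$ equipped with the induced comparison map $P \to M_{12}$, with the goal of showing it is a positive $\cJ$-equivalence. Group completion, being a left Bousfield localization, preserves homotopy pushouts, so the bottom face of the group-completed cube remains cocartesian. The key application of the Bousfield--Friedlander theorem \cite[Theorem B.4]{BF78} is to show that the cartesianness of the vertical faces is preserved by group completion. This is where the repleteness of $M_{12}$ over $N_{12}$ enters crucially: repleteness is exactly the condition that the comparison square from $M_{12}$ to $M_{12}^{\gp}$ is cartesian over the analogous square for $N_{12}$, and the cartesianness of the vertical faces of the original cube propagates this to verify the Bousfield--Friedlander input hypotheses at every vertex. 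Once the group-completed cube has cartesian vertical faces and a cocartesian bottom face, stability forces the top face to be cocartesian as well, so $P^{\gp} \to M_{12}^{\gp}$ is a weak equivalence.

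To descend this equivalence, I would argue that $P$ is itself replete over $N_{12}$. Cartesianness of the vertical faces identifies the underlying $\cJ$-space homotopy fiber of each $M_i \to N_i$ with the common fiber $F$ of $M_{12} \to N_{12}$, and $F$ is grouplike by the repleteness of $M_{12}$ over $N_{12}$. A Mather-type argument for underlying $\cJ$-spaces, valid since $\cS$ is an $\infty$-topos with universal colimits, identifies the fiber of $P \to N_{12}$ with $F$, hence grouplike, so $P$ is replete over $N_{12}$. One then computes
\[P \simeq P^{\gp} \times^h_{N_{12}^{\gp}} N_{12} \simeq M_{12}^{\gp} \times^h_{N_{12}^{\gp}} N_{12} \simeq M_{12},\]
the first equivalence being repleteness of $P$, the middle one the result of the preceding paragraph, and the last the repleteness of $M_{12}$.

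The main obstacle is verifying that the Bousfield--Friedlander theorem applies in the form needed to conclude cartesianness is preserved under group completion. This amounts to identifying the repleteness hypothesis on $M_{12}$ as a suitable $\pi_*$-Kan-type condition on a (bi)simplicial diagram modelling the relevant homotopy pullbacks in $\CSJ$, and then checking that this condition is inherited by the other vertices of the cube through the cartesianness of the vertical faces. This is precisely why the proposition must be established in the explicit simplicial model of \cite{SS12} rather than purely $\infty$-categorically, and it is the reason the author has gathered all results depending on Bousfield--Friedlander into the present section.
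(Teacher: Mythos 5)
Your proposal takes a genuinely different route from the paper. The paper directly targets the comparison map $B^\boxtimes(M_1, M_\emptyset, M_2) \to M_{12}$, reduces to showing that a specific square of two-sided bar constructions is homotopy cartesian, uses the exactness of $M_{12}\to N_{12}$ to split off the right-hand part of a pasted rectangle, and then applies Bousfield--Friedlander once to the outer rectangle (realized as a pointwise cartesian square of bisimplicial sets, with $\pi_*$-Kan supplied by grouplikeness and the fibration-on-$\pi_0$ condition supplied by virtual surjectivity). Your proposal instead group-completes the whole cube, argues for Mather there, and descends. The strategy is appealing, but several of its load-bearing steps do not hold up.

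First, \emph{``stability forces the top face to be cocartesian''} is not correct as stated: grouplike commutative $\cJ$-space monoids model \emph{connective} spectra over $\bS$, and connective spectra are not stable. Mather's cube lemma in connective spectra is not automatic; it requires that the cartesian vertical faces remain cartesian after passage to genuine spectra, which amounts to a $\pi_0$-surjectivity (virtual-surjectivity) condition on the relevant right-hand vertical maps. This is fixable, but it must be argued, and it is not free.

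Second, and more seriously, the descent step is wrong. The pushout $P=M_1\sqcup^h_{M_\emptyset}M_2$ in $\CSJ$ is a relative $\boxtimes$-smash product; its underlying $\cJ$-space is the two-sided bar construction $B^\boxtimes(M_1,M_\emptyset,M_2)$, \emph{not} the homotopy pushout of underlying $\cJ$-spaces. Likewise $N_{12}$ is $B^\boxtimes(N_1,N_\emptyset,N_2)$, not the $\cJ$-space pushout of the bottom face. So the cube of underlying $\cJ$-spaces does \emph{not} have a cocartesian bottom (or top) face, and the appeal to ``Mather for $\cS$ since it is an $\infty$-topos with universal colimits'' does not apply and does not compute the fiber of $P\to N_{12}$. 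Identifying that fiber is precisely the problem that the bar construction and the Bousfield--Friedlander argument in the paper are designed to handle; you have in effect swept the whole difficulty under this step. Moreover, even if the $\cJ$-space fiber over the unit were shown to equal $F$, ``fiber grouplike'' does not characterize exactness/repleteness for a general (non-augmented) map $P\to N_{12}$; the equivalence in Corollary \ref{cor:splitreplete} is only stated for split maps, and elementary $1$-categorical examples (e.g.\ $\bN\to\bZ$) show that a grouplike fiber does not imply the exactness square is cartesian.

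Finally, the preliminary claim that cartesianness of the vertical faces of the cube is preserved by group completion is asserted with ``repleteness propagates via the vertical faces,'' but this propagation is not actually established; it is essentially equivalent in difficulty to the Bousfield--Friedlander step you are trying to isolate. In short, the parts of your argument that go through easily (group completion preserves pushouts; Mather is cheap in a genuinely stable setting) are not where the work is, and the parts carrying the real content (preserving cartesianness under group completion, identifying the fiber of $P\to N_{12}$, inferring repleteness) are either circular, incorrect, or simply absent.
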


\begin{proof} There is no loss of generality in assuming that the vertical arrows are fibrations and that the vertical faces are pullbacks: Indeed, one can fix the bottom square and functorially replace $M_{12} \to N_{12}$ by a fibration, and define the vertical faces by pullback. Since the positive ${\cal J}$-model structure is right proper, the resulting cube is weakly equivalent to that under consideration.  

Let $B^\boxtimes(-, -, -)$ denote the two-sided bar construction in ${\cal J}$-spaces. We wish to prove that the canonical map $B^\boxtimes(M_1, M_\emptyset, M_2) \to M_{12}$ is a weak equivalence. Since $B^\boxtimes(N_1, N_\emptyset, N_2) \to N_{12}$ is a weak equivalence by assumption, we find that it suffices to prove that the left-hand square in the commutative diagram \[\begin{tikzcd}[row sep = small]B^{\boxtimes}(M_1, M_{\emptyset}, M_2) \ar{r} \ar{d} & B^\boxtimes(M_{12}, M_{12}, M_{12}) \ar{d} \ar{r} & B^{\boxtimes}(M_{12}^{\rm gp}, M_{12}^{\rm gp}, M_{12}^{\rm gp}) \ar{d} \\ B^{\boxtimes}(N_1, N_\emptyset, N_2) \ar{r}{\simeq} & B^\boxtimes(N_{12}, N_{12}, N_{12}) \ar{r} & B^{\boxtimes}(N_{12}^{\rm gp}, N_{12}^{\rm gp}, N_{12}^{\rm gp})\end{tikzcd}\] of commutative ${\cal J}$-space monoids is homotopy cartesian. Since $M_{12} \to N_{12}$ is replete and in particular exact, it suffices to prove that the outer rectangle is homotopy cartesian.

By \cite[Corollary 11.4]{SS12}, this occurs precisely when the associated square of Bousfield--Kan homotopy colimits is homotopy cartesian as a square of simplicial sets. Moreover, \cite[Lemma 2.11]{Sag14} allows us to rewrite the resulting square as \[\begin{tikzcd}[row sep = small]B^{\times}((M_1)_{h{\cal J}}, (M_\emptyset)_{h{\cal J}}, (M_2)_{h{\cal J}}) \ar{r} \ar{d} & B^{\times}((M^{\rm gp}_{12})_{h{\cal J}},(M^{\rm gp}_{12})_{h{\cal J}},(M^{\rm gp}_{12})_{h{\cal J}}) \ar{d} \\ B^{\times}((N_1)_{h{\cal J}},(N_\emptyset)_{h{\cal J}},(N_2)_{h{\cal J}}) \ar{r} & B^\times((N^{\rm gp}_{12})_{h{\cal J}}, (N^{\rm gp}_{12})_{h{\cal J}}, (N^{\rm gp}_{12})_{h{\cal J}}).\end{tikzcd}\] This square arises as the realization of a square of bisimplicial sets which is pointwise homotopy cartesian. As the ${\Bbb E}_\infty$-spaces involved on the right-hand side are grouplike, the resulting bisimplicial sets satisfy the $\pi_*$-Kan condition, and virtual surjectivity of the map $M_{12} \to N_{12}$ implies that that the right-hand vertical map is a Kan fibration on vertical path components. Hence the Bousfield--Friedlander theorem \cite[Theorem B.4]{BF78} applies to conclude the proof.
\end{proof}

\noindent The following was pointed out to us by Maxime Ramzi:

\begin{remark}\label{rem:bfindependent} All of our applications of the two-sided bar construction compute the derived tensor product of algebras in some symmetric monoidal category, as opposed to one of mere modules. This means that we are simply computing a derived pushout in the category of commutative algebra objects. The author has not been able to reconcile the construction of \cite[Construction 4.4.2.7]{Lur17} with the usual explicit formula for the two-sided bar construction (upon which the above argument crucially relies). For the pushouts that we are interested in, however, there are Bousfield--Kan formulas available (see e.g.\ \cite[Corollary 2.18]{Hau22}). We expect this to allow us to phrase the above argument in a model-independent manner, but we have not pursued the details of this. 
\end{remark}

\subsection{Commutative ${\cal J}$-space monoids and units} To a positive fibrant commutative ${\cal J}$-space monoid $M$, we may consider the subobject $M^\times$, where $M^\times({\bf n}, {\bf m})$ is defined to consist of those components of $M({\bf n}, {\bf m})$ that represent units in $\pi_0(M_{h{\cal J}})$. In \cite[Theorem 5.12]{Sag16}, a  \emph{units model structure} is constructed on the category of commutative ${\cal J}$-space monoids, which arises as a right Bousfield localization of the positive ${\cal J}$-model structure. It is Quillen equivalent to the group completion model structure (via the identity functor), and exhibits $M \mapsto M^\times$ as a colocalization.

 There is a Quillen adjunction \begin{equation}\label{quillenadjunction}{\Bbb S}^{\cal J}[-] \colon {\cal C}{\cal S}^{\cal J} \rightleftarrows {\cal C}{\rm Sp}^{\Sigma} \colon \Omega^{\cal J}(-)\end{equation} relating the positive ${\cal J}$-model structure and the positive stable model structure on commutative symmetric ring spectra of \cite{MMSS01}. For positive fibrant symmetric ring spectrum $R$, we may consider the \emph{graded units} ${\rm GL}_1^{\cal J}(R) := \Omega^{\cal J}(R)^\times$.  To any positive fibrant commutative ${\cal J}$-space monoid $M$ one can associate a \emph{graded signed monoid} $\pi_{0, *}(M)$. When applied to the inclusion ${\rm GL}_1^{\cal J}(R) \to \Omega^{\cal J}(R)$, this realizes the inclusion of the units ${\rm GL}_1(\pi_*(R))$ of the graded commutative ring $\pi_*(R)$ (cf.\ \cite[Proposition 4.26]{SS12}). 

\begin{remark} We will make use of the graded signed monoid $\pi_{0, *}(M)$ when $M = {\rm GL}_1^{\cal J}(R)$ or $\Omega^{\cal J}(R)$. For e.g.\ $M = \langle x \rangle_*$ assigned to a well-behaved homotopy class $x \in \pi_d(R)$ as in the introduction, $\pi_{0, *}(\langle x \rangle_*)$ contains information about all stable homotopy groups of the sphere (see Remark \ref{rem:strictness}). This suggests that $\pi_{0, *}(M)$ is too complicated to be a tractable algebraic invariant even in examples we would like to regard as simple, although its qualitative properties are of theoretical use. 
\end{remark}

We now describe a technical result that will be used in some of our arguments. Consider a homotopy cartesian square \begin{equation}\label{veritcalsurjunits}\begin{tikzcd}[row sep = small]\widetilde{A} \ar{r} \ar{d} & \widetilde{B} \ar{d} \\ A \ar{r} & B\end{tikzcd}\end{equation} of positive fibrant commutative symmetric ring spectra, and let $\widetilde{R} \to \widetilde{A}$ be a map with $\widetilde{R}$ positive fibrant. Let $P$ be a commutative ${\cal J}$-space monoid, and suppose that we are given a map ${\rm GL}_1^{\cal J}(\widetilde{R}) \to P$. Use the factorization properties of the positive ${\cal J}$-model structure to build a commutative diagram\[\begin{tikzcd}[row sep = small]U^{\cal J} \ar[tail]{r} & G_{\widetilde{R}} \ar[tail]{d} \ar[two heads]{r}{\simeq} & {\rm GL}_1^{\cal J}(\widetilde{R}) \ar{d} \\ \vspace{10 mm} & P^{\rm c} \ar[two heads]{r}{\simeq} & P\end{tikzcd}\] with tailed arrows cofibrations and two-headed arrows fibrations, and $U^{\cal J}$ the initial commutative ${\cal J}$-space monoid. 

\begin{lemma}\label{lem:bousfieldfriedlander} In the situation described above, assume further that the right-hand vertical morphism in \eqref{veritcalsurjunits} induces a surjection ${\rm GL}_1(\pi_*(\widetilde{B})) \to {\rm GL}_1(\pi_*(B)).$ Then the square \[\begin{tikzcd}[row sep = small]P^{\rm c} \boxtimes_{G_{\widetilde{R}}} {\rm GL}_1^{\cal J}(\widetilde{A}) \ar{r} \ar{d} & P^{\rm c, gp} \boxtimes_{G_{\widetilde{R}}} {\rm GL}_1^{\cal J}(\widetilde{B})  \ar{d} \ar{d} \\ P^{\rm c} \boxtimes_{G_{\widetilde{R}}} {\rm GL}_1^{\cal J}(A) \ar{r} & P^{\rm c, gp} \boxtimes_{G_{\widetilde{R}}} {\rm GL}_1^{\cal J}(B)\end{tikzcd} \] of commutative ${\cal J}$-space monoids is homotopy cartesian.
\end{lemma}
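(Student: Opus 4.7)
The plan is to follow the strategy of Proposition~\ref{prop:matherscube} very closely. First I invoke \cite[Corollary 11.4]{SS12} to reduce the problem to showing that the square becomes homotopy cartesian after applying $(-)_{h\mathcal{J}}$. Since $G_{\widetilde{R}}$ is cofibrant and both $G_{\widetilde{R}} \to P^{\rm c}$ and its composite with $P^{\rm c} \to P^{\rm c, gp}$ are cofibrations, I model each pushout corner by the two-sided bar construction $B^\boxtimes$, and then \cite[Lemma 2.11]{Sag14} translates $(B^\boxtimes(-,-,-))_{h\mathcal{J}}$ into the cartesian bar construction $B^\times((-)_{h\mathcal{J}}, (-)_{h\mathcal{J}}, (-)_{h\mathcal{J}})$ of simplicial sets.

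Next I realize this as the geometric realization of a square of bisimplicial sets and check that it is pointwise (i.e., at each bar level $k$) homotopy cartesian. At each level, the square factors as a product of three squares: a trivially homotopy cartesian square with horizontal maps $(P^{\rm c})_{h\mathcal{J}} \to (P^{\rm c, gp})_{h\mathcal{J}}$ and identity vertical maps, the constant square on $((G_{\widetilde{R}})_{h\mathcal{J}})^k$, and the square of $(-)_{h\mathcal{J}}$ applied to ${\rm GL}_1^{\mathcal{J}}$ of \eqref{veritcalsurjunits}. The last of these is homotopy cartesian: $\Omega^{\mathcal{J}}$ is right Quillen in \eqref{quillenadjunction} so it sends the homotopy cartesian square \eqref{veritcalsurjunits} to a homotopy cartesian square of commutative $\mathcal{J}$-space monoids, and $(-)^\times$ is right adjoint to the inclusion of grouplike commutative $\mathcal{J}$-space monoids, so the composite ${\rm GL}_1^{\mathcal{J}} = (-)^\times \circ \Omega^{\mathcal{J}}$ preserves the homotopy cartesian property before level-wise $(-)_{h\mathcal{J}}$.

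The final step, and the main technical point, is to apply the Bousfield--Friedlander theorem \cite[Theorem B.4]{BF78} to this pointwise homotopy cartesian square of bisimplicial sets, exactly as in the proof of Proposition~\ref{prop:matherscube}. The right-hand column is a pushout over the grouplike $G_{\widetilde{R}} \simeq {\rm GL}_1^{\mathcal{J}}(\widetilde{R})$ of the grouplike $P^{\rm c, gp}$ and the grouplike ${\rm GL}_1^{\mathcal{J}}(\widetilde{B})$ or ${\rm GL}_1^{\mathcal{J}}(B)$, so it is itself grouplike; this yields the $\pi_*$-Kan condition. The hypothesis that ${\rm GL}_1(\pi_*(\widetilde{B})) \to {\rm GL}_1(\pi_*(B))$ is surjective is precisely what ensures that the right-hand vertical map is $\pi_0$-surjective, hence a Kan fibration on vertical path components. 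The main subtlety I expect to need to verify carefully is the asymmetry between $P^{\rm c}$ in the left column and $P^{\rm c, gp}$ in the right column, but the product decomposition at each bar level above shows that this asymmetry contributes only the trivially homotopy cartesian factor, so I do not anticipate a genuine obstacle there beyond careful bookkeeping.
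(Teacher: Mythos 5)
Your proof follows the same strategy as the paper's (reduce via bar constructions and \cite[Corollary 11.4]{SS12}, \cite[Lemma 2.11]{Sag14} to a square of bisimplicial sets, verify the Bousfield--Friedlander hypotheses, apply \cite[Theorem B.4]{BF78}), and the way you use the hypotheses — grouplikeness for the $\pi_*$-Kan condition, the unit surjectivity for the vertical $\pi_0$-fibration — matches the paper. Your product-decomposition argument for the pointwise homotopy cartesianness (including the observation that ${\rm GL}_1^{\cal J}$ carries the cartesian square \eqref{veritcalsurjunits} to a cartesian square, which implicitly uses that a homotopy pullback of grouplike objects is again grouplike) fills in a step the paper states without argument, and is correct.
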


\begin{proof} The square in question is modelled by the square \[\begin{tikzcd}[row sep = small]B^\boxtimes(P^{\rm c}, G_{\widetilde{R}}, {\rm GL}_1^{\cal J}(\widetilde{A})) \ar{r} \ar{d} & B^\boxtimes(P^{\rm c, gp}, G_{\widetilde{R}}, {\rm GL}_1^{\cal J}(\widetilde{B})) \ar{d} \\ B^\boxtimes(P^{\rm c}, G_{{\widetilde{R}}}, {\rm GL}_1^{\cal J}(A)) \ar{r} & B^\boxtimes(P^{\rm c, gp}, G_{\widetilde{R}}, {\rm GL}_1^{\cal J}(B)) \end{tikzcd}\] of two-sided bar constructions in ${\cal J}$-spaces. We wish to argue with the Bousfield--Friedlander theorem as in the proof of Proposition \ref{prop:matherscube}. The cofibrancy hypotheses ensure that we can indeed reduce to checking whether the square \[\begin{tikzcd}[row sep = small]B^\times_{\bullet}(P_{h{\cal J}}^{\rm c}, (G_{\widetilde{R}})_{h{\cal J}}, ({\rm GL}_1^{\cal J}(\widetilde{A}))_{h{\cal J}}) \ar{r} \ar{d} & B^\times_{\bullet}(P^{\rm c, gp}_{h{\cal J}}, (G_{\widetilde{R}})_{h{\cal J}}, ({\rm GL}_1^{\cal J}(\widetilde{B}))_{h{\cal J}}) \ar{d} \\ B^\times_{\bullet}(P_{h{\cal J}}^{\rm c}, (G_{\widetilde{R}})_{h{\cal J}}, ({\rm GL}_1^{\cal J}(A))_{h{\cal J}}) \ar{r} & B^\times_{\bullet}(P^{\rm c, gp}_{h{\cal J}}, (G_{\widetilde{R}})_{h{\cal J}}, ({\rm GL}_1^{\cal J}(B))_{h{\cal J}}) \end{tikzcd}\]  of bisimplicial sets is cartesian after realization. The square of bisimplicial sets is pointwise homotopy cartesian and the objects on the right-hand side satisfy the $\pi_*$-Kan condition as all commutative ${\cal J}$-space monoids involved are grouplike.  Combining \cite[Corollary 4.17, Proposition 4.26]{SS12}, we find that the condition on the units ensures that the morphism ${\rm GL}_1^{\cal J}(\widetilde{B}) \to {\rm GL}_1^{\cal J}(B)$ is virtually surjective so that the right-hand vertical morphism in the square of bisimplicial sets induces a Kan fibration on vertical path components. Hence the Bousfield--Friedlander theorem \cite[Theorem B.4]{BF78} applies to conclude the proof. 
\end{proof}

\subsection{Replete juggling} Let us also record the following consequence of the proof of \cite[Lemma 3.12]{Lun21}, which relies on the Bousfield--Friedlander theorem:

\begin{lemma}\label{lem:repbasechange} Let $P \to M$ be a cofibration of cofibrant commutative ${\cal J}$-space monoids. The canonical map \[P \boxtimes_{(P \boxtimes P)^{\rm rep}} (M \boxtimes M)^{\rm rep} \xrightarrow{} (M \boxtimes_P M)^{\rm rep}\] is a ${\cal J}$-equivalence, where all repletions are formed with respect to the natural multiplication maps. 
\end{lemma}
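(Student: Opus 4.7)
The plan is to prove this result by a direct application of the Bousfield--Friedlander theorem \cite[Theorem B.4]{BF78}, in the spirit of Proposition \ref{prop:matherscube} and Lemma \ref{lem:bousfieldfriedlander}. Both sides will be modelled by bisimplicial sets after passing to Bousfield--Kan homotopy colimits over $\mathcal{J}$, and the essential geometric input is Mather's cube lemma (Proposition \ref{prop:matherscube}) combined with the description of repletion as a homotopy pullback $N^{\rm rep} \simeq N^{\rm gp} \times^h_{M^{\rm gp}} M$ relative to the appropriate base.

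The first step is to verify that the canonical comparison map is an equivalence after applying the group completion functor $(-)^{\rm gp}$. Since $(-)^{\rm gp}$ is a left adjoint it preserves homotopy pushouts, and one has $(N^{\rm rep})^{\rm gp} \simeq N^{\rm gp}$ because repletion is a weak equivalence in the group-completion model structure. A direct calculation then yields
\[
\bigl(P \boxtimes_{(P \boxtimes P)^{\rm rep}} (M \boxtimes M)^{\rm rep}\bigr)^{\rm gp} \simeq P^{\rm gp} \boxtimes_{P^{\rm gp} \boxtimes P^{\rm gp}} (M^{\rm gp} \boxtimes M^{\rm gp}) \simeq M^{\rm gp} \boxtimes_{P^{\rm gp}} M^{\rm gp} \simeq (M \boxtimes_P M)^{\rm gp}.
\]
Since $(M \boxtimes_P M)^{\rm rep} \to M$ is replete by construction and the comparison map is a group-completion equivalence, it then suffices to prove that the source $X := P \boxtimes_{(P \boxtimes P)^{\rm rep}} (M \boxtimes M)^{\rm rep}$ is itself replete over $M$; uniqueness of fibrant replacements in the group-completion model structure relative to $M$ then finishes the argument.

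To verify repleteness of $X$ over $M$, the plan is to apply Proposition \ref{prop:matherscube} to a cube whose bottom face is the cocartesian square presenting $X^{\rm gp} \simeq M^{\rm gp} \boxtimes_{P^{\rm gp}} M^{\rm gp}$ via group completions, and whose top face is the defining cocartesian square of $X$. The vertical maps connect each corner to its group completion, and the vertical faces are homotopy cartesian by the repletion formula $N^{\rm rep} \simeq N^{\rm gp} \times^h_{(\text{base})^{\rm gp}} (\text{base})$ applied with the relevant base (either $P$ or $M$). The repleteness hypothesis of the cube lemma is guaranteed by $(M \boxtimes M)^{\rm rep} \to M$ being replete by construction, and virtual surjectivity follows from surjectivity of the multiplication maps $P^{\rm gp} \boxtimes P^{\rm gp} \to P^{\rm gp}$ and $M^{\rm gp} \boxtimes M^{\rm gp} \to M^{\rm gp}$ on $\pi_0$.

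The main obstacle is the careful bookkeeping of repletion relative to the two different bases: $(P \boxtimes P)^{\rm rep}$ is formed over $P$ while $(M \boxtimes M)^{\rm rep}$ and $(M \boxtimes_P M)^{\rm rep}$ are formed over $M$. Ensuring that each vertical face of the cube is genuinely homotopy cartesian despite this mismatch requires interpolating through the intermediate repletion $(P \boxtimes P)^{\rm rep}_M$ over $M$, and exploiting the cofibration hypothesis on $P \to M$ to keep track of cofibrancy and homotopical meaningfulness of the bar constructions that model the pushouts at the level of bisimplicial sets.
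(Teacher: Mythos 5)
Your first step—computing that both sides have the same group completion $(M \boxtimes_P M)^{\rm gp}$ and reducing to showing that $X := P \boxtimes_{(P \boxtimes P)^{\rm rep}} (M \boxtimes M)^{\rm rep}$ is replete over $M$—is correct and a reasonable reorganization. The problem is the cube-lemma step you propose to finish with, which fails on two counts.

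First, Proposition~\ref{prop:matherscube} concludes that the \emph{top face} of the cube is homotopy cocartesian; it does not conclude that a vertical face is homotopy cartesian. In your cube both the top face (defining pushout of $X$) and the bottom face (its group completion, cocartesian because $(-)^{\rm gp}$ preserves pushouts) are already known to be cocartesian, so the lemma, even if its hypotheses held, would simply return something you fed in and would say nothing about the exactness square $X \to X^{\rm gp}$ over $M \to M^{\rm gp}$, which involves $M$ (not a vertex of your cube) and is the square you actually need to be cartesian.

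Second, the hypotheses do not hold. One of the four vertical faces of your cube compares $(P \boxtimes P)^{\rm rep} \to (P \boxtimes P)^{\rm gp}$ with $(M \boxtimes M)^{\rm rep} \to (M \boxtimes M)^{\rm gp}$. Pasting with the defining cartesian square for $(M \boxtimes M)^{\rm rep}$ over $M$, and then with the one for $(P \boxtimes P)^{\rm rep}$ over $P$, one finds that this face is homotopy cartesian if and only if the square with vertices $P, P^{\rm gp}, M, M^{\rm gp}$ is cartesian, i.e.\ if and only if $P \to M$ is exact. But $P \to M$ is only assumed to be a cofibration. The "repletion formula" you invoke identifies $(P \boxtimes P)^{\rm rep}$ as a pullback over $P^{\rm gp}$ and $(M \boxtimes M)^{\rm rep}$ as a pullback over $M^{\rm gp}$; these being over different bases is precisely the obstruction, and interpolating through an auxiliary repletion of $P \boxtimes P$ over $M$ will only work if $P \to M$ is exact.

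The paper sidesteps this entirely: it does not invoke Proposition~\ref{prop:matherscube} as a black box but instead applies the Bousfield--Friedlander theorem directly to the square of two-sided bar constructions
\[
\begin{tikzcd}[row sep=small]
B^\boxtimes(P, (P\boxtimes P)^{\rm rep}, (M\boxtimes M)^{\rm rep}) \ar{r}\ar{d} & B^\boxtimes(P^{\rm gp},(P\boxtimes P)^{\rm gp},(M\boxtimes M)^{\rm gp}) \ar{d}\\
B^\boxtimes(P,P,M) \ar{r} & B^\boxtimes(P^{\rm gp},P^{\rm gp},M^{\rm gp}),
\end{tikzcd}
\]
which models the exactness square for $X$ over $M$. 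Here each simplicial level is a $\boxtimes$-product, and cartesianity is checked \emph{factor by factor}: each factor $P$, $(P\boxtimes P)^{\rm rep}$, $(M\boxtimes M)^{\rm rep}$ is compared only to its own group completion, using the two cartesian squares defining the repletions. Crucially this never pits $(P\boxtimes P)^{\rm rep}$ against $(M\boxtimes M)^{\rm rep}$, so the spurious exactness requirement on $P \to M$ never arises. To repair your proof, replace the cube-lemma step with this levelwise BF argument.
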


\begin{proof} We can argue exactly as in \cite[Lemma 3.12]{Lun21}. Since there is a natural isomorphism $M \boxtimes_P M \cong P \boxtimes_{(P \boxtimes P)} (M \boxtimes M)$ and group completions commute with homotopy pushouts \cite[Lemma 2.9]{Lun21}, it suffices to prove that the square of two-sided bar constructions \[\begin{tikzcd}[row sep = small]B^\boxtimes(P, (P \boxtimes P)^{\rm rep}, (M \boxtimes M)^{\rm rep}) \ar{r} \ar{d} & B^\boxtimes(P^{\rm gp}, (P \boxtimes P)^{\rm gp}, (M \boxtimes M)^{\rm gp}) \ar{d} \\ B^\boxtimes(P, P, M) \ar{r} & B^\boxtimes(P^{\rm gp}, P^{\rm gp}, M^{\rm gp})\end{tikzcd}\] is homotopy cartesian. Using the homotopy cartesian squares for the repletions $(P \boxtimes P)^{\rm rep}$ and $(M \boxtimes M)^{\rm rep}$, we see that we can argue with the Bousfield--Friedlander theorem as in the proof of Proposition \ref{prop:matherscube}. 
\end{proof}

\section{Graded ${\Bbb E}_{\infty}$-spaces}\label{sec:gradedeinfinity} Having established the necessary results on commutative ${\cal J}$-space monoids that involve the Bousfield--Friedlander theorem, we now pass to its underlying $\infty$-category. 

\subsection{The homotopy theory of graded ${\Bbb E}_{\infty}$-spaces}\label{subsec:htpythygr} Let ${\cal C}{\cal S}^{\cal J}_{\infty}$ denote the $\infty$-category underlying the positive ${\cal J}$-model structure on commutative ${\cal J}$-space monoids. By \cite[Example 4.1.7.6]{Lur17}, the fact that $({\cal C}{\cal S}^{\cal J}, \boxtimes, U^{\cal J})$ is a symmetric monoidal model category implies that ${\cal C}{\cal S}^{\cal J}_{\infty}$ is a symmetric monoidal $\infty$-category. We shall refer to its objects as ($QS^0$-)graded ${\Bbb E}_{\infty}$-spaces. Let us record some properties of the category of graded ${\Bbb E}_{\infty}$-spaces here: 

\begin{enumerate}
\item The Quillen adjunction \eqref{quillenadjunction} induces an adjunction \[{\Bbb S}^{\cal J}[-] \colon {\cal C}{\cal S}^{\cal J}_{\infty} \rightleftarrows{} {\cal C}{\rm Sp}^\Sigma_{\infty} \simeq {\rm CAlg}({\rm Sp}) \colon \Omega^{\cal J}(-)\] relating the categories of graded ${\Bbb E}_{\infty}$-spaces and ${\Bbb E}_{\infty}$-ring spectra. 
\item Similarly, the Quillen adjunction \[{\rm colim}_{\cal J} \colon {\cal S}^{\cal J} \rightleftarrows {\cal S} \colon {\rm const}_{\cal J}\] of \cite[Proposition 6.23]{SS12} gives rise to an adjunction of underlying $\infty$-categories. We shall denote the resulting left adjoint by $(-)_{h{\cal J}}$. 
\item As the group completion model structure ${\cal C}{\cal S}^{\cal J}_{\rm gp}$ is a left Bousfield localization of the positive model structure on ${\cal C}{\cal S}^{\cal J}$, the resulting category of grouplike graded ${\Bbb E}_{\infty}$-spaces ${\cal C}{\cal S}^{\cal J}_{\infty, {\rm gp}}$ is a localization of ${\cal C}{\cal S}^{\cal J}_{\infty}$. We model group completions by the corresponding localization functor $(-)^{\rm gp} \colon {\cal C}{\cal S}^{\cal J}_{\infty} \to {\cal C}{\cal S}^{\cal J}_{\infty, {\rm gp}}$. 
\item Associating to a positive fibrant commutative ${\cal J}$-space monoid $M$ its units $M^\times$ is a right adjoint to the inclusion of grouplike objects. In fact, the \emph{units model structure} ${\cal C}{\cal S}^{\cal J}_{\rm un}$ in commutative ${\cal J}$-space monoids of \cite[Theorem 5.12]{Sag16} is a right Bousfield localization, and is Quillen equivalent to the group completion model structure \cite[Corollary 5.13]{Sag16}. This induces a right adjoint $(-)^\times \colon {\cal C}{\cal S}^{\cal J}_{\infty} \to {\cal C}{\cal S}^{\cal J}_{\infty, {\rm un}} \simeq {\cal C}{\cal S}^{\cal J}_{\infty, {\rm gp}}$ to the inclusion ${\cal C}{\cal S}^{\cal J}_{\infty, {\rm gp}} \to {\cal C}{\cal S}^{\cal J}_{\infty}$.  

\end{enumerate}

All definitions stated and results proved in Section \ref{sec:commjspace} are homotopically meaningful and thus have natural analogs in this setting that we will use throughout.  

\subsection{Pulling back units}  Let $R$ be an ${\Bbb E}_{\infty}$-ring and let $J$ be an $R$-module. Recall that a \emph{derivation} of $R$ with values in $J$ is an augmented morphism of ${\Bbb E}_{\infty}$-rings $d \colon R \to R \oplus J$ (with ring structure on the target as in \cite[Remark 7.3.4.15]{Lur17}), so that the space of derivations is ${\rm Map}_{{\rm CAlg}_{/R}}(R, R \oplus J)$. We say that $\widetilde{R} \to R$ is a \emph{square-zero extension} by the $R$-module $J$ if there is a cartesian diagram of ${\Bbb E}_{\infty}$-rings \[\begin{tikzcd}[row sep = small]\widetilde{R} \ar{r} \ar{d} & R \ar{d}{d} \\ R \ar{r}{d_0} & R \oplus J[1],\end{tikzcd}\] where $d_0$ denotes the trivial derivation. 

\begin{lemma}\label{lem:unitspullback} Let $\widetilde{R} \to R$ be a square-zero extension of ${\Bbb E}_{\infty}$-rings. Then the square \[\begin{tikzcd}[row sep = small]{\rm GL}_1^{\cal J}(\widetilde{R}) \ar{r} \ar{d} & \ar{d} \Omega^{\cal J}(\widetilde{R}) \\  {\rm GL}_1^{\cal J}(R) \ar{r} & \Omega^{\cal J}(R)\end{tikzcd}\] is cartesian. 
\end{lemma}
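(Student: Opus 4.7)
The plan is to reduce the claim to a purely algebraic statement about homotopy groups, then verify that statement using the standard characterization of a square-zero extension.

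\textbf{Step 1 (Reduction to homotopy groups).} The unit inclusion ${\rm GL}_1^{\cal J}(R) = \Omega^{\cal J}(R)^\times \to \Omega^{\cal J}(R)$ is a ``component inclusion'': it is $(-1)$-truncated, with image consisting of those connected components labelled by units in the graded signed monoid $\pi_{0,*}(\Omega^{\cal J}(R)) \cong \pi_*(R)$. Since pullbacks preserve $(-1)$-truncated morphisms, the canonical comparison map
\[
{\rm GL}_1^{\cal J}(\widetilde{R}) \longrightarrow {\rm GL}_1^{\cal J}(R) \times_{\Omega^{\cal J}(R)} \Omega^{\cal J}(\widetilde{R})
\]
is a morphism between two $(-1)$-truncated subobjects of $\Omega^{\cal J}(\widetilde{R})$. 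The source picks out the components labelled by units $x \in \pi_*(\widetilde{R})^\times$, while the target picks out those labelled by elements whose image $\overline{x} \in \pi_*(R)$ is a unit. Hence the comparison is an equivalence if and only if, for every $d$, an element $x \in \pi_d(\widetilde{R})$ is a unit precisely when its image $\overline{x} \in \pi_d(R)$ is.

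\textbf{Step 2 (Key algebraic step).} The forward implication is trivial. For the converse, a square-zero extension $\widetilde{R} \to R$ by $J$ fits in a fiber sequence of $\widetilde{R}$-modules
\[
J \longrightarrow \widetilde{R} \longrightarrow R,
\]
where the $\widetilde{R}$-module structure on $J$ factors through $R$ (cf.\ \cite[Remark 7.4.1.7]{Lur17}). Given $x \in \pi_d(\widetilde{R})$ with $\overline{x}$ a unit, multiplication by $x$ yields a morphism of fiber sequences of $\widetilde{R}$-modules
\[
\begin{tikzcd}[row sep = small]
\Sigma^{-d} J \ar{r} \ar{d}{\overline{x}\cdot} & \Sigma^{-d}\widetilde{R} \ar{r} \ar{d}{x\cdot} & \Sigma^{-d}R \ar{d}{\overline{x}\cdot} \\
J \ar{r} & \widetilde{R} \ar{r} & R.
\end{tikzcd}
\]
The right-hand vertical is an equivalence since $\overline{x}$ is a unit; the left-hand vertical is likewise an equivalence since the $\widetilde{R}$-action on $J$ factors through $R$, so that multiplication by $x$ on $J$ agrees with that by $\overline{x}$. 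By the two-out-of-three property in the stable $\infty$-category of $\widetilde{R}$-modules, the middle vertical $x \cdot \colon \Sigma^{-d}\widetilde{R} \to \widetilde{R}$ is an equivalence. Defining $y \in \pi_{-d}(\widetilde{R})$ as the preimage of $1 \in \pi_0(\widetilde{R})$ under the inverse gives $xy = 1$, and by commutativity of $\widetilde{R}$ we conclude that $x$ is a unit.

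The main obstacle is the careful justification of Step 1, in particular the identification of ${\rm GL}_1^{\cal J}$ as a $(-1)$-truncated subobject of $\Omega^{\cal J}$ in the $\infty$-category ${\cal C}{\cal S}^{\cal J}_{\infty}$. This should follow from unwinding the units model structure of \cite[Theorem 5.12]{Sag16}, which exhibits $(-)^\times$ as a right adjoint to the inclusion of grouplike objects that restricts to a localization at the relevant components. Once that reduction is in place, Step 2 is a clean stable-category manipulation.
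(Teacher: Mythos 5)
Your proof is correct, but it takes a genuinely different route from the paper's. The paper's argument is a purely formal reduction to the split case: it places the desired square as a face of a commutative cube whose left- and right-hand faces are obtained by applying the limit-preserving functors ${\rm GL}_1^{\cal J}$ and $\Omega^{\cal J}$ to the defining pullback square of the square-zero extension $\widetilde{R} \to R$, and then by pasting reduces everything to the analogous square for the split square-zero extension $R \oplus J[1]$, which is handled by citing \cite[Proof of Lemma 11.27]{Rog09}. Your argument instead identifies ${\rm GL}_1^{\cal J}(-) \to \Omega^{\cal J}(-)$ as a $(-1)$-truncated map (a component inclusion) of ${\cal J}$-spaces, so that the lemma reduces to the algebraic assertion that $\pi_*(\widetilde{R}) \to \pi_*(R)$ reflects units, and you verify that directly with a fiber-sequence two-out-of-three argument, using the crucial fact that the $\widetilde{R}$-module structure on $J$ factors through $R$. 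The trade-offs are roughly: the paper's proof is softer and pushes all the content into a known computation for the split case, whereas yours is self-contained and isolates the underlying ring-theoretic phenomenon (a surjection with square-zero kernel reflects units) in a form that transfers cleanly to the derived setting. The one point to tighten, which you flagged yourself, is the justification of Step 1: it rests on the facts that $(-)_{h{\cal J}}$ preserves homotopy cartesian squares of ${\cal J}$-spaces \cite[Corollary 11.4]{SS12}, that the forgetful functor from commutative ${\cal J}$-space monoids to ${\cal J}$-spaces is conservative and limit-preserving, and that $M^\times_{h{\cal J}} \to M_{h{\cal J}}$ is by construction an inclusion of path components; together these make the reduction to $\pi_{0,*}$ rigorous.
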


\begin{proof} Consider the commutative cube 

\[\begin{tikzcd}[row sep = tiny]
&
{\rm GL}_1^{\cal J}(R)
\ar{rr}{}
\ar[]{dd}[near end]{}
& & \Omega^{\cal J}(R)
\ar{dd}{}
\\
{\rm GL}_1^{\cal J}(\widetilde{R})
\ar[crossing over]{rr}[near start]{}
\ar{dd}[swap]{}
\ar{ur}
& &\Omega^{\cal J}(\widetilde{R})
\ar{ur}
\\
&
{\rm GL}_1^{\cal J}(R \oplus J[1])
\ar[near start]{rr}{}
& & \Omega^{\cal J}(R \oplus J[1])
\\
{\rm GL}_1^{\cal J}(R)
\ar{ur}
\ar{rr}
& & \Omega^{\cal J}(R)
\ar[crossing over, leftarrow, near start]{uu}{}
\ar[swap]{ur}{}
\end{tikzcd}\] 
of graded ${\Bbb E}_{\infty}$-spaces. As the left- and right-hand faces are cartesian, it suffices to show that the back face is cartesian. By the pasting lemma, this follows from the fact that the square \[\begin{tikzcd}[row sep = small]{\rm GL}_1^{\cal J}(R \oplus J[1]) \ar{r} \ar{d} & \Omega^{\cal J}(R \oplus J[1]) \ar{d} \\ {\rm GL}_1^{\cal J}(R) \ar{r} & \Omega^{\cal J}(R)\end{tikzcd}\] is cartesian (cf.\ \cite[Proof of Lemma 11.27]{Rog09}). 
\end{proof}

\subsection{Replete morphisms and base-change} One convenient property of split replete morphisms is their behavior under base-change. The analogous statement in the context of integral monoids (but without the split condition) is \cite[Proposition I.4.2.1(6(b))]{Ogu18}. 

\begin{lemma}\label{lem:repletebasechangestable} Let $p_M \colon \widetilde{M} \to M$ be a replete morphism of commutative ${\cal J}$-space monoids which admits a section. Let $f \colon N \to M$ be a map. Then the base-change $p_N \colon \widetilde{N} \to N$ of $p_M$ along $f$ is also split replete. 
\end{lemma}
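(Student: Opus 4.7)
\emph{Plan.} The lemma asks us to show that the base-change $p_N$ both admits a section and is replete (i.e.\ virtually surjective and exact). The section is immediate from the universal property of the pullback: given a section $s$ of $p_M$, the pair $(\id_N, s \circ f) \colon N \to N \times \widetilde{M}$ factors through $\widetilde{N} = N \times_M \widetilde{M}$ to give a section of $p_N$. Virtual surjectivity is then automatic, as $p_N^{\gp}$ inherits a section and is therefore surjective on $\pi_0$.

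The substance is exactness, and the idea is to use the section of $p_M$ to trivialize $p_M$ along its fiber, and then observe that trivial (projection) maps behave well under base-change. Since grouplike commutative $\cJ$-space monoids model connective spectra over the sphere, they form a semiadditive $\infty$-category in which split fiber sequences split as biproducts. Applied to the split map $p_M^{\gp}$ with section $s^{\gp}$, this yields an equivalence of grouplike graded $\bE_\infty$-spaces $\widetilde{M}^{\gp} \simeq M^{\gp} \times F$ over $M^{\gp}$, where $F$ is the fiber of $p_M^{\gp}$. Combining with the repleteness of $p_M$,
\[
\widetilde{M} \simeq M \times_{M^{\gp}} \widetilde{M}^{\gp} \simeq M \times_{M^{\gp}} (M^{\gp} \times F) \simeq M \times F,
\]
so $p_M$ is equivalent over $M$ to the projection $M \times F \to M$.

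Base-change now commutes with the product on the right: $\widetilde{N} = N \times_M \widetilde{M} \simeq N \times F$, and $p_N$ is the projection $N \times F \to N$. Since group completion of graded $\bE_\infty$-spaces preserves finite products (it is $\Omega B$ applied componentwise), we obtain $\widetilde{N}^{\gp} \simeq N^{\gp} \times F^{\gp} \simeq N^{\gp} \times F$, where the last identification uses that $F$ is already grouplike. The relevant square
\[
\begin{tikzcd}[row sep = small]
N \times F \ar{r} \ar{d} & N^{\gp} \times F \ar{d} \\
N \ar{r} & N^{\gp}
\end{tikzcd}
\]
is the product of the unit map $N \to N^{\gp}$ with the identity on $F$, which is manifestly cartesian; this establishes the exactness of $p_N$.

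\emph{Main obstacle.} The principal technical inputs are (a) the splitting $\widetilde{M}^{\gp} \simeq M^{\gp} \times F$ of the split map $p_M^{\gp}$, which relies on the semiadditivity of grouplike graded $\bE_\infty$-spaces, and (b) the preservation of finite products by group completion. Both are standard, but care is needed to carry out the manipulations at the level of the underlying $\infty$-category rather than in the point-set model. If one prefers to avoid the semiadditivity argument, an alternative is to apply Mather's cube lemma (Proposition~\ref{prop:matherscube}) to a cube whose vertical faces encode the repleteness squares of $p_M$ and $p_N$; the splitness of $p_M$ ensures the virtual-surjectivity hypothesis, and the lemma transports the exactness of $p_M$ to $p_N$.
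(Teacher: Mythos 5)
The core idea — trivialize $p_M$ along its fiber and base change — is the same as in the paper, which invokes \cite[Lemma~2.12]{Lun21} to produce an equivalence $M \boxtimes W(\widetilde{M}) \xrightarrow{\simeq} \widetilde{M}$ over and under $M$, with $W(\widetilde{M})$ the pullback of $U^{\cal J} \to M^{\gp} \leftarrow \widetilde{M}^{\gp}$. However, several steps in your write-up do not hold as stated, and they are precisely the places where the paper has to do real work.

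First, the semiadditivity claim is false. Grouplike commutative $\cJ$-space monoids model connective spectra \emph{over the sphere}, i.e.\ the slice $(\mathrm{Sp}^{\mathrm{cn}})_{/\bS}$. This category is not pointed (the initial object is $0 \to \bS$ while the terminal object is $\mathrm{id}_\bS$), hence not semiadditive, so ``split fiber sequences split as biproducts'' is not available there. What \emph{is} true, and what the paper uses, is that the $\boxtimes$-decomposition of Lemma~2.12 of \cite{Lun21} already packages the trivialization at the level of commutative $\cJ$-space monoids. Note this is a $\boxtimes$-splitting, not a $\times$-splitting: $\boxtimes$ is the coproduct (Day convolution) in $\CSJ$, and is genuinely different from the pointwise categorical product $\times$ that your argument invokes (e.g., $(M\boxtimes N)_{h\cJ}\simeq M_{h\cJ}\times N_{h\cJ}$ by \cite[Lemma 2.11]{Sag14}, whereas $(M\times N)_{h\cJ}$ is not of this form). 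Consequently, the final ``relevant square is manifestly cartesian'' step is \emph{not} manifest: with $\boxtimes$ in place of $\times$, one must still verify that
\[
\begin{tikzcd}[row sep=small]
N\boxtimes W(\widetilde{M}) \ar{r}\ar{d} & N^{\gp}\boxtimes W(\widetilde{M}) \ar{d}\\
N \ar{r} & N^{\gp}
\end{tikzcd}
\]
is cartesian. The paper does this by reducing to the square of underlying $\bE_\infty$-spaces via \cite[Corollary 11.4]{SS12}, at which point \cite[Lemma 2.11]{Sag14} converts $\boxtimes$ into a genuine product of spaces and makes the claim transparent. A similar caveat applies to ``group completion preserves finite products'': the ingredient the paper needs, and uses, is that group completion commutes with $\boxtimes$ (i.e.\ with homotopy pushouts, \cite[Lemma 2.9]{Lun21}), not with categorical products.

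Finally, the alternative route via Proposition~\ref{prop:matherscube} is misdirected: that lemma concludes that the \emph{top} face of a cube is \emph{cocartesian} given cartesian vertical faces and a cocartesian bottom face; it does not transport the cartesianness of a repleteness square. The paper does not invoke it here. To repair your argument, replace $\times$ with $\boxtimes$ throughout, cite \cite[Lemma 2.12]{Lun21} for the split decomposition in place of the semiadditivity claim, and justify the final cartesian square by passing to $(-)_{h\cJ}$ as described above.
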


\begin{proof} By \cite[Lemma 2.12]{Lun21}, there is an equivalence $M \boxtimes W(\widetilde{M}) \xrightarrow{\simeq} \widetilde{M}$ over and under $M$, where $W(\widetilde{M})$ is the grouplike graded ${\Bbb E}_{\infty}$-space defined as the pullback of the diagram $U^{\cal J} \xrightarrow{} M^{\rm gp} \xleftarrow{} \widetilde{M}^{\rm gp}$. Pulling back along $f$, we obtain a cube \[\begin{tikzcd}[row sep = tiny]
&
\widetilde{N}
\ar{rr}{}
\ar[]{dd}[near end]{}
& & \widetilde{M}
\ar{dd}{}
\\
N \boxtimes W(\widetilde{M})
\ar[crossing over]{rr}[near start]{}
\ar{dd}[swap]{}
\ar{ur}{} 
& & M \boxtimes W(\widetilde{M})
\ar{ur}{\simeq}
\\
&
N
\ar[near start]{rr}{}
& & M
\\
N
\ar{ur}{=}
\ar{rr}
& & M
\ar[crossing over, leftarrow, near start]{uu}{}
\ar{ur}{=}
\end{tikzcd}\] of graded ${\Bbb E}_{\infty}$-spaces. The back and right-hand vertical faces are cartesian, and the front face is readily seen to be so by combining \cite[Corollary 11.4]{SS12} and \cite[Lemma 2.11]{Sag14}; we demonstrate the style of the argument below. From this, we see that $N \boxtimes W(\widetilde{M}) \xrightarrow{} \widetilde{N}$ is an equivalence over $N$, and so it remains to prove that $N \boxtimes W(\widetilde{M}) \to N$ is replete. By \cite[Lemma 2.9]{Lun21} and the fact that $W(\widetilde{M})$ is grouplike, we have equivalences \[N^{\rm gp} \boxtimes W(\widetilde{M}) \xrightarrow{\simeq} N^{\rm gp} \boxtimes W(\widetilde{M})^{\rm gp} \xrightarrow{\simeq} (N \boxtimes W(\widetilde{M}))^{\rm gp}.
\]Thus we have to show that \begin{equation}\label{splitreplete}\begin{tikzcd}[row sep = small]N \boxtimes W(\widetilde{M}) \ar{r} \ar{d} & N^{\rm gp} \boxtimes W(\widetilde{M}) \ar{d} \\ N \ar{r} & N^{\rm gp}\end{tikzcd}\end{equation} is cartesian. By \cite[Corollary 11.4]{SS12} this can be checked after applying $(-)_{h{\cal J}}$, and \cite[Lemma 2.11]{Sag14} implies that the resulting square is equivalent to one of the form \[\begin{tikzcd}[row sep = small]N_{h{\cal J}} \times W(\widetilde{M})_{h{\cal J}} \ar{r} \ar{d} & N^{\rm gp}_{h{\cal J}} \times W(\widetilde{M})_{h{\cal J}} \ar{d} \\ N_{h{\cal J}} \ar{r} & N^{\rm gp}_{h{\cal J}},\end{tikzcd}\] which is cartesian. 
\end{proof}

We remark that the square \eqref{splitreplete} being cartesian only depended upon $W(\widetilde{M})$ being a grouplike graded ${\Bbb E}_{\infty}$-space augmented over the initial object $U^{\cal J}$. This combines with \cite[Lemma 2.12]{Lun21} to prove:

\begin{corollary}\label{cor:splitreplete} Let $M \to N \to M$ be an augmented graded ${\Bbb E}_{\infty}$-space. Then $N \to M$ is replete if and only if there is an equivalence $M \boxtimes G \xrightarrow{\simeq} N$, where $G$ is a grouplike commutative ${\cal J}$-space monoid $G$ augmented over $U^{\cal J}$. \qed
\end{corollary}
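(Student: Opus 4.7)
The plan is to derive both directions of the corollary directly from \cite[Lemma 2.12]{Lun21} and from the observation recorded in the paragraph immediately preceding the corollary; no further substantive argument should be required.

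For the forward implication, I would assume $N \to M$ is replete (it is split by hypothesis) and apply \cite[Lemma 2.12]{Lun21} verbatim as in the proof of Lemma \ref{lem:repletebasechangestable}. This yields an equivalence $M \boxtimes W(N) \xrightarrow{\simeq} N$ over and under $M$, where $W(N) := U^{\cal J} \times_{M^{\rm gp}} N^{\rm gp}$. The object $W(N)$ is grouplike as a pullback of grouplike graded ${\Bbb E}_{\infty}$-spaces, and the projection to the first factor provides an augmentation $W(N) \to U^{\cal J}$. Hence $G := W(N)$ fulfills the stated conditions.

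For the reverse implication, I would assume $N \simeq M \boxtimes G$ with $G$ grouplike and augmented over $U^{\cal J}$. The augmentation $G \to U^{\cal J}$ induces a section $N \simeq M \boxtimes G \to M \boxtimes U^{\cal J} \simeq M$ of the structure map, so $N \to M$ is virtually surjective (it is split after group completion). For exactness, the argument at the end of the proof of Lemma \ref{lem:repletebasechangestable} applies verbatim with $W(\widetilde{M})$ replaced by $G$: it shows that the square analogous to \eqref{splitreplete}, namely
\[\begin{tikzcd}[row sep = small] M \boxtimes G \ar{r} \ar{d} & M^{\rm gp} \boxtimes G \ar{d} \\ M \ar{r} & M^{\rm gp}, \end{tikzcd}\]
is cartesian, using only that $G$ is grouplike and augmented over $U^{\cal J}$. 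Since $G$ is grouplike, group completion commutes with the box product \cite[Lemma 2.9]{Lun21} to give $(M \boxtimes G)^{\rm gp} \simeq M^{\rm gp} \boxtimes G$, so this square is precisely the repletion square of $N \to M$.

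I do not foresee any real obstacle here; the corollary is essentially an extraction of content already contained in Lemma \ref{lem:repletebasechangestable} and its proof. The only care required is to verify that the exactness argument in that proof genuinely uses nothing about $W(\widetilde{M})$ beyond being grouplike and augmented over $U^{\cal J}$, which is clear from inspection.
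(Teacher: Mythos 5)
Your proposal is correct and follows exactly the route the paper takes: the forward direction is an application of \cite[Lemma 2.12]{Lun21} as in Lemma \ref{lem:repletebasechangestable}, and the reverse direction is precisely the observation recorded in the sentence preceding the corollary (that the argument establishing the cartesian square \eqref{splitreplete} uses only that $G$ is grouplike and augmented over $U^{\cal J}$), combined with \cite[Lemma 2.9]{Lun21} to identify the group completion of $M \boxtimes G$.
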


\section{Logarithmic ring spectra}\label{sec:logringspectra} We now review some preliminary material on log ring spectra, following \cites{Rog09, SS12, Sag14, RSS15, RSS18}. 

\begin{definition} A \emph{pre-log ring spectrum} $(A, M, \alpha)$ consists of an ${\Bbb E}_{\infty}$-ring $A$, a graded ${\Bbb E}_{\infty}$-space $M$, and a map $\alpha \colon M \to \Omega^{\cal J}(A)$ of graded ${\Bbb E}_\infty$-spaces. 
\end{definition}

By adjunction, the structure map $\alpha$ gives rise to a unique map $\overline{\alpha} \colon {\Bbb S}^{\cal J}[M] \to A$ of ${\Bbb E}_{\infty}$-rings. We denote by ${\rm PreLog}$ the resulting category of pre-log ring spectra. The idea to use the Grothendieck construction in the following remark is based on a suggestion of an anonymous referee of the paper \cite{BLPO23}.

\begin{remark}[The $\infty$-category ${\rm PreLog}$]\label{rem:grothendieck} Let us give a more formal construction of the $\infty$-category ${\rm PreLog}$, as well as an explanation that it is presentable. To do this, we shall relate the projective model structure on pre-log ring spectra (as used in \cite{RSS15}) to the Grothendieck construction of model categories of Harpaz--Prasma \cite{HP15}. This exhibits ${\rm PreLog}$ as an instance of the $\infty$-categorical Grothendieck construction \cite[Proposition 3.1.2]{HP15}. 

We recall that ${\cal C}{\rm Sp}^{\Sigma}$ denotes the category of commutative symmetric ring spectra with its positive model structure. By \cite[Remark 6.1.3]{HP15}, the model structure obtained on the Grothendieck construction of the pseudo-functor \[{\cal C}{\rm Sp}^{\Sigma} \to {\rm ModCat}, \quad A \mapsto {\cal C}{\rm Sp}^{\Sigma}_{A/}\] is the projective model structure on the arrow category of ${\cal C}{\rm Sp}^{\Sigma}$. We would like to say that the projective model structure on pre-log ring spectra is obtained as the Grothendieck construction of the pseudo-functor \begin{equation}\label{pseudofunctor}{\cal C}{\cal S}^{\cal J} \xrightarrow{{\Bbb S}^{\cal J}[-]} {\cal C}{\rm Sp}^{\Sigma} \to {\rm ModCat}, \quad M \mapsto {\cal C}{\rm Sp}^{\Sigma}_{{\Bbb S}^{\cal J}[M]/}.\end{equation} Indeed, inspecting \cite[Definition 3.0.4]{HP15}, we find that this gives exactly the weak equivalences, fibrations, and cofibrations of the projective model structure of pre-log ring spectra. However, since ${\Bbb S}^{\cal J}[-]$ may not preserve \emph{all} weak equivalences, the composite \eqref{pseudofunctor} fails to be \emph{relative} in the sense of \cite[Definition 3.0.6]{HP15}: Weak equivalences do not necessarily induce Quillen equivalences. 

This is not a problem for our purposes. Precomposition with a cofibrant replacement functor determines a functor \[({\cal C}{\cal S}^{\cal J})^{\rm cof} \xrightarrow{{\Bbb S}^{\cal J}[-]} ({\cal C}{\rm Sp}^\Sigma)^{\rm cof} \to {\rm RelCat}, \quad M \mapsto {\cal C}{\rm Sp}^\Sigma_{{\Bbb S}^{\cal J}[M]/}\] which sends weak equivalences to Dwyer--Kan equivalences. As explained in \cite[Section 3.1]{HP15}, this determines a functor ${\cal C}{\cal S}^{\cal J}_{\infty} \to {\rm Cat}_{\infty}$, and we define ${\rm PreLog}$ to be its Grothendieck construction/unstraightening. Presentability of ${\rm PreLog}$ is now a consequence of Gepner--Haugseng--Nikolaus \cite[Theorem 10.3]{GHN17}. 
\end{remark}

\noindent The following remark is an informal summary of \cite[Construction 4.2]{Sag14}. 

\begin{remark}\label{rem:strictness} The examples discussed in the introduction, such as $({\rm ku}_p, \langle u \rangle_*)$, $(\ell_p, \langle v_1 \rangle_*)$, and $({\rm ko}_p, \langle \beta \rangle_*)$, all arise from homotopy classes $x \in \pi_d(R)$ that are \emph{strict} in a sense that we now elaborate upon.

As discussed in Section \ref{subsec:htpythygr}, any graded ${\Bbb E}_{\infty}$-space $M$ has an underlying ${\Bbb E}_{\infty}$-space $M_{h{\cal J}}$ with an augmentation to $QS^0$. We may associate to $x \in \pi_d(R)$ a graded ${\Bbb E}_{\infty}$-space $C(x)$ together with a map ${\Bbb S}^{\cal J}[C(x)] \to R$, and it is indeed the case that $C(x)_{h{\cal J}}$ is the free ${\Bbb E}_{\infty}$-space $\sqcup_{k \ge 0} B\Sigma_k$ on a single generator. Inverting $x$, we obtain a commutative outer square and a lifting problem \[\begin{tikzcd}[row sep = tiny]{\Bbb S}^{\cal J}[C(x)] \ar{rr} \ar{dd} \ar{dr} & & R \ar{dd} \\ \vspace{10 mm} & {\Bbb S}^{\cal J}[\langle x \rangle_*] \ar[dashed]{ur} \ar{dl} \\ {\Bbb S}^{\cal J}[C(x)^{\rm gp}] \ar{rr} & & R[x^{-1}].\end{tikzcd}\] The Barratt--Priddy--Quillen theorem implies that $C(x)^{\rm gp}_{h{\cal J}} \simeq QS^0$. By design, one ensures that $\langle x \rangle_{*, h{\cal J}} \simeq Q_{\ge 0}S^0 := QS^0 \times_{{\Bbb Z}} {\Bbb N}$; the non-negative path components of $QS^0$. The point is that $\langle x \rangle_{*, h{\cal J}}$, despite not being grouplike, enjoys the property that all of its path components are equivalent. This allows for the construction of $\langle \sqrt[n]{x} \rangle_*$, as described in e.g.\ \cite[Proof of Proposition 4.15]{Sag14} (see Section \ref{subsec:adjroots} for further discussion). 
\end{remark}

\subsection{Mapping spaces of pre-log ring spectra} For pre-log ring spectra $(A, M)$ and $(B, N)$, the space of maps ${\rm Map}_{\rm PreLog}((A, M), (B, N))$ sits in a cartesian square \begin{equation}\label{mappingspace}\begin{tikzcd}[row sep = small]{\rm Map}_{{\rm PreLog}}((A, M), (B, N)) \ar{r} \ar{d} & {\rm Map}_{{\cal C}{\cal S}^{\cal J}_{\infty}}(M, N) \ar{d} \\ {\rm Map}_{{\rm CAlg}}(A, B) \ar{r} & {\rm Map}_{{\rm CAlg}}({\Bbb S}^{\cal J}[M], A).\end{tikzcd}\end{equation} We observe that this follows from the description of ${\rm PreLog}$ as a Grothendieck construction sketched from Remark \ref{rem:grothendieck} by \cite[Proposition 2.4.4.3]{HTT}.

\subsection{The logification construction} The log condition in this setting reads:

\begin{definition} A pre-log ring spectrum $(A, M, \alpha)$ is \emph{log} if the map $\widetilde{\alpha}$ in the cartesian diagram \[\begin{tikzcd}[row sep = small]\alpha^{-1}{\rm GL}_1^{\cal J}(R) \ar{r}{\widetilde{\alpha}} \ar{d} & {\rm GL}_1^{\cal J}(R) \ar{d} \\ M \ar{r}{\alpha} & \Omega^{\cal J}(A)\end{tikzcd}\] of graded ${\Bbb E}_{\infty}$-spaces is an equivalence. 
\end{definition}

We shall write ${\rm Log}$ for the resulting category of log ring spectra. We now describe a left adjoint to the forgetful functor ${\rm Log} \to {\rm PreLog}$:

\begin{definition} If $(A, M, \alpha)$ is a pre-log ring spectrum, its \emph{logification} is defined by the cocartesian diagram \[\begin{tikzcd}[row sep = small]\alpha^{-1}{\rm GL}_1^{\cal J}(A) \ar{r}{\widetilde{\alpha}} \ar{d} & {\rm GL}_1^{\cal J}(A) \ar{d} \\ M \ar{r} & M^a,\end{tikzcd}\] with structure map $\alpha^a$ determined by $\alpha$ and the inclusion of the graded units ${\rm GL}_1^{\cal J}(A) \to \Omega^{\cal J}(A)$. The resulting pre-log ring spectrum $(A, M^a, \alpha^a)$ is log by \cite[Lemma 3.12]{Sag14}, while the resulting functor \[(-)^a \colon {\rm PreLog} \to {\rm Log}\] is left adjoint to the forgetful functor by \cite[Lemma 6.4]{Sag14}. 
\end{definition}

\begin{remark} In \cite[Section 3.2]{SSV16}, a \emph{log} model structure on the category of simplicial pre-log rings is established. It arises as a left Bousfield localization of the injective model structure on simplicial pre-log rings, and its fibrant objects are precisely those simplicial pre-log rings that satisfy the log condition. The \emph{verbatim} translation to the context of pre-log ring spectra is spelled out in the author's thesis \cite[Section 4.5.1]{Lun22}. The category ${\rm Log}$ is thus a localization of ${\rm PreLog}$, and we may model $(-)^a$ as the corresponding localization functor. We observe that, if $(A, M, \alpha)$ is log, the resulting morphism $(A, \alpha^{-1}{\rm GL}_1^{\cal J}(A)) \to (A, M)$ is adjoint to a map $(A, {\rm GL}_1^{\cal J}(A)) \to (A, M)$. 
\end{remark}

\subsection{Strict morphisms of log ring spectra} In our setup, we shall work with the following notion of strict morphisms:

\begin{definition} A map $(f, f^\flat) \colon (R, P) \to (A, M)$ of log ring spectra is \emph{strict} if the canonical map $(A, P^a) \to (A, M)$ is an equivalence. 
\end{definition}

The following is an analog of \cite[Proposition III.1.2.5]{Ogu18}:

\begin{lemma}\label{lem:strictiff} A morphism $(f, f^\flat) \colon (R, P) \to (A, M)$ of log ring spectra is strict if and only if the square \begin{equation}\label{squareofthelemma}\begin{tikzcd}[row sep = small](R, {\rm GL}_1^{\cal J}(R)) \ar{r}{(f, {\rm GL}_1^{\cal J}(f))} \ar{d} & (A, {\rm GL}_1^{\cal J}(A)) \ar{d} \\ (R, P) \ar{r}{(f, f^\flat)} & (A, M)\end{tikzcd}\end{equation} is cocartesian in the category of log ring spectra.
\end{lemma}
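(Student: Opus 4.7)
The plan is to identify the pushout of the span $(R, P) \leftarrow (R, {\rm GL}_1^{\cal J}(R)) \to (A, {\rm GL}_1^{\cal J}(A))$ in ${\rm Log}$ with $(A, P^a)$, where $P^a$ denotes the logification of the pre-log structure on $A$ whose structure map is the composite $P \to \Omega^{\cal J}(R) \to \Omega^{\cal J}(A)$ appearing in the definition of strictness. Granted this identification, the comparison map from the pushout to $(A, M)$ corresponds to the canonical map $(A, P^a) \to (A, M)$, so the square \eqref{squareofthelemma} is cocartesian in ${\rm Log}$ if and only if this canonical map is an equivalence, which is precisely the definition of strictness.

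To carry out the identification I would argue by Yoneda, testing both objects against an arbitrary log ring spectrum $(B, N)$. A preliminary step is to observe that $(R, {\rm GL}_1^{\cal J}(R))$ represents the functor $(B, N) \mapsto {\rm Map}_{\rm CAlg}(R, B)$ on ${\rm Log}$: combining the logification-forgetful adjunction with the cartesian square \eqref{mappingspace}, any ring map $R \to B$ lifts essentially uniquely to a map of log ring spectra, since a map ${\rm GL}_1^{\cal J}(R) \to N$ compatible with the ring structure must factor through $\alpha^{-1}{\rm GL}_1^{\cal J}(B) \simeq {\rm GL}_1^{\cal J}(B)$ by the log condition on $(B,N)$, and there is a canonical such factorization via the functoriality of graded units. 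The analogous statement holds for $(A, {\rm GL}_1^{\cal J}(A))$. Combining these observations with the universal property of the pushout gives
\[{\rm Map}_{\rm Log}\bigl((R,P) \sqcup_{(R, {\rm GL}_1^{\cal J}(R))} (A, {\rm GL}_1^{\cal J}(A)),\, (B,N)\bigr) \simeq {\rm Map}_{\rm Log}((R, P), (B, N)) \times_{{\rm Map}_{\rm CAlg}(R, B)} {\rm Map}_{\rm CAlg}(A, B).\]

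On the other hand, applying the logification adjunction to $(A, P^a)$ and expanding via \eqref{mappingspace} presents ${\rm Map}_{\rm Log}((A, P^a), (B, N))$ as the same iterated pullback: the data of a pre-log map $(A, P) \to (B, N)$ with structure map $P \to \Omega^{\cal J}(R) \to \Omega^{\cal J}(A) \to \Omega^{\cal J}(B)$ amounts to a ring map $A \to B$ together with a log map $(R,P) \to (B,N)$ whose underlying ring map $R \to B$ agrees with the composite $R \to A \to B$. Matching these two descriptions naturally in $(B, N)$ and invoking Yoneda yields the desired equivalence. The main subtlety is to track coherently the various structure maps through the cartesian square identifications; no input is required beyond the logification-forgetful adjunction, the mapping space formula \eqref{mappingspace}, and the log condition on $(B, N)$.
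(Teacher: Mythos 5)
Your argument is correct, but it takes a genuinely different route from the paper's. The paper observes that the analogous square in ${\rm PreLog}$, with bottom-right corner $(A, P)$ carrying the inverse-image pre-log structure, is cocartesian (a direct computation in the Grothendieck-construction model of ${\rm PreLog}$), then applies the colimit-preserving logification functor to conclude that the pushout in ${\rm Log}$ is $(A, P^a)$. For the converse direction it cites the stability of strict morphisms under cobase-change (\cite[Lemma 5.5]{SSV16}), noting that $(R, \GL_1^{\cal J}(R)) \to (A, \GL_1^{\cal J}(A))$ is strict. You instead compute the pushout via Yoneda, starting from the observation that the trivial log structure $R \mapsto (R, \GL_1^{\cal J}(R))$ is left adjoint to the forgetful functor ${\rm Log} \to {\rm CAlg}$, so that $(R, \GL_1^{\cal J}(R))$ corepresents $\Map_{\rm CAlg}(R, -)$; the pushout is then matched against $\Map_{\rm Log}((A, P^a), -)$ using the logification adjunction and the mapping-space formula \eqref{mappingspace}. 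Both routes culminate in the identification of the pushout with $(A, P^a)$, after which cocartesianness is equivalent to $(A, P^a) \xrightarrow{\simeq} (A, M)$, i.e.\ strictness, and you get both implications at once without the external citation. The paper's argument is more economical, since the ${\rm PreLog}$ pushout calculation is trivial, while yours front-loads the (correct, but needing a little care) corepresentability claim for the trivial log structure; verifying that claim cleanly requires mapping the defining pullback square for $\alpha^{-1}\GL_1^{\cal J}(B) \simeq \GL_1^{\cal J}(B)$ against $\GL_1^{\cal J}(R)$ to see that the relevant lift space is contractible, which your sketch gestures at but does not fully spell out.
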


\begin{proof} The square \[\begin{tikzcd}[row sep = small](R, {\rm GL}_1^{\cal J}(R)) \ar{r}{(f, {\rm id})} \ar{d} & (A, {\rm GL}_1^{\cal J}(R)) \ar{d} \\ (R, P) \ar{r}{(f, {\rm id})} & (A, P)\end{tikzcd}\] is cocartesian in the category of pre-log ring spectra. If $(f, f^\flat)$ is strict, the logification of this square is the cocartesian square of log ring spectra predicted by the lemma. Conversely, \eqref{squareofthelemma} being cocartesian implies that $(f, f^\flat)$ is the cobase-change of the strict morphism $(R, {\rm GL}_1^{\cal J}(R)) \to (A, {\rm GL}_1^{\cal J}(A))$, and hence it is itself strict  (by e.g.\ the argument of \cite[Lemma 5.5]{SSV16}). 
\end{proof}

\subsection{Repletion} As in the case of ordinary pre-log rings, the repletion construction extends from graded ${\Bbb E}_{\infty}$-spaces to pre-log ring spectra:

\begin{definition} Let $(B, N) \to (A, M)$ be a map of pre-log ring spectra with $N \to M$ virtually surjective. The \emph{repletion} $(B^{\rm rep}, N^{\rm rep}) \to (A, M)$ over $(A, M)$ is defined by setting $B^{\rm rep} := B \otimes_{{\Bbb S}^{\cal J}[N]} {\Bbb S}^{\cal J}[N^{\rm rep}]$.
\end{definition}

\begin{remark} In \cite[Proposition 9.1]{Lun21}, we explain how to adapt the localization that gives the group completion model structure to exhibit ${\rm PreLog}^{\rm rep}_{(A, M)//(A, M)}$ as a localization of ${\rm PreLog}_{(A, M)//(A, M)}$. 
\end{remark}

\section{The replete tangent bundle}\label{sec:repletetangent}

Following a quick recollection of the tangent bundle construction of \cite[Section 7.3]{Lur17}, we construct a presentable fibration over ${\rm Log}$ that we call the \emph{replete tangent bundle}. Using this, we prove Theorems \ref{thm:repletetangent} and \ref{thm:logcotangent}. In addition to the stable envelope construction, we shall also use \emph{pointed envelopes} \cite[Definition 1.1]{dagiv}. These only briefly appear (but are never defined) in \cite{Lur17}; for this reason, our exposition is at times closer to that of \cite{dagiv} than \cite{Lur17}. 

\subsection{Recollections on the tangent bundle} Recall from \cite[Definition 5.5.3.2]{HTT} that a map of simplicial sets is a \emph{presentable fibration} if it is a (co)Cartesian fibration and its fibers are presentable $\infty$-categories. Throughout, we shall be concerned with presentable fibrations ${\cal E} \to {\cal C}$ where ${\cal C}$ is a presentable $\infty$-category. 

We invite the reader to keep in mind the example of the ``evaluation at the codomain''-functor \begin{equation}\label{coevaluation}{\rm Fun}(\Delta^1, {\cal C}) \to {\rm Fun}(\{1\}, {\cal C}) \simeq {\cal C}, \quad (A \to B) \mapsto B,\end{equation} for a presentable $\infty$-category ${\cal C}$, in which case the fibers are equivalent to ${\cal C}_{/B}$. 

To any presentable $\infty$-category ${\cal C}$ one can associate a \emph{pointed envelope} and a \emph{stable envelope} \cite[Definition 1.1]{dagiv}. By \cite[Example 1.7]{dagiv}, the inclusion of the full subcategory of pointed objects ${\cal C}_* \to {\cal C}$ is a pointed envelope of ${\cal C}$, while by \cite[Example 1.4]{dagiv} the composite ${\rm Stab}({\cal C}) \xrightarrow{\Omega^\infty_{{\cal C}_*}} {\cal C}_* \xrightarrow{} {\cal C}$ is a stable envelope of ${\cal C}$, where ${\rm Stab}({\cal C})$ is the stabilization of ${\cal C}$. Pointed and stable envelopes are unique up to equivalence \cite[Remark 1.8]{dagiv}.  

\begin{definition}(\cite[Definition 1.1]{dagiv}) Let $p \colon {\cal E} \to {\cal C}$ be a presentable fibration. A \emph{pointed envelope} of $p$ is a categorical fibration $u \colon {\cal E}' \to {\cal E}$ such that 
\begin{enumerate}
\item The composite functor ${\cal E}' \xrightarrow{u} {\cal E} \xrightarrow{p} {\cal C}$ is a presentable fibration;
\item the functor $u$ carries $(p \circ u)$-Cartesian morphisms to $p$-Cartesian morphisms; and
\item for every object $C \in {\cal C}$, the map ${\cal E}'_C \to {\cal E}_C$ is a pointed envelope of ${\cal E}_C$.
\end{enumerate} 
The notion of a \emph{stable envelope} is obtained by exchanging ``pointed'' for ``stable'' in the last condition. 
\end{definition}

The tangent bundle of a presentable $\infty$-category ${\cal C}$ is an instance of the stable envelope construction:

\begin{definition}(\cite[Definition 7.3.1.9]{Lur17}) A \emph{tangent bundle} $T_{\cal C} \to {\rm Fun}(\Delta^1, {\cal C})$ of ${\cal C}$ is a stable envelope of \eqref{coevaluation}. 
\end{definition}

\subsection{Recollections on the ${\Bbb E}_{\infty}$-cotangent complex} We now specialize to the case where ${\cal C} = {\rm CAlg}$ is the category of ${\Bbb E}_{\infty}$-rings, and we model the stabilization of a presentable $\infty$-category by its category of spectrum objects. Upon taking fibers above an ${\Bbb E}_{\infty}$-ring $A$, we thus obtain the stable envelope ${\rm Sp}({\rm CAlg}_{/A})$ of ${\rm CAlg}_{/A}$; we informally depict this as
\[\begin{tikzcd}[row sep = small]{\rm Sp}({\rm CAlg}_{/A}) \ar{r} \ar{d} & {\rm CAlg}_{/A} \ar{d} \ar{r} & \{A\} \ar{d} \\ T_{{\rm CAlg}} \ar{r} & {\rm Fun}(\Delta^1, {\rm CAlg}) \ar{r} & {\rm CAlg}.\end{tikzcd}\] 
The analog of Theorem \ref{thm:quillen}(1) is \cite[Corollary 7.3.4.14]{Lur17}, which states that ${\rm Sp}({\rm CAlg}_{/A}) \simeq {\rm Mod}_A$. By \cite[Theorem 7.3.4.18]{Lur17}, we can think of an object of the tangent bundle $T_{\rm CAlg}$ as a pair $(A, J)$ for $J$ an $A$-module, and the lower horizontal composite above, then, informally reads $(A, J) \mapsto (A \oplus J \to A) \mapsto A$. 

\begin{definition}(\cite[Definitition 7.3.2.14]{Lur17}) The \emph{absolute cotangent complex} ${\Bbb L} \colon {\rm CAlg} \to T_{\rm CAlg}$ is the composite ${\rm CAlg} \xrightarrow{} {\rm Fun}(\Delta^1, {\rm CAlg}) \xrightarrow{} T_{{\rm CAlg}}$ of the diagonal embedding and a left adjoint to $T_{{\rm CAlg}} \to {\rm Fun}(\Delta^1, {\rm CAlg})$ relative to ${\rm CAlg}$ (in the sense of \cite[Definition 7.3.2.2]{Lur17}). 
\end{definition}

By \cite[Remark 7.3.2.17]{Lur17}, the absolute cotangent complex specializes to $\Sigma_A^{\infty}(A) := \Sigma_+^\infty(A) \in {\rm Sp}({\rm CAlg}_{/A})$ on each fiber. Under the equivalence ${\rm Sp}({\rm CAlg}_{/A}) \simeq {\rm Mod}_A$, this recovers the $A$-module computing topological Andr\'e--Quillen homology \cite[Remark 7.3.0.1]{Lur17}. See also Basterra--Mandell \cite{BM05}. 

\subsection{Towards the replete tangent bundle} We now aim to carry out these constructions in the context of log ring spectra. The category of pre-log ring spectra does not seem to admit a description as the category of algebras over an operad, and as such it does not immediately fit in the framework of Lurie's cotangent complex formalism. We will overcome this by following the same strategy as in Section \ref{sec:prelude} in this framework. 

We begin by establishing an analog of Corollary \ref{cor:replaugmented}. For this, we will need the following construction:

\begin{construction}\label{constr:splitlogstr} Let $(A, M)$ be a pre-log ring spectrum and consider an object $(B, N) \in {\rm PreLog}_{(A, M)//(A, M)}$. Applying graded units to the structure maps $A \to B \to A$, we obtain a diagram ${\rm GL}_1^{\cal J}(A) \to {\rm GL}_1^{\cal J}(B) \to {\rm GL}_1^{\cal J}(A)$ of grouplike commutative ${\cal J}$-space monoids, and so \cite[Lemma 2.12]{Lun21} applies to obtain an equivalence \begin{equation}\label{unitssplit}{\rm GL}_1^{\cal J}(A) \boxtimes ({\rm GL}_1^{\cal J}(B)/{\rm GL}_1^{\cal J}(A)) \xrightarrow{\simeq} {\rm GL}_1^{\cal J}(B),\end{equation} where ${\rm GL}_1^{\cal J}(B)/{\rm GL}_1^{\cal J}(A)$ is defined as the pullback of $U^{\cal J} \xrightarrow{} {\rm GL}_1^{\cal J}(A) \xleftarrow{} {\rm GL}_1^{\cal J}(B)$, where $U^{\cal J}$ is the initial object in ${\cal C}{\cal S}^{\cal J}_{\infty}$. The composites $M \xrightarrow{} N \xrightarrow{} \Omega^{\cal J}(B)$ and \[{\rm GL}_1^{\cal J}(B)/{\rm GL}_1^{\cal J}(A) \xrightarrow{} {\rm GL}_1^{\cal J}(A) \boxtimes ({\rm GL}_1^{\cal J}(B)/{\rm GL}_1^{\cal J}(A)) \xrightarrow{\simeq} {\rm GL}_1^{\cal J}(B) \xrightarrow{} \Omega^{\cal J}(B)\] induce a pre-log structure $M \boxtimes ({\rm GL}_1^{\cal J}(B)/{\rm GL}_1^{\cal J}(A)) \to \Omega^{\cal J}(B)$ on $B$. 
\end{construction}

The following is an analog of Lemma \ref{lem:replogiremains}.

\begin{lemma}\label{lem:replogiremains2} Let $(B, N) \in {\rm PreLog}^{\rm rep}_{(A, M)//(A, M)}$ be an augmented replete pre-log ring spectrum over a pre-log ring spectrum $(A, M, \alpha)$. Then $(B, N^a)$ is naturally equivalent to $(B, M^a \boxtimes ({\rm GL}_1(B)/{\rm GL}_1(A)))$. In particular, $(B, N^a)$ is an augmented replete $(A, M^a)$-algebra. 
\end{lemma}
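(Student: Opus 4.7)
Following the strategy of Lemma \ref{lem:replogiremains} for the discrete case, I first apply Corollary \ref{cor:splitreplete} to the split replete map $M \to N \to M$: this produces a grouplike graded ${\Bbb E}_{\infty}$-space $G$, augmented over $U^{\cJ}$, together with an equivalence $N \simeq M \boxtimes G$ over and under $M$. This is the spectral counterpart of the splitting from \cite[Lemma 3.11]{Rog09}. Since $G$ is grouplike, the composite $G \to N \to \Omega^{\cJ}(B)$ factors through $\GLoneJof{(B)}$; and combining the augmentation $G \to U^{\cJ}$ with the retract $A \to B \to A$ provided by the augmentation of $(B, N)$, the further composite $G \to \GLoneJof{(B)} \to \GLoneJof{(A)}$ is null. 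The splitting \eqref{unitssplit} of Construction \ref{constr:splitlogstr} thus allows us to factor the structure map of $G$ through $H := \GLoneJof{(B)}/\GLoneJof{(A)}$, so that $\alpha_N \colon N \to \Omega^{\cJ}(B)$ factors as $M \boxtimes G \xrightarrow{\alpha \boxtimes g} \Omega^{\cJ}(A) \boxtimes \GLoneJof{(B)} \xrightarrow{\mu} \Omega^{\cJ}(B)$.

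Mirroring the discrete proof, I then form the commutative diagram
\[\begin{tikzcd}[row sep=small]
\alpha^{-1}\GLoneJof{(A)} \boxtimes G \ar{r} \ar{d} & \GLoneJof{(A)} \boxtimes \GLoneJof{(B)} \ar{r} \ar{d} & \GLoneJof{(B)} \ar{d} \\
M \boxtimes G \ar{r} & \Omega^{\cJ}(A) \boxtimes \GLoneJof{(B)} \ar{r} & \Omega^{\cJ}(B)
\end{tikzcd}\]
and verify that both squares are cartesian. The right square is the spectral incarnation of the retract argument: an element of $\Omega^{\cJ}(A)$ multiplied by a unit of $B$ is a unit of $B$ precisely when the $\Omega^{\cJ}(A)$-factor is itself a unit, which is verified on homotopy groups using that the retract $B \to A$ preserves units. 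The left square reduces to the defining pullback of $\alpha^{-1}\GLoneJof{(A)}$. Pasting produces the equivalence $\alpha_N^{-1}\GLoneJof{(B)} \simeq \alpha^{-1}\GLoneJof{(A)} \boxtimes G$ and identifies the induced map into $\GLoneJof{(B)} \simeq \GLoneJof{(A)} \boxtimes H$ as a $\boxtimes$-product $i \boxtimes \bar g$ of the inclusion $\alpha^{-1}\GLoneJof{(A)} \to \GLoneJof{(A)}$ and the factorization $\bar g \colon G \to H$ from the previous paragraph.

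Finally, the logification is the pushout
\[N^a = \GLoneJof{(B)} \sqcup_{\alpha_N^{-1}\GLoneJof{(B)}} N \simeq \bigl(\GLoneJof{(A)} \boxtimes H\bigr) \sqcup_{\alpha^{-1}\GLoneJof{(A)} \boxtimes G} \bigl(M \boxtimes G\bigr).\]
Since $\boxtimes$ is the coproduct in $\CSJ_{\infty}$ (being the tensor product of commutative monoid objects) and both maps in the defining span separate as $\boxtimes$-products of maps in each factor, a direct analysis of the universal property evaluates the pushout factor-by-factor:
\[N^a \simeq \bigl(\GLoneJof{(A)} \sqcup_{\alpha^{-1}\GLoneJof{(A)}} M\bigr) \boxtimes \bigl(H \sqcup_G G\bigr) \simeq M^a \boxtimes H.\]
Augmented repleteness of $(B, N^a)$ over $(A, M^a)$ is then immediate from Corollary \ref{cor:splitreplete}, as $H$ is grouplike augmented over $U^{\cJ}$. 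The main obstacle I anticipate is the cartesianness of the right square above: since $\boxtimes$ does not preserve pullbacks in $\CSJ_{\infty}$ in general, the verification proceeds by a pointwise argument on path components using the description of graded units as a union of components of $\Omega^{\cJ}(-)$.
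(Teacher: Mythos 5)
Your proof is correct and follows essentially the same strategy as the paper's: the splitting $N \simeq M \boxtimes G$ (with $G$ identified with the paper's $W(N)$), the cartesian $3 \times 2$ diagram that computes the preimage of units along the structure map of $(B,N)$ as $\alpha^{-1}{\rm GL}_1^{\cal J}(A) \boxtimes G$, and then the logification pushout. The only cosmetic differences are that you evaluate the final pushout factor-by-factor where the paper pastes cocartesian rectangles and then invokes \eqref{unitssplit}, and that the cartesianness of your right-hand square — which you correctly attribute to a retract argument verified on homotopy groups — is isolated in the paper as Lemma \ref{lem:unitsmultiplytomakecartesian}.
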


\begin{proof} Our proof is an adaptation of that of Lemma \ref{lem:replogiremains} in this context. By the assumption that the structure map $N \to M$ is replete, \cite[Lemma 2.12]{Lun21} applies to obtain an equivalence $M \boxtimes W(N) \xrightarrow{\simeq} N$, where $W(N)$ is defined as the pullback of $U^{\cal J} \xrightarrow{} M^{\rm gp} \xleftarrow{} N^{\rm gp}$. The resulting (equivalent) structure map $M \boxtimes W(N) \xrightarrow{\simeq} N \xrightarrow{} \Omega^{\cal J}(B)$ factors as the lower horizontal composite in the diagram 
\[\begin{tikzcd}[row sep = small]\alpha^{-1}{\rm GL}_1^{\cal J}(A) \boxtimes W(N) \ar{d} \ar{r} & {\rm GL}_1^{\cal J}(A) \boxtimes {\rm GL}_1^{\cal J}(B) \ar{d} \ar{r} & {\rm GL}_1^{\cal J}(B) \ar{d} \\ M \boxtimes W(N) \ar{r} & \Omega^{\cal J}(A) \boxtimes {\rm GL}_1^{\cal J}(B) \ar{r} & \Omega^{\cal J}(B)\end{tikzcd}\] of commutative ${\cal J}$-space monoids. The left-hand square is clearly cartesian, while the right-hand square is cartesian by Lemma \ref{lem:unitsmultiplytomakecartesian} below. It follows that the logification $N^a$ is determined by the outer cocartesian rectangle  \[\begin{tikzcd}[row sep = small]\alpha^{-1}{\rm GL}_1^{\cal J}(A) \boxtimes W(N) \ar{d} \ar{r} & {\rm GL}_1^{\cal J}(A) \boxtimes {\rm GL}_1^{\cal J}(B) \ar{d} \ar{r} & {\rm GL}_1^{\cal J}(B) \ar{d} \\ M \boxtimes W(N) \ar{r} & M^a \boxtimes {\rm GL}_1^{\cal J}(B) \ar{r} & N^a.\end{tikzcd}\] The left-hand square is cocartesian by definition, and so the right-hand square is cocartesian. Since both squares in the rectangle  \[\begin{tikzcd}[row sep = small] {\rm GL}_1^{\cal J}(A) \ar{d} \ar{r} & {\rm GL}_1^{\cal J}(A) \boxtimes {\rm GL}_1^{\cal J}(B) \ar{d} \ar{r} & {\rm GL}_1^{\cal J}(B) \ar{d} \\ M^a \ar{r} & M^a \boxtimes {\rm GL}_1^{\cal J}(B) \ar{r} & N^a\end{tikzcd}\] are cocartesian, the result follows from the splitting \eqref{unitssplit}.  
\end{proof}

In the proof of Lemma \ref{lem:replogiremains2}, we used:

\begin{lemma}\label{lem:unitsmultiplytomakecartesian} Let $A$ be an ${\Bbb E}_{\infty}$-ring and let $B \in {\rm CAlg}_{A//A}$ be an augmented $A$-algebra. The square \[\begin{tikzcd}[row sep = small]{\rm GL}_1^{\cal J}(A) \boxtimes {\rm GL}_1^{\cal J}(B) \ar{d} \ar{r} & {\rm GL}_1^{\cal J}(B) \ar{d} \\ \Omega^{\cal J}(A) \boxtimes {\rm GL}_1^{\cal J}(B) \ar{r} & \Omega^{\cal J}(B)\end{tikzcd}\] is cartesian. 
\end{lemma}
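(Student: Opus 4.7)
The plan is to reduce to a statement at the level of path components, taking advantage of the fact that ${\rm GL}_1^{\cal J}(R) \to \Omega^{\cal J}(R)$ is always an inclusion of connected components (namely those detected by units in $\pi_*(R)$) for any positive fibrant ${\Bbb E}_{\infty}$-ring $R$. By \cite[Corollary 11.4]{SS12}, cartesianness of a square of graded ${\Bbb E}_{\infty}$-spaces can be checked after applying $(-)_{h{\cal J}}$, and \cite[Lemma 2.11]{Sag14} converts $\boxtimes$ into the cartesian product at this level, so it suffices to show that the induced square
\[
\begin{tikzcd}[row sep = small]
({\rm GL}_1^{\cal J}(A))_{h{\cal J}} \times ({\rm GL}_1^{\cal J}(B))_{h{\cal J}} \ar{r} \ar{d} & ({\rm GL}_1^{\cal J}(B))_{h{\cal J}} \ar{d} \\
(\Omega^{\cal J}(A))_{h{\cal J}} \times ({\rm GL}_1^{\cal J}(B))_{h{\cal J}} \ar{r} & (\Omega^{\cal J}(B))_{h{\cal J}}
\end{tikzcd}
\]
of spaces is cartesian, with horizontals given by the multiplication in $B$.

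Both vertical maps are now inclusions of path components: the right one by construction of ${\rm GL}_1^{\cal J}(B)$, the left one because it is an analogous inclusion in the first factor smashed with an identity. In particular, on each component that the verticals hit, they are equivalences; hence the square is cartesian iff at the level of $\pi_0$ the components in the top-left equal the preimage of the components of the top-right under the bottom horizontal map. Checking this preimage statement is where the augmented hypothesis enters: an element $([a], [u]) \in \pi_*(A) \oplus {\rm GL}_1(\pi_*(B))$ maps to $[a] \cdot [u] \in \pi_*(B)$, and since $[u]$ is already a unit, this is a unit iff $[a]$ is a unit when viewed in $\pi_*(B)$. Applying the augmentation $\epsilon \colon B \to A$ to an inverse of $[a]$ in $\pi_*(B)$ produces an inverse in $\pi_*(A)$ (because the retraction $\pi_*(B) \to \pi_*(A)$ fixes elements of the image of $A \to B$), so $[a]$ is a unit in $\pi_*(B)$ iff it is a unit in $\pi_*(A)$. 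This identifies the preimage with ${\rm GL}_1(\pi_*(A)) \oplus {\rm GL}_1(\pi_*(B))$, matching the top-left.

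The main point of care is to ensure that the reduction via $(-)_{h{\cal J}}$ transports the multiplication maps to those expected in the displayed square of spaces; this should follow from $(-)_{h{\cal J}}$ being a symmetric monoidal left adjoint together with the fact that the $\boxtimes$-multiplication in $B$ arises from the multiplication on $\Omega^{\cal J}(B)$. Everything else is a straightforward consequence of the "inclusion of connected components" argument.
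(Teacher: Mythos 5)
Your proposal is correct and takes essentially the same approach as the paper's: reduce via $(-)_{h\mathcal{J}}$ using \cite[Corollary 11.4]{SS12} and \cite[Lemma 2.11]{Sag14}, observe that the vertical maps are inclusions of path components (the paper phrases this by showing the left and right faces of a cube are cartesian), and then verify the $\pi_0$-level statement using the retraction to see that units of $\pi_*(A)$ are exactly the elements mapping to units in $\pi_*(B)$.
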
 

\begin{proof} This is very similar to the argument of Lemmas 
\ref{lem:unitspullback} and \ref{lem:repletebasechangestable}. Combining \cite[Corollary 11.4]{SS12} and \cite[Lemma 2.11]{Sag14} as in the proof of Lemma \ref{lem:repletebasechangestable}, we find that it suffices to prove that the front face of the commutative cube \[\begin{tikzpicture}[baseline= (a).base]
\node[scale=.85] (a) at (0,0){\begin{tikzcd}[column sep = tiny, row sep = tiny]
&
\pi_0({\rm GL}_1^{\cal J}(A)_{h{\cal J}}) \times \pi_0({\rm GL}_1^{\cal J}(B)_{h{\cal J}})
\ar{rr}{}
\ar[]{dd}[near end]{}
& & \pi_0({\rm GL}_1^{\cal J}(B)_{h{\cal J}})
\ar{dd}{}
\\
{\rm GL}_1^{\cal J}(A)_{h{\cal J}} \times {\rm GL}_1^{\cal J}(B)_{h{\cal J}}
\ar[crossing over]{rr}[near start]{}
\ar{dd}[swap]{}
\ar{ur}
& & {\rm GL}_1^{\cal J}(B)_{h{\cal J}}
\ar{ur}
\\
&
\pi_0(\Omega^{\cal J}(A)_{h{\cal J}}) \times \pi_0({\rm GL}_1^{\cal J}(B)_{h{\cal J}})
\ar[near start]{rr}{}
& & \pi_0(\Omega^{\cal J}(B)_{h{\cal J}})
\\
\Omega^{\cal J}(A)_{h{\cal J}} \times {\rm GL}_1^{\cal J}(B)_{h{\cal J}} 
\ar{ur}
\ar{rr}
& & \Omega^{\cal J}(B)_{h{\cal J}}
\ar[crossing over, leftarrow, near start]{uu}{}
\ar[swap]{ur}{}
\end{tikzcd}};\end{tikzpicture}\] is cartesian. The right-hand face is cartesian, as for any commutative ${\cal J}$-space monoid $M$, an element of the graded signed monoid $\pi_{0, *}(M)$ is a unit if and only if it represents one in $\pi_0(M_{h{\cal J}})$ (cf.\ \cite[Corollary 4.16]{SS12} and the surrounding discussion). For the same reason, the left-hand face is cartesian. The back face is isomorphic to \[\begin{tikzcd}[row sep = small]{\rm GL}_1(\pi_*(A))/\{\pm 1\} \times {\rm GL}_1(\pi_*(B))/\{\pm 1\} \ar{r} \ar{d} & {\rm GL}_1(\pi_*(B))/\{\pm 1\} \ar{d} \\ \pi_*(A)/\{\pm 1\} \times {\rm GL}_1(\pi_*(B))/\{\pm 1\} \ar{r} & \pi_*(B)/\{\pm 1\}.\end{tikzcd}\] Since $A \to B$ admits a retraction, it is precisely the units of $\pi_*(A)$ that map to units in $\pi_*(B)$, so that this square is cartesian. Hence the front face of the cube is cartesian, as desired.  
\end{proof}

\begin{proposition}\label{prop:augmentedalgrepl} Let $(A, M)$ be a log ring spectrum. The forgetful functor ${\rm Log}^{\rm rep}_{(A, M)//(A, M)} \to {\rm CAlg}_{A//A}$ is an equivalence.  
\end{proposition}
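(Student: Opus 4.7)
The plan is to run the argument of Corollary \ref{cor:replaugmented} in the spectral setting, with Lemma \ref{lem:replogiremains2} substituting for the linear Lemma \ref{lem:replogiremains}. The candidate quasi-inverse is
\[F \colon \mathrm{CAlg}_{A//A} \to \mathrm{Log}^{\rm rep}_{(A, M)//(A, M)}, \quad B \mapsto \bigl(B, (M \boxtimes (\mathrm{GL}_1^{\mathcal{J}}(B)/\mathrm{GL}_1^{\mathcal{J}}(A)))^a\bigr),\]
where the pre-log structure before logification is that of Construction \ref{constr:splitlogstr} and the augmentation and unit over $(A, M)$ are induced functorially from the augmentation $B \to A$ together with the natural inclusion $M \to M \boxtimes (\mathrm{GL}_1^{\mathcal{J}}(B)/\mathrm{GL}_1^{\mathcal{J}}(A))$.

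The first item to check is that $F$ actually lands in $\mathrm{Log}^{\rm rep}_{(A, M)//(A, M)}$. Before logification, the structure map $(A, M) \to (B, M \boxtimes (\mathrm{GL}_1^{\mathcal{J}}(B)/\mathrm{GL}_1^{\mathcal{J}}(A)))$ is split replete by Corollary \ref{cor:splitreplete}, since $\mathrm{GL}_1^{\mathcal{J}}(B)/\mathrm{GL}_1^{\mathcal{J}}(A)$ is a grouplike graded $\mathbb{E}_\infty$-space augmented over $U^{\mathcal{J}}$. Logification preserves this: because $(A, M)$ is already log, running the cocartesian-square argument of Lemma \ref{lem:replogiremains2} in reverse shows that $F(B)$ still splits as $M \boxtimes G$ for some grouplike $G$ augmented over $U^{\mathcal{J}}$, so Corollary \ref{cor:splitreplete} applies again. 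That $U \circ F \simeq \mathrm{id}_{\mathrm{CAlg}_{A//A}}$ is then immediate, since neither Construction \ref{constr:splitlogstr} nor logification alters the underlying $\mathbb{E}_\infty$-ring or its augmentation. For the reverse composite, any $(B, N) \in \mathrm{Log}^{\rm rep}_{(A, M)//(A, M)}$ satisfies $N \simeq N^a$, and Lemma \ref{lem:replogiremains2} directly produces a natural equivalence $(B, N) \simeq (B, (M \boxtimes (\mathrm{GL}_1^{\mathcal{J}}(B)/\mathrm{GL}_1^{\mathcal{J}}(A)))^a) = F(B)$.

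The main technical obstacle I anticipate is the upgrade from pointwise equivalences to natural equivalences of $\infty$-functors, which is automatic in the linear Corollary \ref{cor:replaugmented} but requires care here. This reduces to verifying that the splitting $\mathrm{GL}_1^{\mathcal{J}}(A) \boxtimes (\mathrm{GL}_1^{\mathcal{J}}(B)/\mathrm{GL}_1^{\mathcal{J}}(A)) \xrightarrow{\simeq} \mathrm{GL}_1^{\mathcal{J}}(B)$ of Construction \ref{constr:splitlogstr} and the splitting $M \boxtimes W(N) \xrightarrow{\simeq} N$ underlying Lemma \ref{lem:replogiremains2} are functorial in the arguments. Both splittings are produced as pullbacks of canonical maps among grouplike graded $\mathbb{E}_\infty$-spaces together with applications of \cite[Lemma 2.12]{Lun21}, each step of which is manifestly functorial, so this passage should go through without incident.
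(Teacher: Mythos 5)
Your plan—construct an explicit quasi-inverse $F$ and verify both composites are the identity—diverges from the paper's strategy and founders precisely where you yourself flag the difficulty. The paper proves the forgetful functor $U$ is an equivalence by checking two properties of $U$ \emph{itself}: essential surjectivity (a pointwise statement, supplied object by object from Lemma \ref{lem:replogiremains2}) and full faithfulness (computed directly from the pullback description \eqref{mappingspace} of mapping spaces in ${\rm Log}$, using the $({\Bbb S}^{\cal J}[-],\Omega^{\cal J})$-adjunction, cobase-change along $U^{\cal J} \to M$, and the fact that ${\rm GL}_1^{\cal J}(-)$ is right adjoint to the inclusion of grouplike objects). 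Neither step requires assembling the formula $B \mapsto (B,(M \boxtimes G_B)^a)$ into an $\infty$-functor, which is exactly what every part of your argument presupposes.

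The caveat you append at the end is the crux, not a footnote, and ``each step is manifestly functorial, so this passage should go through without incident'' is not a proof in the $\infty$-categorical setting. The equivalence $M \boxtimes W(N) \xrightarrow{\simeq} N$ of \cite[Lemma 2.12]{Lun21} is produced pointwise; to verify $F \circ U \simeq \mathrm{id}$ you would need to exhibit it as a coherent natural transformation of $\infty$-functors on $\mathrm{Log}^{\rm rep}_{(A,M)//(A,M)}$, and the intermediate passage through $\mathrm{GL}_1^{\cal J}(A)\boxtimes\mathrm{GL}_1^{\cal J}(B)$ in Lemma \ref{lem:replogiremains2}'s proof introduces a rectangle of cocartesian squares whose naturality in $(B,N)$ has to be tracked. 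Even the ``immediate'' identification $U\circ F \simeq \mathrm{id}$ presupposes $F$ has been constructed as a functor, not merely defined on objects. The cleaner route—and the reason the paper argues as it does—is to never construct $F$: prove full faithfulness by the mapping-space computation and essential surjectivity pointwise, and deduce the existence of the inverse from the general theory rather than building it by hand.
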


\begin{proof} Throughout this proof, we shall use the shorthand $G_B$ for ${\rm GL}_1^{\cal J}(B)/{\rm GL}_1^{\cal J}(A)$. The functor is essentially surjective, as for any $B \in {\rm CAlg}_{A//A}$, it lifts to the replete augmented $(A, M)$-algebra $(B, M \boxtimes G_B)$ by Lemma \ref{lem:replogiremains2}. It thus remains to prove that it is fully faithful. For this, it suffices to prove that, given augmented $A$-algebras $B$ and $C$, the left-hand vertical map \[\begin{tikzpicture}[baseline= (a).base]
\node[scale=.9] (a) at (0,0){\begin{tikzcd}[row sep = small]{\rm Map}_{{\rm Log}_{(A, M)//(A, M)}}((B, M \boxtimes G_B), (C, M \boxtimes G_C)) \ar{r} \ar{d} & {\rm Map}_{({\cal C}{\cal S}^{\cal J}_{\infty})_{M // M}}(M \boxtimes G_B, M \boxtimes G_C) \ar{d} \\ {\rm Map}_{{\rm CAlg}_{A//A}}(B, C) \ar{r} & {\rm Map}_{{\rm CAlg}_{{\Bbb S}^{\cal J}[M] // A}}({\Bbb S}^{\cal J}[M \boxtimes G_B], C)\end{tikzcd}};\end{tikzpicture}\] in the defining cartesian square \eqref{mappingspace} for mapping spaces in ${\rm Log}$ is an equivalence. It thus suffices to prove that the right-hand vertical map is an equivalence. We consider the commutative diagram \[\begin{tikzpicture}[baseline= (a).base]
\node[scale=.98] (a) at (0,0){\begin{tikzcd}[row sep = small]{\rm Map}_{({\cal C}{\cal S}^{\cal J}_{\infty})_{M // M}}(M \boxtimes G_B, M \boxtimes G_C) \ar{r} \ar{dr} \ar{dd}{\simeq} \ar[swap]{dd}{\text{Cobase-change } U^{\cal J} \to M} & {\rm Map}_{{\rm CAlg}_{{\Bbb S}^{\cal J}[M] // A}}({\Bbb S}^{\cal J}[M \boxtimes G_B], C) \\ \vspace{10 mm} & {\rm Map}_{({\cal C}{\cal S}^{\cal J}_{\infty})_{M // \Omega^{\cal J}(A)}}(M \boxtimes G_B, \Omega^{\cal J}(C)) \ar{u}{\simeq} \ar[swap]{u}{({\Bbb S}^{\cal J}, \Omega^{\cal J})\text{-adjunction}} \ar[swap]{d}{\simeq} \ar{d}{\text{Cobase-change } U^{\cal J} \to M} \\ {\rm Map}_{({\cal C}{\cal S}^{\cal J}_{\infty})_{/M}}(G_B, M \boxtimes G_C) \ar{r} & {\rm Map}_{({\cal C}{\cal S}^{\cal J}_{\infty})_{/\Omega^{\cal J}(A)}}(G_B, \Omega^{\cal J}(C)) \\ {\rm Map}_{({\cal C}{\cal S}^{\cal J}_{\infty})_{/U^{\cal J}}}(G_B, G_C) \ar[swap]{u} {\simeq} \ar{u}{\text{Base-change } U^{\cal J} \to M} \ar{r}{\simeq} \ar[swap]{r}{\substack{\text{Base-change } \\ U^{\cal J} \to {\rm GL}_1^{\cal J}(A)}} & {\rm Map}_{({\cal C}{\cal S}^{\cal J}_{\infty})_{/{\rm GL}_1^{\cal J}(A)}}(G_B, {\rm GL}_1^{\cal J}(C)) \ar{u}{\simeq} \ar[swap]{u}{{\rm GL}_1^{\cal J}(-) \text{ right adjoint}}\end{tikzcd}};\end{tikzpicture}\] of mapping spaces, where the arrows decorated $\simeq$ are equivalences for the indicated reason, and we have implicitly used that the category of grouplike commutative ${\cal J}$-space monoids is a full subcategory of all commutative ${\cal J}$-space monoids when utilizing that ${\rm GL}_1^{\cal J}(-)$ is a right adjoint. This concludes the proof. 
\end{proof}

\begin{example}\label{ex:loghh} Employing the homotopy invariant notions of logification and repletion of \cite{SSV16}, the equivalence of Corollary \ref{cor:replaugmented} extends to a Quillen equivalence of simplicial objects, as one can for instance see by imitating the proof of Proposition \ref{prop:augmentedalgrepl}. For a fixed pre-log ring $(A, M)$, this means that e.g.\ the composite functor \[{\rm sPreLog}_{(A, M) // (A, M)} \xrightarrow{(-, -)^{\rm rep}} {\rm sPreLog}_{(A, M^a) // (A, M^a)}^{\rm rep} \xrightarrow{(-, -)^a} {\rm sLog}^{\rm rep}_{(A, M^a) // (A, M^a)}\] from simplicial augmented $(A, M)$-algebras naturally takes values in simplicial augmented $A$-algebras (via the equivalence of Corollary \ref{cor:replaugmented}. The simplicial tensor $S^1 \otimes (A, M)$ is an augmented simplicial pre-log $(A, M)$-algebra, with augmentation induced by the collapse map of the circle. Under the displayed composite functor, this uniquely determines an augmented commutative $A$-algebra. By construction, this coincides with Rognes' \emph{log Hochschild homology} ${\rm HH}(A, M)$ of the pre-log ring $(A, M)$ \cite[Definition 3.23]{Rog09}. The argument of \cite[Theorem 4.24]{RSS15} (or its conjunction with \cite[Corollary 3.4]{BLPO23Prism}) shows that ${\rm HH}(-, -)$ is invariant under the logification construction. By Proposition \ref{prop:augmentedalgrepl}, the analogous remark also applies to log topological Hochschild homology ${\rm THH}(A, M)$. 
\end{example}

\begin{corollary}\label{cor:augmentedalgrepl} There is a canonical equivalence ${\rm Sp}({\rm Log}^{\rm rep}_{(A, M)//(A, M)}) \simeq {\rm Mod}_A$.
\end{corollary}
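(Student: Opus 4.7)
The plan is to obtain this essentially for free from Proposition \ref{prop:augmentedalgrepl}. That proposition provides an equivalence of $\infty$-categories
\[{\rm Log}^{\rm rep}_{(A,M)//(A,M)} \xrightarrow{\simeq} {\rm CAlg}_{A//A}.\]
Since stabilization is a functor on presentable $\infty$-categories (sending equivalences to equivalences), applying ${\rm Sp}(-)$ to both sides yields a canonical equivalence ${\rm Sp}({\rm Log}^{\rm rep}_{(A,M)//(A,M)}) \simeq {\rm Sp}({\rm CAlg}_{A//A})$.

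The remaining input is the identification ${\rm Sp}({\rm CAlg}_{A//A}) \simeq {\rm Mod}_A$. This is standard: the category ${\rm CAlg}_{A//A}$ is pointed (with zero object $A$), so its stabilization coincides with that of ${\rm CAlg}_{/A}$ via the forgetful functor, and the latter is identified with ${\rm Mod}_A$ by \cite[Corollary 7.3.4.14]{Lur17} (which is the avatar of Theorem \ref{thm:quillen}(1) recalled in the discussion preceding this corollary). Composing these two equivalences gives the desired result.

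There is no real obstacle — the work has already been done in Proposition \ref{prop:augmentedalgrepl}, whose proof established the full faithfulness and essential surjectivity of the forgetful functor using Lemma \ref{lem:replogiremains2} and the $({\Bbb S}^{\cal J}[-],\Omega^{\cal J}(-))$-adjunction. The only mild point worth remarking on in the proof is that we must either invoke Proposition \ref{prop:augmentedalgrepl} as an equivalence of presentable $\infty$-categories (so that passage to stabilization is well-defined and functorial) or, equivalently, verify directly that both sides compute the same universal pointed stable $\infty$-category under the forgetful functor to ${\rm CAlg}_{A//A}$. Either route gives the canonical equivalence, which on objects sends an $A$-module $J$ to the spectrum of replete augmented $(A,M)$-algebras with underlying augmented $A$-algebra $A \oplus J[n]$, in accordance with the informal description preceding Theorem \ref{thm:repletetangent}.
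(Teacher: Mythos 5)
Your proof is correct and matches the paper's argument: the paper deduces the corollary from Proposition \ref{prop:augmentedalgrepl} together with \cite[Corollary 7.3.4.14]{Lur17}, exactly as you do. The small extra remark about ${\rm CAlg}_{A//A}$ being a pointed envelope of ${\rm CAlg}_{/A}$ (so the two stabilizations agree) is a helpful clarification that the paper leaves implicit, but the substance of the argument is identical.
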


\begin{proof} This follows from Proposition \ref{prop:augmentedalgrepl} and \cite[Corollary 7.3.4.14]{Lur17}.
\end{proof}

The following is the spectral analog of Example \ref{ex:splitsquarezero2}. 

\begin{remark}\label{rem:splitsquarezero} Following \cite[Remark 7.3.4.16]{Lur17}, let ${\cal C}$ be a presentably symmetric monoidal, stable $\infty$-category, let $A \in {\rm CAlg}({\cal C})$ be a commutative algebra object, and let $J \in {\rm Mod}_A({\cal C})$ be an $A$-module. One defines the \emph{split square-zero extension} $A \oplus J$ as the image of $J$ under the composite \[{\rm Mod}_A({\cal C}) \simeq {\rm Sp}({\rm CAlg}({\cal C})_{/A}) \xrightarrow{\Omega^\infty} {\rm CAlg}({\cal C})_{/A},\] where the equivalence is \cite[Theorem 7.3.4.13]{Lur17}. While the category ${\rm Log}$ does not seem to fit in this framework, Proposition \ref{prop:augmentedalgrepl} and Corollary \ref{cor:augmentedalgrepl} suggest that the ``infinitesimal theory'' of log ring spectra can largely be ported from that of ordinary ${\Bbb E}_{\infty}$-rings. More explicitly, let us consider the composite \[{\rm Mod}_A \simeq {\rm Sp}({\rm Log}^{\rm rep}_{(A, M)//(A, M)}) \xrightarrow{\Omega^\infty} {\rm Log}^{\rm rep}_{(A, M)//(A, M)}.\] The image of an $A$-module $J$ identifies with $(A \oplus J, M \boxtimes {\rm GL}_1^{\cal J}(A \oplus J)/{\rm GL}_1^{\cal J}(A))$. By definition, this recovers the split square-zero extensions of log ring spectra used previously in the literature (cf.\ \cite[Definition 11.6]{Rog09} and \cite[Construction 5.6]{Sag14}), that we shall denote by $(A, M) \oplus J := (A \oplus J, M \oplus J)$. 
\end{remark}

\subsection{The replete pointed envelope} Let ${\cal C}$ be a presentable $\infty$-category. As explained in \cite[Notation 1.57]{dagiv}, one can explicitly model the pointed envelope $P_{\cal C} \to {\rm Fun}(\Delta^1, {\cal C})$ of the presentable fibration \eqref{coevaluation} as the full subcategory of ${\rm Fun}(\Delta^2, {\cal C})$ consisting of those triangles that compose to an equivalence; that is, those commutative diagrams \[\begin{tikzcd}[row sep = small]X \ar{dr} \ar{rr} & & Z \\ & Y \ar{ur} \end{tikzcd}\] for which the horizontal map $X \to Z$ an equivalence. The functor $P_{\cal C} \to {\rm Fun}(\Delta^1, {\cal C})$ informally sends such a diagram to the morphism $Y \to Z$, that is, it is induced by the evaluation ${\rm Fun}(\Delta^2, {\cal C}) \to {\rm Fun}(\Delta^{\{1, 2\}}, {\cal C}) \simeq {\rm Fun}(\Delta^1, {\cal C})$. 

\begin{definition} Let $P_{\rm Log}$ be the pointed envelope of ${\rm Fun}(\Delta^1, {\rm Log}) \to {\rm Log}$. The \emph{replete pointed envelope} $P_{{\rm Log}}^{\rm rep}$ is the full subcategory of $P_{\rm Log}$ spanned by those triangles \begin{equation}\label{repletetriangles}\begin{tikzcd}[row sep = small](A, M) \ar{dr} \ar{rr} & & (C, K) \\ & (B, N) \ar{ur} \end{tikzcd}\end{equation} for which $(B, N) \to (C, K)$ is replete.  The functor $P^{\rm rep}_{{\rm Log}} \to {\rm Fun}(\Delta^1, {\rm Log})$ is obtained by restriction of the functor $P_{\rm Log} \to {\rm Fun}(\Delta^1, {\rm Log})$.  
\end{definition}

We remark that, since $(A, M) \to (C, K)$ is an equivalence in the above triangle, the $M \to K$ is necessarily virtually surjective.

\begin{lemma}\label{lem:pointedfiber} The functor $P^{\rm rep}_{\rm Log} \to {\rm Log}$ is a presentable fibration with fibers $(P^{\rm rep}_{\rm Log})_{(A, M)}$ canonically equivalent to ${\rm CAlg}_{A // A}$. 
\end{lemma}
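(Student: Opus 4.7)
The plan is to leverage Proposition \ref{prop:augmentedalgrepl} fiberwise, after recognising that $P_{\rm Log} \to {\rm Log}$ is already a presentable fibration. Indeed, the composite $P_{\rm Log} \to {\rm Fun}(\Delta^1, {\rm Log}) \to {\rm Log}$ is an instance of the general pointed envelope construction applied to the presentable fibration ${\rm Fun}(\Delta^1, {\rm Log}) \to {\rm Log}$ obtained by codomain evaluation, cf.\ \cite[Proposition 1.5]{dagiv}. It thus suffices to show that the full subcategory $P^{\rm rep}_{\rm Log} \hookrightarrow P_{\rm Log}$ inherits the fibration structure and to identify its fibers.

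For the fiber identification, the explicit description of the pointed envelope gives $(P_{\rm Log})_{(A,M)} \simeq {\rm Log}_{(A,M)//(A,M)}$ via the assignment sending a triangle \eqref{repletetriangles} (with $(C,K) \simeq (A,M)$) to its middle vertex $(B,N)$, augmented by the upper and lower edges. Under this equivalence, $(P^{\rm rep}_{\rm Log})_{(A,M)}$ corresponds to the full subcategory of those $(B,N)$ for which the augmentation $(B,N) \to (A,M)$ is exact. Since that augmentation is split by the left edge $(A,M) \to (B,N)$, Corollary \ref{cor:splitreplete} converts exactness into (split) repleteness, yielding $(P^{\rm rep}_{\rm Log})_{(A,M)} \simeq {\rm Log}^{\rm rep}_{(A,M)//(A,M)}$. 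Proposition \ref{prop:augmentedalgrepl} then identifies this with the presentable $\infty$-category ${\rm CAlg}_{A//A}$.

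For the fibration structure, I first verify that the inclusion $P^{\rm rep}_{\rm Log} \hookrightarrow P_{\rm Log}$ preserves coCartesian edges. A coCartesian lift in $P_{\rm Log}$ of $f \colon (A,M) \to (A',M')$ at $(B,N)$ is given by the base-change $(B,N) \otimes_{(A,M)} (A',M')$ augmented over $(A',M')$. When $(B,N)$ is split replete, the splitting $(B, M^a \boxtimes ({\rm GL}_1^{\cal J}(B)/{\rm GL}_1^{\cal J}(A))) \xrightarrow{\simeq} (B, N)$ of Lemma \ref{lem:replogiremains2} is compatible with the pushout, and Corollary \ref{cor:splitreplete} then shows the resulting augmented $(A',M')$-algebra is again split replete. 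Hence $P^{\rm rep}_{\rm Log} \to {\rm Log}$ is a coCartesian fibration with presentable fibers. For the Cartesian structure, the coCartesian base-change functor on fibers corresponds, under Proposition \ref{prop:augmentedalgrepl}, to ordinary base-change $- \otimes_A A' \colon {\rm CAlg}_{A//A} \to {\rm CAlg}_{A'//A'}$, which is a left adjoint; by the standard criterion (HTT Proposition 5.5.3.3), this upgrades the coCartesian fibration with presentable fibers to a presentable fibration.

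The main obstacle I anticipate is verifying that the equivalence of Proposition \ref{prop:augmentedalgrepl} is natural in $(A,M)$, i.e., that it intertwines the respective coCartesian base-change functors. This naturality is already implicit in the construction of its quasi-inverse $B \mapsto (B, M^a \boxtimes ({\rm GL}_1^{\cal J}(B)/{\rm GL}_1^{\cal J}(A)))$—both $M^a$ and the grouplike ``units quotient'' factor behave predictably under base-change along $(A,M) \to (A',M')$, the latter ultimately relying on Lemma \ref{lem:repletebasechangestable}—but the compatibility must be spelled out carefully to conclude.
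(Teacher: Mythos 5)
Your proof is correct in substance but takes a genuinely different route from the paper's for the fibration structure; the fiber identification coincides. The paper verifies the \emph{Cartesian} condition directly: Cartesian lifts in $P_{\rm Log}$ are pullbacks of augmented log rings, and Lemma~\ref{lem:repletebasechangestable} (pullback stability of split replete morphisms) shows these lifts stay inside $P^{\rm rep}_{\rm Log}$. You instead verify the \emph{coCartesian} condition via pushout stability of split repleteness and then upgrade to a presentable fibration with an adjoint-functor criterion. Both work, but the paper's route is tidier: pullbacks in ${\rm Log}$ agree with those in ${\rm PreLog}$ (reflective localization), so the Cartesian-lift check is exactly Lemma~\ref{lem:repletebasechangestable}, whereas your pushout argument must negotiate the logification implicit in cobase-change of log ring spectra. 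Concretely, the pushout in ${\rm PreLog}$ carries the log structure $M'^{a}\boxtimes ({\rm GL}_1^{\cal J}(B)/{\rm GL}_1^{\cal J}(A))$, still replete by Corollary~\ref{cor:splitreplete}, but the coCartesian lift in ${\rm Log}$ is its logification, so one must then re-apply Lemma~\ref{lem:replogiremains2} to see the logified object is again split replete with its ``grouplike part'' updated to ${\rm GL}_1^{\cal J}(B')/{\rm GL}_1^{\cal J}(A')$; your phrase ``compatible with the pushout'' glides over this two-step argument. Finally, the naturality concern you raise is a red herring for this lemma: the upgrade criterion (HTT~\S5.5.3) only requires the fiberwise base-change functors to preserve small colimits, which holds for any cobase-change along a map of commutative algebra objects in a presentable setting without identifying them against ${\rm CAlg}_{A//A}$.
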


\begin{proof} It is an inner fibration, being the composite of the inclusion $P_{\rm Log}^{\rm rep} \to P_{\rm Log}$ and the presentable fibration $P_{\rm Log} \to {\rm Log}$. The condition of \cite[Definition 2.4.2.1(ii)]{HTT} is satisfied by Lemma \ref{lem:repletebasechangestable}. Hence $P_{\rm Log}^{\rm rep} \to {\rm Log}$ is a Cartesian fibration. By construction, the fibers over a log ring $(A, M)$ are equivalent to ${\rm Log}^{\rm rep}_{(A, M)//(A, M)}$ (cf.\ \cite[Remark 1.3, Example 1.7]{dagiv}), which is equivalent to ${\rm CAlg}_{A//A}$ by Proposition \ref{prop:augmentedalgrepl}. This concludes the proof. 
\end{proof}

By construction and Lemma \ref{lem:pointedfiber}, we have a commutative diagram \[\begin{tikzcd}[row sep = small]P_{\rm Log} \ar{dr} & & P_{\rm Log}^{\rm rep} \ar{ll} \ar{dl}\\ \vspace{10 mm} & {\rm Log}\end{tikzcd}\] of Cartesian fibrations over ${\rm Log}$. The theory of \emph{relative adjunctions} of \cite[Section 7.3.2]{Lur17} provides us with the following ``globalization'' of the repletion functor:

\begin{lemma}\label{lem:repglobal} The functor $P^{\rm rep}_{\rm Log} \to P_{\rm Log}$ admits a left adjoint relative to ${\rm Log}$, which on each fiber realizes the repletion functor ${\rm Log}_{(A, M)//(A, M)} \to {\rm Log}^{\rm rep}_{(A, M)//(A, M)}$. 
\end{lemma}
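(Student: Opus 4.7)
The plan is to verify the hypotheses of Lurie's criterion for relative adjunctions between Cartesian fibrations (the dual formulation of \cite[Proposition 7.3.2.6]{Lur17}): given a commutative triangle $P^{\rm rep}_{\rm Log} \xrightarrow{\iota} P_{\rm Log} \to {\rm Log}$ of Cartesian fibrations over ${\rm Log}$, the inclusion $\iota$ admits a left adjoint relative to ${\rm Log}$ provided that (i) $\iota$ preserves Cartesian edges over ${\rm Log}$, and (ii) for each log ring spectrum $(A, M)$, the induced functor on fibers admits a left adjoint. When both hold, the resulting relative adjoint will by construction restrict on each fiber to the chosen fiberwise left adjoint, which is exactly what the lemma asserts.

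Condition (ii) is essentially tautological: identifying the fibers via Lemma \ref{lem:pointedfiber} and the defining property of $P^{\rm rep}_{\rm Log}$, the map $\iota_{(A, M)}$ is the fully faithful inclusion ${\rm Log}^{\rm rep}_{(A, M)//(A, M)} \hookrightarrow {\rm Log}_{(A, M)//(A, M)}$, and the repletion functor (described after Definition 5.10 and constructed model-categorically in \cite[Proposition 9.1]{Lun21}) provides its left adjoint. For condition (i), I would unwind the description of Cartesian edges in the pointed envelope $P_{\rm Log}$: over a morphism $(B', N') \to (B, N)$ in ${\rm Log}$, a Cartesian edge sends a triangle
\[\begin{tikzcd}[row sep = small](A, M) \ar{dr} \ar{rr} & & (C, K) \\ & (B, N) \ar{ur} \end{tikzcd}\]
to the triangle obtained by base-change of both legs along $(B', N') \to (B, N)$. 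The task then is to verify that if $(B, N) \to (C, K)$ is exact---equivalently, split replete by Corollary \ref{cor:splitreplete}, since $(A, M) \to (C, K)$ is an equivalence---then the base-changed morphism $(B', N') \to (C', K')$ is again split replete. This is precisely the content of Lemma \ref{lem:repletebasechangestable}.

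The main technical input is thus the base-change stability of split replete morphisms recorded in Lemma \ref{lem:repletebasechangestable}; everything else is formal manipulation of the envelope formalism and the existence of fiberwise repletion. The only subtlety I expect beyond a direct application of the relative adjoint criterion is the bookkeeping step of translating between ``exact over an equivalence'' and the split replete condition via Corollary \ref{cor:splitreplete}, so that Lemma \ref{lem:repletebasechangestable} becomes applicable to the base-changed triangle.
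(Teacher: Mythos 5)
Your proposal is correct and takes essentially the same route as the paper: both apply Lurie's relative-adjoint criterion \cite[Proposition 7.3.2.6]{Lur17}, use the existence of the fiberwise repletion left adjoint, and verify preservation of Cartesian edges. The paper dispatches the Cartesian-edge condition more tersely (noting that $P^{\rm rep}_{\rm Log} \hookrightarrow P_{\rm Log}$ is a full subcategory inclusion and that the Cartesian fibration structure on $P^{\rm rep}_{\rm Log}$ was already set up in Lemma \ref{lem:pointedfiber} via Lemma \ref{lem:repletebasechangestable}), whereas you invoke Lemma \ref{lem:repletebasechangestable} and Corollary \ref{cor:splitreplete} explicitly at this step -- but the key input and the structure of the argument are the same.
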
 

\begin{proof} We wish to apply \cite[Proposition 7.3.2.6]{Lur17}. As the inclusion functor ${\rm Log}_{(A, M)//(A, M)}^{\rm rep} \to {\rm Log}_{(A, M)//(A, M)}$ admits a left adjoint (the repletion functor) for any log ring spectrum $(A, M)$, it suffices to check that $P_{\rm Log}^{\rm rep} \to P_{\rm Log}$ carries $(P_{\rm Log}^{\rm rep} \to {\rm Log})$-Cartesian morphisms to $(P_{\rm Log} \to {\rm Log})$-Cartesian morphisms. But this is clear using the characterization of \cite[Proposition 2.4.4.3]{HTT}, since the functor $P^{\rm rep}_{\rm Log} \to P_{\rm Log}$ is the inclusion of a full subcategory. 
\end{proof}

Objects \eqref{repletetriangles} of $P^{\rm rep}_{\rm Log}$ have replete structure maps $(B, N) \to (C, K)$ that admit a section up to homotopy, and so they are strict by Corollary \ref{cor:splitreplete} and \cite[Lemma 3.4.1.5]{Lun22}. This means that the only additional data necessary to determine the diagram \eqref{repletetriangles} is (1) the underlying diagram of ${\Bbb E}_{\infty}$-rings and (2) the log structure $K$ on the codomain $C$. This is made formal below, where we denote by $P_{{\rm CAlg}} \to {\rm CAlg}$ the pointed envelope of the functor ${\rm Fun}(\Delta^1, {\rm CAlg}) \to {\rm Fun}(\{1\}, {\rm CAlg}) \simeq {\rm CAlg}$. 

\begin{lemma}\label{lem:pointedeqlog} There is an equivalence \[P^{\rm rep}_{{\rm Log}} \xrightarrow{\simeq} {\rm Log} \times_{{\rm CAlg}} P_{\rm CAlg}\] of Cartesian fibrations over ${\rm Log}$. 
\end{lemma}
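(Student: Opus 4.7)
The plan is to construct an explicit map of Cartesian fibrations and reduce the statement to the already-established fiberwise equivalence of Proposition \ref{prop:augmentedalgrepl}.

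First, I would build the comparison functor. The category $P^{\rm rep}_{\rm Log}$ sits as a full subcategory of ${\rm Fun}(\Delta^2, {\rm Log})$, and postcomposition with the forgetful functor ${\rm Log} \to {\rm CAlg}$ (which preserves equivalences) induces a functor $P^{\rm rep}_{\rm Log} \to {\rm Fun}(\Delta^2, {\rm CAlg})$ landing in the full subcategory $P_{\rm CAlg}$ of triangles that compose to an equivalence. Pairing this with the structural functor $P^{\rm rep}_{\rm Log} \to {\rm Log}$ of Lemma \ref{lem:pointedfiber} (evaluation at the initial vertex, equivalently the terminal one), and observing compatibility with the projection $P_{\rm CAlg} \to {\rm CAlg}$, I obtain a functor
\[
\Phi \colon P^{\rm rep}_{{\rm Log}} \longrightarrow {\rm Log} \times_{{\rm CAlg}} P_{\rm CAlg}
\]
over ${\rm Log}$.

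Next, I would verify that $\Phi$ is a morphism of Cartesian fibrations over ${\rm Log}$. By construction and Lemma \ref{lem:pointedfiber}, Cartesian morphisms in $P^{\rm rep}_{\rm Log}$ over an arrow $(A', M') \to (A, M)$ in ${\rm Log}$ are given by base-change along this arrow, with repleteness preserved thanks to Lemma \ref{lem:repletebasechangestable}. On the other hand, Cartesian morphisms in ${\rm Log} \times_{{\rm CAlg}} P_{\rm CAlg}$ over $(A', M') \to (A, M)$ are determined by Cartesian morphisms in $P_{\rm CAlg}$ over the underlying map $A' \to A$ of ${\Bbb E}_{\infty}$-rings, which are also defined by base-change. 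Since the forgetful functor ${\rm Log} \to {\rm CAlg}$ preserves the relevant colimits defining base-change, $\Phi$ sends Cartesian edges to Cartesian edges.

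The fiber of $\Phi$ over a log ring spectrum $(A, M) \in {\rm Log}$ is canonically identified with the forgetful functor
\[
{\rm Log}^{\rm rep}_{(A, M)//(A, M)} \longrightarrow {\rm CAlg}_{A // A},
\]
using Lemma \ref{lem:pointedfiber} on the left and the identification of the fiber of $P_{\rm CAlg} \to {\rm CAlg}$ over $A$ with ${\rm CAlg}_{A // A}$ on the right. By Proposition \ref{prop:augmentedalgrepl}, this functor is an equivalence of $\infty$-categories.

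Having a morphism of Cartesian fibrations over ${\rm Log}$ that is a fiberwise equivalence, the conclusion follows from \cite[Corollary 2.4.4.4]{HTT}. The only genuine content in this chain is the fiberwise comparison handled by Proposition \ref{prop:augmentedalgrepl}; the remaining obstacle is simply the bookkeeping showing that $\Phi$ genuinely preserves Cartesian edges, which ultimately reduces to Lemma \ref{lem:repletebasechangestable} and the fact that both envelope constructions are organized by base-change.
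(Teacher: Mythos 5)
Your argument follows the paper's proof essentially verbatim: construct the comparison functor from the forgetful functor ${\rm Log} \to {\rm CAlg}$, check that it preserves Cartesian edges, and reduce to the fiberwise equivalence of Proposition \ref{prop:augmentedalgrepl} via \cite[Corollary 2.4.4.4]{HTT}. One small slip: where you write that the forgetful functor ``preserves the relevant \emph{colimits} defining base-change,'' you mean \emph{limits} — the Cartesian lifts in both pointed envelopes are computed by pullback, and what is used (as the paper states) is that ${\rm Log} \to {\rm CAlg}$ commutes with limits.
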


\begin{proof} The pullback ${\rm Log} \times_{{\rm CAlg}} P_{\rm CAlg} \to {\rm Log}$ is a Cartesian fibration since the pointed envelope $P_{\rm CAlg} \to {\rm CAlg}$ is a Cartesian fibration. The resulting functor $P_{\rm Log}^{\rm rep} \to {\rm Log} \times_{\rm CAlg} P_{\rm CAlg}$ of Cartesian fibrations over ${\rm Log}$ sends Cartesian morphisms over ${\rm Log}$ to Cartesian morphisms over ${\rm Log}$, since the forgetful functor ${\rm Log} \to {\rm CAlg}$ commutes with limits. The functor is an equivalence on each fiber by Proposition \ref{prop:augmentedalgrepl}, which concludes the proof by \cite[Corollary 2.4.4.4]{HTT}. 
\end{proof}

\begin{remark} In light of Lemma \ref{lem:pointedeqlog}, the reader may wonder why we simply did not \emph{define} the replete pointed envelope as the pullback ${\rm Log} \times_{{\rm CAlg}} P_{\rm CAlg} \to {\rm Log}$. This is related to the discussion of Remark \ref{rem:rognescomp}: We find this line of exposition to more clearly highlight the repletion functor as a means of ``cashing out'' the additional data provided by the log structure, while we would find it less natural to define an object which is \emph{a priori} independent of the log structure. In particular, we find statements like Lemma \ref{lem:repglobal} to be more transparent from this perspective, while the effect of the repletion procedure would be hidden in Proposition \ref{prop:augmentedalgrepl} if we took the pullback ${\rm Log} \times_{{\rm CAlg}} P_{\rm CAlg}$ as the definition of the replete pointed envelope. 
\end{remark}

\subsection{The replete tangent bundle} We are now ready to give the definition of the replete tangent bundle. By Lemma \ref{lem:pointedfiber}, the functor $P^{\rm rep}_{\rm Log} \to {\rm Log}$ is a presentable fibration, and so we may form its stable envelope:

\begin{definition} The \emph{replete tangent bundle} $T^{\rm rep}_{\rm Log} \to {\rm Log}$ is the stable envelope of the presentable fibration $P_{\rm Log}^{\rm rep} \to {\rm Log}$. \end{definition} 

The definition comes with some immediate pleasant consequences: 

\begin{corollary}\label{cor:repletefiber} For a log ring spectrum $(A, M)$, the fiber $(T^{\rm rep}_{\rm Log})_{(A, M)}$ of the replete tangent bundle is canonically equivalent to the category ${\rm Mod}_A$ of modules over the underlying ring spectrum $A$. 
\end{corollary}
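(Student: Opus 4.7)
The plan is to unpack definitions and chain together the fiberwise identifications already established. By definition of the stable envelope (the third condition in the definition of a stable envelope recalled just before Definition of tangent bundle), the fiber $(T^{\rm rep}_{\rm Log})_{(A,M)}$ is the stabilization ${\rm Sp}((P^{\rm rep}_{\rm Log})_{(A,M)})$ of the corresponding fiber of the replete pointed envelope. So the task reduces to computing this stabilization.

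First, I would invoke Lemma \ref{lem:pointedfiber} (or equivalently the direct description via Lemma \ref{lem:pointedeqlog}) to identify $(P^{\rm rep}_{\rm Log})_{(A,M)}$ with ${\rm Log}^{\rm rep}_{(A,M)//(A,M)}$, the category of augmented replete $(A,M)$-algebras. Then Corollary \ref{cor:augmentedalgrepl} gives the equivalence $ {\rm Sp}({\rm Log}^{\rm rep}_{(A,M)//(A,M)}) \simeq {\rm Mod}_A$. Composing these two equivalences produces the desired canonical equivalence $(T^{\rm rep}_{\rm Log})_{(A,M)} \simeq {\rm Mod}_A$.

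There is essentially no obstacle here, since all the real work has already been done: Proposition \ref{prop:augmentedalgrepl} compares replete augmented log algebras with ordinary augmented $A$-algebras, and \cite[Corollary 7.3.4.14]{Lur17} provides the stabilization of ${\rm CAlg}_{A//A}$. The only point to verify for full rigor is that the stable envelope construction of \cite{Lur17} really does produce a presentable fibration whose fibers are the stabilizations of the fibers of the input fibration; this is part of the content of the definition of a stable envelope and of \cite[Example 1.4]{dagiv}, and in particular follows once we know (from Lemma \ref{lem:pointedfiber}) that $P^{\rm rep}_{\rm Log} \to {\rm Log}$ is a presentable fibration. Thus the proof is a short compilation of the results already assembled.
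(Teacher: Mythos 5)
Your proof is correct and follows essentially the same path as the paper's: identify the fiber of $P^{\rm rep}_{\rm Log} \to {\rm Log}$ via Lemma \ref{lem:pointedfiber}, then stabilize and invoke \cite[Corollary 7.3.4.14]{Lur17}. The only cosmetic difference is that you route through Corollary \ref{cor:augmentedalgrepl}, which itself packages Proposition \ref{prop:augmentedalgrepl} together with Lurie's result, whereas the paper applies them directly.
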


\begin{proof} By Lemma \ref{lem:pointedfiber}, the fiber is equivalent to the stabilization ${\rm Sp}({\rm CAlg}_{A // A})$, and so the result follows from \cite[Corollary 7.3.4.14]{Lur17}.
\end{proof}

\begin{corollary}\label{cor:stableeqlog} There is an equivalence \[T^{\rm rep}_{\rm Log} \xrightarrow{\simeq} {\rm Log} \times_{{\rm CAlg}} T_{\rm CAlg}\] of Cartesian fibrations over ${\rm Log}$. 
\end{corollary}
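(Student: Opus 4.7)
The plan is to deduce the corollary from Lemma \ref{lem:pointedeqlog} together with the observation that forming stable envelopes of presentable fibrations is compatible with pullback along a map of base $\infty$-categories, and then invoke the uniqueness of stable envelopes up to equivalence.

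First, I would record the pullback-stability property. Suppose $v \colon \mathcal{G} \to \mathcal{F}$ is a stable envelope of a presentable fibration $\mathcal{F} \to \mathcal{B}$, and let $f \colon \mathcal{B}' \to \mathcal{B}$ be any functor. Then the base-changed functor $\mathcal{B}' \times_{\mathcal{B}} \mathcal{G} \to \mathcal{B}' \times_{\mathcal{B}} \mathcal{F}$ is a stable envelope of $\mathcal{B}' \times_{\mathcal{B}} \mathcal{F} \to \mathcal{B}'$: the composite to $\mathcal{B}'$ is a presentable fibration because presentable fibrations are pullback-stable; Cartesian morphisms in the pulled-back fibrations are precisely those mapping to Cartesian morphisms on the right, and hence are preserved by the pulled-back functor; and the fibers over $B' \in \mathcal{B}'$ agree with the fibers of $v$ over $f(B')$, so the fiberwise condition on stable envelopes is inherited verbatim.

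Applying this to $T_{\rm CAlg} \to P_{\rm CAlg}$ along the forgetful functor ${\rm Log} \to {\rm CAlg}$ shows that the canonical map ${\rm Log} \times_{\rm CAlg} T_{\rm CAlg} \to {\rm Log} \times_{\rm CAlg} P_{\rm CAlg}$ is a stable envelope of ${\rm Log} \times_{\rm CAlg} P_{\rm CAlg} \to {\rm Log}$. By Lemma \ref{lem:pointedeqlog}, the target identifies over ${\rm Log}$ with $P^{\rm rep}_{\rm Log}$, so ${\rm Log} \times_{\rm CAlg} T_{\rm CAlg}$ is a stable envelope of $P^{\rm rep}_{\rm Log} \to {\rm Log}$. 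By definition, $T^{\rm rep}_{\rm Log}$ is another stable envelope of $P^{\rm rep}_{\rm Log} \to {\rm Log}$, and the uniqueness of stable envelopes up to equivalence (cf.\ \cite[Remark 1.8]{dagiv}) produces an equivalence $T^{\rm rep}_{\rm Log} \xrightarrow{\simeq} {\rm Log} \times_{\rm CAlg} T_{\rm CAlg}$.

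To upgrade this to an equivalence of Cartesian fibrations over ${\rm Log}$, I would appeal to \cite[Corollary 2.4.4.4]{HTT} as in the proof of Lemma \ref{lem:pointedeqlog}: both sides are Cartesian fibrations over ${\rm Log}$, the constructed map lies over ${\rm Log}$ by construction, and on the fiber over $(A, M) \in {\rm Log}$ it implements, via Corollary \ref{cor:repletefiber} and the identification of the fiber of $T_{\rm CAlg}$ over $A$, the canonical self-equivalence of ${\rm Mod}_A$. I expect the only point that requires any care is the pullback-stability of stable envelopes stated above, and this is essentially formal because stabilization is defined fiberwise and Cartesian morphisms behave well under pullback; no further computation beyond unwinding the definitions should be necessary.
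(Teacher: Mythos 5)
Your proposal is correct and follows essentially the same route as the paper: the paper's proof reads ``This follows from Lemma \ref{lem:pointedeqlog} and [Lurie, Remark 7.3.1.3]'', where the cited remark supplies exactly the pullback-stability of stable envelopes that you spell out by hand in your first paragraph. Your unwinding of why the pulled-back functor inherits the three conditions defining a stable envelope (presentability of the composite, preservation of Cartesian edges, and the fiberwise characterization) is the right justification for that remark, so the argument is the same modulo how much of it is delegated to the reference.
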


\begin{proof} This follows from Lemma \ref{lem:pointedeqlog} and \cite[Remark 7.3.1.3]{Lur17}. 
\end{proof}

\subsection{Construction of the log cotangent complex} The following cube summarizes the construction of the replete tangent bundle $T^{\rm rep}_{\rm Log}$ in terms of the ordinary tangent bundle $T_{\rm PreLog}$ of the presentable $\infty$-category ${\rm PreLog}$: \[
\begin{tikzcd}[row sep = tiny, column sep = small]
&
{\rm Sp}({\rm Log}^{\rm rep}_{(A, M^a) // (A, M^a)})
\ar{rr}{}
\ar[]{dd}[near end]{}
& & T^{\rm rep}_{\rm Log}
\ar{dd}
\\
{\rm Sp}({\rm PreLog}_{/(A, M)})
\ar[crossing over]{rr}[near start]{}
\ar{dd}[swap]{}
\ar{ur}
& & T_{\rm PreLog}
\ar{ur}{}
\\
&
{\rm Log}^{\rm rep}_{(A, M^a) // (A, M^a)}
\ar[near start]{rr}{} \ar[near end]{dd}{}
& & P^{\rm rep}_{\rm Log} \ar{dd}
\\
{\rm PreLog}_{/(A, M)} \ar{dd}
\ar{ur}{}
\ar[crossing over]{rr}{}
& & {\rm Fun}(\Delta^1, {\rm PreLog})  
\ar[crossing over, leftarrow, near start,swap]{uu}{}
\ar[swap]{ur}{}
\\
& \{(A, M^a)\} \ar[near start]{rr}
& & 
{\rm Log}
\\
\{(A, M)\} \ar{ur} \ar{rr} & & {\rm PreLog} \ar{ur} \ar[crossing over, leftarrow, near start,swap]{uu}{}
\end{tikzcd}\]
Here the bottom square is defined by logification. The middle square is defined by logification, cobase-change, and repletion. More formally, we have defined the functor ${\rm Fun}(\Delta^1, {\rm PreLog}) \to P^{\rm rep}_{\rm Log}$ as the composite \begin{equation}\label{logcotangentcomp}{\rm Fun}(\Delta^1, {\rm PreLog}) \to {\rm Fun}(\Delta^1, {\rm Log}) \to P_{\rm Log} \to P^{\rm rep}_{\rm Log}\end{equation} given by logification, the left adjoint provided by \cite[Proof of Lemma 7.3.3.21]{Lur17}, and the left adjoint provided by Lemma \ref{lem:repglobal}, respectively. The top square is defined to be the stable envelope of the bottom cube.

\begin{remark}\label{rem:lifeiscomplicated} Due to our passing to pointed envelopes in \emph{log} ring spectra, the fiberwise description of \eqref{logcotangentcomp} is more complicated than in the case of Construction \ref{constr:replabelianization2}. On each fiber, we have the description \[((B, N) \to (A, M)) \mapsto ((A, M^a) \to ((A \otimes B)^{\rm rep}, (M \boxtimes N)^{a, {\rm rep}}) \to (A, M^a)).\] Here the repletion is taken with respect to the map $(M^a \boxtimes N^a)^a \to M^a$ induced by the identity on $M^a$ and $N^a \to M^a$. Observe that we do not mimic Construction \ref{constr:replabelianization2} directly, as we do not know how to construct the relevant (variants of) relative adjunctions when the base category varies. For this reason, we pass to log ring spectra immediately.
\end{remark}

By Corollary \ref{cor:repletefiber} (or Corollary \ref{cor:stableeqlog} combined with \cite[Theorem 7.3.4.18]{Lur17}), we can think of the objects of the replete tangent bundle $T^{\rm rep}_{\rm Log}$ as triples $((A, M), J)$ with $(A, M)$ a log ring spectrum and $J$ an $A$-module. From this point of view, we may describe the composite $T_{\rm Log}^{\rm rep} \to P^{\rm rep}_{\rm Log} \to {\rm Log}$ by the formula \[((A, M), J) \mapsto ((A, M) \to (A, M) \oplus J \to (A, M)) \mapsto (A, M),\] cf.\ Remark \ref{rem:splitsquarezero}. 

\begin{lemma}\label{lem:cottxleft} The functor $T^{\rm rep}_{\rm Log} \to P^{\rm rep}_{\rm Log}$ admits a left adjoint relative to ${\rm Log}$.\end{lemma}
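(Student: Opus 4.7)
The plan is to invoke Lurie's criterion \cite[Proposition 7.3.2.6]{Lur17} for the existence of a left adjoint relative to a base: given a commutative triangle $\mathcal{X} \to \mathcal{Y} \to \mathcal{C}$ in which both legs over $\mathcal{C}$ are Cartesian fibrations, the functor $\mathcal{X} \to \mathcal{Y}$ admits a left adjoint relative to $\mathcal{C}$ precisely when it preserves Cartesian edges and each fiberwise functor $\mathcal{X}_C \to \mathcal{Y}_C$ admits a left adjoint. Both $T^{\rm rep}_{\rm Log} \to {\rm Log}$ and $P^{\rm rep}_{\rm Log} \to {\rm Log}$ are Cartesian (in fact presentable) fibrations, by Lemma \ref{lem:pointedfiber} and by the definition of the replete tangent bundle as a stable envelope.

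The preservation of Cartesian morphisms over ${\rm Log}$ is built into the very definition of a stable envelope \cite[Definition 1.1]{dagiv}, so nothing needs to be checked there. It remains to produce the fiberwise left adjoints. Fix a log ring spectrum $(A, M)$. By Lemma \ref{lem:pointedfiber} the fiber of $P^{\rm rep}_{\rm Log} \to {\rm Log}$ over $(A, M)$ is ${\rm Log}^{\rm rep}_{(A, M)//(A, M)}$, which by Proposition \ref{prop:augmentedalgrepl} is equivalent to ${\rm CAlg}_{A//A}$; under this identification, the fiber of $T^{\rm rep}_{\rm Log} \to P^{\rm rep}_{\rm Log}$ over $(A, M)$ becomes the ordinary functor $\Omega^\infty \colon {\rm Sp}({\rm CAlg}_{A//A}) \to {\rm CAlg}_{A//A}$ by Corollary \ref{cor:repletefiber}. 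Since ${\rm CAlg}_{A//A}$ is a presentable pointed $\infty$-category (the object $A$ serving as both initial and terminal object), standard stabilization theory supplies the required left adjoint $\Sigma^\infty_+$.

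With both hypotheses of \cite[Proposition 7.3.2.6]{Lur17} verified, we obtain the desired relative left adjoint. No substantive obstacle arises; the real work has already been done in establishing Proposition \ref{prop:augmentedalgrepl} and Corollary \ref{cor:repletefiber}, which together ensure that the replete tangent bundle inherits the linear infrastructure of the non-logarithmic setting fiberwise, at which point the proof reduces to a direct application of Lurie's machinery. If desired, one may alternatively bypass the fiberwise argument by noting that Corollary \ref{cor:stableeqlog} exhibits $T^{\rm rep}_{\rm Log} \to P^{\rm rep}_{\rm Log}$ as the pullback of $T_{\rm CAlg} \to P_{\rm CAlg}$ along the forgetful functor ${\rm Log} \to {\rm CAlg}$, so that the relative left adjoint is obtained from its known counterpart over ${\rm CAlg}$.
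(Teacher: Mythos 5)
Your proof is correct and follows the same overall strategy as the paper: invoke \cite[Proposition 7.3.2.6]{Lur17}, supply fiberwise left adjoints $\Sigma^\infty$ via Proposition \ref{prop:augmentedalgrepl} and Corollary \ref{cor:repletefiber}, and confirm preservation of Cartesian edges. The one genuine difference is how the last step is discharged. You observe that condition (2) of the stable-envelope definition \cite[Definition 1.1]{dagiv} asserts precisely that $u\colon T^{\rm rep}_{\rm Log} \to P^{\rm rep}_{\rm Log}$ carries $(p\circ u)$-Cartesian morphisms to $p$-Cartesian ones, so the check is vacuous; this is a valid and in fact cleaner reading than the paper's, which re-derives the same fact by unwinding the pullback descriptions of Lemma \ref{lem:pointedeqlog} and Corollary \ref{cor:stableeqlog} and inspecting a concrete cartesian square of split square-zero extensions. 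Your alternative route (pulling back the known relative adjunction $T_{\rm CAlg} \rightleftarrows P_{\rm CAlg}$ along ${\rm Log} \to {\rm CAlg}$ using Corollary \ref{cor:stableeqlog}) is also sound, though if you wanted to make it fully rigorous you would need to record that relative adjunctions are stable under base change of the base $\infty$-category, which is not cited in the paper. Two small cosmetic points: Lurie's criterion is an implication rather than the biconditional you state, and since ${\rm CAlg}_{A//A}$ is already pointed the left adjoint should be written $\Sigma^\infty$ rather than $\Sigma^\infty_+$.
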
 

\begin{proof} On each fiber, the functor under consideration is equivalent to \[{\rm Sp}({\rm Log}^{\rm rep}_{(A, M) // (A, M)}) \xrightarrow{\Omega^\infty} {\rm Log}^{\rm rep}_{(A, M)//(A, M)},\] which admits the left adjoint $\Sigma^\infty$. We now aim to apply \cite[Proposition 7.3.2.6]{Lur17} to the diagram \[\begin{tikzcd}[row sep = small]P^{\rm rep}_{\rm Log} \ar{dr} & & T^{\rm rep}_{\rm Log} \ar{ll} \ar{dl} \\ \vspace{10 mm} & {\rm Log}.\end{tikzcd}\] For this, we have to show that morphisms in $T^{\rm rep}_{\rm Log}$ that are cartesian over ${\rm Log}$ are sent to morphisms in $P^{\rm rep}_{\rm Log}$ that are cartesian over ${\rm Log}$. But in light of Lemma \ref{lem:pointedeqlog} and Corollary \ref{cor:stableeqlog}, this is true for the same reason that it is true for the functor $T_{\rm CAlg} \to P_{\rm CAlg}$ over ${\rm CAlg}$ (cf.\ \cite[Definition 7.3.2.14]{Lur17}): Given a cartesian morphism $((A, M), J_1) \to ((B, N), J_2)$ over ${\rm Log}$ in $T^{\rm rep}_{\rm Log}$, there is an equivalence of $A$-modules $J_1 \xrightarrow{\simeq} J_2$. Hence the square \[\begin{tikzcd}[row sep = small](A, M) \oplus J_1\ar{r} \ar{d} & (A, M) \ar{d} \\ (B, N) \oplus J_2 \ar{r} & (B, N)\end{tikzcd}\] is cartesian, which concludes the proof. 
\end{proof}

\noindent Let us pause to record that our results assemble to a full proof of Theorem \ref{thm:repletetangent}. 

\begin{proof}[Proof of Theorem \ref{thm:repletetangent}] Part (1) is Corollary \ref{cor:repletefiber} and part (2) is Lemma \ref{lem:cottxleft}. 
\end{proof}

\begin{definition} The \emph{log cotangent complex} is the composite functor \[{\Bbb L}^{\rm rep} \colon {\rm PreLog} \xrightarrow{(-)^a} {\rm Log} \xrightarrow{} {\rm Fun}(\Delta^1, {\rm Log}) \xrightarrow{} P_{\rm Log} \xrightarrow{} P^{\rm rep}_{\rm Log} \xrightarrow{} T^{\rm rep}_{\rm Log}\] given by logification, the diagonal embedding, the left adjoint provided by \cite[Proof of Lemma 7.3.3.21]{Lur17}, the left adjoint provided by Lemma \ref{lem:repglobal}, and the left adjoint provided by Lemma \ref{lem:cottxleft}, respectively. 
\end{definition}

\subsection{Identifying the log cotangent complex} By Corollary \ref{cor:repletefiber}, for each pre-log ring spectrum $(A, M)$ we obtain an $A$-module that we denote by ${\Bbb L}_{(A, M)}^{\rm rep}$ - the \emph{log cotangent complex} of $(A, M)$. A variant ${\rm TAQ}(A, M)$ of the log cotangent complex was introduced and studied by Rognes \cite[Sections 11 and 13]{Rog09} and Sagave \cite{Sag14} - therein referred to as \emph{log topological Andr\'e--Quillen homology}. We now aim to prove Theorem \ref{thm:logcotangent}, which states that these $A$-modules are canonically equivalent. We first simplify the rather complicated description of Remark \ref{rem:lifeiscomplicated}. Let $(B, N) \to (A, M)$ be a map of log ring spectra. There is a map \begin{equation}\label{lifeismanageablemap}(A \otimes B) \otimes_{{\Bbb S}^{\cal J}[M \boxtimes N]} {\Bbb S}^{\cal J}[(M \boxtimes N)^{\rm rep}] \to (A \otimes B) \otimes_{{\Bbb S}^{\cal J}[(M \boxtimes_{} N)^a]} {\Bbb S}^{\cal J}[(M \boxtimes_{} N)^{a, {\rm rep}}].\end{equation} The codomain is as in Remark \ref{rem:lifeiscomplicated}. In the domain, the repletion is taken with respect to the map induced by the identity on $M$ and the map $N \to M$, and the map is induced by logification.

\begin{proposition}\label{prop:lifeismanageable} The map \eqref{lifeismanageablemap} is an equivalence after one suspension in the category of augmented $A$-algebras.  
\end{proposition}

\begin{proof} We very closely follow the proof strategy of \cite[Theorem 4.24]{RSS15}, of which the argument here should be considered a special case. We spell out the argument for the convenience of the reader. Let us write $(C, K, \gamma)$ for the pre-log ring spectrum $(A \otimes B, M \boxtimes_{} N)$. Let $\gamma^{-1}{\rm GL}_1^{\cal J}(C)^{\rm rep}$ denote the repletion of the morphism $\gamma^{-1}{\rm GL}_1^{\cal J}(C) \to \alpha^{-1}{\rm GL}_1^{\cal J}(A)$ (which is virtually surjective, as $(C, K) \to (A, M)$ admits a section). For a cocomplete category ${\cal C}$ pointed at $c$, we shall write $S^1 \odot_c -$ for the pointed tensor with $S^1$. Consider first the pushout diagram \begin{equation}\label{gammapushout}\begin{tikzpicture}[baseline= (a).base]
\node[scale=.84] (a) at (0,0){\begin{tikzcd}[row sep = small, column sep = tiny]S^1 \odot_{\alpha^{-1}{\rm GL}_1^{\cal J}(A)} \gamma^{-1}{\rm GL}_1^{\cal J}(C) \ar{d} \ar{r} & S^1 \odot_{\alpha^{-1}{\rm GL}_1^{\cal J}(A)^{\rm gp}} \gamma^{-1}{\rm GL}_1^{\cal J}(C)^{\rm gp} \ar{d} \\ S^1 \odot_{\alpha^{-1}{\rm GL}_1^{\cal J}(A)} \gamma^{-1}{\rm GL}_1^{\cal J}(C)^{\rm rep} \ar{r} & S^1 \odot_{\alpha^{-1}{\rm GL}_1^{\cal J}(A)^{\rm gp}} (\gamma^{-1}{\rm GL}_1^{\cal J}(C)^{\rm rep} \boxtimes_{\gamma^{-1}{\rm GL}_1^{\cal J}(C)} \gamma^{-1}{\rm GL}_1^{\cal J}(C)^{\rm gp}).\end{tikzcd}};\end{tikzpicture}\end{equation} of graded ${\Bbb E}_{\infty}$-spaces. The codomain of the right-hand vertical map is grouplike, as its domain is grouplike, and the left-hand vertical map is  surjective on $\pi_0((-)_{h{\cal J}})$ by Lemma \ref{lem:isoonpizerohj} below. Combining this with the facts that group completions commute with homotopy pushouts \cite[Lemma 2.9]{Lun21} and that the left-hand vertical map is an equivalence after group completion  implies that the right-hand vertical map was an equivalence to begin with. Observe that the map $\gamma^{-1}{\rm GL}_1^{\cal J}(C) \to \Omega^{\cal J}(C)$ factors over ${\rm GL}_1^{\cal J}(C)$, and hence over $\gamma^{-1}{\rm GL}_1^{\cal J}(C)^{\rm gp}$. From this, we obtain the diagram \begin{equation}\label{gammaeq}\begin{tikzcd}[row sep = small, column sep = tiny] \vspace{10 mm}  & C \xrightarrow{\cong} C \otimes_{{\Bbb S}^{\cal J}[\gamma^{-1}{\rm GL}_1^{\cal J}(C)^{\rm gp}]} {\Bbb S}^{\cal J}[\gamma^{-1}{\rm GL}_1^{\cal J}(C)^{\rm gp}] \ar{d}{} \\ \vspace{10 mm} & C \otimes_{{\Bbb S}^{\cal J}[\gamma^{-1}{\rm GL}_1^{\cal J}(C)^{\rm gp}]} ({\Bbb S}^{\cal J}[\gamma^{-1}{\rm GL}_1^{\cal J}(C)^{\rm gp}] \otimes_{{\Bbb S}^{\cal J}[\gamma^{-1}{\rm GL}_1^{\cal J}(C)]} {\Bbb S}^{\cal J}[\gamma^{-1}{\rm GL}_1^{\cal J}(C)^{\rm rep}]) \ar{d}{\cong} \\ \vspace{10 mm} & C \otimes_{{\Bbb S}^{\cal J}[\gamma^{-1}{\rm GL}_1^{\cal J}(C)]} {\Bbb S}^{\cal J}[\gamma^{-1}{\rm GL}_1^{\cal J}(C)^{\rm rep}]. \end{tikzcd}\end{equation} Since the right-hand vertical map in \eqref{gammapushout} is an equivalence, this composite becomes an equivalence after one suspension $S^1 \odot_A -$ in augmented commutative $A$-algebras.  

Consider now the commutative cube \[\begin{tikzpicture}[baseline= (a).base]
\node[scale=.73] (a) at (0,0){\begin{tikzcd}[row sep = tiny, column sep = tiny]
&
{\Bbb S}^{\cal J}[S^1 \odot_{\alpha^{-1}{\rm GL}_1^{\cal J}(A)} \gamma^{-1}{\rm GL}_1^{\cal J}(C)^{\rm rep}] 
\ar{rr}{}
\ar[]{dd}[near end]{}
& & {\Bbb S}^{\cal J}[S^1 \odot_M K^{\rm rep}] 
\ar{dd}{}
\\
{\Bbb S}^{\cal J}[S^1 \odot_{\alpha^{-1}{\rm GL}_1^{\cal J}(A)} \gamma^{-1}{\rm GL}_1^{\cal J}(C)] 
\ar[crossing over]{rr}[near start]{}
\ar{dd}[swap]{}
\ar{ur}{} 
& & {\Bbb S}^{\cal J}[S^1 \odot_M K]
\ar{ur}{}
\\
&
{\Bbb S}^{\cal J}[S^1 \odot_{{\rm GL}_1^{\cal J}(A)} {\rm GL}_1^{\cal J}(C)^{\rm rep}]
\ar[near start]{rr}{}
& & {\Bbb S}^{\cal J}[S^1 \odot_{M^a} K^{a, {\rm rep}}]
\\
{\Bbb S}^{\cal J}[S^1 \odot_{{\rm GL}_1^{\cal J}(A)} {\rm GL}_1^{\cal J}(C)]
\ar{ur}{\simeq}
\ar{rr}
& & {\Bbb S}^{\cal J}[S^1 \odot_{M^a} K^a]
\ar[crossing over, leftarrow, near start]{uu}{}
\ar{ur}{}
\end{tikzcd}};\end{tikzpicture}\] of ${\Bbb E}_{\infty}$-rings. The front vertical face is cocartesian by the definition of the logification $K^a$, while the back face is cocartesian by \cite[Lemma 4.26]{RSS15} (the cocartesian squares needed for its statement are the defining cocartesian squares of $M^a \simeq M$ and $K^a$), and the composite \eqref{gammaeq} being an equivalence after one suspension implies that the left-hand face is cocartesian after base-change along the induced morphism ${\Bbb S}^{\cal J}[S^1 \odot_{{\rm GL}_1^{\cal J}(A)} {\rm GL}_1^{\cal J}(C)] \to S^1 \odot_A C$. This implies that the square \[\begin{tikzpicture}[baseline= (a).base]
\node[scale=.91] (a) at (0,0){\begin{tikzcd}[row sep = small, column sep = tiny]S^1 \odot_A (C \otimes_{{\Bbb S}^{\cal J}[\gamma^{-1}{\rm GL}_1^{\cal J}(C)]} {\Bbb S}^{\cal J}[\gamma^{-1}{\rm GL}_1^{\cal J}(C)^{\rm rep}]) \ar{r}{\simeq} \ar{d} & S^1 \odot_A (C \otimes_{{\Bbb S}^{\cal J}[{\rm GL}_1^{\cal J}(C)]} {\Bbb S}^{\cal J}[{\rm GL}_1^{\cal J}(C)^{\rm rep}]) \ar{d} \\ S^1 \odot_A (C \otimes_{{\Bbb S}^{\cal J}[K]} {\Bbb S}^{\cal J}[K^{\rm rep}]) \ar{r} & S^1 \odot_A (C \otimes_{{\Bbb S}^{\cal J}[K^a]} {\Bbb S}^{\cal J}[K^{a, {\rm rep}}])\end{tikzcd}};\end{tikzpicture}\] is cocartesian, with the top horizontal map an equivalence. Hence the lower horizontal map is an equivalence, as desired. 
\end{proof}

Let $S^1 \odot_M -$ denote the pointed tensor with $S^1$ in the category $({\cal C}{\cal S}^{\cal J}_{\infty})_{M // M}$ of graded ${\Bbb E}_{\infty}$-spaces pointed at $M$. In the proof of Proposition \ref{prop:lifeismanageable}, we used the following:

\begin{lemma}\label{lem:isoonpizerohj} If $N \in ({\cal C}{\cal S}^{\cal J}_{\infty})_{M // M}$ is a graded ${\Bbb E}_{\infty}$-space pointed at $M$, then the repletion map $S^1 \odot_M N \to S^1 \odot_M N^{\rm rep}$ is a surjection on $\pi_0(-_{h{\cal J}})$. 
\end{lemma}

\begin{proof} By \cite[Lemma 2.12]{Lun21}, there is an equivalence $M \boxtimes W(N) \xrightarrow{\simeq} N^{\rm rep}$, where $W(N)$ is the pullback of $U^{\cal J} \xrightarrow{} M^{\rm gp} \xleftarrow{} N^{\rm gp}$.  Imitating \cite[Proposition 3.1]{RSS18} gives a natural equivalence $M \boxtimes W(N) \simeq M \times (N^{\rm gp}_{h{\cal J}}/M^{\rm gp}_{h{\cal J}})$ over and under $M$. Hence there are equivalences \[S^1 \odot_M N^{\rm rep} \xleftarrow{\simeq} S^1 \odot_M (M \boxtimes W(N)) \simeq S^1 \odot_M (M \times (N^{\rm gp}_{h{\cal J}}/M^{\rm gp}_{h{\cal J}})).\] Observe now that $S^1 \odot_M (M \times (N^{\rm gp}_{h{\cal J}}/M^{\rm gp}_{h{\cal J}})) \simeq M \boxtimes B(N^{\rm gp}_{h{\cal J}}/M^{\rm gp}_{h{\cal J}})$, since $B(-)$ is the suspension functor on grouplike ${\Bbb E}_{\infty}$-spaces. This concludes the proof, as the repletion map is one over $M$, and $B(N^{\rm gp}_{h{\cal J}}/M^{\rm gp}_{h{\cal J}})$ is path-connected. 
\end{proof}

\begin{proof}[Proof of Theorem \ref{thm:logcotangent}] By construction (see Remark \ref{rem:lifeiscomplicated}), the log cotangent complex ${\Bbb L}_{(A, M)}^{\rm rep}$ corresponds to the suspension spectrum \[\Sigma^{\infty} ((A \otimes A) \otimes_{{\Bbb S}^{\cal J}[(M \boxtimes_{} M)^a]} {\Bbb S}^{\cal J}((M \boxtimes_{} M)^{a, {\rm rep}}), ((M \boxtimes_{} M)^{a, {\rm rep}}) \] in ${\rm Sp}({\rm Log}^{\rm rep}_{(A, M^a) // (A, M^a)}) \simeq {\rm Mod}_A$. Under the equivalence of categories of Proposition \ref{prop:augmentedalgrepl} and the equivalences of Proposition \ref{prop:lifeismanageable}, this corresponds to the suspension spectrum \[\Sigma^{\infty}((A \otimes A) \otimes_{{\Bbb S}^{\cal J}[M \boxtimes M]} {\Bbb S}^{\cal J}[(M \boxtimes M)^{\rm rep}])\] in ${\rm Sp}({\rm CAlg}_{A//A})$. By \cite[Section 9]{Lun21}, this $A$-module is equivalent to ${\rm TAQ}(A, M)$.
\end{proof}

For a map $(R, P) \to (A, M)$ of pre-log ring spectra, we define the \emph{relative log cotangent complex} ${\Bbb L}_{(A, M) / (R, P)}^{\rm rep}$ as the cofiber of $A \otimes_R {\Bbb L}_{(R, P)}^{\rm rep} \to {\Bbb L}_{(A, M)}^{\rm rep}$. By Theorem \ref{thm:logcotangent}, the following definition coincides with the one considered in \cite{Sag14}:

\begin{definition}[{\cite[Definition 5.22]{Sag14}}] A map $(R, P) \to (A, M)$ of pre-log ring spectra is \emph{formally log \'etale} if the $A$-module ${\Bbb L}_{(A, M)/ (R, P)}^{\rm rep}$ vanishes. 
\end{definition}

\section{The log Postnikov tower}\label{sec:logpostnikov} Given a log ring spectrum $(R, P)$ with $R$ connective, we now aim to construct a tower \[\cdots \to (\tau_{\le 2}(R), P_{\le 2}) \to (\tau_{\le 1}(R), P_{\le 1}) \to (\pi_0(R), P_{\le 0})\] of log square-zero extensions under $(R, P)$. We will first recall the notion of log square-zero extensions from \cite{Lun22}, and explain how this definition is effectively forced upon us by the analysis in Section \ref{sec:repletetangent}. We then proceed to prove Theorem \ref{thm:squarezeroiff}, reconciling our log square-zero extensions with the strict square-zero extensions used in log geometry. 

\subsection{Log square-zero extensions} Recall from the discussion of Remark \ref{rem:splitsquarezero} that, for a presentably symmetric monoidal, stable $\infty$-category ${\cal C}$ with a commutative algebra object $A$ and an $A$-module $J$, we define the square-zero extension $A \oplus J$ to be the image of $J$ under \[{\rm Mod}_A({\cal C}) \simeq {\rm Sp}({\rm CAlg}({\cal C})_{/A}) \xrightarrow{\Omega^\infty} {\rm CAlg}({\cal C})_{/A}.\] The heuristics of Section \ref{sec:repletetangent} suggest that the correct analogous object $(A, M) \oplus J$ in the context of log ring spectra is the image of $J$ under \begin{equation}\label{compositetologrep}{\rm Mod}_A \simeq {\rm Sp}({\rm Log}^{\rm rep}_{(A, M)//(A, M)}) \xrightarrow{\Omega^{\infty}} {\rm Log}^{\rm rep}_{(A, M)//(A, M)},\end{equation} where the equivalence is Corollary \ref{cor:augmentedalgrepl}. In Remark \ref{rem:splitsquarezero}, we identified this with the square-zero extension $(A \oplus J, M \oplus J)$ considered in \cite[Construction 5.6]{Sag14}. 

\begin{definition} Let $(A, M)$ be a log ring spectrum and let $J$ be an $A$-module. 
\begin{enumerate}
\item The \emph{split log square-zero extension} $(A, M) \oplus J$ is the image of $J$ under \eqref{compositetologrep}. 
\item A \emph{log derivation} of $(A, M)$ with values in $J$ is an augmented $(A, M)$-algebra map $(d, d^\flat) \colon (A, M) \to (A, M) \oplus J$; that is, the space of log derivations is the mapping space ${\rm Der}((A, M), J) := {\rm Map}_{{\rm Log}_{/(A, M)}}((A, M), (A, M) \oplus J)$.
\item A map $(\widetilde{A}, \widetilde{M}) \to (A, M)$ of log ring spectra is \emph{log square-zero} is there exists a log derivation $(d, d^\flat) \colon (A, M) \to (A, M) \oplus J[1]$ which sits in a cartesian diagram of log ring spectra \begin{equation}\label{logsquarezero}\begin{tikzcd}[row sep = small](\widetilde{A}, \widetilde{M}) \ar{r} \ar{d} & (A, M) \ar{d}{(d, d^\flat)} \\ (A, M) \ar{r}{(d_0, d_0^\flat)} & (A, M) \oplus J[1] \end{tikzcd}\end{equation} with $(d_0, d_0^\flat)$ the trivial log derivation. In this situation, we say that $(\widetilde{A}, \widetilde{M})$ is a \emph{log square-zero extension of $(A, M)$ by $J$}, cf.\ \cite[Definition 7.4.1.6]{Lur17}.
\end{enumerate}
\end{definition}

By Remark \ref{rem:splitsquarezero}, the definitions above recover those considered in \cite[Section 11 and 13]{Rog09}, \cite{Sag14}, and \cite[Chapter 4]{Lun22}. The following is a consequence of our construction (or Theorem \ref{thm:logcotangent}):

\begin{lemma}\label{lem:logcotangentrep} The log cotangent complex corepresents log derivations; that is, there is a canonical equivalence \[{\rm Map}_{{\rm Mod}_A}({\Bbb L}_{(A, M)}^{\rm rep}, J) \simeq {\rm Der}((A, M), J)\] for log ring spectra $(A, M)$ and $A$-modules $J$. \qed
\end{lemma}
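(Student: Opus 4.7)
The plan is to unwind the log cotangent complex as a composition of (relative) left adjoints and convert it, one step at a time, into a chain of equivalences of mapping spaces. Since the assertion is fiberwise over $(A,M) \in \mathrm{Log}$ (we may replace $(A,M)$ by its logification at the outset, as $\mathbb{L}^{\mathrm{rep}}$ factors through $(-)^a$), all the relative adjunctions featuring in the definition of $\mathbb{L}^{\mathrm{rep}}$ restrict to ordinary adjunctions on the fibers over $(A,M)$. The target plays a symmetric role: I will repeatedly use that the split square-zero extension $(A,M)\oplus J$ already lies in the smallest subcategory appearing at each stage. Indeed, $(A,M)\oplus J \to (A,M)$ is pointed by its zero section and, by Corollary \ref{cor:splitreplete} together with Remark \ref{rem:splitsquarezero}, it is replete over $(A,M)$.

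First, using Corollary \ref{cor:repletefiber} and the relative adjunction of Lemma \ref{lem:cottxleft}, the space $\mathrm{Map}_{\mathrm{Mod}_A}(\mathbb{L}^{\mathrm{rep}}_{(A,M)}, J)$ is equivalent, fiberwise over $(A,M)$, to the mapping space in $(P^{\mathrm{rep}}_{\mathrm{Log}})_{(A,M)}$ from the image of $(A,M)$ under the first three functors of $\mathbb{L}^{\mathrm{rep}}$ to $\Omega^\infty J$. Under the equivalence of Proposition \ref{prop:augmentedalgrepl}, $\Omega^\infty J$ corresponds to the split log square-zero extension $(A,M)\oplus J \to (A,M)$ (cf.\ Remark \ref{rem:splitsquarezero}). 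Next, since the target is replete, the relative left adjoint of Lemma \ref{lem:repglobal} identifies this with a mapping space in $(P_{\mathrm{Log}})_{(A,M)} \simeq \mathrm{Log}_{(A,M)//(A,M)}$. Finally, the relative left adjoint $\mathrm{Fun}(\Delta^1, \mathrm{Log}) \to P_{\mathrm{Log}}$ (freely adjoining a section) converts the mapping space into one in $\mathrm{Fun}(\Delta^1, \mathrm{Log})_{/(A,M)}$, where the source is now the image of $(A,M) \in \mathrm{Log}$ under the diagonal embedding — that is, the identity arrow $\mathrm{id}_{(A,M)}$. Since the target is already pointed, the adjunction unit here is an equivalence.

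Combining the four equivalences, one is left with the space of commutative squares
\[\begin{tikzcd}[row sep = small]
(A,M) \ar{r}{=} \ar{d} & (A,M) \ar{d} \\
(A,M)\oplus J \ar{r} & (A,M)
\end{tikzcd}\]
in $\mathrm{Log}$, which is precisely $\mathrm{Map}_{\mathrm{Log}_{/(A,M)}}((A,M), (A,M)\oplus J) = \mathrm{Der}((A,M), J)$. The only step requiring genuine input (as opposed to formal bookkeeping) is verifying that $(A,M)\oplus J \to (A,M)$ lies in $P^{\mathrm{rep}}_{\mathrm{Log}}$, which is handled by Corollary \ref{cor:splitreplete} and Remark \ref{rem:splitsquarezero}; the remainder is an unwinding of the universal properties assembled in Section \ref{sec:repletetangent}. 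This is also the one piece of the argument one should treat carefully, since the relative adjunctions are stated over $\mathrm{Log}$ rather than fiberwise, but \cite[Proposition 7.3.2.6]{Lur17} ensures the fiberwise left adjoints assemble as expected. Alternatively, Theorem \ref{thm:logcotangent} identifies $\mathbb{L}^{\mathrm{rep}}_{(A,M)}$ with Rognes--Sagave's $\mathrm{TAQ}(A,M)$, for which the corresponding corepresentability is proved in \cite[Section 11]{Rog09} and \cite{Sag14}.
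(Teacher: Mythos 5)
Your proof is correct and spells out exactly what the paper leaves implicit: the paper states the lemma with a \texttt{\textbackslash qed} because it follows directly from the construction of $\mathbb{L}^{\mathrm{rep}}$ as a composite of relative left adjoints, and your unwinding through Corollary \ref{cor:repletefiber}, Lemma \ref{lem:cottxleft}, Lemma \ref{lem:repglobal}, and the freely-adjoined-section adjoint is precisely that argument made explicit. Your closing remark about the alternative route via Theorem \ref{thm:logcotangent} also matches the paper's parenthetical ``(or Theorem \ref{thm:logcotangent})''.
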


\subsection{Starting from a strict square-zero extension} We now aim to prove Theorem \ref{thm:squarezeroiff}. With notation as in its statement, we shall first show that, if $(p, p^\flat)$ is strict, then $(p, p^\flat)$ is a log square-zero extension by $J$. Since $p$ is a square-zero extension by $J$, we obtain a cartesian diagram 

\begin{equation}\label{tobelogified}\begin{tikzcd}[row sep = small](\widetilde{R}, \widetilde{P}) \ar{r} \ar{d}{} & (R, \widetilde{P}) \ar{d}{(d, {\rm id})} \\ (R, \widetilde{P}) \ar{r}{(d_0, {\rm id})} & (R \oplus J[1], \widetilde{P})\end{tikzcd}\end{equation} of pre-log ring spectra. Observe that it is essential that we use the inverse image pre-log structure $\widetilde{P} \to P \to \Omega^{\cal J}(R)$, so that the pre-log structure on the square-zero extension $R \oplus J[1]$ is unambiguously defined. The following should be considered a higher variant of \cite[Theorem 4.17]{SSV16}, as we explain in Remark \ref{rem:ssvcomparison}. 

\begin{proposition}\label{prop:strictimpliessquarezero} Let $(p, p^\flat)$ be strict. The logification of the cartesian diagram \eqref{tobelogified} is equivalent to a cartesian diagram of the form \[\begin{tikzcd}[row sep = small](\widetilde{R}, \widetilde{P}) \ar{r} \ar{d} & (R, P) \ar{d}{(d, d^\flat)} \\ (R, P) \ar{r}{(d_0, d_0^\flat)} & (R \oplus J[1], P \oplus J[1])\end{tikzcd}\] of log ring spectra. In particular, $(p, p^\flat)$ is a log square-zero extension.
\end{proposition}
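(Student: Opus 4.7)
The plan is to apply the logification functor $(-)^a$ to the cartesian square \eqref{tobelogified} in ${\rm PreLog}$, identify each vertex of the result with the claimed form, and then verify that the resulting diagram remains cartesian in ${\rm Log}$.

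I begin with the vertex identifications. The top-left object $(\widetilde{R}, \widetilde{P})^a$ equals $(\widetilde{R}, \widetilde{P})$ since the latter is already log. The two copies of $(R, \widetilde{P})^a$ identify with $(R, P)$ by the strictness of $(p, p^\flat)$. The main computation is $(R \oplus J[1], \widetilde{P})^a \simeq (R, P) \oplus J[1]$. Since the structure map $\widetilde{P} \to \Omega^{\cal J}(R \oplus J[1])$ factors through $\Omega^{\cal J}(R)$, I compute this logification in two stages: first logifying through $\Omega^{\cal J}(R)$ to obtain $P$ by strictness, then logifying with respect to the additional units from $J[1]$. For the second stage, I apply Lemma \ref{lem:unitspullback} to the square-zero extension $R \oplus J[1] \to R$ by $J[1]$ (with trivial derivation) to obtain a cartesian square of units, and combine this with the splitting \eqref{unitssplit} from Construction \ref{constr:splitlogstr} to write ${\rm GL}_1^{\cal J}(R \oplus J[1]) \simeq {\rm GL}_1^{\cal J}(R) \boxtimes G$ with $G := {\rm GL}_1^{\cal J}(R \oplus J[1])/{\rm GL}_1^{\cal J}(R)$. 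The logification pushout then collapses to $P \boxtimes G$, which by Remark \ref{rem:splitsquarezero} is precisely the graded ${\Bbb E}_\infty$-space component of $(R, P) \oplus J[1]$.

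Next I verify cartesianness. Since the inclusion ${\rm Log} \hookrightarrow {\rm PreLog}$ is fully faithful and preserves limits (being the right adjoint to logification), it suffices to check cartesianness in ${\rm PreLog}$. On underlying ${\Bbb E}_\infty$-rings the square is the defining one for $\widetilde{R}$. For graded ${\Bbb E}_\infty$-spaces I must check that $\widetilde{P} \simeq P \times_{P \boxtimes G} P$ with respect to the two maps induced by $d^\flat$ and $d_0^\flat$. I will deduce this via Mather's cube lemma (Proposition \ref{prop:matherscube}), applied to the cube comparing the pullback square in question with the corresponding cartesian square of units from Lemma \ref{lem:unitspullback}. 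The hypotheses of Mather's cube are satisfied because the projection of the split log square-zero extension is split replete over $(R, P)$ by Corollary \ref{cor:splitreplete} and Lemma \ref{lem:repletebasechangestable}.

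The main obstacle is the identification of $(R \oplus J[1], \widetilde{P})^a$ with $(R, P) \oplus J[1]$: one must simultaneously invoke the units pullback square from Lemma \ref{lem:unitspullback}, the splitting of Construction \ref{constr:splitlogstr}, and the strictness hypothesis to collapse the two-stage logification into a single split log square-zero extension. Tracking how the maps $d_0^\flat, d^\flat$ arise naturally from the ring derivations $d_0, d$ through this identification is the key technical point, and ensures that the subsequent cartesianness verification reduces to the units pullback already supplied by Lemma \ref{lem:unitspullback}.
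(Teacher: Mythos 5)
Your vertex identification is correct and essentially recovers the content of Lemma \ref{lem:logisquarezero} in the paper (which is the paper's analogue of your ``two-stage'' logification, proved by a single chain of cartesian units squares followed by the splitting \eqref{sageq}). Your treatment of the first part is fine, including the use of Lemma \ref{lem:unitspullback} applied to the projection and the section $d$ via pasting.

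The gap is in the cartesianness verification. Proposition \ref{prop:matherscube} is Mather's \emph{second} cube lemma: it concludes that the \emph{top} face of a cube is homotopy \emph{cocartesian}, given that the vertical faces are cartesian, the bottom is cocartesian, and a repleteness hypothesis holds. It is a tool for transferring cocartesianness upward, and cannot be used to conclude that a square is \emph{cartesian}. Nothing in the proposal explains what cube you have in mind, nor why cartesianness of the square $\widetilde{P} \simeq P \times_{P\oplus J[1]} P$ would follow from the conclusion of Proposition \ref{prop:matherscube}; the repleteness of $P\oplus J[1]\to P$ (which you cite) is one of the \emph{hypotheses} of that lemma, not a route to the cartesianness you need. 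In the paper this step is instead handled by Lemma \ref{lem:logicartesian}: one first observes that $d^\flat$ is replete (so cartesianness may be checked after group completion), and then verifies the group-completed square is cartesian using the Bousfield--Friedlander-type criterion of Lemma \ref{lem:bousfieldfriedlander} applied to the square of two-sided bar constructions modeling $P\boxtimes_{{\rm GL}_1^{\cal J}(\widetilde R)} {\rm GL}_1^{\cal J}(-)$. You should replace your Mather-cube argument with this line of reasoning (or another direct argument for cartesianness); as written the proof of cartesianness does not go through.
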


The remainder of this subsection is dedicated to the proof of Proposition \ref{prop:strictimpliessquarezero}. We first give a convenient description of the log structure on $R$ in the presence of a strict map $(\widetilde{R}, \widetilde{P}) \to (R, P)$ with $\widetilde{R} \to R$ square-zero.

\begin{lemma}\label{lem:inverseimagelogi} The diagram \[\begin{tikzcd}[row sep = small]{\rm GL}_1^{\cal J}(\widetilde{R}) \ar{r}{{\rm GL}_1^{\cal J}(p)} \ar{d} & {\rm GL}_1^{\cal J}(R) \ar{d} \\ \widetilde{P} \ar{r} & P\end{tikzcd}\] of commutative ${\cal J}$-space monoids is cocartesian. 
\end{lemma}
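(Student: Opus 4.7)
The plan is to identify the claimed pushout with the defining pushout of the logification $\widetilde{P}^a$, which we know equals $P$ by strictness of $(p,p^\flat)$.

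First I would recall that strictness of $(p, p^\flat) \colon (\widetilde{R}, \widetilde{P}) \to (R, P)$ gives an equivalence $(R, \widetilde{P}^a) \xrightarrow{\simeq} (R, P)$, where the logification is formed with respect to the composite structure map $\alpha \colon \widetilde{P} \xrightarrow{\widetilde{\alpha}} \Omega^{\cal J}(\widetilde{R}) \xrightarrow{\Omega^{\cal J}(p)} \Omega^{\cal J}(R)$. By definition, $\widetilde{P}^a$ fits in a cocartesian square
\[\begin{tikzcd}[row sep = small]\alpha^{-1}{\rm GL}_1^{\cal J}(R) \ar{r} \ar{d} & {\rm GL}_1^{\cal J}(R) \ar{d} \\ \widetilde{P} \ar{r} & \widetilde{P}^a \simeq P.\end{tikzcd}\]
Consequently, to prove the lemma it suffices to exhibit a natural equivalence $\alpha^{-1}{\rm GL}_1^{\cal J}(R) \simeq {\rm GL}_1^{\cal J}(\widetilde{R})$ compatible with the maps to $\widetilde{P}$ and to ${\rm GL}_1^{\cal J}(R)$.

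The key step will be to chase pullbacks. By definition, $\alpha^{-1}{\rm GL}_1^{\cal J}(R)$ is the pullback of $\widetilde{P} \xrightarrow{\alpha} \Omega^{\cal J}(R) \leftarrow {\rm GL}_1^{\cal J}(R)$. Since $\alpha$ factors through $\Omega^{\cal J}(\widetilde{R})$, pullback pasting yields
\[\alpha^{-1}{\rm GL}_1^{\cal J}(R) \simeq \widetilde{P} \times_{\Omega^{\cal J}(\widetilde{R})} \bigl(\Omega^{\cal J}(\widetilde{R}) \times_{\Omega^{\cal J}(R)} {\rm GL}_1^{\cal J}(R)\bigr).\]
The inner pullback is precisely ${\rm GL}_1^{\cal J}(\widetilde{R})$ by Lemma \ref{lem:unitspullback}, which is applicable because $\widetilde{R} \to R$ is a square-zero extension. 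Hence $\alpha^{-1}{\rm GL}_1^{\cal J}(R) \simeq \widetilde{P} \times_{\Omega^{\cal J}(\widetilde{R})} {\rm GL}_1^{\cal J}(\widetilde{R}) = \widetilde{\alpha}^{-1}{\rm GL}_1^{\cal J}(\widetilde{R})$, and the log condition on $(\widetilde{R}, \widetilde{P})$ finally identifies this with ${\rm GL}_1^{\cal J}(\widetilde{R})$.

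The two relevant structure maps transport correctly under these identifications: the map to ${\rm GL}_1^{\cal J}(R)$ becomes the projection that agrees with ${\rm GL}_1^{\cal J}(p)$ after using the cartesian square of Lemma \ref{lem:unitspullback}, while the map to $\widetilde{P}$ is just the other projection from $\widetilde{\alpha}^{-1}{\rm GL}_1^{\cal J}(\widetilde{R})$. Substituting this equivalence into the cocartesian square defining $\widetilde{P}^a$ yields exactly the square in the lemma statement, proving it is cocartesian. The main subtlety I anticipate is making sure that the comparison maps are genuinely the ones described in the statement (in particular that the left-hand vertical morphism is the one induced by the log condition on $(\widetilde{R},\widetilde{P})$), which should follow from naturality of the pullback pasting together with the fact that the units pullback of Lemma \ref{lem:unitspullback} is constructed from the same square-zero cartesian diagram used to define $\widetilde{R} \to R$.
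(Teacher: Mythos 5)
Your proposal is correct and follows essentially the same argument as the paper: both identify the square with the defining pushout for the logification of $(R,\widetilde{P})$ (using strictness to conclude the pushout is $P$), and both identify $\alpha^{-1}\mathrm{GL}_1^{\cal J}(R)$ with $\mathrm{GL}_1^{\cal J}(\widetilde{R})$ by pasting cartesian squares, invoking Lemma~\ref{lem:unitspullback} for the inner square and the log condition on $(\widetilde{R},\widetilde{P})$ for the outer one.
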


\begin{proof} Consider the cartesian squares \[\begin{tikzcd}[row sep = small](p \circ \widetilde{\beta})^{-1}{\rm GL}_1^{\cal J}(R) \ar{d} \ar{r} & p^{-1}{\rm GL}_1^{\cal J}(R) \ar{r} \ar{d} & {\rm GL}_1^{\cal J}(R) \ar{d} \\ \widetilde{P} \ar{r}{\widetilde{\beta}} & \Omega^{\cal J}(\widetilde{R}) \ar{r}{\Omega^{\cal J}(p)} & \Omega^{\cal J}(R) \end{tikzcd}\] of graded ${\Bbb E}_{\infty}$-spaces. We have that $p^{-1}{\rm GL}_1^{\cal J}(R) \simeq {\rm GL}_1^{\cal J}(\widetilde{R})$ by Lemma \ref{lem:unitspullback}, so that $(p \circ \widetilde{\beta})^{-1}{\rm GL}_1^{\cal J}(R) \simeq {\rm GL}_1^{\cal J}(\widetilde{R})$ since $(\widetilde{R}, \widetilde{P})$ is log. Hence the pushout of the diagram \[\widetilde{P} \xleftarrow{} {\rm GL}_1^{\cal J}(\widetilde{R}) \xrightarrow{} {\rm GL}_1^{\cal J}(R)\] of graded ${\Bbb E}_{\infty}$-spaces models the underlying graded ${\Bbb E}_{\infty}$-space of the logification of $(R, \widetilde{P})$, which is equivalent to $P$ by the assumption that $(\widetilde{R}, \widetilde{P}) \to (R, P)$ is strict. This concludes the proof. 
\end{proof}

Let us now identify the logification of $(R \oplus J[1], \widetilde{P})$.

\begin{lemma}\label{lem:logisquarezero} The logification of the pre-log structure \begin{equation}\label{longprelog}\widetilde{P} \xrightarrow{\widetilde{\beta}} \Omega^{\cal J}(\widetilde{R}) \xrightarrow{\Omega^{\cal J}(p)} \Omega^{\cal J}(R) \xrightarrow{d} \Omega^{\cal J}(R \oplus J[1])\end{equation} is equivalent to the split square-zero extension $(R, P) \oplus J[1]$ of Remark \ref{rem:splitsquarezero}.
\end{lemma}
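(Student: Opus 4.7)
The plan is to compute $\widetilde{P}^a$ directly from the defining pushout of the logification and match the outcome against the formula for $(R,P) \oplus J[1]$ recorded in Remark \ref{rem:splitsquarezero}. Writing $\alpha$ for the composite pre-log structure \eqref{longprelog}, I would begin from the identification
\[
\widetilde{P}^a \simeq \widetilde{P} \sqcup_{\alpha^{-1}{\rm GL}_1^{\cal J}(R \oplus J[1])} {\rm GL}_1^{\cal J}(R \oplus J[1])
\]
supplied by the definition of logification.

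The first step is to identify $\alpha^{-1}{\rm GL}_1^{\cal J}(R \oplus J[1]) \simeq {\rm GL}_1^{\cal J}(\widetilde{R})$ by peeling off the three maps whose composite is $\alpha$. The split projection $R \oplus J[1] \to R$ is a square-zero extension by $J[1]$, and $d$ is a section of this projection, so Lemma \ref{lem:unitspullback} applied to the projection together with pullback-pasting gives $\Omega^{\cal J}(d)^{-1}{\rm GL}_1^{\cal J}(R \oplus J[1]) \simeq {\rm GL}_1^{\cal J}(R)$. Applying Lemma \ref{lem:unitspullback} again, this time to the square-zero extension $p$, identifies $\Omega^{\cal J}(p)^{-1}{\rm GL}_1^{\cal J}(R) \simeq {\rm GL}_1^{\cal J}(\widetilde{R})$, and the log condition on $(\widetilde{R}, \widetilde{P})$ gives $\widetilde{\beta}^{-1}{\rm GL}_1^{\cal J}(\widetilde{R}) \simeq {\rm GL}_1^{\cal J}(\widetilde{R})$.

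The second step is to factor the pushout defining $\widetilde{P}^a$ through ${\rm GL}_1^{\cal J}(R)$, which yields
\[
\widetilde{P}^a \simeq \bigl(\widetilde{P} \sqcup_{{\rm GL}_1^{\cal J}(\widetilde{R})} {\rm GL}_1^{\cal J}(R)\bigr) \sqcup_{{\rm GL}_1^{\cal J}(R)} {\rm GL}_1^{\cal J}(R \oplus J[1]).
\]
Lemma \ref{lem:inverseimagelogi} identifies the inner pushout with $P$, and Construction \ref{constr:splitlogstr} applied to the augmented $R$-algebra $R \oplus J[1]$ supplies an equivalence
\[
{\rm GL}_1^{\cal J}(R) \boxtimes \bigl({\rm GL}_1^{\cal J}(R \oplus J[1])/{\rm GL}_1^{\cal J}(R)\bigr) \xrightarrow{\simeq} {\rm GL}_1^{\cal J}(R \oplus J[1])
\]
over and under ${\rm GL}_1^{\cal J}(R)$. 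Substituting this into the outer pushout and cancelling the copy of ${\rm GL}_1^{\cal J}(R)$ leaves
\[
\widetilde{P}^a \simeq P \boxtimes \bigl({\rm GL}_1^{\cal J}(R \oplus J[1])/{\rm GL}_1^{\cal J}(R)\bigr),
\]
which by Remark \ref{rem:splitsquarezero} is the underlying graded ${\Bbb E}_{\infty}$-space of $(R, P) \oplus J[1]$.

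The step I expect to require the most care is not any individual computation above, but rather the verification that the resulting structure map on $\widetilde{P}^a$ coincides with the structure map of $(R, P) \oplus J[1]$. Both are forced by the universal property of the pushout combined with the splitting of Construction \ref{constr:splitlogstr}; I would verify this by tracking that on the $P$-summand the structure map is $P \to \Omega^{\cal J}(R) \xrightarrow{\Omega^{\cal J}(d)} \Omega^{\cal J}(R \oplus J[1])$, while on the unit-quotient summand it factors through the canonical inclusion of units into $\Omega^{\cal J}(R \oplus J[1])$. This amounts to a diagram chase given the identifications above.
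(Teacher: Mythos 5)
Your first two steps match the paper's proof: you identify $\alpha^{-1}{\rm GL}_1^{\cal J}(R\oplus J[1])\simeq {\rm GL}_1^{\cal J}(\widetilde{R})$ by pulling back in stages (the pasting trick with the projection is a perfectly valid way to handle the square for $d$), and you factor the resulting cobase-change through ${\rm GL}_1^{\cal J}(R)$, invoking Lemma \ref{lem:inverseimagelogi} to identify the inner pushout with $P$. So far this tracks the paper exactly.

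The gap appears in the final cancellation and, more importantly, in your identification of the structure map. After factoring, your outer pushout is $P \sqcup_{{\rm GL}_1^{\cal J}(R)} {\rm GL}_1^{\cal J}(R\oplus J[1])$ with the map ${\rm GL}_1^{\cal J}(R)\to {\rm GL}_1^{\cal J}(R\oplus J[1])$ being ${\rm GL}_1^{\cal J}(d)$. But the splitting you invoke (Construction \ref{constr:splitlogstr}, equivalently \cite[Lemma 5.4]{Sag14}) is the one attached to the \emph{trivial} section $d_0\colon R\to R\oplus J[1]$: under that splitting the first-factor inclusion is ${\rm GL}_1^{\cal J}(d_0)$, not ${\rm GL}_1^{\cal J}(d)$, so the ``cancelling'' you describe does not go through directly against a $d$-pushout. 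Relatedly, your asserted structure map on the $P$-summand, namely $P\to\Omega^{\cal J}(R)\xrightarrow{\Omega^{\cal J}(d)}\Omega^{\cal J}(R\oplus J[1])$, is \emph{not} the structure map of $(R,P)\oplus J[1]$: by definition (cf.\ Example \ref{ex:splitsquarezero1}, Remark \ref{rem:splitsquarezero}) the split square-zero extension uses the trivial section $d_0$ on the $P$-summand, and $\Omega^{\cal J}(d)\circ\beta$ and $\Omega^{\cal J}(d_0)\circ\beta$ are genuinely different maps $P\to\Omega^{\cal J}(R\oplus J[1])$ whenever the derivation $d$ is nontrivial. You are therefore comparing two a priori different pre-log structures on $P\boxtimes({\rm GL}_1^{\cal J}(R\oplus J[1])/{\rm GL}_1^{\cal J}(R))$ and asserting, without argument, that they coincide.

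What bridges this gap---and what the paper's proof does explicitly---is the observation that the composite ${\rm GL}_1^{\cal J}(\widetilde R)\xrightarrow{{\rm GL}_1^{\cal J}(p)}{\rm GL}_1^{\cal J}(R)\xrightarrow{{\rm GL}_1^{\cal J}(d)}{\rm GL}_1^{\cal J}(R\oplus J[1])$ is equivalent to the composite with ${\rm GL}_1^{\cal J}(d_0)$ in place of ${\rm GL}_1^{\cal J}(d)$, since $d\circ p\simeq d_0\circ p$ (this is the defining pullback square exhibiting $p\colon\widetilde R\to R$ as the square-zero extension determined by $d$). This lets one replace the outer pushout by the one along ${\rm GL}_1^{\cal J}(d_0)$ \emph{before} cancelling, so that both the underlying commutative ${\cal J}$-space monoid \emph{and} the structure map match $(R,P)\oplus J[1]$ on the nose. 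Without this swap, your diagram chase would terminate with a structure map involving $d$, which does not visibly agree with the target. You should insert this equivalence of composites right after factoring through ${\rm GL}_1^{\cal J}(R)$; the rest of your argument then goes through.
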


\begin{proof} Pulling back the pre-log structure \eqref{longprelog} along the canonical inclusion morphism ${\rm GL}_1^{\cal J}(R \oplus J[1]) \to \Omega^{\cal J}(R \oplus J[1])$, we obtain a commutative diagram \[\begin{tikzcd}[row sep = small]\widetilde{\beta}^{-1}{\rm GL}_1^{\cal J}(\widetilde{R}) \ar{r}{\simeq} \ar{d} & {\rm GL}_1^{\cal J}(\widetilde{R}) \ar{r}{{\rm GL}_1^{\cal J}(p)} \ar{d} & {\rm GL}_1^{\cal J}(R) \ar{d} \ar{r}{{\rm GL}_1^{\cal J}(d)} & {\rm GL}_1^{\cal J}(R \oplus J[1]) \ar{d} \\ \widetilde{P} \ar{r} & \Omega^{\cal J}(\widetilde{R})\ar{r} & \Omega^{\cal J}(R) \ar{r}{\Omega^{\cal J}(d)} & \Omega^{\cal J}(R \oplus J[1]) \end{tikzcd}\] in which every square is a pullback: The left-hand square is so by definition, while the middle and right-hand squares are pullbacks by Lemma \ref{lem:unitspullback}. Hence the logification of the pre-log structure \eqref{longprelog} is obtained by cobase-change of the map ${\rm GL}_1^{\cal J}(\widetilde{R}) \to \widetilde{P}$ along the composite \[{\rm GL}_1^{\cal J}(\widetilde{R}) \xrightarrow{{\rm GL}_1^{\cal J}(p)} {\rm GL}_1^{\cal J}(R) \xrightarrow{{\rm GL}_1^{\cal J}(d)} {\rm GL}_1^{\cal J}(R \oplus J[1]),\] which is equivalent to the composite \[{\rm GL}_1^{\cal J}(\widetilde{R}) \xrightarrow{{\rm GL}_1^{\cal J}(p)} {\rm GL}_1^{\cal J}(R) \xrightarrow{{\rm GL}_1^{\cal J}(d_0)} {\rm GL}_1^{\cal J}(R \oplus J[1]).\] By Lemma \ref{lem:inverseimagelogi}, we hence obtain that the logification of the pre-log structure \eqref{longprelog} can be described as the pushout of the diagram \[P \xleftarrow{} {\rm GL}_1^{\cal J}(R) \xrightarrow{{\rm GL}_1^{\cal J}(d_0)} {\rm GL}_1^{\cal J}(R \oplus J[1])\] of commutative ${\cal J}$-space monoids. The result follows from the equivalence \begin{equation}\label{sageq}{\rm GL}_1^{\cal J}(R) \boxtimes (1 + J[1]) \xrightarrow{\simeq} {\rm GL}_1^{\cal J}(R \oplus J[1])\end{equation} of \cite[Lemma 5.4]{Sag14}. 
\end{proof}

\begin{remark}\label{rem:ssvcomparison} An analog of Proposition \ref{prop:strictimpliessquarezero} appears in \cite[Theorem 4.17]{SSV16} for integral log rings $(R, P)$. In its proof, the authors consider the projection \[\delta \colon R \xrightarrow{d} R \oplus J[1] \xrightarrow{} J[1]\]  for a derivation $d$, and they define a monoid map \[{\rm GL}_1(R) \to P \oplus J[1], \quad x \mapsto (x, x^{-1}\delta(x)),\] where we have used the log condition on $(R, P)$ to identify $x \in {\rm GL}_1(R)$ with its image in $P$. In the presence of a strict square-zero extension $(\widetilde{R}, \widetilde{P}) \to (R, P)$, this defines a log derivation $(d, d^\flat) \colon (R, P) \to (R, P) \oplus J[1]$ in a natural way, as the proof of \emph{loc.\ cit.}\ shows. The analog of this appears at the very end of the proof of Lemma \ref{lem:logisquarezero}, where we make implicit reference to a homotopy inverse of the equivalence \eqref{sageq}. Indeed, in the context of ordinary rings, this is given by \[{\rm GL}_1(R \oplus J[1]) \xrightarrow{\cong} {\rm GL}_1(R) \oplus (1 + J[1]), \quad (x, j) \mapsto (x, (1, x^{-1}j)).\] 
\end{remark}

We now return to working towards a proof of Proposition \ref{prop:strictimpliessquarezero}. Lemma \ref{lem:logisquarezero} implies that the logification of the square \eqref{tobelogified} has an underlying square of commutative ${\cal J}$-space monoids of the form  \begin{equation}\label{amazingthatthisiscartesian}\begin{tikzcd}[row sep = small]\widetilde{P} \ar{r} \ar{d} & P \ar{d}{d^\flat} \\ P \ar{r}{d_0^\flat} & P \oplus J[1].\end{tikzcd}\end{equation}

\begin{lemma}\label{lem:logicartesian} The square \eqref{amazingthatthisiscartesian} is cartesian. 
\end{lemma}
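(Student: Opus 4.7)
The aim is to show that the canonical comparison map $\widetilde{P} \to P \times_{P \oplus J[1]} P$ induced by the commutativity of \eqref{amazingthatthisiscartesian} is a $\cJ$-equivalence. We plan to realize the square \eqref{amazingthatthisiscartesian} as the top face of a commutative cube whose bottom face is the cartesian square
\[\begin{tikzcd}[row sep = small]
\GLoneJof{(\widetilde{R})} \ar{r} \ar{d} & \GLoneJof{(R)} \ar{d}{\GLoneJof{(d)}} \\
\GLoneJof{(R)} \ar{r}{\GLoneJof{(d_0)}} & \GLoneJof{(R \oplus J[1])}
\end{tikzcd}\]
obtained from Lemma \ref{lem:unitspullback} applied to the defining square-zero cartesian square for $\widetilde{R}$, with vertical arrows the canonical inclusions of units into the corresponding log structures. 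The four vertical faces of the cube are then cocartesian: two (those involving $p^\flat$) by Lemma \ref{lem:inverseimagelogi}, and the remaining two (those involving $d^\flat$ and $d_0^\flat$) by the cocartesian pushout description of the logification supplied by the proof of Lemma \ref{lem:logisquarezero} (applied with $d_0$ and $d$, respectively).

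With this cube in hand, we plan to verify cartesianness of the top face via the Bousfield--Friedlander theorem \cite[Theorem B.4]{BF78}, in analogy with the proofs of Proposition \ref{prop:matherscube} and Lemma \ref{lem:bousfieldfriedlander}. By \cite[Corollary 11.4]{SS12} it suffices to show that the image of the top face under $(-)_{h\cJ}$ is homotopy cartesian as a square of simplicial sets. Using \cite[Lemma 2.11]{Sag14} together with the cocartesian vertical faces, we express this image as the geometric realization of a square of bisimplicial sets built from two-sided bar constructions. Pointwise cartesianness then follows from cartesianness of the bottom face of the cube, and the $\pi_*$-Kan condition on the relevant side holds because the commutative $\cJ$-space monoids appearing there are grouplike.

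The hard part will be verifying the Kan fibration condition on vertical path components required by Bousfield--Friedlander. In our setup, this reduces to showing that the relevant map between grouplike commutative $\cJ$-space monoids is surjective on $\pi_0((-)_{h\cJ})$, which is immediate because $\GLoneJof{(R \oplus J[1])} \to \GLoneJof{(R)}$ admits a section (for example $\GLoneJof{(d_0)}$ or $\GLoneJof{(d)}$, since $d$ and $d_0$ are retractions of the square-zero extension). With this, the Bousfield--Friedlander theorem will give cartesianness of the realization, and hence of the top face \eqref{amazingthatthisiscartesian}.
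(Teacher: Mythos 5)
Your overall strategy — build a cube with a cartesian square of $\GLoneJ$'s on the bottom (via Lemma~\ref{lem:unitspullback}), cocartesian vertical faces (via Lemma~\ref{lem:inverseimagelogi} and the proof of Lemma~\ref{lem:logisquarezero}), and then run a Bousfield--Friedlander argument on the top face — is close in spirit to what the paper does, and these are indeed the right ingredients. However, there is one genuine gap and one incorrect justification.

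The gap is the $\pi_*$-Kan condition. With your cube, the top face becomes a square of two-sided bar constructions of the form $B^\boxtimes(\widetilde{P}, {\rm GL}_1^{\cal J}(\widetilde{R}), {\rm GL}_1^{\cal J}(-))$, and after applying $(-)_{h{\cal J}}$ the bisimplicial sets on the right-hand column have the fixed factor $\widetilde{P}_{h{\cal J}}$ in every level. But $\widetilde{P}$ is a general log structure and need \emph{not} be grouplike (for instance $\langle v_1 \rangle_*$ is not), so the levels of those bisimplicial sets are not grouplike ${\Bbb E}_\infty$-spaces, and the $\pi_*$-Kan condition cannot simply be read off. This is exactly why Lemma~\ref{lem:bousfieldfriedlander} is formulated with $P^{\rm c, gp}$ (not $P^{\rm c}$) in the right-hand column of the bar construction. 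The paper's proof of Lemma~\ref{lem:logicartesian} therefore begins with a reduction step that your proposal skips: it first observes that $d^\flat$ is replete (the projection $P \oplus J[1] \to P$ is replete by Corollary~\ref{cor:splitreplete}, and $d^\flat$ is replete by pasting, using connectivity of $J$), which reduces the cartesianness of \eqref{amazingthatthisiscartesian} to the cartesianness of the corresponding square with $P^{\rm gp}$ and $(P \oplus J[1])^{\rm gp}$ in the right-hand column. Only then is the grouplikeness available that makes Lemma~\ref{lem:bousfieldfriedlander} (and the $\pi_*$-Kan hypothesis) applicable.

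The second issue is the justification of the surjectivity-on-$\pi_0$ condition. The map whose surjectivity is needed is the one in the direction of $d^\flat$, that is ${\rm GL}_1^{\cal J}(R) \to {\rm GL}_1^{\cal J}(R \oplus J[1])$. The existence of a section of the projection ${\rm GL}_1^{\cal J}(R \oplus J[1]) \to {\rm GL}_1^{\cal J}(R)$ makes the \emph{projection} a split epimorphism; it makes ${\rm GL}_1^{\cal J}(d)$ a split monomorphism, which says nothing about surjectivity. The correct argument is genuinely connectivity-based: since $J$ is connective, $R \to R \oplus J[1]$ is $0$-connected; both rings are connective, so all graded units are concentrated in degree $0$, and hence ${\rm GL}_1(\pi_*(R)) \to {\rm GL}_1(\pi_*(R \oplus J[1]))$ is an isomorphism, in particular surjective. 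This is what the paper's phrase ``since $J$ is connective'' is referring to, and it is the same hypothesis that feeds the reduction step above.
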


\begin{proof}  Observe that the projection $P \oplus J[1]  \to P$ is replete by Corollary \ref{cor:splitreplete}. Hence $d^\flat$ is replete by pasting and the assumption that $J$ is connective. It thus suffices to prove that the square is cartesian after replacing $d^\flat$ with its group completion $d^{\flat, {\rm gp}}$. Consider the square \[\begin{tikzcd}[row sep = small]\widetilde{P} \boxtimes_{{\rm GL}_1^{\cal J}(\widetilde{R})} {\rm GL}_1^{\cal J}(\widetilde{R}) \ar{r} \ar{d} & \widetilde{P}^{\rm gp} \boxtimes_{{\rm GL}_1^{\cal J}(\widetilde{R})} {\rm GL}_1^{\cal J}(R) \ar{d} \\ \widetilde{P} \boxtimes_{{\rm GL}_1^{\cal J}(\widetilde{R})} {\rm GL}_1^{\cal J}(R)\ar{r} & \widetilde{P}^{\rm gp} \boxtimes_{{\rm GL}_1^{\cal J}(\widetilde{R})} {\rm GL}_1^{\cal J}(R \oplus J[1])\end{tikzcd}\] of graded ${\Bbb E}_{\infty}$-spaces.  Lemma \ref{lem:inverseimagelogi} shows that the upper right-hand and lower left-hand corners model $P$, while (the proof of) Lemma \ref{lem:logisquarezero} shows that the lower right-hand corner models $P \oplus J[1]$. Hence this square models \eqref{amazingthatthisiscartesian} with $d^\flat$ replaced with $d^{\flat, {\rm gp}}$, and since $J$ is connective, Lemma \ref{lem:bousfieldfriedlander} applies to conclude.
\end{proof}

\begin{proof}[Proof of Proposition \ref{prop:strictimpliessquarezero}] By Lemma \ref{lem:logisquarezero}, the square of logifications is of the desired form. By Lemma \ref{lem:logicartesian}, it is cartesian. 
\end{proof}

\subsection{Starting with a log square-zero extension} We now aim to prove the other direction of Theorem \ref{thm:squarezeroiff}.

\begin{proposition}\label{prop:squarezerostrict} Let $(p, p^\flat) \colon (\widetilde{R}, \widetilde{P}, \widetilde{\alpha}) \to (R, P, \alpha)$ be a square-zero extension of pre-log ring spectra by a connective $R$-module $J$. Then $(p, p^\flat)$ is strict.
\end{proposition}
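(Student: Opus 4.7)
The plan is to reformulate strictness as a cocartesian statement in graded $\mathbb{E}_\infty$-spaces via Lemma \ref{lem:strictiff}, and then establish this cocartesian-ness using a suitably oriented application of Mather's cube lemma (Proposition \ref{prop:matherscube}).

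Since the forgetful functor ${\rm Log} \to {\rm PreLog}$ is right adjoint to logification, the given cartesian square descends to one in ${\rm PreLog}$. In particular, $\widetilde{R} \simeq R \times_{R \oplus J[1]} R$ is a square-zero extension of $R$ by $J$ (cf.\ \cite[Remark 7.4.1.12]{Lur17}), and $\widetilde{P} \simeq P \times_{P \oplus J[1]} P$ in graded $\mathbb{E}_\infty$-spaces, the two projections being composed with the log derivations $d^\flat$ and $d_0^\flat$. By Lemma \ref{lem:strictiff}, combined with the logification formula underlying Lemma \ref{lem:inverseimagelogi}, strictness of $(p, p^\flat)$ will follow from the cocartesian-ness of the square
\[\begin{tikzcd}[row sep = small]{\rm GL}_1^{\cal J}(\widetilde{R}) \ar{r}{{\rm GL}_1^{\cal J}(p)} \ar{d} & {\rm GL}_1^{\cal J}(R) \ar{d} \\ \widetilde{P} \ar{r}{p^\flat} & P \end{tikzcd}\]
in graded $\mathbb{E}_\infty$-spaces.

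I would then assemble the eight relevant objects into a cube: the top face is the cartesian square ${\rm GL}_1^{\cal J}(\widetilde{R}) \simeq {\rm GL}_1^{\cal J}(R) \times_{{\rm GL}_1^{\cal J}(R \oplus J[1])} {\rm GL}_1^{\cal J}(R)$ of Lemma \ref{lem:unitspullback}; the bottom face is the cartesian square defining $\widetilde{P}$; and the vertical edges are log-structure inclusions. Two of the vertical faces of this cube are the logification squares of the strict morphisms $(d, d^\flat), (d_0, d_0^\flat) \colon (R, P) \to (R, P) \oplus J[1]$ (strict by Lemma \ref{lem:logisquarezero}), and hence are cocartesian by Lemma \ref{lem:inverseimagelogi}. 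The remaining two vertical faces --- including the square we want cocartesian --- can further be shown to be cartesian via a direct pullback computation: the key identification is $\widetilde{P} \times_P {\rm GL}_1^{\cal J}(R) \simeq {\rm GL}_1^{\cal J}(\widetilde{R})$, obtained by recognizing both sides as fibers over the ``derivative'' map to $1 + J[1]$ and using the log condition on $(R, P)$ to match the relevant fibers.

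Finally, I would reorient the cube so that the desired square becomes the top face and one of the cocartesian logification squares becomes the bottom face. In this orientation, all four new vertical faces are cartesian, and the map playing the role of $M_{12} \to N_{12}$ in Proposition \ref{prop:matherscube} is the trivial section $d_0^\flat \colon P \to P \oplus J[1]$ of the split replete projection $P \oplus J[1] \to P$. This section is itself replete: virtual surjectivity follows from the connectivity of $J$ (ensuring that $1 + J[1]$ is $1$-connective), and exactness reduces to an explicit group-completion pullback. Proposition \ref{prop:matherscube} then produces the cocartesian-ness of the top face. The main obstacle in this plan is the cartesian-ness of the auxiliary vertical face ${\rm GL}_1^{\cal J}(\widetilde{R}) \simeq \widetilde{P} \times_P {\rm GL}_1^{\cal J}(R)$, whose verification combines the log condition on $(R, P)$ with Lemma \ref{lem:unitspullback} and genuinely uses the connectivity hypothesis on $J$.
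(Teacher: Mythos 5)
Your overall strategy---reduce to a cocartesian statement on graded $\mathbb{E}_\infty$-spaces and attack it with Proposition \ref{prop:matherscube}---is exactly the one the paper uses, and the cube you build has (essentially) the same eight vertices as the one in the paper's proof, just entered through a different pair of opposite faces. The paper's orientation is already the Mather orientation: the square it wants cocartesian is the top face, the bottom face is the trivial-derivation pushout square (cocartesian), and the $d^\flat$-derivation square sits as one of the four vertical faces and is shown cartesian by exhibiting it as a pasted pullback. You instead enter through the top/bottom faces being the two pullback squares defining ${\rm GL}_1^{\cal J}(\widetilde{R})$ and $\widetilde{P}$, and then reorient, which forces the \emph{other} derivation square to become a vertical face of the Mather cube.

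That is where the gap is. When you reorient so that the desired square is the new top and one derivation square is the new bottom, the four new vertical faces are: the two old horizontal faces, the auxiliary copy of the desired square, and the \emph{other} derivation square. In the preceding paragraph you established that both derivation squares are \emph{cocartesian} and said nothing about them being cartesian; yet ``In this orientation, all four new vertical faces are cartesian'' quietly asserts that the remaining derivation square is cartesian too. That assertion is true (it follows from a pullback-pasting argument along the split replete projection $P\oplus J[1]\to P$, exactly the kind of step the paper makes explicit when it says the right-hand face ``arises from pulling back the monoid derivation $d^\flat$ along $\overline{\alpha}$ and is as such homotopy cartesian''), but as written your proposal neither observes the need for it nor supplies it. Without it, Proposition \ref{prop:matherscube} cannot be invoked.

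Two secondary points. First, your cube replaces the paper's $\alpha^{-1}{\rm GL}_1^{\cal J}(R)$ and $(\alpha\circ p^\flat)^{-1}{\rm GL}_1^{\cal J}(R)$ by ${\rm GL}_1^{\cal J}(R)$ and ${\rm GL}_1^{\cal J}(\widetilde{R})$, and so you must invoke the log condition on both $(R,P)$ and $(\widetilde R,\widetilde P)$ to identify ${\rm GL}_1^{\cal J}(\widetilde R)$ with a pullback; but Proposition \ref{prop:squarezerostrict} is stated for \emph{pre-log} ring spectra, and the paper's choice of vertices is precisely what makes the left square of (\ref{twopullbacks}) cartesian for free by pasting, with no log hypothesis. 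Second, your appeals to Lemma \ref{lem:logisquarezero} and Lemma \ref{lem:inverseimagelogi} to conclude that $(d,d^\flat)$ is strict and that the derivation squares are cocartesian are not literal applications of those lemmas: Lemma \ref{lem:inverseimagelogi} assumes $\widetilde R\to R$ is a square-zero extension, whereas $d\colon R\to R\oplus J[1]$ is a derivation (a section of one); the conclusion still holds, but by re-running the argument, not by quoting the lemma. Finally, the cartesian identification ${\rm GL}_1^{\cal J}(\widetilde R)\simeq \widetilde P\times_P{\rm GL}_1^{\cal J}(R)$ does not ``genuinely use the connectivity hypothesis on $J$''---connectivity enters only in checking that the replete map $M_{12}\to N_{12}$ is in fact replete (virtual surjectivity).
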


\begin{proof} Consider the commutative diagram \begin{equation}\label{twopullbacks}\begin{tikzcd}[row sep = small](\alpha \circ p^\flat)^{-1}{\rm GL}_1^{\cal J}(R) \ar{r} \ar{d} & \alpha^{-1}{\rm GL}_1^{\cal J}(R) \ar{r}{\overline{\alpha}} \ar{d}{\overline{i}} & {\rm GL}_1^{\cal J}(R) \ar{d}{i} \\ \widetilde{P} \ar{r}{p^\flat} & P \ar{r}{\alpha} & \Omega^{\cal J}(R)\end{tikzcd}\end{equation} of commutative ${\cal J}$-space monoids, in which both squares are homotopy cartesian. We will show that the left-hand square is homotopy cocartesian, which we claim suffices to conclude. Indeed, if the left-hand square in \eqref{twopullbacks} is cocartesian, then both squares in the diagram \[\begin{tikzcd}[row sep = small](\alpha \circ p^\flat)^{-1}{\rm GL}_1^{\cal J}(R) \ar{r} \ar{d} & \alpha^{-1}{\rm GL}_1^{\cal J}(R) \ar{r}{\overline{\alpha}} \ar{d}{\overline{i}} & {\rm GL}_1^{\cal J}(R) \ar{d} \\ \widetilde{P} \ar{r}{p^\flat} & P \ar{r} & P^a\end{tikzcd}\] are cocartesian, where the right-hand square is the defining cocartesian square for the logification of $(R, P)$. Since the left-hand square is cocartesian, so is the outer square. But the pushout of $\widetilde{P} \xleftarrow{} {(\alpha \circ p^\flat)^{-1}}{\rm GL}_1^{\cal J}(R) \xrightarrow{} {\rm GL}_1^{\cal J}(R)$ is the logification of $\widetilde{P} \xrightarrow{p^\flat} P \xrightarrow{\alpha} \Omega^{\cal J}(R)$, and so $(p, p^\flat)$ is strict.

We now proceed to prove that the left-hand square in \eqref{twopullbacks} is cocartesian. Notice that the map $\overline{i}$ gives rise to a map $\overline{i} \oplus J[1] \colon \alpha^{-1}{\rm GL}_1^{\cal J}(R) \oplus J[1] \to P \oplus J[1]$. Consider the commutative cube \[\begin{tikzcd}[row sep = tiny]
&
{\widetilde P}
\ar{rr}{}
\ar[]{dd}[near end]{}
& & P
\ar{dd}{d^\flat}
\\
(\alpha \circ p^\flat)^{-1}{\rm GL}_1^{\cal J}(R)
\ar[crossing over]{rr}[near start]{}
\ar{dd}[swap]{}
\ar{ur}
& & \alpha^{-1}{\rm GL}_1^{\cal J}(R)
\ar{ur}{\overline{i}}
\\
&
P
\ar[near start]{rr}{d_0^\flat}
& & P \oplus J[1]
\\
\alpha^{-1}{\rm GL}_1^{\cal J}(R)
\ar{ur}{\overline{i}}
\ar{rr}{\overline{i}^*d_0^\flat}
& & \alpha^{-1}{\rm GL}_1^{\cal J}(R) \oplus J[1]
\ar[crossing over, leftarrow, near start]{uu}{\overline{i}^*d^\flat}
\ar[swap]{ur}{\overline{i} \oplus J[1]}
\end{tikzcd}\] of commutative ${\cal J}$-space monoids. Here the back face is the homotopy cartesian square exhibiting $\widetilde{P} \to P$ as part of a log square-zero extension. The right-hand face arises from pulling back the monoid derivation $d^\flat$ along $\overline{\alpha}$ and is as such homotopy cartesian. The left-hand face is the left-hand homotopy cartesian square in the diagram (\ref{twopullbacks}), and this implies that the front face is homotopy cartesian. Moreover, the bottom face is homotopy cocartesian. Since $d^\flat$ is replete (cf.\ proof of Lemma \ref{lem:logicartesian}), Proposition \ref{prop:matherscube} applies to conclude the proof. 
\end{proof}

\begin{proof}[Proof of Theorem \ref{thm:squarezeroiff}] This is Proposition \ref{prop:strictimpliessquarezero} and Proposition \ref{prop:squarezerostrict}. 
\end{proof}

\subsection{The log Postnikov tower} Let $(R, P)$ be a log ring spectrum with $R$ connective. We wish to define a tower of log square-zero extensions under $(R, P)$ that is compatible with the Postnikov tower of $R$. As a consequence of Theorem \ref{thm:squarezeroiff}, there is essentially no choice:

\begin{definition}\label{def:logpostnikov} There $(R, P)$ be a log ring spectrum with $R$ connective. We define the \emph{log Postnikov tower} of $(R, P)$ to be the tower \[\cdots \to (\tau_{\le 2}(R), P_{\le 2}) \to (\tau_{\le 1}(R), P_{\le 1}) \to (\pi_0(R), P_{\le 0}),\] where each $(\tau_{\le n}(R), P_{\le n}) := (\tau_{\le n}(R), P^a)$ is defined as the logification of the inverse image pre-log structure $P \to \Omega^{\cal J}(R) \to \Omega^{\cal J}(\tau_{\le n}(R))$. 
\end{definition}

\begin{corollary} Each map $(\tau_{\le n}(R), P_{\le n}) \to (\tau_{\le (n -1)}(R), P_{\le (n - 1)})$ is a log square-zero extension by $\pi_n(R)[n + 1]$. 
\end{corollary}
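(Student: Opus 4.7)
The plan is to apply Theorem \ref{thm:squarezeroiff} to each successive map in the tower. This reduces the claim to two verifications: (i) the underlying morphism of ${\Bbb E}_{\infty}$-rings $\tau_{\le n}(R) \to \tau_{\le (n-1)}(R)$ is a square-zero extension by the relevant connective $\tau_{\le (n-1)}(R)$-module, and (ii) the induced map $(\tau_{\le n}(R), P_{\le n}) \to (\tau_{\le (n-1)}(R), P_{\le (n-1)})$ of log ring spectra is strict in the sense of Section \ref{sec:logringspectra}.

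For (i), this is the classical fact that each layer of the Postnikov tower of a connective ${\Bbb E}_{\infty}$-ring is a square-zero extension; see for instance \cite[Corollary 7.4.1.28]{Lur17}. The relevant module is the appropriate shift of $\pi_n(R)$, viewed first as a $\pi_0(R)$-module and then, by restriction along the truncation, as a connective $\tau_{\le (n-1)}(R)$-module.

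For (ii), let $f \colon R \to \tau_{\le n}(R)$ and $g \colon \tau_{\le n}(R) \to \tau_{\le (n-1)}(R)$ denote the truncation maps, and write $N := f_* P$ for the inverse image pre-log structure. By Definition \ref{def:logpostnikov}, $P_{\le n} = N^a$ and $P_{\le (n-1)} \simeq (g_* N)^a$, so strictness of the map amounts to the identification $(g_*(N^a))^a \simeq (g_* N)^a$. This is the general compatibility of logification with inverse image along any map of ${\Bbb E}_{\infty}$-rings, and I would prove it by a universal property argument: both sides corepresent, on log ring spectra of the form $(\tau_{\le (n-1)}(R), M)$, the space of maps of pre-log ring spectra $(\tau_{\le n}(R), N) \to (\tau_{\le (n-1)}(R), M)$ over $g$, obtained by applying the logification/forgetful adjunction once at the target and (for the left-hand side) once more at the source.

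The only genuinely new input is therefore the formal step (ii), which is essentially bookkeeping with the adjunctions already used throughout Sections \ref{sec:logringspectra} and \ref{sec:repletetangent}. With (i) and (ii) in hand, Theorem \ref{thm:squarezeroiff} upgrades the ordinary square-zero extension of (i) to a log square-zero extension with the same module, giving the desired conclusion.
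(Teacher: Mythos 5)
Your proof is correct, and it takes a route that differs from the paper's in the key step. Both arguments reduce via Theorem \ref{thm:squarezeroiff} to showing that $(\tau_{\le n}(R), P_{\le n}) \to (\tau_{\le (n-1)}(R), P_{\le (n-1)})$ is strict, and both invoke Lurie for the underlying square-zero extension of ${\Bbb E}_{\infty}$-rings. Where you diverge is in the strictness verification: you prove directly that logification commutes with forming the inverse image pre-log structure, $(g_*(N^a))^a \simeq (g_*N)^a$, by corepresentability. The paper instead observes that $(R,P) \to (\tau_{\le n}(R), P_{\le n})$ is strict by definition of $P_{\le n}$, then invokes Lemma \ref{lem:strictiff} (strictness is equivalent to a certain cocartesian square in ${\rm Log}$) three times together with pasting of cocartesian squares. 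Your approach buys a clean and more general statement — logification commutes with inverse image along any ring map — while the paper's approach stays closer to the machinery already set up in Section \ref{sec:logringspectra} and avoids stating this general compatibility as a separate fact. One place where your writeup could be tightened: the "once more at the source" application of the adjunction requires using that $(-)^a \colon {\rm PreLog} \to {\rm Log}$ is a left adjoint on the \emph{whole} categories (not just fiberwise over a fixed ring) and that it does not change the underlying ring map, so that fibers over $g$ are matched; the fact that the pullback $g^*M$ of a log structure need not be log is exactly why one must use the global adjunction here rather than the fiberwise one. With that clarification the argument is complete.
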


\begin{proof} This is true by \cite[Proposition 7.1.3.9]{Lur17} at the level of ${\Bbb E}_{\infty}$-rings. By Theorem \ref{thm:squarezeroiff}, it suffices to prove that the map is strict. Each truncation morphism $(R, P) \to (\tau_{\le n}(R), P_{\le n})$ is strict by definition. Lemma \ref{lem:strictiff} thus implies that \[(\tau_{\le (n - 1)}(R), P_{\le (n - 1)}) \simeq (\tau_{\le (n - 1)}(R), {\rm GL}_1^{\cal J}(\tau_{\le (n - 1)}(R))) \otimes_{(R, {\rm GL}_1^{\cal J}(R))} (R, P).\] Using that the map $(R, {\rm GL}_1^{\cal J}(R)) \to (\tau_{\le (n - 1)}(R), {\rm GL}_1^{\cal J}(\tau_{\le (n - 1)}(R)))$ factors over $(\tau_{\le n}(R), P_{\le n})$ and applying Lemma \ref{lem:strictiff} again to the map $(R, P) \to (\tau_{\le n}(R), P_{\le n})$, we identify the above relative coproduct with \[(\tau_{\le (n - 1)}(R), {\rm GL}_1^{\cal J}(\tau_{\le (n - 1)}(R))) \otimes_{(\tau_{\le n}(R), {\rm GL}_1^{\cal J}(\tau_{\le n}(R)))} (\tau_{\le n}(R), P_{\le n}),\] which concludes the proof by a third application to Lemma \ref{lem:strictiff}.   
\end{proof}

\begin{remark} By Lemma \ref{lem:inverseimagelogi}, each of the graded ${\Bbb E}_{\infty}$-spaces $P_{\le n}$ admit the explicit description $P \boxtimes_{{\rm GL}_1^{\cal J}(R)} {\rm GL}_1^{\cal J}(\tau_{\le n}(R))$. In particular, the underlying tower of graded ${\Bbb E}_{\infty}$-spaces is cobase-changed from the Postnikov tower of $R$ in the sense that it is of the form \[\cdots  \! \to \!P \boxtimes_{{\rm GL}_1^{\cal J}\!\!(R)} {\rm GL}_1^{\cal J}(\tau_{\le 2}(R)) \to P \boxtimes_{{\rm GL}_1^{\cal J}\!\!(R)} {\rm GL}_1^{\cal J}(\tau_{\le 1}(R)) \to P \boxtimes_{{\rm GL}_1^{\cal J}\!\!(R)} {\rm GL}_1^{\cal J}(\pi_0(R)).\] We thus see that the bottom-most stage of the tower is not necessarily discrete, in contrast to the Postnikov tower of a connective ${\Bbb E}_{\infty}$-ring. 
\end{remark}

\begin{proposition}\label{prop:logpostnikovconverge} Let $(R, P)$ be a log ring spectrum with $R$ connective. The limit of the log Postnikov tower is equivalent to $(R, P)$.
\end{proposition}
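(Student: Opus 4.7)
The plan is to compute the limit directly using the explicit description of the log Postnikov tower and reduce to convergence for the underlying graded ${\Bbb E}_{\infty}$-space.

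First I would observe that the forgetful functor ${\rm Log} \to {\rm PreLog}$ preserves limits as the right adjoint to logification, and that both forgetful functors ${\rm PreLog} \to {\rm CAlg}$ and ${\rm PreLog} \to \CSJ$ preserve limits: the former has a left adjoint $A \mapsto (A, U^{\cJ})$ and a right adjoint $A \mapsto (A, \Omega^{\cJ}(A))$, while the latter is right adjoint to $M \mapsto ({\Bbb S}^{\cJ}[M], M, {\rm id})$. Consequently, the limit of the log Postnikov tower in ${\rm Log}$ has underlying ${\Bbb E}_{\infty}$-ring $\lim_n \tau_{\le n}(R) \simeq R$ by Postnikov convergence for connective ${\Bbb E}_{\infty}$-rings \cite[Proposition 7.1.3.19]{Lur17}, and underlying graded ${\Bbb E}_{\infty}$-space $\lim_n P_{\le n}$. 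The task thus reduces to verifying that the natural comparison map $P \to \lim_n P_{\le n}$ is an equivalence in $\CSJ$.

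Next I would use the explicit formula $P_{\le n} \simeq P \boxtimes_{{\rm GL}_1^{\cJ}(R)} {\rm GL}_1^{\cJ}(\tau_{\le n}(R))$ recorded in the preceding remark. Since ${\rm GL}_1^{\cJ}$ factors as $(-)^{\times} \circ \Omega^{\cJ}$, and both of these factors are right adjoints, ${\rm GL}_1^{\cJ}$ preserves limits and $\lim_n {\rm GL}_1^{\cJ}(\tau_{\le n}(R)) \simeq {\rm GL}_1^{\cJ}(R)$. Iteratively applying Lemma \ref{lem:unitspullback} along the square-zero extensions of the Postnikov tower of $R$, the homotopy fiber of each transition map ${\rm GL}_1^{\cJ}(\tau_{\le n}(R)) \to {\rm GL}_1^{\cJ}(\tau_{\le (n-1)}(R))$ agrees with that of $\Omega^{\cJ}(\tau_{\le n}(R)) \to \Omega^{\cJ}(\tau_{\le (n-1)}(R))$, and is therefore at least $(n+1)$-connective.

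Transferring to ${\Bbb E}_{\infty}$-spaces over $QS^0$ via the equivalence of \cite[Theorem 1.7]{SS12}, I would then model the pushout as the realization of the two-sided bar construction $P_{\le n, h\cJ} \simeq |B^{\times}(P_{h\cJ}, {\rm GL}_1^{\cJ}(R)_{h\cJ}, {\rm GL}_1^{\cJ}(\tau_{\le n}(R))_{h\cJ})|$ in ${\Bbb E}_{\infty}$-spaces. The increasing connectivity of the successive fibers ensures that the sequential limit commutes with both the realization and the products appearing inside the bar construction, yielding $\lim_n P_{\le n, h\cJ} \simeq |B^{\times}(P_{h\cJ}, {\rm GL}_1^{\cJ}(R)_{h\cJ}, {\rm GL}_1^{\cJ}(R)_{h\cJ})| \simeq P_{h\cJ}$. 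This gives the desired equivalence $P \simeq \lim_n P_{\le n}$ back in $\CSJ$.

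The main obstacle is precisely this commutation of the sequential limit with the $\boxtimes$-pushout: the cobase-change functor $P \boxtimes_{{\rm GL}_1^{\cJ}(R)} -$ is a left adjoint and does not preserve general limits. The resolution combines the right adjointness of ${\rm GL}_1^{\cJ}$ (handling the apex of the pushout square) with the highly connective nature of the successive fibers, permitting the Mittag--Leffler-type argument needed to pass the limit through the bar construction.
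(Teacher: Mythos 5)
The proposal follows the paper only through the initial reductions (limits of ${\rm Log}$ are computed in ${\rm CAlg}$ and $\CSJ$ separately; convergence on the ring side is Postnikov convergence; the explicit formula $P_{\le n} \simeq P \boxtimes_{\GLoneJ(R)} \GLoneJ(\tau_{\le n}(R))$; $\GLoneJ$ preserves the limit). After that the two arguments diverge. The paper uses Lemma~\ref{lem:bousfieldfriedlander} to show that the log Postnikov tower sits in a tower of cartesian squares against the group-completed version $P^{\gp} \boxtimes_{\GLoneJ(R)}\GLoneJ(\tau_{\le n}(R))$, reduces the problem to convergence of that group-completed tower, and then transfers via the functor $\gamma$ to connective spectra over $\bS$, where the pushouts become pushouts of spectra and cobase-change preserves connectivity (Corollary~\ref{cor:finally}); the requisite connectivity of the maps $\gamma(\GLoneJ(R)) \to \gamma(\GLoneJ(\tau_{\le n}(R)))$ is the content of Lemma~\ref{lem:truncationnconn}. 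You instead work directly with the bar construction in $\bE_\infty$-spaces and try to let the limit pass through the realization using increasing connectivity of the fibers of the transition maps on $\GLoneJ$.

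The gap is exactly at the step where you assert that, because of Lemma~\ref{lem:unitspullback}, the fiber of $\GLoneJ(\tau_{\le n}(R)) \to \GLoneJ(\tau_{\le(n-1)}(R))$ agrees with the fiber of $\Omega^{\cJ}(\tau_{\le n}(R)) \to \Omega^{\cJ}(\tau_{\le(n-1)}(R))$ ``and is therefore at least $(n+1)$-connective.'' That the fibers over $U^{\cJ}$ agree is correct, but the claimed connectivity of the $\Omega^{\cJ}$-fiber does not follow for free, and this is precisely the content of Lemma~\ref{lem:truncationnconn}, which the paper explicitly flags with a warning: the graded units $\gamma(\GLoneJ(R))$ behave differently from the ordinary spectrum of units $\mathrm{gl}_1(R)$, even for connective $R$. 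The paper's proof of Lemma~\ref{lem:truncationnconn} does not pass to $\Omega^{\cJ}$ at all; it applies $\GLoneJ$ to the square-zero pullback square, transports to spectra via $\gamma$, and crucially invokes Sagave's splitting $\GLoneJ(\tau_{\le n}(R)\oplus \pi_{n+1}(R)[n+2]) \simeq \GLoneJ(\tau_{\le n}(R)) \boxtimes (1 \oplus \pi_{n+1}(R)[n+2])$ from \cite[Lemma 5.4]{Sag14}, which holds for $\GLoneJ$ because of the multiplicative structure of units but does \emph{not} hold for $\Omega^{\cJ}$. Since $\Omega^{\cJ}$ of a split square-zero extension does not split off a factor in this way, passing to $\Omega^{\cJ}$ as you do actually discards the structure needed to identify the fiber. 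You would need to substitute a genuine proof of Lemma~\ref{lem:truncationnconn} here. Once that is in place, your bar-construction argument for commuting the sequential limit with the realization and the levelwise products is plausible in spirit (each simplicial level changes with $n$ only in the last tensor factor, so the transition maps remain highly connected levelwise), but it is sketchier than the paper's route through $\gamma$ and stable pushouts, which avoids commuting a limit with a geometric realization entirely.
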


\begin{proof}  As Postnikov towers of ring spectra converge, it suffices to show that the limit of the diagram of underlying commutative ${\cal J}$-space monoids is $P$. Lemma \ref{lem:bousfieldfriedlander} shows that both squares in the diagram \[\begin{tikzcd}[row sep = small]P \boxtimes_{{\rm GL}_1^{\cal J}(R)} {\rm GL}_1^{\cal J}(R) \ar{d} \ar{r} & P^{\rm gp} \boxtimes_{{\rm GL}_1^{\cal J}(R)} {\rm GL}_1^{\cal J}(R) \ar{d} \\ P \boxtimes_{{\rm GL}_1^{\cal J}(R)} {\rm GL}_1^{\cal J}(\tau_{\le n}(R)) \ar{r} \ar{d} & P^{\rm gp} \boxtimes_{{\rm GL}_1^{\cal J}(R)} {\rm GL}_1^{\cal J}(\tau_{\le n}(R)) \ar{d} \\ P \boxtimes_{{\rm GL}_1^{\cal J}(R)} {\rm GL}_1^{\cal J}(\pi_0(R)) \ar{r} & P^{{\rm gp}} \boxtimes_{{\rm GL}_1^{\cal J}(R)} {\rm GL}_1^{\cal J}(\pi_0(R))\end{tikzcd}\] are cartesian. As limits commute with limits, the limit in question is equivalent to \[(P \boxtimes_{{\rm GL}_1^{\cal J}(R)} {\rm GL}_1^{\cal J}(\pi_0(R))) \times_{(P^{\rm gp} \boxtimes_{{\rm GL}_1^{\cal J}(R)} {\rm GL}_1^{\cal J}(\pi_0(R)))} {\rm lim}(P^{\rm gp} \boxtimes_{{\rm GL}_1^{\cal J}(R)} {\rm GL}_1^{\cal J}(\tau_{\le n}(R))).\] It thus suffices to prove that ${\rm lim}(P^{\rm gp} \boxtimes_{{\rm GL}_1^{\cal J}(R)} {\rm GL}_1^{\cal J}(\tau_{\le n}(R))) \simeq P^{\rm gp}$, as the outer rectangle above is cartesian. The result now follows from Corollary \ref{cor:finally} below. \end{proof}

In the part of the proof of Proposition \ref{prop:logpostnikovconverge} provided above, we reduced to checking that the tower \[\cdots  \to P^{\rm gp} \boxtimes_{{\rm GL}_1^{\cal J}\!\!(R)} {\rm GL}_1^{\cal J}(\tau_{\le 1}(R)) \to P^{\rm gp} \boxtimes_{{\rm GL}_1^{\cal J}\!\!(R)} {\rm GL}_1^{\cal J}(\pi_0(R))\] converges to $P^{\rm gp}$. For this, we shall exploit the relationship between grouplike commutative ${\cal J}$-space monoids and the category of connective spectra over the sphere of \cite[Theorems 1.6, 5.10]{Sag16}, therein modelled by an appropriate slice of the stable model structure on Segal $\Gamma$-spaces. The slice is over a certain Segal $\Gamma$-space $b{\cal J}$, which models the sphere spectrum. These are connected by a chain of Quillen equivalences and the resulting derived functor is denoted $\gamma$, which gives an equivalence relating their underlying $\infty$-categories. As a first step, we need:

\begin{lemma}\label{lem:truncationnconn} The map $\gamma({\rm GL}_1^{\cal J}(R)) \to \gamma({\rm GL}_1^{\cal J}(\tau_{\le n}(R)))$ is $n$-connected. 
\end{lemma}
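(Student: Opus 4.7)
The plan is to induct along the Postnikov tower using Lemma \ref{lem:unitspullback}. First, since $R \simeq \lim_m \tau_{\le m}(R)$ and both ${\rm GL}_1^{\cal J}(-) = \Omega^{\cal J}(-)^\times$ and $\gamma$ preserve limits (the former as a composite of right adjoints, the latter as an equivalence of $\infty$-categories \cite[Theorems 1.6, 5.10]{Sag16}), we obtain $\gamma({\rm GL}_1^{\cal J}(R)) \simeq \lim_m \gamma({\rm GL}_1^{\cal J}(\tau_{\le m}(R)))$. It therefore suffices to establish, uniformly in $m \ge n$, that the fiber $F_{m, n}$ of $\gamma({\rm GL}_1^{\cal J}(\tau_{\le m}(R))) \to \gamma({\rm GL}_1^{\cal J}(\tau_{\le n}(R)))$ is $n$-connected; then the limit, computing the fiber of the map of interest, also is $n$-connected.

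Next, I would induct on $m - n$. The base case $m = n$ is trivial. For the inductive step, each map $\tau_{\le m+1}(R) \to \tau_{\le m}(R)$ is a square-zero extension by $\pi_{m+1}(R)[m+1]$ by \cite[Proposition 7.1.3.9]{Lur17}, so Lemma \ref{lem:unitspullback} produces a cartesian square of graded ${\Bbb E}_{\infty}$-spaces from which the fiber of ${\rm GL}_1^{\cal J}(\tau_{\le m+1}(R)) \to {\rm GL}_1^{\cal J}(\tau_{\le m}(R))$ is identified with that of $\Omega^{\cal J}(\tau_{\le m+1}(R)) \to \Omega^{\cal J}(\tau_{\le m}(R))$. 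Since $\Omega^{\cal J}(-)$ is a right adjoint and hence preserves limits, this common fiber is $\Omega^{\cal J}(\pi_{m+1}(R)[m+1])$. Assuming this is $m$-connected, the fiber sequence $F_{m+1, m} \to F_{m+1, n} \to F_{m, n}$ combined with the inductive hypothesis (and the bound $m \ge n$) yields $n$-connectivity of $F_{m+1, n}$ via the long exact sequence of homotopy groups.

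The main obstacle is thus the last connectivity claim: showing that $\gamma$ applied to $\Omega^{\cal J}(\pi_{m+1}(R)[m+1])$ yields an $m$-connected connective spectrum over $\mathbb{S}$. I expect this to follow by unpacking the chain of Quillen equivalences defining $\gamma$ in \cite[Theorems 1.6, 5.10]{Sag16}: the connectivity of the underlying spectrum of a grouplike ${\cal J}$-space monoid $M$ is governed by the Bousfield--Kan homotopy colimit $M_{h{\cal J}}$ (cf.\ \cite[Lemma 2.11]{Sag14}), and a direct inspection shows that $\Omega^{\cal J}(\pi_{m+1}(R)[m+1])_{h{\cal J}}$ is $m$-connected as a space (being modelled levelwise by spaces of the form $\Omega^{n_1} K(\pi_{m+1}(R), m + 1 + n_2)$). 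Having replaced this step by a purely ${\cal J}$-space-level assertion, the induction and limit arguments close out the proof.
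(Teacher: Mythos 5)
Your overall strategy --- reduce to the Postnikov tower via the limit argument, then analyze each square-zero stage --- is the same as the paper's, and the limit step at the end is handled identically. However, the stage-wise analysis has a genuine gap, and it is located precisely at the point where the paper issues its warning that ``the spectrum of graded units $\gamma({\rm GL}_1^{\cal J}(R))$ exhibits different behaviour.''

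There are two issues. First, the identification of the fiber with $\Omega^{\cal J}(\pi_{m+1}(R)[m+1])$ via right-adjointness of $\Omega^{\cal J}$ does not go through: $\Omega^{\cal J}(-)$ preserves limits of ${\Bbb E}_\infty$-rings, but the pullback you are taking (over the unit $U^{\cal J}$ of ${\cal C}{\cal S}^{\cal J}_\infty$) is not of that form, since $U^{\cal J}$ is not $\Omega^{\cal J}$ of any ${\Bbb E}_\infty$-ring (in particular $U^{\cal J} \not\simeq \Omega^{\cal J}(\mathbb{S})$), and $\pi_{m+1}(R)[m+1]$ is not an ${\Bbb E}_\infty$-ring, so $\Omega^{\cal J}(\pi_{m+1}(R)[m+1])$ is at best a ${\cal J}$-space with no multiplicative structure --- $\gamma$ cannot even be applied to it. Second, and more fundamentally, the connectivity claim that $\Omega^{\cal J}(\pi_{m+1}(R)[m+1])_{h{\cal J}}$ is $m$-connected is false. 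The levels $\Omega^{n_1} K(\pi_{m+1}(R), m+1+n_2)$ are $m$-connected only when $n_1 \le n_2$; for objects $({\bf n_1}, {\bf n_2})$ of ${\cal J}$ with $n_1 - n_2 = m+1$ the level is the discrete set $\pi_{m+1}(R)$, and since morphisms in ${\cal J}$ preserve this ``degree'' $n_1 - n_2$, these classes survive to $\pi_0$ of the homotopy colimit. Concretely, for a grouplike $M$ one has $\pi_0(\gamma(M)) \cong \pi_0(M_{h{\cal J}})$, and here that $\pi_0$ records the graded signed set $\pi_{0,*}$, which is far from trivial. The ${\cal J}$-space pullback over $U^{\cal J}$ is therefore not $m$-connected; only the fiber of its image under $\gamma$ over $\mathbb{S}$ is, and your argument conflates these. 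The paper's proof avoids the issue by never leaving the grouplike world: it applies ${\rm GL}_1^{\cal J}$ to the cartesian square exhibiting the square-zero extension (all four corners stay grouplike, unlike the $\Omega^{\cal J}$-corners of Lemma \ref{lem:unitspullback}), then uses the explicit splitting ${\rm GL}_1^{\cal J}(\tau_{\le n}(R) \oplus \pi_{n+1}(R)[n+2]) \simeq {\rm GL}_1^{\cal J}(\tau_{\le n}(R)) \boxtimes (1 \oplus \pi_{n+1}(R)[n+2])$ together with the identification $\gamma(1 \oplus J) \simeq J$ from \cite[Lemma 5.4, Proposition 5.5, Lemma 7.2]{Sag14}, which cleanly isolates the extension module and gives the $n$-connected fiber directly.
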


We warn the reader that this is not as obvious as it might seem at first sight. While the analogous statement is clear for the spectrum of units ${\rm gl}_1(R)$, the spectrum of \emph{graded units} $\gamma({\rm GL}_1^{\cal J}(R))$ exhibits different behaviour, even when the ring spectrum $R$ is connective. 

\begin{proof}[Proof of Lemma \ref{lem:truncationnconn}] Consider first the case where $R = \tau_{\le (n + 1)}(R)$, so that the truncation map $\tau_{\le (n + 1)}(R) \to \tau_{\le n}(R)$ is a square-zero extension. The cartesian square exhibiting this map as such gives a cartesian square of commutative ${\cal J}$-space monoids upon applying ${\rm GL}_1^{\cal J}(-)$. Hence there is a cartesian square \[\begin{tikzcd}[row sep = small]\gamma({\rm GL}_1^{\cal J}(\tau_{\le (n + 1)}(R))) \ar{r} \ar{d} & \gamma({\rm GL}_1^{\cal J}(\tau_{\le n}(R))) \ar{d} \\ \gamma({\rm GL}_1^{\cal J}(\tau_{\le n}(R))) \ar{r} & \gamma({\rm GL}_1^{\cal J}(\tau_{\le n}(R) \oplus \pi_{n + 1}(R)[n + 2])) \end{tikzcd}\] of Segal $\Gamma$-spaces over $b{\cal J}$. By \cite[Lemma 5.4, Proposition 5.5, Lemma 7.2]{Sag14}, the lower right-hand corner splits as $\gamma({\rm GL}_1^{\cal J}(\tau_{\le n}(R))) \times \gamma((1 \oplus \pi_{n + 1}(R)[n + 2]))$, and the term $\gamma((1 \oplus \pi_{n + 1}(R)[n + 2])$ models the connective spectrum $\pi_{n + 1}(R)[n + 2]$. Hence the fiber of the map of spectra $\gamma({\rm GL}_1^{\cal J}(\tau_{\le (n + 1)}(R))) \to \gamma({\rm GL}_1^{\cal J}(\tau_{\le n}(R)))$ is $n$-connected, as desired.

 The result follows as $\gamma({\rm GL}_1^{\cal J}(R))$ is the limit of the tower \[\cdots \to \gamma({\rm GL}_1^{\cal J}(\tau_{\le (n + 1)}(R))) \to \gamma({\rm GL}_1^{\cal J}(\tau_{\le n}(R)))\] where each map from $\gamma({\rm GL}_1^{\cal J}(\tau_{\le (n + k)}(R)))$ is $n$-connected, cf.\ \cite[Proof of Lemma 3.3]{BMS19}. 
\end{proof}

\begin{corollary}\label{cor:finally} Each map $\gamma(P^{\rm gp}) \to \gamma(P^{\rm gp} \boxtimes_{{\rm GL}_1^{\cal J}(R)} {\rm GL}_1^{\cal J}(\tau_{\le n}(R)))$ is $n$-connected. In particular, the canonical map \[\gamma(P^{\rm gp}) \to {\rm lim}(\gamma(P^{\rm gp} \boxtimes_{{\rm GL}_1^{\cal J}(R)} {\rm GL}_1^{\cal J}(\tau_{\le n}(R))))\] is an equivalence. Since the functor $\gamma$ is an equivalence, the canonical map \[P^{\rm gp} \to {\rm lim}(P^{\rm gp} \boxtimes_{{\rm GL}_1^{\cal J}(R)} {\rm GL}_1^{\cal J}(\tau_{\le n}(R)))\] is also an equivalence.  
\end{corollary}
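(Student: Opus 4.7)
The plan is to deduce the corollary from Lemma~\ref{lem:truncationnconn} by transporting the defining pushout square of $P^{\rm gp} \boxtimes_{{\rm GL}_1^{\cal J}(R)} {\rm GL}_1^{\cal J}(\tau_{\le n}(R))$ through $\gamma$ into the stable $\infty$-category of spectra, where parallel arrows of a pushout--pullback square share the same fiber.

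First I would apply $\gamma$ to obtain the commutative square
\[
\begin{tikzcd}[row sep = small]
\gamma({\rm GL}_1^{\cal J}(R)) \ar{r} \ar{d} & \gamma({\rm GL}_1^{\cal J}(\tau_{\le n}(R))) \ar{d} \\
\gamma(P^{\rm gp}) \ar{r} & \gamma(P^{\rm gp} \boxtimes_{{\rm GL}_1^{\cal J}(R)} {\rm GL}_1^{\cal J}(\tau_{\le n}(R))).
\end{tikzcd}
\]
By the Quillen equivalences cited from \cite[Theorems~1.6,~5.10]{Sag16}, $\gamma$ identifies grouplike graded ${\Bbb E}_{\infty}$-spaces with a slice of the $\infty$-category of connective spectra over $\bS$, and the forgetful functor from this slice to $\Sp$ preserves both colimits and limits. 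Consequently the square above is a pushout in $\Sp$, hence by stability also a pullback. The fiber of the right-hand vertical arrow therefore agrees with the fiber of the left-hand vertical arrow, which is $n$-connected by Lemma~\ref{lem:truncationnconn}. This establishes the first claim.

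For the limit statement, the tower
\[
\cdots \to \gamma(P^{\rm gp} \boxtimes_{{\rm GL}_1^{\cal J}(R)} {\rm GL}_1^{\cal J}(\tau_{\le n+1}(R))) \to \gamma(P^{\rm gp} \boxtimes_{{\rm GL}_1^{\cal J}(R)} {\rm GL}_1^{\cal J}(\tau_{\le n}(R)))
\]
sits under $\gamma(P^{\rm gp})$, and by the first part the $n$-th comparison map out of $\gamma(P^{\rm gp})$ has $n$-connected fiber. Passing to the inverse limit, the fiber of the induced map from $\gamma(P^{\rm gp})$ to the limit has vanishing homotopy groups in every degree, and so the map is an equivalence. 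The corresponding statement for $P^{\rm gp}$ itself is then immediate since $\gamma$ is an equivalence of $\infty$-categories and therefore preserves limits.

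The only substantive point to verify along the way is that $\gamma$ genuinely transports the $\boxtimes$-pushout over ${\rm GL}_1^{\cal J}(R)$ to the corresponding pushout of connective spectra over $\bS$; however, this is a formal consequence of $\gamma$ being an equivalence and of the forgetful functor to $\Sp$ preserving colimits, so it is not a real obstacle.
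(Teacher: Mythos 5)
Your overall strategy is the one the paper uses — apply $\gamma$, observe that the resulting square is cocartesian (hence cartesian) in spectra, transport connectivity across the square, and then pass to the limit — but you apply the fiber comparison to the wrong pair of arrows. You assert that the fiber of the \emph{right-hand vertical} arrow agrees with the fiber of the \emph{left-hand vertical} arrow, and then invoke Lemma \ref{lem:truncationnconn}. But Lemma \ref{lem:truncationnconn} concerns the \emph{top horizontal} arrow $\gamma({\rm GL}_1^{\cal J}(R)) \to \gamma({\rm GL}_1^{\cal J}(\tau_{\le n}(R)))$, not the left vertical arrow $\gamma({\rm GL}_1^{\cal J}(R)) \to \gamma(P^{\rm gp})$; and the target of the corollary is the \emph{bottom horizontal} arrow $\gamma(P^{\rm gp}) \to \gamma(P^{\rm gp}\boxtimes_{{\rm GL}_1^{\cal J}(R)}{\rm GL}_1^{\cal J}(\tau_{\le n}(R)))$, not either vertical arrow. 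As written, your intermediate claims — even if true — do not imply the statement.

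The fix is small: since the square is a pushout in spectra, the cofiber of the top horizontal arrow is equivalent to the cofiber of the bottom horizontal arrow (equivalently, by stability the horizontal fibers agree). Lemma \ref{lem:truncationnconn} says the top horizontal arrow is $n$-connected, hence so is the bottom one, which is exactly what the corollary needs. The rest of your argument (the $\lim$ and $\lim^1$ vanishing, and the final transport along the equivalence $\gamma$) is fine once this is corrected.
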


\begin{proof} This follows from Lemma \ref{lem:truncationnconn} and the cocartesian squares \[\begin{tikzcd}[row sep = small]\gamma({\rm GL}_1^{\cal J}(R)) \ar{r} \ar{d} & \gamma({\rm GL}_1^{\cal J}(\tau_{\le n}(R))) \ar{d} \\ \gamma(P^{\rm gp}) \ar{r} & \gamma(P^{\rm gp} \boxtimes_{{\rm GL}_1^{\cal J}(R)} {\rm GL}_1^{\cal J}(\tau_{\le n}(R)))\end{tikzcd}\] of connective spectra over the sphere. It is, in particular, a cocartesian square of spectra, so the lower map is $n$-connected because the upper one is. 
\end{proof}

\section{Log \'etale rigidity}\label{sec:logetalerigidity} 

All ring spectra in this section are assumed to be connective. This in particular means that all pre-log ring spectra $(R, P)$ are \emph{connective} in the sense of \cite[Definition 7.7]{Lun21}: Both $R$ and ${\Bbb S}^{\cal J}[P]$ are assumed to be connective.

 Recall that, for a log ring spectrum $(R, P)$, the log Postnikov tower of Definition \ref{def:logpostnikov} gives a tower \[(R, P) \to \cdots \to (\tau_{\le 2}(R), P_{\le 2}) \to (\tau_{\le 1}(R), P_{\le 1}) \to (\pi_0(R), P_{\le 0})\] of log square-zero extensions of log ring spectra. 

\begin{lemma}\label{lem:mappingspacesq0} Let $(\widetilde{R}, \widetilde{P}) \to (\widetilde{A}, \widetilde{M})$ be a formally log \'etale morphism of log ring spectra, and assume that $(\widetilde{B}, \widetilde{N}) \to (B, N)$ is a log square-zero extension of $(\widetilde{R}, \widetilde{P})$-algebras. Then the map \[{\rm Map}_{{\rm Log}_{(\widetilde{R}, \widetilde{P})/}}((\widetilde{A}, \widetilde{M}), (\widetilde{B}, \widetilde{N})) \xrightarrow{} {\rm Map}_{{\rm Log}_{(\widetilde{R}, \widetilde{P})/}}((\widetilde{A}, \widetilde{M}), (B, N))\] induced by $(\widetilde{B}, \widetilde{N}) \to (B, N)$ is an equivalence.
\end{lemma}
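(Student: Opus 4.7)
The plan is to apply the mapping space functor ${\rm Map}_{{\rm Log}_{(\widetilde{R}, \widetilde{P})/}}((\widetilde{A}, \widetilde{M}), -)$ to the defining cartesian square \eqref{logsquarezero} of the log square-zero extension and reduce the claim to a vanishing statement for the relative log cotangent complex. Since $(\widetilde{R}, \widetilde{P}) \to (B, N) \oplus J[1]$ factors through the projection $(B, N) \oplus J[1] \to (B, N)$, the square is cartesian in ${\rm Log}_{(\widetilde{R}, \widetilde{P})/}$ as well, and applying the mapping space functor produces a cartesian square of spaces. The arrow of interest in the lemma is the top horizontal map of this square; as the base-change of $(d_0, d_0^\flat)_*$ along $(d, d^\flat)_*$, it will be an equivalence once I show that $(d_0, d_0^\flat)_*$ is an equivalence.

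To prove that $(d_0, d_0^\flat)_*$ is an equivalence, I would observe that it is a section of the map $\pi_* \colon {\rm Map}_{{\rm Log}_{(\widetilde{R}, \widetilde{P})/}}((\widetilde{A}, \widetilde{M}), (B, N) \oplus J[1]) \to {\rm Map}_{{\rm Log}_{(\widetilde{R}, \widetilde{P})/}}((\widetilde{A}, \widetilde{M}), (B, N))$ induced by the projection $\pi \colon (B, N) \oplus J[1] \to (B, N)$ of the split square-zero extension. Thus the claim reduces to showing that $\pi_*$ itself is an equivalence. For any map $f \colon (\widetilde{A}, \widetilde{M}) \to (B, N)$ over $(\widetilde{R}, \widetilde{P})$, the fiber of $\pi_*$ over $f$ is by definition the mapping space in ${\rm Log}_{(\widetilde{R}, \widetilde{P})//(B, N)}$ from $(\widetilde{A}, \widetilde{M})_f$ into $(B, N) \oplus J[1]$, i.e., the space of $(\widetilde{R}, \widetilde{P})$-log derivations of $(\widetilde{A}, \widetilde{M})$ with values in the $\widetilde{A}$-module $J[1]$ obtained via $f$.

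The decisive step is the identification of this fiber with the mapping space ${\rm Map}_B({\Bbb L}^{\rm rep}_{(\widetilde{A}, \widetilde{M})/(\widetilde{R}, \widetilde{P})} \otimes_{\widetilde{A}} B, J[1])$. This is a relative version of Lemma \ref{lem:logcotangentrep}, obtained by combining the absolute corepresentability statement with the cofiber sequence defining the relative log cotangent complex together with the standard adjunction between restriction and extension of scalars. Once this identification is in place, the formal log \'etaleness hypothesis ${\Bbb L}^{\rm rep}_{(\widetilde{A}, \widetilde{M})/(\widetilde{R}, \widetilde{P})} \simeq 0$ immediately forces every fiber of $\pi_*$ to be contractible, so $\pi_*$ is an equivalence, hence so is $(d_0, d_0^\flat)_*$, and finally so is the map in the lemma.

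The main obstacle is the bookkeeping for the relative corepresentability statement. Everything else is formal: the pullback-of-mapping-spaces argument, the section observation, and the passage from $\pi_*$ being an equivalence to $(d_0, d_0^\flat)_*$ being one are all standard categorical manipulations. The real content lies in correctly identifying the fiber of $\pi_*$ with relative log derivations and matching these against the relative log cotangent complex; given the infrastructure developed in Sections \ref{sec:repletetangent} and \ref{sec:logpostnikov}, this should be a direct consequence of the constructions, but a careful argument is required to keep track of the various augmentations and base-changes along $f$.
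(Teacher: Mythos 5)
Your proposal follows the paper's proof exactly: use the defining cartesian square \eqref{logsquarezero} to reduce to $(d_0,d_0^\flat)_*$, note that this is a section of the projection $\pi_*$ to further reduce to $\pi_*$ being an equivalence, and then show the fibers of $\pi_*$ are contractible by identifying them with ${\rm Map}_{{\rm Mod}_B}(B\otimes_{\widetilde{A}}{\Bbb L}^{\rm rep}_{(\widetilde{A},\widetilde{M})/(\widetilde{R},\widetilde{P})},J[1])$, which vanishes by formal log \'etaleness. The only cosmetic difference is that the paper cites Lemma \ref{lem:logcotangentrep} for the relative corepresentability without spelling out the passage from the absolute case via the cofiber sequence and scalar adjunction, whereas you flag that step explicitly.
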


\begin{proof} Using the pullback square \eqref{logsquarezero} for $(\widetilde{B}, \widetilde{N}) \to (B, N)$, we can check the statement with $(\widetilde{B}, \widetilde{N}) \to (B, N)$ replaced by $(B, N) \to (B \oplus J[1], N \oplus J[1])$, and thus with $(\widetilde{B}, \widetilde{N}) \to (B, N)$ replaced with $(B \oplus J[1], N \oplus J[1]) \to (B, N)$. We have reduced to checking that \[{\rm Map}_{{\rm Log}_{(\widetilde{R}, \widetilde{P})/}}((\widetilde{A}, \widetilde{M}), (B \oplus J[1], N \oplus J[1])) \xrightarrow{} {\rm Map}_{{\rm Log}_{(\widetilde{R}, \widetilde{P})/}}((\widetilde{A}, \widetilde{M}), (B, N))\] is an equivalence. The fiber over any $(\widetilde{R}, \widetilde{P})$-algebra map $(\widetilde{A}, \widetilde{M}) \to (B, N)$ is the space of augmented $(\widetilde{R}, \widetilde{P})$-algebra maps \[{\rm Map}_{{\rm Log}_{(\widetilde{R}, \widetilde{P})//(B, N)}}((\widetilde{A}, \widetilde{M}), (B \oplus J[1], N \oplus J[1])).\] By Lemma \ref{lem:logcotangentrep}, this is equivalent to ${\rm Map}_{{\rm Mod}_B}(B \otimes_{\widetilde{A}} {\Bbb L}_{(\widetilde{A}, \widetilde{M})/(\widetilde{R}, \widetilde{P})}^{\rm rep}, J[1])$, which is contractible by assumption.
\end{proof}

\begin{corollary}\label{cor:truncationalonglogpostnikov} Let $(R, P) \to (A, M)$ be a formally log \'etale morphism and let $(B, N)$ be an $(R, P)$-algebra. Then map \[{\rm Map}_{{\rm Log}_{(R, P)/}}((A, M), (B, N)) \to {\rm Map}_{{\rm Log}_{(R, P)/}}((A, M), (\pi_0(B), N_{\le 0}))\] induced by the log Postnikov truncation $(B, N) \to (\pi_0(B), N_{\le 0})$ is an equivalence. 
\end{corollary}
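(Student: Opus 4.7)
The plan is to combine Lemma \ref{lem:mappingspacesq0} with the convergence of the log Postnikov tower (Proposition \ref{prop:logpostnikovconverge}). First I would apply Proposition \ref{prop:logpostnikovconverge} to the $(R,P)$-algebra $(B,N)$ to exhibit it as the limit
\[
(B, N) \simeq \lim_{n} (\tau_{\le n}(B), N_{\le n})
\]
in the category ${\rm Log}_{(R,P)/}$. Since mapping spaces in an $\infty$-category commute with limits in the target, this gives an equivalence
\[
{\rm Map}_{{\rm Log}_{(R,P)/}}((A, M), (B, N)) \simeq \lim_{n} {\rm Map}_{{\rm Log}_{(R,P)/}}((A, M), (\tau_{\le n}(B), N_{\le n})).
\]

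Next, I would show by induction on $n$ that each truncation morphism
\[
{\rm Map}_{{\rm Log}_{(R,P)/}}((A, M), (\tau_{\le n}(B), N_{\le n})) \to {\rm Map}_{{\rm Log}_{(R,P)/}}((A, M), (\tau_{\le (n-1)}(B), N_{\le (n-1)}))
\]
is an equivalence. Indeed, by the corollary preceding Section \ref{sec:logetalerigidity} (i.e., the statement following Definition \ref{def:logpostnikov}), the map $(\tau_{\le n}(B), N_{\le n}) \to (\tau_{\le (n-1)}(B), N_{\le (n-1)})$ is a log square-zero extension by $\pi_n(B)[n+1]$. Since $(R, P) \to (A, M)$ is formally log \'etale and $(\tau_{\le n}(B), N_{\le n}) \to (\tau_{\le (n-1)}(B), N_{\le (n-1)})$ is a log square-zero extension of $(R, P)$-algebras, Lemma \ref{lem:mappingspacesq0} applies to give the desired equivalence.

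Combining these, by induction each map
\[
{\rm Map}_{{\rm Log}_{(R,P)/}}((A, M), (\tau_{\le n}(B), N_{\le n})) \to {\rm Map}_{{\rm Log}_{(R,P)/}}((A, M), (\pi_0(B), N_{\le 0}))
\]
is an equivalence, so the limit of the tower is equivalent to ${\rm Map}_{{\rm Log}_{(R,P)/}}((A, M), (\pi_0(B), N_{\le 0}))$. Under the identification above, the resulting map is precisely the one induced by $(B, N) \to (\pi_0(B), N_{\le 0})$, completing the proof.

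There is no real obstacle here; this is a routine bootstrapping argument from Lemma \ref{lem:mappingspacesq0}, where the only mild care required is to confirm that the hypotheses of that lemma are genuinely satisfied along the entire tower, namely that each stage provides a log square-zero extension of $(R, P)$-algebras (which is the content of the corollary immediately following Definition \ref{def:logpostnikov}) and that the convergence of the log Postnikov tower established in Proposition \ref{prop:logpostnikovconverge} takes place in the slice category ${\rm Log}_{(R,P)/}$, which follows from the fact that the forgetful functor ${\rm Log}_{(R,P)/} \to {\rm Log}$ preserves and reflects limits.
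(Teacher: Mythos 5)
Your argument is correct and is essentially the same as the paper's: apply Proposition \ref{prop:logpostnikovconverge} to write the mapping space out of $(A,M)$ into $(B,N)$ as the limit over the log Postnikov tower, note that each stage is a log square-zero extension of $(R,P)$-algebras by the corollary after Definition \ref{def:logpostnikov}, and then invoke Lemma \ref{lem:mappingspacesq0} at each stage to collapse the tower. You have simply spelled out a couple of points the paper leaves implicit (that the convergence happens in the slice category and that mapping spaces send the limit in the target to a limit of spaces).
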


\begin{proof} By Proposition \ref{prop:logpostnikovconverge}, ${\rm Map}_{{\rm Log}_{(R, P)/}}((A, M), (B, N))$ is the limit of the mapping spaces  ${\rm Map}_{{\rm Log}_{(R, P)/}}((B, N), (\tau_{\le n}(B), N_{\le n}))$ along the maps in the log Postnikov tower, so that Lemma \ref{lem:mappingspacesq0} applies to conclude. 
\end{proof}

\begin{remark} Before stating the next result, we remind the reader that Theorem \ref{thm:chartedlogetale} is \emph{not} concerned with the functor \begin{equation}\label{naivetruncation}{\rm Log}_{(R, P)/} \to {\rm Log}_{(\pi_0(R), P_{\le 0})/}, \quad (A, M) \mapsto (\pi_0(A), M_{\le 0}),\end{equation} but rather the base-change functor along $(R, P) \to (\pi_0(R), P_{\le 0})$. These perspectives coincide with the notion of \'etaleness considered in \cite[Section 7.5]{Lur17}, as \'etale morphisms $R \to A$ therein are required to be \emph{flat}; in particular, there is an isomorphism $\pi_0(A) \otimes_{\pi_0(R)} \pi_*(R) \xrightarrow{\cong} \pi_*(A)$.  This condition is not relevant for us, and the functor \eqref{naivetruncation} does not restrict to one of formally log \'etale objects. Nonetheless, the following truncation property for formally log \'etale morphisms will prove convenient for us:
\end{remark} 

\begin{corollary}\label{cor:truncationalltheway} Let $(R, P) \to (A, M)$ be a formally log \'etale morphism. The map \[{\rm Map}_{{\rm Log}_{(R, P)/}}((A, M), (B, N)) \xrightarrow{} {\rm Map}_{{\rm Log}_{(\pi_0(R), P_{\le 0})/}}((\pi_0(A), M_{\le 0}), (\pi_0(B), N_{\le 0}))\] induced by the log Postnikov truncations is an equivalence. 
\end{corollary}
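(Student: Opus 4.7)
The plan is to factor the claimed equivalence as a composition of three simpler equivalences. The key auxiliary fact is a universal property of the log Postnikov truncation $(X, L) \to (\pi_0(X), L_{\le 0})$, analogous to that of $\pi_0$ for connective $\mathbb{E}_\infty$-rings: mapping into a log ring spectrum whose underlying ring is discrete factors uniquely through the truncation. I will first establish this, then use it together with Corollary \ref{cor:truncationalonglogpostnikov} to truncate both source, target, and base.

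To prove the universal property, suppose $(S, Q)$ is a log ring spectrum with $S$ discrete, and consider the map
\[
{\rm Map}_{\rm Log}((\pi_0(X), L_{\le 0}), (S, Q)) \to {\rm Map}_{\rm Log}((X, L), (S, Q))
\]
induced by precomposition with $(X, L) \to (\pi_0(X), L_{\le 0})$. Via the Cartesian square description of mapping spaces of pre-log ring spectra from \eqref{mappingspace}, this reduces to two elementary inputs: the equivalence ${\rm Map}_{\rm CAlg}(X, S) \simeq {\rm Map}_{\rm CAlg}(\pi_0(X), S)$ when $S$ is discrete, and the logification adjunction applied to the inverse image pre-log structure $L \to \Omega^{\cal J}(X) \to \Omega^{\cal J}(\pi_0(X))$, whose logification is by definition $L_{\le 0}$. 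Taking fibers over specified structure maps upgrades this to an equivalence on mapping spaces in slice categories.

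With this in hand, the proof proceeds in three steps. First, Corollary \ref{cor:truncationalonglogpostnikov} applied to the formally log \'etale morphism $(R, P) \to (A, M)$ yields
\[
{\rm Map}_{{\rm Log}_{(R, P)/}}((A, M), (B, N)) \xrightarrow{\simeq} {\rm Map}_{{\rm Log}_{(R, P)/}}((A, M), (\pi_0(B), N_{\le 0})),
\]
truncating the target. Second, applying the universal property above with $(X, L) = (A, M)$ and $(S, Q) = (\pi_0(B), N_{\le 0})$, and taking the fiber over the structure maps from $(R, P)$, gives
\[
{\rm Map}_{{\rm Log}_{(R, P)/}}((\pi_0(A), M_{\le 0}), (\pi_0(B), N_{\le 0})) \xrightarrow{\simeq} {\rm Map}_{{\rm Log}_{(R, P)/}}((A, M), (\pi_0(B), N_{\le 0})).
\]
Third, applying the universal property to the log Postnikov truncation $(R, P) \to (\pi_0(R), P_{\le 0})$ of the base, with the same discrete target, yields
\[
{\rm Map}_{{\rm Log}_{(\pi_0(R), P_{\le 0})/}}((\pi_0(A), M_{\le 0}), (\pi_0(B), N_{\le 0})) \xrightarrow{\simeq} {\rm Map}_{{\rm Log}_{(R, P)/}}((\pi_0(A), M_{\le 0}), (\pi_0(B), N_{\le 0})).
\]
Composing these three equivalences gives the result. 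The only genuine input is Corollary \ref{cor:truncationalonglogpostnikov} (which in turn rests on convergence of the log Postnikov tower, Proposition \ref{prop:logpostnikovconverge}, and the formally log \'etale hypothesis); the universal property of log Postnikov truncation is essentially a formal consequence of \eqref{mappingspace} and the logification adjunction, so I do not expect any serious obstacle.
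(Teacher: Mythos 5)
Your proposal is correct and takes essentially the same approach as the paper: both reduce to Corollary \ref{cor:truncationalonglogpostnikov} for the target, the Cartesian square description \eqref{mappingspace} of pre-log mapping spaces, the logification adjunction, and the universal property of $\pi_0$ for connective ring spectra. The only difference is organizational — you isolate a ``universal property of the log Postnikov truncation'' lemma and apply it twice (once to truncate the source, once to truncate the base), whereas the paper runs both truncations simultaneously through a single commutative cube of pre-log mapping spaces and then peels off the logification adjunction at the end.
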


\begin{proof} By Corollary \ref{cor:truncationalonglogpostnikov}, we can replace ${\rm Map}_{{\rm Log}_{(R, P)/}}((A, M), (B, N))$ with the mapping space ${\rm Map}_{{\rm Log}_{(R, P)/}}((A, M), (\pi_0(B), N_{\le 0}))$. Since ${\rm PreLog}$ is a full subcategory of ${\rm Log}$, we may replace ${\rm Log}$ by ${\rm PreLog}$. Consider the commutative cube \[\begin{tikzpicture}[baseline= (a).base]
\node[scale=.59] (a) at (0,0){\begin{tikzcd}[column sep = tiny, row sep = tiny]
&
{\rm Map}_{(R, P)/}((A, M), (\pi_0(B), N_{\le 0}))
\ar{rr}{}
\ar[]{dd}[near end]{}
& & {\rm Map}_{{P/}}(M, N_{\le 0})
\ar{dd}{}
\\
{\rm Map}_{{(\pi_0(R), P)/}}((\pi_0(A), M), (\pi_0(B), N_{\le 0}))
\ar[crossing over]{rr}[near start]{}
\ar{dd}[swap]{}
\ar{ur}
& & {\rm Map}_{{P/}}(M, N_{\le 0})
\ar{ur}{=}
\\
&
{\rm Map}_{{R/}}(A, \pi_0(B))
\ar[near start]{rr}{}
& & {\rm Map}_{{{\Bbb S}^{\cal J}[P]/}}({\Bbb S}^{\cal J}[M], \pi_0(B))
\\
{\rm Map}_{{\pi_0(R)/}}(\pi_0(A), \pi_0(B))
\ar{ur}{\simeq}
\ar{rr}
& & {\rm Map}_{{{\Bbb S}^{\cal J}[P]/}}({\Bbb S}^{\cal J}[M], \pi_0(B))
\ar[crossing over, leftarrow, near start]{uu}{}
\ar[]{ur}{=}
\end{tikzcd}};\end{tikzpicture}\] induced by the truncation maps $(-, -) \to (\pi_0(-), -)$ of \emph{pre-}log ring spectra (so that e.g.\ $\pi_0(A)$ carries the inverse image pre-log structure $M \to \Omega^{\cal J}(A) \to \Omega^{\cal J}(\pi_0(A))$), where we have used short-hands of the form ${\rm Map}_{{\cal C}_{x/}} := {\rm Map}_{x/}$. Here the front and back vertical faces are the defining cartesian squares for mapping spaces in ${\rm PreLog}$, and it is clear that the right-hand face is cartesian. Hence the left-hand face is cartesian, and we conclude that both maps \[\begin{tikzcd}[row sep = small]{\rm Map}_{{\rm PreLog}_{(\pi_0(R), P_{\le 0})/}}((\pi_0(A), M_{\le 0}), (\pi_0(B), N_{\le 0})) \ar{d}{\simeq} \\ {\rm Map}_{{\rm PreLog}_{(\pi_0(R), P)/}}((\pi_0(A), M), (\pi_0(B), N_{\le 0})) \ar{d}{\simeq} \\ {\rm Map}_{{\rm PreLog}_{(R, P)/}}((A, M), (\pi_0(B), N_{\le 0}))\end{tikzcd}\] are equivalences, where in the first equivalence we have used that logification is left adjoint to the forgetful functor (recall that $M_{\le 0}$ is the logification of $M \to \Omega^{\cal J}(A) \to \Omega^{\cal J}(\pi_0(A))$).
\end{proof}

\begin{lemma}\label{lem:fullyfaithful} Let $(\widetilde{R}, \widetilde{P}) \to (R, P)$ be a strict morphism of log ring spectra whose underlying map of ${\Bbb E}_{\infty}$-rings is $0$-connected. The base-change functor \[(-, -) \otimes_{(\widetilde{R}, \widetilde{P})} (R, P) \colon {\rm Log}_{(\widetilde{R}, \widetilde{P})/} \xrightarrow{} {\rm Log}_{(R, P)/}\] restricts to a fully faithful functor on formally log \'etale objects. 
\end{lemma}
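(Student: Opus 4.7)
The plan is to reduce the claim to the log Postnikov level via Corollary \ref{cor:truncationalltheway}. Given formally log \'etale $(\widetilde{A}, \widetilde{M}), (\widetilde{B}, \widetilde{N}) \in {\rm Log}_{(\widetilde{R}, \widetilde{P})/}^{\rm fl\acute{e}t}$ with base-changes $(A, M), (B, N)$, the stability of formally log \'etale morphisms under base-change (immediate from ${\Bbb L}^{\rm rep}_{(A, M)/(R, P)} \simeq A \otimes_{\widetilde{A}} {\Bbb L}^{\rm rep}_{(\widetilde{A}, \widetilde{M})/(\widetilde{R}, \widetilde{P})}$) ensures that $(R, P) \to (A, M)$ is still formally log \'etale. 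Using the base-change/restriction adjunction, the map
\[{\rm Map}_{(\widetilde{R}, \widetilde{P})/}((\widetilde{A}, \widetilde{M}), (\widetilde{B}, \widetilde{N})) \longrightarrow {\rm Map}_{(R, P)/}((A, M), (B, N))\]
can be rewritten as the map on mapping spaces out of $(\widetilde{A}, \widetilde{M})$ induced by the unit $\eta \colon (\widetilde{B}, \widetilde{N}) \to (B, N)$, with $(B, N)$ restricted to be an $(\widetilde{R}, \widetilde{P})$-algebra. Applying Corollary \ref{cor:truncationalltheway} to the formally log \'etale source $(\widetilde{R}, \widetilde{P}) \to (\widetilde{A}, \widetilde{M})$ on both sides translates the question into asking whether the induced map
\[(\pi_0(\widetilde{B}), \widetilde{N}_{\le 0}) \longrightarrow (\pi_0(B), N_{\le 0})\]
is an equivalence in ${\rm Log}_{(\pi_0(\widetilde{R}), \widetilde{P}_{\le 0})/}$.

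The key point I would exploit is that the log Postnikov truncation functor
\[T \colon {\rm Log} \to {\rm Log}, \quad (A, M) \mapsto (\pi_0(A), M_{\le 0}),\]
is left adjoint to the inclusion of the full subcategory of log ring spectra whose underlying ${\Bbb E}_{\infty}$-ring is discrete. This combines the classical fact that $\pi_0$ is left adjoint to the inclusion of discrete ${\Bbb E}_{\infty}$-rings with the universal property of the logification applied to the inverse image of $M$ in $\Omega^{\cal J}(\pi_0(A))$: for $(B', N')$ with $B'$ discrete, a map $A \to B'$ factors uniquely through $\pi_0(A)$, and the accompanying graded ${\Bbb E}_{\infty}$-space map $M \to N'$ factors uniquely through the logification $M_{\le 0}$ of the inverse image pre-log structure. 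As a left adjoint, $T$ preserves pushouts.

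I would then combine the $0$-connectedness of $\widetilde{R} \to R$ with the pushout presentation $(R, P) \simeq (\widetilde{R}, \widetilde{P}) \otimes_{(\widetilde{R}, {\rm GL}_1^{\cal J}(\widetilde{R}))} (R, {\rm GL}_1^{\cal J}(R))$ of the strict morphism (Lemma \ref{lem:strictiff}) to deduce $T(R, P) \simeq T(\widetilde{R}, \widetilde{P})$, since after applying $T$ the relevant map of unit spectra $(\pi_0(\widetilde{R}), {\rm GL}_1^{\cal J}(\pi_0(\widetilde{R}))) \to (\pi_0(R), {\rm GL}_1^{\cal J}(\pi_0(R)))$ is an equivalence. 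Applying $T$ to the base-change square defining $(B, N)$ then gives $T(B, N) \simeq T(\widetilde{B}, \widetilde{N}) \otimes_{T(\widetilde{R}, \widetilde{P})} T(R, P) \simeq T(\widetilde{B}, \widetilde{N})$, which is exactly what is needed. The main obstacle is justifying the left adjointness of $T$, which is not stated explicitly earlier in the paper; once granted, the remainder is a formal manipulation of pushouts. An alternative, more hands-on route would verify $\widetilde{P}_{\le 0} \simeq P_{\le 0}$ and $\widetilde{N}_{\le 0} \simeq N_{\le 0}$ directly, using strictness, $0$-connectedness, and the idempotence of logification.
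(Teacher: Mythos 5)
Your proposal is correct and your reduction to the log Postnikov level via Corollary \ref{cor:truncationalltheway} is exactly the paper's first step. Where you genuinely diverge is in the mechanism for comparing the two Postnikov-level objects. The paper verifies the equivalences $\widetilde{P}_{\le 0} \simeq P_{\le 0}$, $\widetilde{M}_{\le 0} \simeq M_{\le 0}$, $\widetilde{N}_{\le 0} \simeq N_{\le 0}$ directly, repeatedly unwinding the pushout presentations of strict morphisms furnished by Lemma \ref{lem:strictiff} and invoking stability of strictness under cobase-change \cite[Lemma 5.5]{SSV16}; this is precisely the ``more hands-on route'' you sketch in your last sentence. You instead package all of this into the observation that the log Postnikov truncation $T \colon (A, M) \mapsto (\pi_0(A), M_{\le 0})$ is a localization onto the full subcategory of log ring spectra with discrete underlying $\mathbb{E}_{\infty}$-ring, and then apply $T$ to the cobase-change squares to get $T(R, P) \simeq T(\widetilde{R}, \widetilde{P})$, $T(A, M) \simeq T(\widetilde{A}, \widetilde{M})$, $T(B, N) \simeq T(\widetilde{B}, \widetilde{N})$ formally. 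The adjunction you need is true and not hard to check: using the mapping-space pullback square \eqref{mappingspace}, for $(B', N')$ with $B'$ discrete one sees that ${\rm Map}_{\rm PreLog}((\pi_0(A), M), (B', N')) \to {\rm Map}_{\rm PreLog}((A, M), (B', N'))$ is an equivalence (since ${\rm Map}_{\rm CAlg}(\pi_0(A), B') \to {\rm Map}_{\rm CAlg}(A, B')$ is and the other two corners are unchanged), and then one applies that logification is left adjoint to the inclusion ${\rm Log} \hookrightarrow {\rm PreLog}$. You flag this gap yourself, and it is the only gap; once closed, your argument is clean and arguably a bit more conceptual, at the cost of establishing an adjunction that the paper chose to avoid making explicit. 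One small caveat: even with the abstract route, you still need the hands-on input $\pi_0(\widetilde{R}) \cong \pi_0(R)$ from the $0$-connectedness hypothesis and the Lemma \ref{lem:strictiff} presentation of the strict map to start the induction; your abstraction only shortcuts the propagation to $(A, M)$ and $(B, N)$.
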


\begin{proof} Let $(\widetilde{R}, \widetilde{P}) \to (\widetilde{A}, \widetilde{M})$ be a formally log \'etale morphism of log ring spectra, let $(\widetilde{B}, \widetilde{N})$ be an $(\widetilde{R}, \widetilde{P})$-algebra, and let $(A, M)$ and $(B, N)$ denote the respective cobase-changes along $(\widetilde{R}, \widetilde{P}) \to (R, P)$. We need to show that the top horizontal arrow in the diagram \[\begin{tikzpicture}[baseline= (a).base]
\node[scale=.83] (a) at (0,0){\begin{tikzcd}[column sep = tiny, row sep = small]{\rm Map}_{{\rm Log}_{(\widetilde{R}, \widetilde{P})/}}((\widetilde{A}, \widetilde{M}), (\widetilde{B}, \widetilde{N})) \ar{r} \ar{d}{\simeq} & {\rm Map}_{{\rm Log}_{(R, P)/}}((A, M), (B, N)) \ar{d}{\simeq} \\ {\rm Map}_{{\rm Log}_{(\pi_0(\widetilde{R}), \widetilde{P}_{\le 0})/}}((\pi_0(\widetilde{A}), \widetilde{M}_{\le 0}), (\pi_0(\widetilde{B}), \widetilde{N}_{\le 0})) \ar{r} & {\rm Map}_{{\rm Log}_{(\pi_0(R), P_{\le 0})/}}((\pi_0(A), M_{\le 0}), (\pi_0(B), N_{\le 0}))\end{tikzcd}};\end{tikzpicture}\] is an equivalence. The vertical arrows are equivalences by Corollary \ref{cor:truncationalltheway}. The $0$-connectedness hypothesis implies that $\pi_0(\widetilde{R}) \cong \pi_0(R)$, and consequently we obtain $\pi_0(\widetilde{A}) \cong \pi_0(A)$ and $\pi_0(\widetilde{B}) \cong \pi_0(B)$ by our standing connectivity hypotheses. We shall now argue that there are equivalences \begin{equation}\label{bottomlogpostnikov}\widetilde{P}_{\le 0} \xrightarrow{\simeq} P_{\le 0}, \quad \widetilde{M}_{\le 0} \xrightarrow{\simeq} M_{\le 0}, \quad \text{and} \quad \widetilde{N}_{\le 0} \xrightarrow{\simeq} N_{\le 0},\end{equation} which will conclude the proof.  For the first of these, we observe that there is an equivalence \[(\pi_0(\widetilde{R}), \widetilde{P}_{\le 0}) \simeq (\widetilde{R}, \widetilde{P}) \otimes_{(\widetilde{R}, {\rm GL}_1^{\cal J}(\widetilde{R}))} (\pi_0(\widetilde{R}), {\rm GL}_1^{\cal J}(\pi_0(\widetilde{R})))\] of log ring spectra by Lemma \ref{lem:strictiff}, the right-hand side of which is isomorphic to  the coproduct $(\widetilde{R}, \widetilde{P}) \otimes_{(\widetilde{R}, {\rm GL}_1^{\cal J}(\widetilde{R}))} (\pi_0(R), {\rm GL}_1^{\cal J}(\pi_0(R)))$. This, in turn, is isomorphic to \[((\widetilde{R}, \widetilde{P}) \otimes_{(\widetilde{R}, {\rm GL}_1^{\cal J}(\widetilde{R}))} (R, {\rm GL}_1^{\cal J}(R))) \otimes_{(R, {\rm GL}_1^{\cal J}(R))} (\pi_0(R), {\rm GL}_1^{\cal J}(\pi_0(R))).\] By the assumption that $(\widetilde{R}, \widetilde{P}) \to (R, P)$ is strict and Lemma \ref{lem:strictiff}, we obtain that this is equivalent to  \[(R, P) \otimes_{(R, {\rm GL}_1^{\cal J}(R))} (\pi_0(R), {\rm GL}_1^{\cal J}(\pi_0(R))) \simeq (\pi_0(R), P_{\le 0}),\] where we have once again used Lemma \ref{lem:strictiff} for the last equivalence. As strict morphisms are stable under cobase-change by \cite[Lemma 5.5]{SSV16}, the maps $(\widetilde{A}, \widetilde{M}) \to (A, M)$ and $(\widetilde{B}, \widetilde{N}) \to (B, N)$ are also strict, so that the two remaining equivalences of \eqref{bottomlogpostnikov} are obtained by the same argument.
\end{proof}

\begin{proposition}\label{prop:esssurjective} Let $(R, P) \to (A, M)$ be a formally log \'etale map of log ring spectra, and let $(\widetilde{R}, \widetilde{P}) \to (R, P)$ be a square-zero extension by a $0$-connected $R$-module $J$. Then there exists an essentially unique log square-zero extension $(\widetilde{A}, \widetilde{M}) \to (A, M)$ by $A \otimes_R J$ together with a formally log \'etale morphism $(\widetilde{R}, \widetilde{P}) \to (\widetilde{A}, \widetilde{M})$ which exhibits $(\widetilde{A}, \widetilde{M})$ as a deformation of $(A, M)$ to $(\widetilde{R}, \widetilde{P})$; that is, the square \begin{equation}\label{thisiscocartesian}\begin{tikzcd}[row sep = small](\widetilde{R}, \widetilde{P}) \ar{r} \ar{d} & (\widetilde{A}, \widetilde{M}) \ar{d} \\ (R, P) \ar{r} & (A, M)\end{tikzcd}\end{equation} of log ring spectra is cocartesian. 
\end{proposition}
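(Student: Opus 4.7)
The strategy is to construct $(\widetilde{A}, \widetilde{M})$ as the log square-zero extension of $(A, M)$ classified by the log derivation obtained from $(\widetilde{R}, \widetilde{P}) \to (R, P)$ via the formally log \'etale hypothesis. By Theorem \ref{thm:squarezeroiff}, the log square-zero extension $(\widetilde{R}, \widetilde{P}) \to (R, P)$ fits into a cartesian square classified by a log derivation $(d, d^\flat) \colon (R, P) \to (R, P) \oplus J[1]$, equivalently an $R$-module map ${\Bbb L}^{\rm rep}_{(R, P)} \to J[1]$ by Lemma \ref{lem:logcotangentrep}. The formally log \'etale hypothesis provides an equivalence ${\Bbb L}^{\rm rep}_{(A, M)} \simeq A \otimes_R {\Bbb L}^{\rm rep}_{(R, P)}$, and composing its inverse with the base-change of the above derivation yields a log derivation $(\tilde d, \tilde d^\flat) \colon (A, M) \to (A, M) \oplus (A \otimes_R J)[1]$. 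I then define $(\widetilde{A}, \widetilde{M})$ as the pullback of $(\tilde d_0, \tilde d_0^\flat)$ and $(\tilde d, \tilde d^\flat)$; this is a log square-zero extension of $(A, M)$ by the connective $A$-module $A \otimes_R J$ by construction. The compatibility of $(\tilde d, \tilde d^\flat)$ with $(d, d^\flat)$ together with the universal property of the pullback produce the structure map $(\widetilde{R}, \widetilde{P}) \to (\widetilde{A}, \widetilde{M})$ making \eqref{thisiscocartesian} commute.

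The main work lies in verifying that the commutative square \eqref{thisiscocartesian} is cocartesian. Paralleling Lurie's argument for \cite[Theorem 7.5.0.6]{Lur17}, I plan to analyze the natural comparison $c \colon (R, P) \sqcup_{(\widetilde{R}, \widetilde{P})} (\widetilde{A}, \widetilde{M}) \to (A, M)$: since colimits in ${\rm Log}$ are logifications of pre-log colimits and do not change underlying ring spectra, the underlying map of $c$ is $R \sqcup_{\widetilde{R}} \widetilde{A} \to A$, which is the ${\Bbb E}_\infty$-ring analog of our situation and an equivalence by the identification of $\widetilde{A}$ as a pullback compatible with base-change of $\widetilde{R}$ along $R \to A$---an identification that crucially uses the formally log \'etale hypothesis through the interaction between the ordinary and log cotangent complex formalisms. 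The comparison $c$ is strict since strictness is preserved under pushouts \cite[Lemma 5.5]{SSV16}, and a strict morphism of log ring spectra is an equivalence as soon as its underlying map of ring spectra is one; hence $c$ is an equivalence. With the cocartesian property in hand, formal log \'etaleness of $(\widetilde{R}, \widetilde{P}) \to (\widetilde{A}, \widetilde{M})$ is immediate from base-change of the relative log cotangent complex along \eqref{thisiscocartesian},
\[{\Bbb L}^{\rm rep}_{(\widetilde{A}, \widetilde{M})/(\widetilde{R}, \widetilde{P})} \simeq \widetilde{A} \otimes_A {\Bbb L}^{\rm rep}_{(A, M)/(R, P)} \simeq 0,\]
and essential uniqueness follows from Lemma \ref{lem:fullyfaithful}.

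The main obstacle is the identification sketched in the previous paragraph between the pullback defining $\widetilde{A}$ and the pushout $R \sqcup_{\widetilde{R}} \widetilde{A} \simeq A$, which is the log analog of the key step in Lurie's essential surjectivity argument. A more structural alternative would be to develop a log variant of Lurie's obstruction theory \cite[Section 7.4]{Lur17} within the replete tangent bundle formalism of Section \ref{sec:repletetangent}, whereby the vanishing of ${\Bbb L}^{\rm rep}_{(A, M)/(R, P)}$ directly produces $(\widetilde{A}, \widetilde{M})$ as the essentially unique lift of $(A, M)$ along $(\widetilde{R}, \widetilde{P}) \to (R, P)$; the fiberwise identification $(T^{\rm rep}_{\rm Log})_{(A, M)} \simeq {\rm Mod}_A$ of Corollary \ref{cor:repletefiber} and the left-adjoint construction of ${\Bbb L}^{\rm rep}$ should make this adaptation largely formal.
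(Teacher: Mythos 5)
Your overall strategy matches the paper's: construct $(\widetilde{A},\widetilde{M})$ from a log derivation obtained via the vanishing of ${\Bbb L}^{\rm rep}_{(A,M)/(R,P)}$, then check cocartesianness and formal log \'etaleness. However, there are two genuine gaps.

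First, the step you flag as "the main obstacle''---showing the square \eqref{thisiscocartesian} is cocartesian---is indeed the crux, and you do not resolve it. The paper does not build a log variant of Lurie's obstruction theory here; it observes that the cocartesianness of the underlying square of ${\Bbb E}_\infty$-rings is precisely \cite[Theorem 3.25]{dagiv}, and then promotes this to log ring spectra using strictness of the two vertical morphisms (each is a log square-zero extension, hence strict by Theorem \ref{thm:squarezeroiff}, so by Lemma \ref{lem:strictiff} each is a cobase-change of a map of trivial log structures). Your alternative argument via a comparison map $c$ and a claim that "$c$ is strict since strictness is preserved under pushouts'' does not follow from \cite[Lemma 5.5]{SSV16} as stated: that lemma says strictness is stable under cobase-change, whereas $c$ is the comparison map out of a pushout, and the maps you can control via cobase-change form a triangle with $c$, not $c$ itself. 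A two-out-of-three argument for strictness would require additional justification.

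Second, your verification of formal log \'etaleness of $(\widetilde{R},\widetilde{P}) \to (\widetilde{A},\widetilde{M})$ uses the formula ${\Bbb L}^{\rm rep}_{(\widetilde{A},\widetilde{M})/(\widetilde{R},\widetilde{P})} \simeq \widetilde{A} \otimes_A {\Bbb L}^{\rm rep}_{(A,M)/(R,P)}$, but there is no ring map $A \to \widetilde{A}$, so this does not typecheck. Base change along the cocartesian square runs the other way: one gets $A \otimes_{\widetilde{A}} {\Bbb L}^{\rm rep}_{(\widetilde{A},\widetilde{M})/(\widetilde{R},\widetilde{P})} \simeq {\Bbb L}^{\rm rep}_{(A,M)/(R,P)} \simeq 0$, which on its own does not force the left-hand side to vanish. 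The paper closes this gap by invoking conservativity of $A \otimes_{\widetilde{A}} (-)$, which holds because $\widetilde{A} \to A$ is a square-zero extension by a $0$-connected module \cite[Lemma 3.3]{PVK22}. Without this conservativity step your argument is incomplete.
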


\begin{proof} We shall construct a commutative cube \begin{equation}\label{inductioncube}\begin{tikzpicture}[baseline= (a).base]
\node[scale=.85] (a) at (0,0){\begin{tikzcd}[column sep = small, row sep = tiny]
&
(R, P)
\ar{rr}{}
\ar[]{dd}[near end]{}
& & (A, M)
\ar{dd}{}
\\
(\widetilde{R}, \widetilde{P})
\ar[crossing over]{rr}[near start]{}
\ar{dd}[swap]{(p, p^\flat)}
\ar{ur}{}
& & (\widetilde{A}, \widetilde{M})
\ar{ur}{}
\\
&
(R \oplus J[1], (P + J[1])^{\cal J})
\ar[near start]{rr}{}
& & (A \oplus (A \otimes_R J)[1], M \oplus (A \otimes_R J)[1])
\\
(R, P)
\ar[]{ur}{(d, d^\flat)}
\ar[swap]{rr}{(f, f^\flat)}
& & (A, M)
\ar[crossing over, leftarrow, near start]{uu}{(q, q^\flat)}
\ar[dashed]{ur}
\end{tikzcd}};\end{tikzpicture}\end{equation} of log ring spectra. Here $(d, d^\flat)$ is a log derivation which exhibits $(p, p^\flat)$ as a square-zero extension. The vertical maps of the back face are the trivial derivations.

The space of dashed arrows making the bottom face of the cube (\ref{inductioncube}) commute is an element of \[{\rm Map}_{(R, P)//(A, M)}((A, M), (A \oplus (A \otimes_R J)[1], M \oplus (A \otimes_R J)[1]).\] By Lemma \ref{lem:logcotangentrep}, this is equivalent to \[{\rm Map}_{{\rm Mod}_A}({\Bbb L}_{(A, M) / (R, P)}^{\rm rep}, A \otimes_R J[1]),\] which is contractible by assumption. This gives an essentially unique dashed arrow making the bottom face commute.
We define $(\widetilde{A}, \widetilde{M})$ to be the associated infinitesimal extension, and we claim that it has the properties predicted by the proposition.  The square \eqref{thisiscocartesian} is cocartesian at the level of ${\Bbb E}_{\infty}$-rings by \cite[Theorem 3.25]{dagiv}. Since the vertical morphisms are strict, it follows that \eqref{thisiscocartesian} is also cocartesian at the level of commutative ${\cal J}$-space monoids. 

It only remains to show that the resulting map $(\widetilde{f}, \widetilde{f}^\flat) \colon (\widetilde{R}, \widetilde{P}) \to (\widetilde{A}, \widetilde{M})$ is formally log \'etale. We claim that there are equivalences \[A \otimes_{{\widetilde{A}}} {\Bbb L}_{(\widetilde{A}, \widetilde{M}) / (\widetilde{R}, \widetilde{P})}^{\rm rep} \xrightarrow{\simeq} {\Bbb L}_{(A, \widetilde{M}) / (R, \widetilde{P})}^{\rm rep} \xrightarrow{\simeq} {\Bbb L}_{(A, M) / (R, P)}^{\rm rep} \simeq 0\] of $A$-modules. For this, we use Theorem \ref{thm:logcotangent} to identify the cotangent complexes with the relevant log ${\rm TAQ}$-terms. The first equivalence follows from the homotopy cocartesian square \eqref{thisiscocartesian} and flat base change for log ${\rm TAQ}$ \cite[Proposition 11.29]{Rog09}. The second follows from logification invariance of log {\rm TAQ} \cite[Corollary 11.23]{Rog09} and strictness of $(q, q^\flat)$. Since $\widetilde{A} \to A$ is a square-zero extension by a $0$-connected module, the extension of scalars functor $A {\otimes_{\widetilde{A}}} -$ is conservative; see \cite[Lemma 3.3]{PVK22}. This concludes the proof.
\end{proof}

\begin{proof}[Proof of Theorem \ref{thm:logetalebase}] The base-change functor is essentially surjective by Proposition \ref{prop:esssurjective}, while it is fully faithful by Lemma \ref{lem:fullyfaithful}. 
\end{proof}

\subsection{Charted log \'etale morphisms} In the following definition, we once again make reference to the functor $\gamma$ of \cite[Section 3]{Sag16}, which associates to a commutative ${\cal J}$-space monoid $M$ a certain augmented Segal $\Gamma$-space $\gamma(M)$. Its underlying $\Gamma$-space is very special, and hence uniquely determines a connective spectrum. We invite the reader to think of $\gamma(M)$ as the connective spectrum associated to the ``underlying grouplike ${\Bbb E}_{\infty}$-space'' of the group completion $M^{\rm gp}$. 

\begin{definition}\label{def:chartedlogetale} A morphism $(R, P^a) \to (A, M^a)$ of log ring spectra is \emph{charted log \'etale} if it arises as the logification of a morphism $(R, P) \to (A, M)$ of pre-log ring spectra such that 
\begin{enumerate}
\item the induced morphism $R \otimes_{{\Bbb S}^{\cal J}[P]} {\Bbb S}^{\cal J}[M] \to A$ is \'etale; and
\item the $A$-module $A \otimes (\gamma(M)/\gamma(P))$ vanishes. 
\end{enumerate}
We shall refer to such a map $(R, P) \to (A, M)$ of pre-log ring spectra as a \emph{log \'etale chart} for the charted log \'etale morphism $(R, P^a) \to (A, M^a)$.
\end{definition} 

The notion of charted log \'etale morphisms appeared previously in \cite[Definition 8.4]{Lun21} under the terminology ``log \'etale''. 

\begin{lemma}\label{lem:logetalechartformallylogetale} Charted log \'etale morphisms are formally log \'etale.
\end{lemma}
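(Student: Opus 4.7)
My plan is to deduce the vanishing of the relative log cotangent complex ${\Bbb L}^{\rm rep}_{(A, M^a)/(R, P^a)}$ from the two hypotheses in Definition \ref{def:chartedlogetale} by means of a transitivity argument. By Theorem \ref{thm:logcotangent} and logification invariance of log TAQ \cite[Corollary 11.23]{Rog09}, it is equivalent to show that ${\Bbb L}^{\rm rep}_{(A, M)/(R, P)} \simeq 0$ for a log \'etale chart $(R, P) \to (A, M)$ of pre-log ring spectra. Set $A^\sharp := R \otimes_{{\Bbb S}^{\cal J}[P]} {\Bbb S}^{\cal J}[M]$ and factor the chart as
$$(R, P) \longto (A^\sharp, M) \longto (A, M),$$
where the first map is the cobase-change of the free inclusion $({\Bbb S}^{\cal J}[P], P) \to ({\Bbb S}^{\cal J}[M], M)$ along $({\Bbb S}^{\cal J}[P], P) \to (R, P)$, and the second map is the identity on pre-log structures, and hence becomes strict after logification.

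The transitivity cofiber sequence
$$A \otimes_{A^\sharp} {\Bbb L}^{\rm rep}_{(A^\sharp, M)/(R, P)} \longto {\Bbb L}^{\rm rep}_{(A, M)/(R, P)} \longto {\Bbb L}^{\rm rep}_{(A, M)/(A^\sharp, M)}$$
now exhibits the middle term as an extension of two $A$-modules that should both vanish. For the right-hand term, strictness implies that the log cotangent complex agrees with the ordinary cotangent complex ${\Bbb L}_{A/A^\sharp}$, which is zero by condition (1). For the left-hand term, flat base-change for log TAQ \cite[Proposition 11.29]{Rog09} identifies it with $A \otimes_{{\Bbb S}^{\cal J}[M]} {\Bbb L}^{\rm rep}_{({\Bbb S}^{\cal J}[M], M)/({\Bbb S}^{\cal J}[P], P)}$, and the computation of log TAQ for free pre-log ring spectra due to Rognes and Sagave (cf.\ \cite[Section 13]{Rog09} and \cite[Section 6]{Sag14}) yields ${\Bbb L}^{\rm rep}_{({\Bbb S}^{\cal J}[M], M)/({\Bbb S}^{\cal J}[P], P)} \simeq {\Bbb S}^{\cal J}[M] \otimes \gamma(M/P)$, where $\gamma(M/P)$ denotes the cofiber of $\gamma(P) \to \gamma(M)$. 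After base-change this becomes $A \otimes \gamma(M/P)$, which vanishes by condition (2).

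The main obstacle I anticipate is verifying that transitivity, strict invariance, and flat base-change for the log cotangent complex---all established in Rognes' framework for log TAQ \cite{Rog09}---transport cleanly to the replete tangent bundle formalism of Section \ref{sec:repletetangent}. Theorem \ref{thm:logcotangent} gives the desired equivalence of invariants, but one has to trace through the identification carefully to be sure that these structural properties survive the comparison before the two vanishing inputs can be combined as above. Once this compatibility is in place, the argument reduces to a direct concatenation of the two conditions in Definition \ref{def:chartedlogetale}.
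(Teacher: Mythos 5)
Your proposal is correct, and it recovers essentially the same argument as the paper, just unpacked.  The paper's own proof is a one-line citation: Definition \ref{def:chartedlogetale} is imported from \cite[Definition 8.4]{Lun21}, and the relevant vanishing of $\mathrm{TAQ}((A,M)/(R,P))$ is quoted directly from \cite[Lemma 8.7]{Lun21}, after which Theorem \ref{thm:logcotangent} identifies $\mathrm{TAQ}$ with ${\Bbb L}^{\rm rep}$.  Your transitivity factorization through $(A^\sharp, M) = (R \otimes_{{\Bbb S}^{\cal J}[P]} {\Bbb S}^{\cal J}[M], M)$ is exactly how one proves that cited lemma, and the cofiber sequence you produce,
\[
A \otimes (\gamma(M)/\gamma(P)) \longrightarrow {\Bbb L}^{\rm rep}_{(A, M)/(R, P)} \longrightarrow {\Bbb L}_{A / A^\sharp},
\]
is literally what is recorded in the remark immediately following the lemma in the paper (there attributed to \cite[Lemma 11.25]{Rog09} and \cite[Lemma 6.2]{Sag14}).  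The obstacle you flag at the end --- whether transitivity, strict invariance, and base change transport from log TAQ to ${\Bbb L}^{\rm rep}$ --- is not a genuine gap: Theorem \ref{thm:logcotangent} is precisely the statement that ${\Bbb L}^{\rm rep}_{(A,M)}$ is the $A$-module computing log TAQ, and the relative version ${\Bbb L}^{\rm rep}_{(A,M)/(R,P)}$ is defined as the cofiber of $A\otimes_R {\Bbb L}^{\rm rep}_{(R,P)} \to {\Bbb L}^{\rm rep}_{(A,M)}$, matching the definition of relative log TAQ; so the formal properties you invoke (established in \cite{Rog09, Sag14}) carry over verbatim, and indeed the paper relies on this transport itself in the proof of Proposition \ref{prop:esssurjective}.
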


\begin{proof} By \cite[Lemma 8.7]{Lun21}, the $A$-module ${\rm TAQ}((A, M) / (R, P))$ vanishes, so that Theorem \ref{thm:logcotangent} applies to conclude.  
\end{proof}

\begin{remark} The precise relationship between the notions ``charted log \'etale'' in the sense of Definition \ref{def:chartedlogetale} and formally log \'etale in the sense of the vanishing of the log cotangent complex is closely related to the discussion of \cite[Remark 11.26]{Rog09}. By \cite[Lemma 11.25]{Rog09} and \cite[Lemma 6.2]{Sag14}, there is a cofiber sequence of $A$-modules \[A \otimes (\gamma(M)/\gamma(P)) \to {\Bbb L}^{\rm rep}_{(A, M) / (R, P)} \to {\Bbb L}_{A / R \otimes_{{\Bbb S}^{\cal J}[P]} {\Bbb S}^{\cal J}[M]},\] where we have used Theorem \ref{thm:logcotangent} to identify the middle term. Definition \ref{def:chartedlogetale} asks that the two outer terms vanish, together with a finiteness hypothesis on $R \otimes_{{\Bbb S}^{\cal J}[P]} {\Bbb S}^{\cal J}[M] \to A$. By definition, formal log \'etaleness only asks for the middle term to vanish. Under further finiteness hypotheses (notice that we have not imposed any on $P \to M$ itself), the analogous notions coincide in classical log geometry \cite[(3.5)]{Kat89}. 
\end{remark}

Let $(R, P^a) \to (A, M^a)$ be a charted log \'etale morphism. Despite our very suggestive notation, there may be several distinct pre-log ring spectra $(R, P)$ that participate in log \'etale charts $(R, P) \to (A, M)$ for $(R, P^a) \to (A, M^a)$. The following definition gets rid of this ambiguity:

\begin{definition}\label{def:chartedlogetaleatp} Let $(R, P)$ be a fixed pre-log ring spectrum with logification $(R, P^a)$. We define the category ${\rm Log}_{(R, P^a)/}^{{\rm chl\acute{e}t}, P}$ of \emph{charted log \'etale $(R, P^a)$-algebras at $P$} to be the category spanned by those charted log \'etale maps $(R, P^a) \to (A, M^a)$ that arise as the logification of log \'etale charts of the form $(R, P) \to (A, M)$. 
\end{definition}

In the following remark, we discuss some technical aspects of the above definition.

\begin{remark}\label{rem:chartedlogetale} It may seem that we have added an unreasonable amount of restrictions on the category of ``log \'etale'' morphisms that we consider. We will now argue that this is a rather natural choice from the perspective of classical log geometry. 

Log rings are not to log schemes what rings are to schemes; instead, log rings serve as \emph{charts} of log structures. For this reason, we should think of log ring spectra as charts for (the for now hypothetical notion of) \emph{spectral log schemes}. This is the basis for the intuition that we now present. 

Definition \ref{def:chartedlogetale} is an immediate adaptation of \cite[Theorem 3.5]{Kat89}, which states that a map of sufficiently nice log schemes is log \'etale precisely when it \'etale locally admits charts satisfying natural analogs of Definition \ref{def:chartedlogetale}. Our choice to further fix $P$ is justified by a lemma of Nizio{\l} \cite[Lemma 2.8]{Niz08}, which is a generalization of \cite[Lemma 3.1.6]{Kat}. It states that, for a log \'etale morphism of log schemes with a fixed chart on the target, one can always extend this to a chart for the log \'etale morphism that satisfies the obvious analog of Definition \ref{def:chartedlogetale}. 

Let us stress that we have \emph{not} proved an analog of this ``chart extension'' lemma in our context, but we merely use it as justification for our choice of subcategory of ``log \'etale objects.'' In particular, we are currently unable to prove that the notion of charted log \'etale morphisms of Definition \ref{def:chartedlogetale} is closed under composition. Nonetheless, we may of course compose morphisms $(A, M^a) \to (B, N^a)$ in ${\rm Log}_{(R, P^a)/}^{{\rm chl\acute{e}t}, P}$. While the transitivity sequence \[B \otimes_{A} {\Bbb L}_{(A, M^a)/(R, P^a)}^{\rm rep} \to {\Bbb L}_{(B, N^a)/(R, P^a)}^{\rm rep} \to {\Bbb L}_{(B, N^a)/(A, M^a)}^{\rm rep}\] reveals that $(A, M^a) \to (B, N^a)$ is formally log \'etale, this does \emph{not} mean that it is charted log \'etale. 
\end{remark}

\begin{lemma}\label{lem:basechangestrict} Let $(\widetilde{R}, \widetilde{P})$ be a pre-log ring spectrum with logification $(\widetilde{R}, \widetilde{P}^a)$. Base-change along a strict morphism $(\widetilde{R}, \widetilde{P}^a) \to (R, P^a)$ induces a functor \[{\rm Log}_{(\widetilde{R}, \widetilde{P}^a)/}^{{\rm chl\acute{e}t}, \widetilde{P}} \to {\rm Log}_{({R}, {P}^a)/}^{{\rm chl\acute{e}t}, \widetilde{P}}.\]
\end{lemma}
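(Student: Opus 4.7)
The plan is to transport a log \'etale chart along the strict morphism by applying pre-log base-change and then invoking logification. Fix an object $(\widetilde{R}, \widetilde{P}^a) \to (A, M^a)$ of $\mathrm{Log}_{(\widetilde{R}, \widetilde{P}^a)/}^{\rm chl\acute{e}t, \widetilde{P}}$, so by Definition~\ref{def:chartedlogetaleatp} it arises as the logification of a log \'etale chart $(\widetilde{R}, \widetilde{P}) \to (A, M)$ in pre-log ring spectra. Viewing $R$ as carrying the inverse-image pre-log structure $\widetilde{P} \to \Omega^{\mathcal J}(\widetilde{R}) \to \Omega^{\mathcal J}(R)$, I will show that the pre-log coproduct $(R, \widetilde{P}) \otimes_{(\widetilde{R}, \widetilde{P})} (A, M)$ is a log \'etale chart for the base-change of our object along $(\widetilde{R}, \widetilde{P}^a) \to (R, P^a)$.

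Since the map $(\widetilde{R}, \widetilde{P}) \to (R, \widetilde{P})$ is the identity on the pre-log factor, the coproduct in ${\rm PreLog}$ is simply $(R \otimes_{\widetilde{R}} A, M)$ with structure map $M \to \Omega^{\mathcal J}(A) \to \Omega^{\mathcal J}(R \otimes_{\widetilde R} A)$. This yields a candidate chart $(R, \widetilde{P}) \to (R \otimes_{\widetilde{R}} A, M)$, and both conditions of Definition~\ref{def:chartedlogetale} are preserved under the base-change. For condition (1), the induced map $R \otimes_{\Bbb S^{\mathcal J}[\widetilde{P}]} \Bbb S^{\mathcal J}[M] \to R \otimes_{\widetilde{R}} A$ is obtained by cobase-change along $\widetilde{R} \to R$ from the \'etale morphism $\widetilde{R} \otimes_{\Bbb S^{\mathcal J}[\widetilde{P}]} \Bbb S^{\mathcal J}[M] \to A$, and \'etaleness is stable under base-change. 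For condition (2), $A \otimes (\gamma(M)/\gamma(\widetilde{P})) \simeq 0$ by hypothesis, and hence
\[
(R \otimes_{\widetilde{R}} A) \otimes (\gamma(M)/\gamma(\widetilde{P})) \simeq (R \otimes_{\widetilde{R}} A) \otimes_A \bigl(A \otimes (\gamma(M)/\gamma(\widetilde{P}))\bigr) \simeq 0.
\]

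It remains to identify the logification of the pre-log chart with the log base-change in the target category. Since logification is a left adjoint it commutes with coproducts, so
\[
((R, \widetilde{P}) \otimes_{(\widetilde{R}, \widetilde{P})} (A, M))^a \simeq (R, \widetilde{P}^a) \otimes_{(\widetilde{R}, \widetilde{P}^a)} (A, M^a).
\]
The strictness hypothesis on $(\widetilde{R}, \widetilde{P}^a) \to (R, P^a)$ means precisely that the canonical map $(R, \widetilde{P}^a) \to (R, P^a)$ is an equivalence of log ring spectra, so the right-hand side is equivalent to $(R, P^a) \otimes_{(\widetilde{R}, \widetilde{P}^a)} (A, M^a)$. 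This exhibits the base-change as charted log \'etale at $\widetilde{P}$. Functoriality on morphisms follows from the functoriality of the pre-log coproduct together with the observation that morphisms in $\mathrm{Log}_{(R, P^a)/}^{\rm chl\acute{e}t, \widetilde P}$ are by Definition~\ref{def:chartedlogetaleatp} just morphisms under $(R, P^a)$, with no compatibility imposed on chart data.

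The only subtle point I expect is the last identification: one must know that the log base-change is correctly computed by logifying the pre-log coproduct and that strictness really supplies the desired equivalence $(R, \widetilde{P}^a) \simeq (R, P^a)$. Once these compatibilities are unwound, the argument is formal.
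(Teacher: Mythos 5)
Your proof is correct and takes essentially the same route as the paper's: you choose a log \'etale chart $(\widetilde{R}, \widetilde{P}) \to (A, M)$, form the pre-log coproduct $(R, \widetilde{P}) \to (R \otimes_{\widetilde{R}} A, M)$, check that the two conditions of Definition~\ref{def:chartedlogetale} are preserved under cobase-change, and use strictness plus the fact that logification preserves colimits to identify the logified chart with the log base-change. The only cosmetic caveat is that you should make clear that $(R, \widetilde{P}^a)$ denotes the logification of the inverse-image pre-log structure $\widetilde{P} \to \Omega^{\mathcal J}(\widetilde R) \to \Omega^{\mathcal J}(R)$, i.e.\ $(R, \widetilde{P})^a$, rather than equipping $R$ with the already-logified $\widetilde{P}^a$ (the paper uses the same shorthand).
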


\begin{proof} Consider a charted log \'etale morphism $(\widetilde{R}, \widetilde{P}^a) \to (\widetilde{A}, \widetilde{M}^a)$ with log \'etale chart $(\widetilde{R}, \widetilde{P}) \to (\widetilde{A}, \widetilde{M})$. We claim that the base-change $(R, P^a) \to (A, M^a)$ of  $(\widetilde{R}, \widetilde{P}^a) \to (\widetilde{A}, \widetilde{M}^a)$ along $(\widetilde{R}, \widetilde{P}^a) \to (R, P^a)$ is charted log \'etale with log \'etale chart $(R, \widetilde{P}) \to (A, \widetilde{M})$. The morphism $R \otimes_{{\Bbb S}^{\cal J}[\widetilde{P}]} {\Bbb S}^{\cal J}[\widetilde{M}] \to A$ is \'etale, as it is the base-change of the \'etale morphism $\widetilde{R} \otimes_{{\Bbb S}^{\cal J}[\widetilde{P}]} {\Bbb S}^{\cal J}[\widetilde{M}] \to \widetilde{A}$ along $\widetilde{R} \otimes_{{\Bbb S}^{\cal J}[\widetilde{P}]} {\Bbb S}^{\cal J}[\widetilde{M}] \to R \otimes_{{\Bbb S}^{\cal J}[\widetilde{P}]} {\Bbb S}^{\cal J}[\widetilde{M}]$. The morphism $A \otimes \gamma(\widetilde{P}) \to A \otimes \gamma(\widetilde{M})$ is an equivalence, being induced up from the equivalence $\widetilde{A} \otimes \gamma(\widetilde{P}) \to \widetilde{A} \otimes \gamma(\widetilde{M})$ along $\widetilde{A} \to A$. The strictness hypothesis ensures that the map $(R, \widetilde{P}) \to (A, \widetilde{M})$ indeed logifies to $(R, P^a) \to (A, M^a)$. 
\end{proof}

\subsection{Log \'etale rigidity} The precise formulation of Theorem \ref{thm:chartedlogetale} is:

\begin{theorem}\label{thm:precisechartedlogetale} Let $(R, P)$ be a pre-log ring spectrum. Base-change along the log Postnikov truncation $(R, P^a) \to (\pi_0(R), P_{\le 0}^a)$ induces an equivalence of categories \[{\rm Log}_{(R, P^a)/}^{{\rm chl\acute{e}t}, P} \xrightarrow{\simeq} {\rm Log}_{({\pi_0(R)}, {P_{\le 0}^a})/}^{{\rm chl\acute{e}t}, P}.\]
\end{theorem}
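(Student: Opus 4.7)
I will approach the proof in three parts: well-definedness, fully faithfulness, and essential surjectivity of the base-change functor.

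First, the log Postnikov truncation $(R, P^a) \to (\pi_0(R), P^a_{\le 0})$ is strict, since by construction both sides arise as the logification of the pre-log structure $P$. Hence Lemma \ref{lem:basechangestrict} applies: base-change sends a charted log \'etale object with log \'etale chart $(R, P) \to (A, M)$ to one with log \'etale chart $(\pi_0(R), P) \to (\pi_0(R) \otimes_R A, M)$, with the chart $M$ unchanged.

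Next, full faithfulness reduces to Lemma \ref{lem:fullyfaithful}: by Lemma \ref{lem:logetalechartformallylogetale}, every charted log \'etale morphism is formally log \'etale, so ${\rm Log}_{(R, P^a)/}^{{\rm chl\acute{e}t}, P}$ is a full subcategory of ${\rm Log}_{(R, P^a)/}^{{\rm fl\acute{e}t}}$, and the log Postnikov truncation is strict with $0$-connected underlying ring map $R \to \pi_0(R)$.

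Essential surjectivity is the main technical step. Given an object $(A_0, M^a) \in {\rm Log}_{(\pi_0(R), P^a_{\le 0})/}^{{\rm chl\acute{e}t}, P}$ with fixed log \'etale chart $(\pi_0(R), P) \to (A_0, M)$, I plan to construct a lift using the same chart $M$. Set $S := R \otimes_{{\Bbb S}^{\cal J}[P]} {\Bbb S}^{\cal J}[M]$ and $S_0 := \pi_0(R) \otimes_{{\Bbb S}^{\cal J}[P]} {\Bbb S}^{\cal J}[M] \simeq S \otimes_R \pi_0(R)$. Since the fiber of $R \to \pi_0(R)$ is $1$-connective, so is the fiber of $S \to S_0$, which therefore induces an isomorphism $\pi_0(S) \cong \pi_0(S_0)$. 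The \'etale $S_0$-algebra $A_0$ yields an \'etale $\pi_0(S)$-algebra $\pi_0(A_0)$, and Lurie's \'etale rigidity (Theorem \ref{thm:lurieetale}) applied to the connective $S$ produces an essentially unique \'etale $S$-algebra $A$ with $\pi_0(A) \cong \pi_0(A_0)$. Applying Lurie's rigidity to $S_0$ and comparing $\pi_0$-algebras identifies $A \otimes_S S_0 \simeq A_0$. The composite $(R, P) \to (S, M) \to (A, M)$ is a map of pre-log ring spectra, and by construction the map $S \to A$ is \'etale, giving condition (1) of Definition \ref{def:chartedlogetale}. For condition (2), the $A$-module $A \otimes (\gamma(M)/\gamma(P))$ is connective because $\gamma(M)/\gamma(P)$ is a cofiber of connective spectra, and its base-change to $A_0$ vanishes by the chartedness of $(A_0, M^a)$; a Nakayama-type argument using the $1$-connectedness of $A \to A_0$ then forces $A \otimes (\gamma(M)/\gamma(P)) \simeq 0$. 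The logification $(A, M^a)$ is therefore the desired lift in ${\rm Log}_{(R, P^a)/}^{{\rm chl\acute{e}t}, P}$, with base-change to $(A_0, M^a)$ following from strictness of the truncation map.

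The main obstacle is essential surjectivity. The key insight, which the charting hypothesis buys us, is a reduction from deforming a log ring spectrum along the log Postnikov tower to deforming an \'etale $\mathbb{E}_\infty$-algebra over the auxiliary connective ring $S = R \otimes_{{\Bbb S}^{\cal J}[P]} {\Bbb S}^{\cal J}[M]$, a problem governed directly by Lurie's Theorem \ref{thm:lurieetale}. This bypasses the difficulty emphasized in the introduction — that formally log \'etale deformations do not glue along the Postnikov tower because ${\rm Log}$ lacks an operadic description — because fixing the chart $M$ eliminates the need to glue log structures across the tower.
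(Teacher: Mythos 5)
Your proof takes essentially the same route as the paper's: full faithfulness is deduced from Lemma \ref{lem:fullyfaithful} via Lemmas \ref{lem:logetalechartformallylogetale} and \ref{lem:basechangestrict}, and essential surjectivity is reduced to Lurie's \'etale rigidity for the auxiliary pushout $R \otimes_{{\Bbb S}^{\cal J}[P]} {\Bbb S}^{\cal J}[M]$ (with condition (2) of Definition \ref{def:chartedlogetale} checked via the Nakayama-type conservativity of base change along the $0$-connected map $A \to A_0$, which the paper cites as \cite[Lemma 3.3]{PVK22}). The only cosmetic difference is that the paper phrases the rigidity comparison through the common codomain $\pi_0(R \otimes_{{\Bbb S}^{\cal J}[P]} {\Bbb S}^{\cal J}[M])$ while you compare through $\pi_0(S) \cong \pi_0(S_0)$; these are identical in content.
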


\begin{proof} Lemma \ref{lem:logetalechartformallylogetale} and Lemma \ref{lem:basechangestrict} imply that we may still apply Lemma \ref{lem:fullyfaithful} to infer that the functor is fully faithful, as the below argument shows. We shall now argue that it is essentially surjective. Consider a charted log \'etale morphism $(\pi_0(R), P_{\le 0}^a) \to (A_0, M_{0})$ with a given log \'etale chart $(\pi_0(R), P) \to (A_0, M)$. There is a commutative diagram \[\begin{tikzcd}[row sep = small]R \otimes_{{\Bbb S}^{\cal J}[P]} {\Bbb S}^{\cal J}[M] \ar{r}  \ar[dashed]{d} \ar[bend left = 5 mm]{rr} & \pi_0(R) \otimes_{{\Bbb S}^{\cal J}[P]} {\Bbb S}^{\cal J}[M] \ar{r} \ar{d} & \pi_0(R \otimes_{{\Bbb S}^{\cal J}[P]} {\Bbb S}^{\cal J}[M]) \ar{d} \\ A \ar[bend right = 5 mm, dashed]{rr}& A_0 \ar{r} & \pi_0(A_0)\end{tikzcd}\] of ${\Bbb E}_{\infty}$-rings. Observe that $\pi_0(\pi_0(R) \otimes_{{\Bbb S}^{\cal J}[P]} {\Bbb S}^{\cal J}[M]) \cong \pi_0(R \otimes_{{\Bbb S}^{\cal J}[P]} {\Bbb S}^{\cal J}[M])$. The ${\Bbb E}_{\infty}$-ring $A$ and the dashed morphisms are obtained from \'etale rigidity equivalence ${\rm CAlg}^{\rm \acute{e}t}_{R \otimes_{{\Bbb S}^{\cal J}[P]} {\Bbb S}^{\cal J}[M]/} \xrightarrow{\simeq} {\rm CAlg}^{\rm \acute{e}t}_{\pi_0(R \otimes_{{\Bbb S}^{\cal J}[P]} {\Bbb S}^{\cal J}[M])/}$ \cite[Theorem 7.5.0.6]{Lur17}; in particular, the outer rectangle is cocartesian. Appealing instead to the equivalence ${\rm CAlg}^{\rm \acute{e}t}_{\pi_0(R) \otimes_{{\Bbb S}^{\cal J}[P]} {\Bbb S}^{\cal J}[M]/} \xrightarrow{\simeq} {\rm CAlg}^{\rm \acute{e}t}_{\pi_0(R \otimes_{{\Bbb S}^{\cal J}[P]} {\Bbb S}^{\cal J}[M])/}$, we find that $A_0$ is essentially unique among \'etale $(\pi_0(R) \otimes_{{\Bbb S}^{\cal J}[P]} {\Bbb S}^{\cal J}[M])$-algebras for which the right-hand square is cocartesian. Since $R \otimes_{{\Bbb S}^{\cal J}[P]} {\Bbb S}^{\cal J}[M] \to A$ is \'etale, the pushout of the diagram $A \xleftarrow{} R \otimes_{{\Bbb S}^{\cal J}[P]} {\Bbb S}^{\cal J}[M] \xrightarrow{} \pi_0(R) \otimes_{{\Bbb S}^{\cal J}[P]} {\Bbb S}^{\cal J}[M]$ is another \'etale $(\pi_0(R) \otimes_{{\Bbb S}^{\cal J}[P]} {\Bbb S}^{\cal J}[M])$-algebra with this property, hence equivalent to $A_0$. In particular, there is a compatible morphism $A \to A_0$ making the resulting left-hand square cocartesian. 

Let us endow $A$ with the inverse-image pre-log structure $(A, M)$ along the composite $M \to \Omega^{\cal J}(R \otimes_{{\Bbb S}^{\cal J}[P]} {\Bbb S}^{\cal J}[M]) \to \Omega^{\cal J}(A)$. We claim that the induced map of logifications $(R, P^a) \to (A, M^a)$ is the formally log \'etale chart which we seek. To see this, consider the diagram \[\begin{tikzcd}[row sep = small](R, P) \ar{r} \ar{d} & (R \otimes_{{\Bbb S}^{\cal J}[P]} {\Bbb S}^{\cal J}[M], M) \ar{r} \ar{d} & (A, M) \ar{d} \\ (\pi_0(R), P) \ar{r} & (\pi_0(R) \otimes_{{\Bbb S}^{\cal J}[P]} {\Bbb S}^{\cal J}[M], M) \ar{r} & (A_0, M)\end{tikzcd}\] of pre-log ring spectra. The upper horizontal composite exhibits $(R, P^a) \to (A, M^a)$ as a charted log \'etale: We already know that the morphism $R \otimes_{{\Bbb S}^{\cal J}[P]} {\Bbb S}^{\cal J}[M] \to A$ is \'etale, and the $A$-module $A \otimes (M^{\rm gp}/P^{\rm gp})$ vanishes since $A_0 \otimes (M^{\rm gp}/P^{\rm gp})$ does and $A \to A_0$ is $0$-connected \cite[Lemma 3.3]{PVK22}. As both squares are cocartesian, so is the outer rectangle, and hence we obtain a cocartesian square of log ring spectra after logification. This concludes the proof. 
\end{proof}

\begin{bibdiv}
\begin{biblist}

\bib{Ang08}{article}{
   author={Angeltveit, Vigleik},
   title={Topological Hochschild homology and cohomology of $A_\infty$ ring
   spectra},
   journal={Geom. Topol.},
   volume={12},
   date={2008},
   number={2},
   pages={987--1032},
   issn={1465-3060},
   review={\MR{2403804}},
   doi={10.2140/gt.2008.12.987},
}

\bib{ABM23}{misc}{
      author={Ausoni, C.},
	author={Bayındır, H.}
	author={Moulinos, T.}
       title={Adjunction of roots, algebraic K-theory and chromatic redshift},
        date={2023},
        note={\arxivlink{2211.16929}},
}

\bib{Bas99}{article}{
   author={Basterra, M.},
   title={Andr\'{e}-Quillen cohomology of commutative $S$-algebras},
   journal={J. Pure Appl. Algebra},
   volume={144},
   date={1999},
   number={2},
   pages={111--143},
   issn={0022-4049},
   review={\MR{1732625}},
   doi={10.1016/S0022-4049(98)00051-6},
}

\bib{BF78}{article}{
   author={Bousfield, A. K.},
   author={Friedlander, E. M.},
   title={Homotopy theory of $\Gamma $-spaces, spectra, and bisimplicial
   sets},
   conference={
      title={Geometric applications of homotopy theory},
      address={Proc. Conf., Evanston, Ill.},
      date={1977},
   },
   book={
      series={Lecture Notes in Math.},
      volume={658},
      publisher={Springer, Berlin-New York},
   },
   isbn={3-540-08859-8},
   date={1978},
   pages={80--130},
   review={\MR{0513569}},
}

\bib{BLPO23Prism}{article}{
   author={Binda, F.},
   author={Lundemo, T.},
   author={Park, D.},
	author = {{\O}stv{\ae}r, P. A.}
   title={Logarithmic prismatic cohomology via logarithmic THH},
   journal={IMRN.},
   date={2023},
   doi={10.1093/imrn/rnad224},
}

\bib{BLPO23}{article}{
      author={Binda, F.},
      author={Lundemo, T.},
      author={Park, D.},
	author = {{\O}stv{\ae}r, P. A.}
       title={A Logarithmic Hochschild-Kostant-Rosenberg Theorem and Generalized Residue Sequences in Logarithmic Hochschild Homology},
	journal = {Adv. Math.}
	issn = {0001-8708}
        date={2023},
        doi = {10.1016/j.aim.2023.109354}
}

\bib{BM05}{article}{
   author={Basterra, Maria},
   author={Mandell, Michael A.},
   title={Homology and cohomology of $E_\infty$ ring spectra},
   journal={Math. Z.},
   volume={249},
   date={2005},
   number={4},
   pages={903--944},
   issn={0025-5874},
   review={\MR{2126222}},
   doi={10.1007/s00209-004-0744-y},
}

\bib{BMS19}{article}{
   author={Bhatt, Bhargav},
   author={Morrow, Matthew},
   author={Scholze, Peter},
   title={Topological Hochschild homology and integral $p$-adic Hodge
   theory},
   journal={Publ. Math. Inst. Hautes \'{E}tudes Sci.},
   volume={129},
   date={2019},
   pages={199--310},
   issn={0073-8301},
   review={\MR{3949030}},
   doi={10.1007/s10240-019-00106-9},
}

\bib{Dev20}{article}{
   author={Devalapurkar, Sanath},
   title={Roots of unity in $K(n)$-local rings},
   journal={Proc. Amer. Math. Soc.},
   volume={148},
   date={2020},
   number={7},
   pages={3187--3194},
   issn={0002-9939},
   review={\MR{4099803}},
   doi={10.1090/proc/14960},
}

\bib{egaiv}{article}{
   author={Grothendieck, A.},
   title={\'{E}l\'{e}ments de g\'{e}om\'{e}trie alg\'{e}brique. IV.
   \'{E}tude locale des sch\'{e}mas et des morphismes de sch\'{e}mas IV},
   language={French},
   journal={Inst. Hautes \'{E}tudes Sci. Publ. Math.},
   number={32},
   date={1967},
   pages={361},
   issn={0073-8301},
   review={\MR{0238860}},
}

\bib{GHN17}{article}{
   author={Gepner, David},
   author={Haugseng, Rune},
   author={Nikolaus, Thomas},
   title={Lax colimits and free fibrations in $\infty$-categories},
   journal={Doc. Math.},
   volume={22},
   date={2017},
   pages={1225--1266},
   issn={1431-0635},
   review={\MR{3690268}},
}

\bib{Hau22}{article}{
   author={Haugseng, Rune},
   title={On (co)ends in $\infty$-categories},
   journal={J. Pure Appl. Algebra},
   volume={226},
   date={2022},
   number={2},
   pages={Paper No. 106819, 16},
   issn={0022-4049},
   review={\MR{4279286}},
   doi={10.1016/j.jpaa.2021.106819},
}

\bib{HP15}{article}{
   author={Harpaz, Yonatan},
   author={Prasma, Matan},
   title={The Grothendieck construction for model categories},
   journal={Adv. Math.},
   volume={281},
   date={2015},
   pages={1306--1363},
   issn={0001-8708},
   review={\MR{3366868}},
   doi={10.1016/j.aim.2015.03.031},
}

\bib{HP23}{article}{
   author={Hesselholt, Lars},
   author={Pstr\k{a}gowski, P},
   title={Dirac Geometry I: Commutative Algebra},
   journal={Peking Math J.},
   date={2023},
   doi={10.1007/s42543-023-00072-6},
}

\bib{HW22}{article}{
   author={Hahn, Jeremy},
   author={Wilson, Dylan},
   title={Redshift and multiplication for truncated Brown-Peterson spectra},
   journal={Ann. of Math. (2)},
   volume={196},
   date={2022},
   number={3},
   pages={1277--1351},
   issn={0003-486X},
   review={\MR{4503327}},
   doi={10.4007/annals.2022.196.3.6},
}

\bib{Kat89}{article}{
   author={Kato, Kazuya},
   title={Logarithmic structures of Fontaine-Illusie},
   conference={
      title={Algebraic analysis, geometry, and number theory},
      address={Baltimore, MD},
      date={1988},
   },
   book={
      publisher={Johns Hopkins Univ. Press, Baltimore, MD},
   },
   isbn={0-8018-3841-X},
   date={1989},
   pages={191--224},
   review={\MR{1463703}},
}

\bib{Kat}{misc}{
      author={Kato, K.},
       title={Logarithmic degeneration and Dieudonn\'e theory},
        note={Preprint.},
}

\bib{KS04}{article}{
   author={Kato, Kazuya},
   author={Saito, Takeshi},
   title={On the conductor formula of Bloch},
   journal={Publ. Math. Inst. Hautes \'{E}tudes Sci.},
   number={100},
   date={2004},
   pages={5--151},
   issn={0073-8301},
   review={\MR{2102698}},
   doi={10.1007/s10240-004-0026-6},
}

\bib{KS58}{article}{
   author={Kodaira, K.},
   author={Spencer, D. C.},
   title={On deformations of complex analytic structures. I, II},
   journal={Ann. of Math. (2)},
   volume={67},
   date={1958},
   pages={328--466},
   issn={0003-486X},
   review={\MR{0112154}},
   doi={10.2307/1970009},
}

\bib{Law20}{misc}{
      author={Lawson, T.},
       title={Adjoining roots in homotopy theory},
        date={2020},
        note={\arxivlink{2002.01997}},
}

\bib{Lun21}{article}{
   author={Lundemo, Tommy},
   title={On the relationship between logarithmic TAQ and logarithmic THH},
   journal={Doc. Math.},
   volume={26},
   date={2021},
   pages={1187--1236},
   issn={1431-0635},
   review={\MR{4324464}},
}

\bib{Lun22}{misc}{
      author={Lundemo, T.},
       title={On Formally \'Etale Morphisms in Derived and Higher Logarithmic Geometry},
        date={2022},
        note={PhD Thesis (Radboud University Nijmegen). Available at \url{https://hdl.handle.net/2066/252233}},
}

\bib{dagiv}{misc}{
      author={Lurie, J.},
       title={Derived Algebraic Geometry IV: Deformation Theory},
        date={2009},
        note={Preprint, available at the author's home page},
}

\bib{HTT}{book}{
   author={Lurie, Jacob},
   title={Higher topos theory},
   series={Annals of Mathematics Studies},
   volume={170},
   publisher={Princeton University Press, Princeton, NJ},
   date={2009},
   pages={xviii+925},
   isbn={978-0-691-14049-0},
   isbn={0-691-14049-9},
   review={\MR{2522659}},
   doi={10.1515/9781400830558},
}

\bib{Lur17}{misc}{
      author={Lurie, J.},
       title={Higher Algebra},
        date={2017},
        note={Preprint, available at the author's home page},
}
	
\bib{SAG}{misc}{
      author={Lurie, J.},
       title={Spectral Algebraic Geometry},
        date={2018},
        note={Preprint, available at the author's home page},
}

\bib{Mat}{article}{
   author={Mather, Michael},
   title={Pull-backs in homotopy theory},
   journal={Canadian J. Math.},
   volume={28},
   date={1976},
   number={2},
   pages={225--263},
   issn={0008-414X},
   review={\MR{0402694}},
   doi={10.4153/CJM-1976-029-0},
}

\bib{MMSS01}{article}{
   author={Mandell, M. A.},
   author={May, J. P.},
   author={Schwede, S.},
   author={Shipley, B.},
   title={Model categories of diagram spectra},
   journal={Proc. London Math. Soc. (3)},
   volume={82},
   date={2001},
   number={2},
   pages={441--512},
   issn={0024-6115},
   review={\MR{1806878}},
   doi={10.1112/S0024611501012692},
}

\bib{Niz08}{article}{
   author={Nizio\l , Wies\l awa},
   title={$K$-theory of log-schemes. I},
   journal={Doc. Math.},
   volume={13},
   date={2008},
   pages={505--551},
   issn={1431-0635},
   review={\MR{2452875}},
}

\bib{Ogu18}{book}{
   author={Ogus, Arthur},
   title={Lectures on logarithmic algebraic geometry},
   series={Cambridge Studies in Advanced Mathematics},
   volume={178},
   publisher={Cambridge University Press, Cambridge},
   date={2018},
   pages={xviii+539},
   isbn={978-1-107-18773-3},
   review={\MR{3838359}},
   doi={10.1017/9781316941614},
}

\bib{Ols05}{article}{
   author={Olsson, Martin C.},
   title={The logarithmic cotangent complex},
   journal={Math. Ann.},
   volume={333},
   date={2005},
   number={4},
   pages={859--931},
   issn={0025-5831},
   review={\MR{2195148}},
   doi={10.1007/s00208-005-0707-6},
}

\bib{PVK22}{article}{
   author={Pstr\polhk{a}gowski, Piotr},
   author={VanKoughnett, Paul},
   title={Abstract Goerss-Hopkins theory},
   journal={Adv. Math.},
   volume={395},
   date={2022},
   pages={Paper No. 108098, 51},
   issn={0001-8708},
   review={\MR{4363589}},
   doi={10.1016/j.aim.2021.108098},
}

\bib{Qui}{article}{
   author={Quillen, Daniel},
   title={On the (co-) homology of commutative rings},
   conference={
      title={Applications of Categorical Algebra},
      address={Proc. Sympos. Pure Math., Vol. XVII, New York},
      date={1968},
   },
   book={
      series={Proc. Sympos. Pure Math.},
      volume={XVII},
      publisher={Amer. Math. Soc., Providence, RI},
   },
   date={1970},
   pages={65--87},
   review={\MR{0257068}},
}

\bib{Rog09}{article}{
   author={Rognes, John},
   title={Topological logarithmic structures},
   conference={
      title={New topological contexts for Galois theory and algebraic
      geometry (BIRS 2008)},
   },
   book={
      series={Geom. Topol. Monogr.},
      volume={16},
      publisher={Geom. Topol. Publ., Coventry},
   },
   date={2009},
   pages={401--544},
   review={\MR{2544395}},
   doi={10.2140/gtm.2009.16.401},
}

\bib{RogLoen}{misc}{
      author={Rognes, J.},
       title={Log THH and TC},
        date={2009},
        note={Talk notes, available at the author's home page},
}

\bib{RSS15}{article}{
   author={Rognes, John},
   author={Sagave, Steffen},
   author={Schlichtkrull, Christian},
   title={Localization sequences for logarithmic topological Hochschild
   homology},
   journal={Math. Ann.},
   volume={363},
   date={2015},
   number={3-4},
   pages={1349--1398},
   issn={0025-5831},
   doi={10.1007/s00208-015-1202-3},
}

\bib{RSS18}{article}{
   author={Rognes, John},
   author={Sagave, Steffen},
   author={Schlichtkrull, Christian},
   title={Logarithmic topological Hochschild homology of topological
   $K$-theory spectra},
   journal={J. Eur. Math. Soc. (JEMS)},
   volume={20},
   date={2018},
   number={2},
   pages={489--527},
   issn={1435-9855},
   review={\MR{3760301}},
   doi={10.4171/JEMS/772},
}

\bib{Sag14}{article}{
   author={Sagave, Steffen},
   title={Logarithmic structures on topological $K$-theory spectra},
   journal={Geom. Topol.},
   volume={18},
   date={2014},
   number={1},
   pages={447--490},
   issn={1465-3060},
   review={\MR{3159166}},
   doi={10.2140/gt.2014.18.447},
}

\bib{Sag16}{article}{
   author={Sagave, Steffen},
   title={Spectra of units for periodic ring spectra and group completion of
   graded $E_\infty$ spaces},
   journal={Algebr. Geom. Topol.},
   volume={16},
   date={2016},
   number={2},
   pages={1203--1251},
   issn={1472-2747},
   review={\MR{3493419}},
   doi={10.2140/agt.2016.16.1203},
}

\bib{SS12}{article}{
   author={Sagave, Steffen},
   author={Schlichtkrull, Christian},
   title={Diagram spaces and symmetric spectra},
   journal={Adv. Math.},
   volume={231},
   date={2012},
   number={3-4},
   pages={2116--2193},
   issn={0001-8708},
   review={\MR{2964635}},
   doi={10.1016/j.aim.2012.07.013},
}

\bib{SS19}{article}{
   author={Sagave, Steffen},
   author={Schlichtkrull, Christian},
   title={Virtual vector bundles and graded Thom spectra},
   journal={Math. Z.},
   volume={292},
   date={2019},
   number={3-4},
   pages={975--1016},
   issn={0025-5874},
   review={\MR{3980280}},
   doi={10.1007/s00209-018-2131-0},
}

\bib{SSV16}{article}{
   author={Sagave, Steffen},
   author={Sch\"{u}rg, Timo},
   author={Vezzosi, Gabriele},
   title={Derived logarithmic geometry I},
   journal={J. Inst. Math. Jussieu},
   volume={15},
   date={2016},
   number={2},
   pages={367--405},
   issn={1474-7480},
   review={\MR{3480969}},
   doi={10.1017/S1474748014000322},
}

\bib{SVW99}{article}{
   author={Schw\"{a}nzl, R.},
   author={Vogt, R. M.},
   author={Waldhausen, F.},
   title={Adjoining roots of unity to $E_\infty$ ring spectra in good
   cases---a remark},
   conference={
      title={Homotopy invariant algebraic structures},
      address={Baltimore, MD},
      date={1998},
   },
   book={
      series={Contemp. Math.},
      volume={239},
      publisher={Amer. Math. Soc., Providence, RI},
   },
   isbn={0-8218-1057-X},
   date={1999},
   pages={245--249},
   review={\MR{1718085}},
   doi={10.1090/conm/239/03606},
}

\bib{TV08}{article}{
   author={To\"{e}n, Bertrand},
   author={Vezzosi, Gabriele},
   title={Homotopical algebraic geometry. II. Geometric stacks and
   applications},
   journal={Mem. Amer. Math. Soc.},
   volume={193},
   date={2008},
   number={902},
   pages={x+224},
   issn={0065-9266},
   review={\MR{2394633}},
   doi={10.1090/memo/0902},
}

\end{biblist}
\end{bibdiv}

\end{document}